\newtheorem{Thm}{Theorem}[section]
\newtheorem{Lem}[Thm]{Lemma}
\newtheorem{Cor}[Thm]{Corollary}
\newtheorem{Prop}[Thm]{Proposition}
\newtheorem{Rem}[Thm]{Remark}
\newtheorem{Def}[Thm]{Definition}
\newcommand{\C}{\mathbb{C}}           
\newcommand{\Z}{\mathbb{Z}}
\newcommand{\Q}{\mathbb{Q}}
\newcommand{\ad}{\text{ad}}
\newcommand{\id}{\mathrm{id}}
\newcommand{\ch}{\mathrm{ch}\,}
\newcommand{\fa}{{\mathfrak a}}             
\newcommand{\fb}{{\mathfrak b}}
\newcommand{\fg}{{\mathfrak g}}
\newcommand{\fh}{{\mathfrak h}}
\newcommand{\fn}{{\mathfrak n}}
\newcommand{\fp}{{\mathfrak p}}
\newcommand{\hfg}{\hat{\fg}}
\newcommand{\hfh}{\hat{\fh}}
\newcommand{\hfb}{\hat{\fb}}
\newcommand{\hfn}{\hat{\fn}}
\newcommand{\hfp}{\hat{\fp}}
\newcommand{\hP}{\hat{P}}
\newcommand{\hQ}{\hat{Q}}
\newcommand{\hI}{\hat{I}}
\newcommand{\hW}{\hat{W}}
\newcommand{\ga}{\alpha}
\newcommand{\gb}{\beta}
\newcommand{\gl}{\lambda}
\newcommand{\gL}{\Lambda}
\newcommand{\gd}{\delta}
\newcommand{\gD}{\Delta}
\newcommand{\gt}{\theta}
\renewcommand{\ggg}{\gamma}
\newcommand{\gs}{\sigma}
\newcommand{\gS}{\Sigma}
\newcommand{\gee}{\varepsilon}
\renewcommand{\hat}{\widehat}
\newcommand{\ol}{\overline}
\newcommand{\wti}{\widetilde}
\newcommand{\wt}{\mathrm{wt}}
\newcommand{\bL}{\mathbf{L}}
\newcommand{\Lfg}{\bL\fg}
\newcommand{\Lfn}{\bL\fn}
\newcommand{\hV}{\hat{V}}
\newcommand{\cF}{\mathcal{F}}
\newcommand{\chh}{\mathrm{ch}_{\fh}}
\newcommand{\chhh}{\mathrm{ch}_{\hfh}}
\newcommand{\D}{\mathcal{D}}
\newcommand{\chq}{\mathrm{ch}_q}
\newcommand{\hgDre}{\hat{\gD}^{\mathrm{re}}}
\begin{document}
\title[Demazure modules and graded limits of minimal affinizations]{Demazure modules 
      and graded limits of minimal affinizations}
\author{Katsuyuki Naoi}
\date{}

\begin{abstract}
  For a minimal affinization over a quantum loop algebra of type $BC$,
  we provide a character formula in terms of Demazure operators and multiplicities in terms of crystal bases.
  We also prove the formula for the limit of characters conjectured by Mukhin and Young.
  These are achieved by verifying that its graded limit
  (a variant of a classical limit) is isomorphic to some multiple generalization of a Demazure module, 
  and by determining the defining relations of the graded limit.
\end{abstract}

\maketitle

\section{Introduction}\label{Section:Introduction}

Let $\fg$ be a complex simple Lie algebra of rank $n$, 
and $\bL\fg = \fg \otimes \C[t,t^{-1}]$ the associated loop algebra.
The theory of finite-dimensional representations of the quantum loop algebra $U_q(\bL\fg)$ has been intensively studied
from various viewpoints in recent years. For example, see the survey \cite{MR2642561} and references therein.

In \cite{MR1367675}, Chari introduced the notion of minimal affinizations. 
An affinization $\hat{V}$ of a simple $U_q(\fg)$-module $V$ 
is by definition a simple $U_q(\bL\fg)$-module whose highest weight is equal to that of $V$.
Two affinizations of $V$ are said to be equivalent if they are isomorphic as $U_q(\fg)$-modules.
Then one can define a partial ordering on the set of
equivalence classes of affinizations of $V$, and minimal ones with respect to this ordering are called minimal affinizations.
An almost complete classification of minimal affinizations was done by Chari and Pressley 
in \cite{MR1367675,MR1347873,MR1376937,MR1402568},
and in particular it was proved that, if $\fg$ is of type $ABCFG$, for every simple $U_q(\fg)$-module
its minimal affinization is unique.

Given a minimal affinization, one can consider its classical limit.
By restricting it to the current algebra $\fg[t] = \fg \otimes \C[t]$ and taking a pull-back, 
a graded $\fg[t]$-module is obtained. 
In this article we call this the \textit{graded limit}.
Graded limits are quite important for the study of minimal affinizations since
the $U_q(\fg)$-module structure of a minimal affinization is completely determined by the $U(\fg)$-module structure of
its graded limit.
This idea was applied in \cite{MR1836791,MR2238884} to Kirillov-Reshetikhin modules,
which are minimal affinizations whose highest weights are multiples of a fundamental weight.

The graded limits of general minimal affinizations were first studied by Moura in \cite{MR2587436}.
In the article, he defined some two $\fg[t]$-modules using the graded limits of Kirillov-Reshetikhin modules,
and conjectured in all types that the graded limit of a minimal affinization is isomorphic to them.
This conjecture was proved in type $A$ and partially in type $BD$ in the article,
and partially in type $E_6$ in \cite{MR2896463}.

In the present paper we study in more detail the graded limits of minimal affinizations in type $ABC$.
These are the classical types in which minimal affinizations are unique (our main interest is in type $BC$ 
since type $A$ is well-known).
In particular, we give a proof to the Moura's conjecture in these types (though we need some modification in type $C$).

To introduce our results, let us define some $\fg[t]$-modules.
Denote by $\hat{\fg}$ the nontwisted affine Lie algebra associated with $\fg$, 
and by $\hfb \subseteq \hfg$ the standard Borel subalgebra.
Let $\xi_1,\ldots,\xi_p \in \hP$ be a sequence of weights of $\hfg$,
and assume that each $\xi_i$ belongs to the affine Weyl group orbit $\hW \gL^i$ of a dominant integral weight $\gL^i \in \hP^+$.
We define a $\hfb$-module $D(\xi_1, \ldots, \xi_p)$ by 
\[ D(\xi_1, \ldots, \xi_p) = U(\hfb)(v_{\xi_1} \otimes \cdots \otimes v_{\xi_p}) \subseteq \hV(\gL^1) \otimes \cdots \otimes
   \hat{V}(\gL^p),
\]
where $\hat{V}(\gL)$ is the simple highest weight $\hfg$-module with highest weight $\gL$,
and $v_\xi$ is an extremal weight vector with weight $\xi$. 
When $p =1$, $D(\xi_1)$ is called a Demazure module.
It is easily seen that, if each $\xi_i$ is dominant with respect to $\fg$,
then $D(w_\circ\xi_1, \ldots,w_\circ\xi_p)$ is $\fg[t]$-stable, where $w_\circ$ is the longest element of the Weyl group
of $\fg$. 

Assume that $\fg$ is of type $ABC$.
Let $V_q(\gl)$ be the simple $U_q(\fg)$-module with highest weight $\gl \in P^+$,
and $L_q(\bm{\pi})$ a minimal affinization of $V_q(\gl)$
(here $\bm{\pi}$ denotes the $\ell$-highest weight. See Subsection \ref{Subsection:QAA}).
By $L(\bm{\pi})$ we denote its graded limit.
Our first main theorem is the following (Theorem \ref{Thm:Main2}).

\begin{Thm}\label{Thm:intro1}
  As a $\fg[t]$-module, $L(\bm{\pi})$ is isomorphic to $D(w_\circ\xi_1,\ldots,w_\circ\xi_n)$ 
  with suitable $\fg$-dominant $\xi_1,\ldots,\xi_n \in \hP$.  
\end{Thm}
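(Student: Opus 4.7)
The strategy is to construct a surjection $L(\bm{\pi})\twoheadrightarrow D(w_\circ\xi_1,\ldots,w_\circ\xi_n)$ from an explicit presentation of the graded limit, and then to upgrade it to an isomorphism by a dimension count. For the choice of $\xi_i$: writing $\gl = \sum_i\gl_i\go_i$ and taking the normalized form of the $\ell$-highest weight $\bm{\pi}$ of a minimal affinization in type $ABC$ (Chari--Pressley), I would let each $\xi_i$ be an affine weight whose classical part is $\gl_i\go_i$ and whose null-root coefficient is the spectral shift of the $i$-th factor of $\bm{\pi}$, arranged so that $\xi_i\in\hW\gL^i$ with $\gL^i$ dominant of a level adapted to the node (distinguishing short and long roots in type $BC$). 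With this choice each single Demazure module $D(w_\circ\xi_i)$ already coincides with the graded limit of the Kirillov--Reshetikhin module $KR(\gl_i\go_i)$ by Chari--Moura and Fourier--Littelmann / Naoi.

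For the surjection, I would first establish a presentation of $L(\bm{\pi})$ as the cyclic $\fg[t]$-module generated by a vector $v$ of weight $\gl$ subject to $\fn^+[t]v=0$, explicit scalar relations for $(h\otimes t^k)v$ read off from $\bm{\pi}$, Garland-type truncations $(f_i\otimes 1)^{\gl_i+1}v=0$, and finer cross-node ``minimality'' relations obtained by pulling back the short exact sequences that Chari--Pressley use to recognise minimal affinizations in type $ABC$. Then I would check that $v_{w_\circ\xi_1}\otimes\cdots\otimes v_{w_\circ\xi_n}\in D$ satisfies the same relations: the highest-weight and $\fh[t]$-scalar relations follow from the extremality of each tensor factor combined with the string structure of the $\xi_i$, the Garland relations from those of each $D(w_\circ\xi_i)$, and the cross-node relations from a direct coproduct computation.

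To conclude, it suffices to show $\dim L(\bm{\pi}) \le \dim D$. On the quantum side, $L_q(\bm{\pi})$ is a subquotient of $\bigotimes_i KR_q(\gl_i\go_i)$, giving $\dim L(\bm{\pi}) \le \prod_i \dim KR(\gl_i\go_i)$. On the Demazure side, results of Fourier--Littelmann and Naoi identify $D(w_\circ\xi_1,\ldots,w_\circ\xi_n)$ with the Feigin--Loktev fusion product of the KR graded limits for our choice of $\xi_i$, whence $\dim D = \prod_i \dim KR(\gl_i\go_i)$. Combining yields $\dim L(\bm{\pi}) = \dim D$, so the surjection is an isomorphism.

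\textbf{Main obstacle.} The hardest step will be extracting and verifying the cross-node minimality relations, and then showing that they already hold in the tensor product $\hV(\gL^1)\otimes\cdots\otimes\hV(\gL^n)$ used to define $D$. In type $C$ a further subtlety arises from the modification flagged in the introduction: the naive $\ell$-string spacings implicit in Moura's conjecture must be adjusted because of the short/long root asymmetry, so the $\xi_i$ must be re-chosen with a corrected null-root shift for which both the surjection and the dimension count still go through.
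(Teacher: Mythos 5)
Your plan has two fatal gaps. First, the dimension count at the end is false: $D(w_\circ\xi_1,\ldots,w_\circ\xi_n)$ is the cyclic $\hfb$-submodule of $\hV(\gL^1)\otimes\cdots\otimes\hV(\gL^n)$ generated by a single tensor product of extremal vectors, and its dimension is in general strictly smaller than $\prod_i\dim KR(\gl_i\varpi_i)$ --- already in type $A$ one has $D(w_\circ\xi_1,\ldots,w_\circ\xi_n)\cong V(\gl,0)$, whereas $\prod_i\dim V(\gl_i\varpi_i)>\dim V(\gl)$ as soon as two of the $\gl_i$ are nonzero. The generalized Demazure module that is isomorphic to the Feigin--Loktev fusion product of KR graded limits (the result of \cite{MR2964614} you invoke) is a \emph{different} module, built from different extremal weights; it is only ``similar'' to the one occurring here. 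So the inequality $\dim L(\bm{\pi})\le\prod_i\dim KR(\gl_i\varpi_i)$ cannot be closed against $\dim D$. Relatedly, your step of ``first establishing a presentation of $L(\bm{\pi})$'' is itself Theorem \ref{Thm:Main1}, which is exactly as hard as the statement being proved: showing that $\ol{v}_{\bm{\pi}}$ satisfies the relations gives only a surjection $M(\gl)\twoheadrightarrow L(\bm{\pi})$ (and even that requires a delicate $q$-character argument, Proposition \ref{Lem:essential}); showing the relations are \emph{defining} is obtained in the paper only by closing the cycle $D\twoheadrightarrow M(\gl)\twoheadrightarrow L(\bm{\pi})\twoheadrightarrow D$ of finite-dimensional modules. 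The genuinely new ingredient your plan is missing is the surjection $D\twoheadrightarrow M(\gl)$, proved by computing $\mathrm{Ann}_{U(\hfn_+)}v_D$ inductively along the factorization of $D$ into $F_w$-steps (Proposition \ref{Prop:annihilator}, using Joseph's results on Demazure modules). With that in hand no dimension comparison is needed.

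Second, your choice of $\xi_i$ is wrong in type $C$. The correct $\xi_i$ there has classical part $p_{i^\flat}\varpi_{i^\flat}+(\gl_i-p_i)\varpi_i$, i.e.\ it mixes \emph{two} fundamental weights with parity corrections $p_i\in\{0,1\}$ propagated along the support of $\gl$; no adjustment of the null-root (or $\gL_0$) coefficient alone will make a weight with classical part $\gl_i\varpi_i$ work, and with your $\xi_i$ the statement is simply false in type $C$. Note also that the paper's proof of $L(\bm{\pi})\twoheadrightarrow D$ does not go through the presentation at all: it factors $\bm{\pi}$ into pieces $\bm{\pi}^{(i)}$ adapted to the $\xi_i$ using Chari's tensor-product theorem, passes to $\mathbf{A}$-lattices and graded limits, and then uses the KR-module-equals-Demazure-module identification together with an extra computation (Lemma \ref{Lem:fundamentals}, on $D(-\varpi_r-\varpi_s+\gL_0)$) forced by the type $C$ parity phenomenon.
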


When $\fg$ is of type $AB$ and $\gl = \sum_{i} \gl_i \varpi_i$ ($\varpi_i$ are the fundamental weights of $\fg$),
we set $\xi_i = \gl_i\varpi_i + \gl_i' \gL_0$,
where $\gl_i' = \lceil \gl_i/2 \rceil$ if $\fg$ is of type $B$ and $i = n$, and $\gl_i' = \gl_i$ otherwise.
Here $\gL_0$ is the fundamental weight of $\hfg$ associated with the additional index $0$.
In type $C$, we need to choose $\xi_i$'s in a little more complicated way. 
For the detail see Subsection \ref{Main}.

Let $\fg = \fn_+ \oplus \fh \oplus \fn_-$ be a triangular decomposition of $\fg$,
and denote by $\ga_i$ and $\ga_i^{\vee}$ the simple roots and coroots respectively.
Our second main theorem gives the defining relations of $L(\bm{\pi})$ (Theorem \ref{Thm:Main1}).

\begin{Thm}\label{Thm:Intro2}
  The graded limit $L(\bm{\pi})$ is isomorphic to the cyclic $\fg[t]$-module generated by a nonzero vector
  $v$ with relations
  \begin{align*}
    \fn_+[t]v = 0, \ \ &(h \otimes t^s)v= \gd_{s0}\langle h, \gl \rangle v \ \text{for} \ h \in \fh, s \ge 0,\ \ 
    f_i^{\langle \ga_i^\vee, \gl \rangle+1}v = 0 \ \text{for} \ 1 \le i \le  n, \nonumber \\
    &t^2\fn_-[t]v=0, \ \text{and} \ \ (f_{\ga}\otimes t)v= 0 \ \text{for} \ \ga \in \gD_+^1,
  \end{align*}
  where $\gD_+^1$ is a subset of the positive roots $\gD_+$ defined by
  \[ \gD_+^1 = \Big\{ \ga \in \gD_+ \Bigm| \ga = \sum_{1 \le i \le n} n_i \ga_i \ \text{with} \ n_i \le 1 \ \text{for all} \ 
     i\Big\}.
  \]
\end{Thm}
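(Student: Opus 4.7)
The plan is to let $M$ denote the cyclic $\fg[t]$-module presented by the stated relations with generator $v$, and to establish $M\cong L(\bm{\pi})$ in two stages: first construct a surjection $M\twoheadrightarrow L(\bm{\pi})$ by verifying the relations on the graded limit, and then force $\dim M \le \dim L(\bm{\pi})$ by comparison with the multi-Demazure module $D:=D(w_\circ\xi_1,\ldots,w_\circ\xi_n)$ from Theorem \ref{Thm:intro1}.

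The first stage amounts to checking that the highest weight vector $v\in L(\bm{\pi})$ satisfies each listed relation. The relations $\fn_+[t]v=0$, $(h\otimes t^s)v=\gd_{s0}\langle h,\gl\rangle v$, and $f_i^{\langle\ga_i^\vee,\gl\rangle+1}v=0$ hold on general grounds, since $v$ is the image of a highest $\ell$-weight vector and $L(\bm{\pi})$ is finite-dimensional of highest weight $\gl$ over $\fg$. The substantive content lies in $(f_\ga\otimes t)v=0$ for $\ga\in\gD_+^1$ and $t^2\fn_-[t]v=0$, which must be extracted from the explicit Drinfeld polynomial $\bm{\pi}$ of the minimal affinization in type $ABC$. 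I would first establish the former using the specific ``string'' pattern of these polynomials and the translation between Drinfeld generators $x_{\ga,r}^-$ and the classical limit elements $f_\ga\otimes t^r$; the $t^2\fn_-[t]$-vanishing should then follow by iterated commutator computations starting from the $\gD_+^1$-relations together with $\fn_+[t]v=0$.

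For the upper bound, I would verify that the cyclic generator $v_{w_\circ\xi_1}\otimes\cdots\otimes v_{w_\circ\xi_n}$ of $D$ itself satisfies the same defining relations. The highest-weight conditions and $\fg$-integrability follow from the $\fg$-dominance of each $\xi_i$ and general Demazure-module theory. The two nontrivial relations should fall out of the specific form $\xi_i=\gl_i\gp_i+\gl_i'\gL_0$ (with the type $C$ modification): the coefficient of $\gL_0$ controls how many powers of $t$ survive on each tensor factor, while the support of $\gp_i$ restricts which $f_\ga\otimes t$ can act nontrivially. This yields a surjection $M\twoheadrightarrow D$, and combined with $L(\bm{\pi})\cong D$ from Theorem \ref{Thm:intro1}, a dimension count closes the argument; concretely, one constructs an explicit spanning set of $M$ using a PBW filtration of $\fn_-\otimes\C[t]$ cut down by the $\gD_+^1$-relations and $t^2\fn_-[t]v = 0$, and compares its cardinality to the known Demazure-type character of $D$.

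The main obstacle I anticipate is the verification of $(f_\ga\otimes t)v=0$ and $t^2\fn_-[t]v=0$ on both sides. On the minimal-affinization side, the Drinfeld-polynomial calculation is delicate and must be carried out uniformly across type $ABC$, ideally by reducing non-simple $\ga$ to the simple case via commutators and then invoking the special form of the highest $\ell$-weight. On the Demazure side, the need for the type $C$ modification of $\xi_i$ signals that a naive choice fails one of these relations, so identifying the correct perturbation will require a careful analysis of how loop generators act across the tensor factors. A secondary challenge is the spanning argument for $M$: producing a basis whose cardinality matches the multi-Demazure character is combinatorially nontrivial and may require separate bookkeeping in types $B$ and $C$.
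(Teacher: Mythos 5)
There is a genuine gap in the second half of your plan, and it sits exactly where the real difficulty of the theorem lies. Writing $M$ for the presented module and $D=D(w_\circ\xi_1,\ldots,w_\circ\xi_n)$, your first stage ($M\twoheadrightarrow L(\bm{\pi})$ by verifying the relations on $\ol{v}_{\bm{\pi}}$ via the Drinfeld polynomials) matches the paper's Subsection 5.3 in spirit, though the paper's actual mechanism is an $\ell$-weight/$q$-character induction (Proposition \ref{Lem:essential}) showing that $x_{i,p}^-x_{i+1,0}^-\cdots x_{j,0}^-v_{\bm{\pi}}$ is independent of $p$ up to scalar. The problem is your second stage. Checking that $v_{\xi_1}\otimes\cdots\otimes v_{\xi_n}$ satisfies the relations of $M$ yields $M\twoheadrightarrow D$, hence $\dim M\ge\dim D$ --- a \emph{lower} bound on $\dim M$, which is useless: every inequality you can extract from your surjections points the same way. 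What is needed is $\dim M\le\dim L(\bm{\pi})$, and the paper obtains it by constructing the surjection in the \emph{opposite} direction, $D\twoheadrightarrow M(\gl)$. That construction is the technical heart of the paper: one computes $\mathrm{Ann}_{U(\hfn_+)}$ of the extremal vector exactly (Proposition \ref{Prop:annihilator}), checks that this annihilator kills $v_M$ (Lemma \ref{Lem:exis}), and then propagates the resulting $\hfb$-map up through the Demazure functors $F_{i}$ using Joseph's results and the character identity $\chhh F_iT=\D_i\chhh T$.

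Your proposed substitute --- a PBW spanning set of $M$ cut down by the listed relations, with cardinality matched against the Demazure character --- is not a routine bookkeeping exercise and would fail as stated in type $C$. There the annihilator of the extremal vector contains, besides the powers $x_\ga^{\rho(\ga)+1}$, the mixed monomials $x_\ga^{\rho(\ga)-2k+1}x_\gb^k$ for the pairs $(\ga,\gb)\in S$ of Proposition \ref{Prop:annihilator}(ii); these are genuine but non-obvious consequences of the defining relations of $M$ (the paper derives them by a delicate descending induction in Lemma \ref{Lem:exis}), and without deriving them your spanning set is strictly larger than $\dim D$. Finally, your appeal to Theorem \ref{Thm:intro1} is circular: in the paper $L(\bm{\pi})\cong D$ is not proved independently but emerges from the same cycle $D\twoheadrightarrow M(\gl)\twoheadrightarrow L(\bm{\pi})\twoheadrightarrow D$ that proves the present theorem, so you cannot use it as an input here. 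The surjection $L(\bm{\pi})\twoheadrightarrow D$ (Subsection 5.4, via Kirillov--Reshetikhin and tensor-product arguments) is available separately, but the reverse inequality $\dim L(\bm{\pi})\le\dim D$ is exactly what passes through $M$.
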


We easily see that the above theorems are reformulation of the Moura's conjecture in type $AB$.
It should also be noted that Theorem \ref{Thm:Intro2}, together with a result of \cite{MR2342293}, gives a proof to
\cite[Conjecture 1.13]{MR2763623} in type $B$.

The module $D(w_\circ \xi_1,\ldots,w_\circ \xi_n)$ in Theorem \ref{Thm:intro1} can be constructed in another way as follows.
For a $\hfb$-submodule $D$ of a $\hfg$-module $V$ and an index $i$, let $F_iD$ be the 
$(\hfb \oplus \C f_i)$-submodule of $V$ generated by $D$,
and for an element $w \in \hW$ with reduced expression $w=s_{i_1} \cdots s_{i_p}$, 
set $F_wD = F_{i_1} \cdots F_{i_p}D$.
One can naturally extend $F_w$ for $w \in \hW \rtimes \gS$ (see Subsection \ref{Demazure}), 
where $\gS$ is the group of Dynkin diagram automorphisms.
Then we have
\begin{align}\label{eq:intro}
   &D(w_\circ\xi_1,\ldots,w_\circ\xi_n)\\ \nonumber &= F_{w_\circ w_1}\Big(D(\gL^1) \otimes F_{w_2}\Big(D(\gL^2) \otimes 
   \cdots \otimes F_{w_{n-1}}\Big(D(\gL^{n-1}) \otimes F_{w_n}D(\gL^n)\Big)\!\cdots\! \Big)\!\Big)
\end{align}
for suitable $\gL^1,\ldots,\gL^n \in\hP^+$ and $w_1,\ldots,w_n \in \hW \rtimes \gS$ 
(see Subsection \ref{Subsection:Corollaries}).
The character of such a module is given by \cite{MR1887117} 
in terms of Demazure operators $\mathcal{D}_w$ (see Subsection \ref{Demazure}).
Since the character of $L_q(\bm{\pi})$ is equal to that of $L(\bm{\pi})$, we obtain the following character formula
as a corollary of Theorem \ref{Thm:intro1} (Corollary \ref{Cor:Cor2}).

\begin{Cor}
  \[ \ch L_q(\bm{\pi}) = \D_{w_\circ w_{1}}\Big(e^{\gL^1}\cdot \D_{w_2}\Big(e^{\gL^2} \cdots 
      \D_{w_{n-1}}\Big(e^{\gL^{n-1}}\cdot \D_{w_{n}}\big(e^{\gL^n}\big)\Big)\!\cdots\!\Big)\!\Big)\Big|_{e^{\gL_0} 
      = e^\gd= 1},
  \]
  where $\gd$ is the null root.
\end{Cor}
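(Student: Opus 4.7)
The plan is to deduce this corollary directly from Theorem \ref{Thm:intro1} together with the iterated $F_w$-description in \eqref{eq:intro} and the known Demazure-operator character formula for such modules from \cite{MR1887117}. First, I would record the easy observation that taking the graded limit preserves the underlying $U(\fg)$-module structure (the $q\to 1$ specialization and pull-back do not change weight-space dimensions), so $\ch L_q(\bm{\pi}) = \ch L(\bm{\pi})$ as elements of $\Z[P]$; all that remains is to compute the latter character.

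Next, I would invoke Theorem \ref{Thm:intro1} to identify $L(\bm{\pi})$ with $D(w_\circ \xi_1,\ldots, w_\circ \xi_n)$ as a $\fg[t]$-module, and then use the identity \eqref{eq:intro} to rewrite this module as the nested application
\[
F_{w_\circ w_1}\Big(D(\gL^1) \otimes F_{w_2}\Big(D(\gL^2) \otimes \cdots \otimes F_{w_n} D(\gL^n)\Big)\!\cdots\!\Big)
\]
with the data $\gL^i \in \hP^+$ and $w_i \in \hW\rtimes \gS$ specified in Subsection \ref{Subsection:Corollaries}. At this point the problem is entirely at the level of $\hfb$-characters, and the main tool from \cite{MR1887117} applies: the character of $F_w$ applied to an $\hfb$-module is obtained by applying the Demazure operator $\D_w$ to its character, and $\ch D(\gL^i) = e^{\gL^i}$. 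Iterating the Demazure operator identity from the innermost tensor factor outward, as in the nested expression above, gives an equality of $\hfh$-characters
\[
\chhh D(w_\circ\xi_1,\ldots,w_\circ\xi_n) = \D_{w_\circ w_1}\Big(e^{\gL^1}\cdot \D_{w_2}\Big(e^{\gL^2}\cdots \D_{w_{n-1}}\big(e^{\gL^{n-1}}\cdot \D_{w_n}(e^{\gL^n})\big)\!\cdots\!\Big)\!\Big).
\]

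Finally, to descend from $\hfh$-characters to $\fh$-characters I would specialize $e^{\gL_0}=e^{\gd}=1$; this specialization is well-defined because, thanks to $\fg$-dominance of the $\xi_i$ and the resulting $\fg[t]$-stability, the $\hfh$-weights appearing in the module project consistently onto $\fh$-weights, and on the left side this specialization recovers $\ch L(\bm{\pi})$. Combining with the first step yields the claimed formula. The only step requiring any care is the matching of the chosen $\gL^i$'s and $w_i$'s with the Demazure data that produces $D(w_\circ\xi_1,\ldots,w_\circ\xi_n)$, but this is done once and for all in Subsection \ref{Subsection:Corollaries}; apart from that, the corollary is a direct transport of Theorem \ref{Thm:intro1} through the Demazure character formula and should not present any substantive obstacle.
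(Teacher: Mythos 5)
Your proposal is correct and follows essentially the same route as the paper: identify $L(\bm{\pi})$ with $D(w_\circ\xi_1,\ldots,w_\circ\xi_n)$ via Theorem \ref{Thm:Main2}, rewrite it as the nested $F_w$-module via Proposition \ref{Prop:character2} (using the length-additivity from Lemma \ref{Lem:BC}), apply the Lakshmibai--Littelmann--Magyar character formula in its $\wti{W}$-extended form (Corollary \ref{Prop:character3}), and conclude with $\ch L_q(\bm{\pi})=\ch L(\bm{\pi})$ and the specialization $e^{\gL_0}=e^{\gd}=1$. The only caveat is that the blanket claim ``$\chhh F_wD=\D_w\chhh D$'' is not true for arbitrary $\hfb$-modules and must be taken in the precise form of Theorem \ref{Thm:LLM}, which is what the paper does.
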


The right-hand side of (\ref{eq:intro}) has a crystal analogue.
Using this, we can express the $U_q(\fg)$-module multiplicities of $L_q(\bm{\pi})$ 
as the numbers of some elements in a crystal basis (Corollary \ref{Cor:crystal}).
We would like to emphasize that the crystal basis appearing here is (essentially) of \textit{finite type} 
(see Remark \ref{Rem}).

On the other hand, we deduce from Theorem \ref{Thm:Intro2} the formula for the limit of 
normalized characters (Corollary \ref{Cor:Cor3}), which was conjectured by Mukhin and Young in \cite[Conjecture 6.3]{MY}.

\begin{Cor}
  Let $J \subseteq \{1,\ldots,n\}$,
  and $\gl^1,\gl^2,\ldots$ be an infinite sequence of elements of $P^+$ such that 
  $\lim_{k \to \infty} \langle \gl^k, \ga_i^\vee \rangle = \infty$ if $i \notin J$ and 
  $\langle \gl^k, \ga_i^\vee\rangle =0$ for all $k$ otherwise. 
  Assume that $L_q(\bm{\pi}^k)$ is a minimal affinization of $V_q(\gl^k)$ for each $k$.
  Then $\lim_{k \to \infty} e^{-\gl^k}\mathrm{ch}\, L_q(\bm{\pi}^k)$ exists, and
  \begin{equation*}
     \lim_{k \to \infty} e^{-\gl^k}\mathrm{ch} \, L_q(\bm{\pi}^k) 
     = \prod_{\ga \in \gD_+\setminus \gD_+^J} \frac{1}{1 - e^{-\ga}}\cdot
       \prod_{\ga \in \gD_+ \setminus \gD_+^{1,J}} \frac{1}{1 -e^{-\ga}},
  \end{equation*}  
  where $\gD_+^{J} = \gD_+ \cap \Big(\sum_{i \in J} \Z \ga_i\Big)$ 
  and $\gD_+^{1,J} = \{ \ga \in \gD_+ \mid \ga = \sum_{i} n_i\ga_i \ \text{with} \ n_i \le 1 \ 
  \text{if} \ i \notin J\}$.
\end{Cor}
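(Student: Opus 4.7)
Since $\ch L_q(\bm{\pi}^k) = \ch L(\bm{\pi}^k)$ as $\fg$-characters, the task reduces to computing $\lim_k e^{-\gl^k}\ch L(\bm{\pi}^k)$ using the defining relations from Theorem \ref{Thm:Intro2}. The plan is to sandwich this limit between an upper bound via a PBW argument on those relations, and a matching lower bound coming from the Demazure description (Theorem \ref{Thm:intro1}) and the character formula (Corollary \ref{Cor:Cor2}).

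For the upper bound I apply PBW to $\fn_-[t]/t^2\fn_-[t]$: the relations $\fn_+[t]v^k=0$, $\fh[t]v^k\subseteq\C v^k$, $t^2\fn_-[t]v^k=0$ and $(f_\ga\otimes t)v^k=0$ for $\ga\in\gD_+^1$ imply that every element of $L(\bm{\pi}^k)=U(\fn_-[t])v^k$ can be written as
\[
  \prod_{\gb\in\gD_+\setminus\gD_+^1}(f_\gb\otimes t)^{a_\gb}\,\prod_{\ga\in\gD_+}f_\ga^{b_\ga}\,v^k,
\]
where the $f_\ga^{b_\ga}$-part is constrained by $f_i^{\langle\gl^k,\ga_i^\vee\rangle+1}v^k=0$ and thus has character at most $\ch V(\gl^k)$. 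Since the hypothesis forces $\langle\gl^k,\ga_i^\vee\rangle=0$ for $i\in J$ and every $k$, one has $f_iv^k=0$ for $i\in J$, and the commutator identity $c\,(f_\ga\otimes t)v^k = f_j(f_{\ga-\ga_j}\otimes t)v^k$ for $j\in J$ (valid when $\ga-\ga_j$ is a root) gives, by induction on the height of $\ga\in\gD_+^{1,J}\setminus\gD_+^1$, the vanishing $(f_\ga\otimes t)v^k=0$ for all $\ga\in\gD_+^{1,J}$. Propagating through PBW monomials by the adjoint $U(\fn_-^J)$-action (which acts purely as bracket since $\fn_-^J v^k=0$) upgrades this to: only $(f_\gb\otimes t)$ with $\gb\in\gD_+\setminus\gD_+^{1,J}$ contribute nontrivially, yielding
\[
  \ch L(\bm{\pi}^k)\le\ch V(\gl^k)\cdot\prod_{\ga\in\gD_+\setminus\gD_+^{1,J}}(1-e^{-\ga})^{-1}.
\]
The classical limit $\lim_k e^{-\gl^k}\ch V(\gl^k)=\prod_{\ga\in\gD_+\setminus\gD_+^J}(1-e^{-\ga})^{-1}$ (the generalized Verma $U(\fg)\otimes_{U(\fp_J)}\C_{\gl^k}$ appearing as the limit of simple modules with highest weight growing off $J$) then yields the desired upper bound on the RHS.

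For the matching lower bound I would use the exact Demazure-operator expression for $\ch L_q(\bm{\pi}^k)$ provided by Theorem \ref{Thm:intro1} and Corollary \ref{Cor:Cor2}, and evaluate the limit $\gl^k\to\infty$ off $J$ directly using standard properties of Demazure operators; the existence of $\lim_k e^{-\gl^k}\ch L_q(\bm{\pi}^k)$ as a formal power series in $e^{-\ga_i}$ then follows from coefficient-wise stabilization once the upper bound is sharp. The main obstacle is the propagation step in the upper bound: verifying that $(f_\ga\otimes t)v^k=0$ for $\ga\in\gD_+^{1,J}$ extends to redundancy throughout arbitrary PBW monomials requires checking that the adjoint $U(\fn_-^J)$-submodule of $\fn_-$ generated by $\{f_\gb : \gb\in\gD_+^1\}$ equals $\{f_\gb : \gb\in\gD_+^{1,J}\}$ — a root-theoretic closure property verified case by case in types $B$ and $C$, where root multiplicities are bounded by $2$, so that a single reduction per direction suffices.
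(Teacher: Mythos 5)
There is a genuine gap, and it sits exactly where your proposal defers to "standard properties of Demazure operators": the lower bound. The paper's proof does not touch the Demazure-operator formula at all. Instead it introduces the induced module $M^k = U(\fg[t])\otimes_{U(\fa_J)}\C v^k$, where $\fa_J = \fn_+[t]\oplus\fh[t]\oplus\bigoplus_{\ga\in\gD_+^J}\C f_\ga\oplus\bigoplus_{\ga\in\gD_+^{1,J}}\C(f_\ga\otimes t)\oplus t^2\fn_-[t]$, whose normalized character equals the right-hand side \emph{exactly for every $k$} by PBW. Theorem \ref{Thm:Main1} gives a surjection $\Phi^k\colon M^k\to L(\bm{\pi}^k)$ whose kernel is generated by $w_i^k = f_i^{\langle\ga_i^\vee,\gl^k\rangle+1}v^k$ for $i\notin J$; since $\fn_+ w_i^k=0$, every nonzero graded weight space of $\ker\Phi^k$ lies in $\gl^k-(\langle\ga_i^\vee,\gl^k\rangle+1)\ga_i+w_\circ\hQ^+$ for some $i\notin J$, so for a \emph{fixed} weight depth $\gb$ and degree $s$ the kernel vanishes once $k$ is large. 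This simultaneously produces the matching lower bound and the existence of the limit. Your plan to recover the lower bound by passing to the limit in the iterated Demazure-operator expression of Corollary \ref{Cor:Cor2} is not a proof: there is no cited or standard mechanism for computing $\lim_{\gl^k\to\infty}$ of such an expression and identifying it coefficientwise with the stated product, and nothing in your sketch supplies one. Without this, you have only $\limsup\le$ the right-hand side.

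Your upper bound also needs repair, although it is closer to the paper's argument than it looks. The PBW spanning set you describe puts the degree-one factors and the degree-zero factors in separate blocks, but the constraint $f_i^{\langle\ga_i^\vee,\gl^k\rangle+1}v^k=0$ only applies when the degree-zero block acts directly on $v^k$; once the degree-one factors are interposed you cannot conclude that the degree-zero contribution is bounded by $\ch V(\gl^k)$. What the spanning argument actually yields is the bound $\ch L(\bm{\pi}^k)\le e^{\gl^k}\prod_{\ga\in\gD_+\setminus\gD_+^J}(1-e^{-\ga})^{-1}\prod_{\ga\in\gD_+\setminus\gD_+^{1,J}}(1-e^{-\ga})^{-1}$, i.e.\ precisely $\ch M^k$ --- which is all you need, and which the induced-module formulation delivers without any of the propagation bookkeeping. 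Your bracket argument showing $(f_\ga\otimes t)v^k=0$ for $\ga\in\gD_+^{1,J}$ (writing $f_\ga\otimes t$ as an iterated bracket of some $f_\gamma$ with $\gamma\in\gD_+^J$ against $f_{\ga'}\otimes t$ with $\ga'\in\gD_+^1$) is sound and is implicitly what makes $\Phi^k$ well defined; but the subsequent "propagation through arbitrary PBW monomials" is both unnecessary for the bound you need and insufficient for the sharper bound you claim. I recommend replacing the whole argument with the induced-module construction and the kernel analysis.
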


It should be noted that the existence of the limit has already been proved in \cite{MR2982441}, \cite{MY}.

The theorems are established by showing one by one the existence of three surjective homomorphisms
\[ D(w_\circ \xi_1,\ldots, w_\circ \xi_n) \twoheadrightarrow M(\gl), \ \ \ M(\gl) \twoheadrightarrow L(\bm{\pi}), \ \ \
   L(\bm{\pi}) \twoheadrightarrow D(w_\circ \xi_1,\ldots,w_\circ \xi_n),
\]
where $M(\gl)$ denotes the $\fg[t]$-module defined in Theorem \ref{Thm:Intro2}.
The key idea to verify the first one is to determine the defining relations of  $D(w_\circ\xi_1,\ldots,w_\circ \xi_n)$ 
inductively using the equality (\ref{eq:intro}).
A main tool to prove the latter two is the theory of $q$-characters introduced by Frenkel and Reshetikhin \cite{MR1745260}.
A $q$-character is a generalization of a usual character which 
records the dimensions of generalized eigenspaces (i.e.,\ $\ell$-weight spaces) with respect to the 
commutative subalgebra $U_q(\bL \fh)$.

In this article we concentrate only on the type $ABC$.
However, (at least a part of) these results would hold in the other types.
These will be studied in future publications.

It should be mentioned that a module similar to the right-hand side of (\ref{eq:intro}) 
also appears in another study of graded limits.
In \cite{MR2964614}, it was proved that the fusion product of the graded limits of Kirillov-Reshetikhin modules is isomorphic
to such a module. This fact was essentially used to prove the $X=M$ conjecture in type $AD$.

The plan of this article is as follows.
In Section 2, after fixing some notation we define modules $D(\xi_1,\ldots,\xi_p)$ and
study their properties.
In Section 3, we review the theory of finite-dimensional representations of a quantum loop algebra.
In Section 4, we state our main theorems and corollaries.
Finally in Section 5, we establish our main theorems by showing the existence of three surjective homomorphisms.
For this we need some results on $q$-characters, which are also recalled in this section.\\ \\ \\

\noindent \textbf{Index of notation}  \\[-6pt]

We provide for the reader's convenience a brief index of the notation which is used repeatedly in this paper:\\

\noindent 2.1: $C=(c_{ij})_{1\le i,j \le n}$, $I$, $d_1,\ldots,d_n$, $\fg$, $\fh$, $\fb$, $\gD$, $\gD_+$,
$\ga_i$, $\varpi_i$, $P$, $P^+$, $Q$, $Q^+$, $W$, \\
\hspace{16pt} $w_\circ$, $e_\ga$, $f_\ga$, $\ga^{\vee}$ ($\ga \in \gD$), $\fn_\pm$, $\fg_J$, $\hfg$, $K$, $d$,
$\hfh$, $\hfb$, $\hat{\gD}$, $\hat{\gD}_+$, $\hgDre$, $\hgDre_+$, $\gd$, $\hI$, $\ga_0$, $e_0$,\\
\hspace{16pt}  $f_0$, $\ga^\vee$ ($\ga \in \hgDre$), $\gL_0$, $\hP$, $\hP^+$, $\hQ$, $\hQ^+$, $\gl \le \nu$, $\hW$, $\gS$, 
$\wti{W}$, $\bL\fa$, $\fa[t]$, $t^s\fg[t]$, $V(\gl)$,\\ 
\hspace{16pt} $V(\gl,a)$, $\chh V$.\\
\noindent 2.2: $\hV(\gL)$, $v_\xi$, $D(\xi_1,\ldots,\xi_p)$, $\hfp_i$, $F_i$, $F_w$, $\mathcal{D}_w$, $F_\tau$.\\
\noindent 3.1: $q_i$, $U_q(\bL\fg)$, $x_{i,r}^\pm$, $k_i^\pm$, $h_{i,m}$, $U_q(\bL\fn_\pm)$, $U_q(\bL\fh)$, $U_q(\fg)$, 
$U_q(\bL\fg_J)$, $U_q(\bL\fh_J)$.\\
\noindent 3.2: $V_q(\gl)$, $P_q^+$, $\bm{\varpi}_{i,a}$, $P_q$, $\wt$, $V_{\bm{\rho}}$, $L_q(\bm{\pi})$, $v_{\bm{\pi}}$,
$\bm{\pi}^*$, ${}^*\!\bm{\pi}$.\\
\noindent 3.3: $\bm{\pi}_{m,a}^{(i)}$, $P_{q,J}$, $\bm{\rho}_J$.\\
\noindent 3.4: $\bf{A}$, $\big(x_{i,r}^\pm\big)^{(k)}$, $U_{\bf{A}}(\bL\fg)$, $P_{\bf{A}}^+$, $L_{\bf{A}}(\bm{\pi})$, 
$\ol{L_q(\bm{\pi})}$.\\
\noindent 4.1: $L(\bm{\pi})$, $\ol{v}_{\bm{\pi}}$.\\
\noindent 4.2: $\xi_1,\ldots,\xi_n$, $i^\flat$, $p_i$, $\gD_+^1$.\\
\noindent 4.3: $w_1,\ldots,w_n$, $w_{[r,t]}$, $\gL^1,\ldots,\gL^n$.\\
\noindent 5.1: $M(\gl)$, $\ga_{p,q}$, $v_M$, $D$, $v_D$.\\
\noindent 5.2: $\wt_\ell (V)$, $\chq V$, $\bm{\ga}_{i,a}$, $\bm{\nu} \le \bm{\rho}$.

\section{Lie algebras}\label{Section:Lie}

\subsection{Notation and basics}

Let $C = (c_{ij})_{1 \le i,j \le n}$ be a Cartan matrix of finite type, and set $I = \{1, \ldots,n\}$. 
Throughout this paper, we assume that the indices are ordered as in \cite[Section 4.8]{MR1104219}.
Let $D = \mathrm{diag}(d_1,\ldots,d_n)$ be the diagonal matrix 
such that $DC$ is symmetric and the numbers $d_1,\ldots,d_n$ are coprime positive integers.

Let $\fg$ be the complex simple Lie algebra associated with $C$.
Fix a Cartan subalgebra $\fh$ and a Borel subalgebra $\fb$ containing $\fh$.
Let $\gD$ be the root system, and $\gD_+$ the set of positive roots.
Denote by $\ga_i$ ($i \in I$) the simple roots and by $\varpi_i$ ($i \in I$) the fundamental weights.
For notational convenience, we set $\varpi_0 = 0$.
Let $P$ be the weight lattice, $P^+$ the set of dominant integral weights,
$Q$ the root lattice and $Q^+ = \sum_{i \in I} \Z_{\ge 0} \ga_i$.
Let $W$ be the Weyl group and $w_\circ\in W$ the longest element.

For each $\ga \in \gD$, denote by $\fg_\ga$ the corresponding root space,
and fix nonzero elements $e_\ga \in \fg_\ga$, $f_\ga \in \fg_{-\ga}$ and $\ga^\vee \in \fh$ such that
\[ [e_\ga, f_\ga] = \ga^\vee, \ \ \ [\ga^\vee, e_\ga] = 2 e_\ga, \ \ \ [\ga^\vee, f_\ga] = -2 f_\ga.
\]
We also use the notation $e_i = e_{\ga_i}$, $f_i = f_{\ga_i}$ for $i \in I$.
Set $\fn_{\pm} = \bigoplus_{\ga \in \gD_+} \fg_{\pm \ga}$.
For a subset $J \subseteq I$, denote by $\fg_J$ the semisimple Lie subalgebra of $\fg$ 
generated by $\{e_i, f_i \mid i \in J\}$.

Let $\gt\in \gD_+$ be the highest root, and
denote by $( \ , \ )$ the unique non-degenerate invariant symmetric bilinear form on $\fg$ normalized so that 
$(\gt^\vee,\gt^\vee) = 2$.
The restriction of this bilinear form on $\fh$ induces a linear isomorphism $\nu\colon \fh \to \fh^*$.
By $( \ , \ )$ we also denote the bilinear form on $\fh^*$ induced by $\nu^{-1}$.
Then for $\ga = \sum_{i \in I} n_i \ga_i^\vee \in \gD$, it follows that
\[ \ga^\vee = \sum_{i} \frac{(\ga_i,\ga_i)}{(\ga,\ga)}n_i \ga_i^\vee.
\]

Let $\hat{\fg}$ be the non-twisted affine Lie algebra associated with $\fg$:
\[ \hat{\fg} = \fg \otimes \C[t, t^{-1}] \oplus \C K \oplus \C d,
\]
where $K$ denotes the canonical central element and $d$ is the degree operator.
The Lie bracket of $\hat{\fg}$ is given by 
\begin{align*} [x \otimes t^m + & a_1 K + b_1 d, y \otimes t^n + a_2 K + b_2 d] \\
               &= [x, y] \otimes t^{m+n} + n b_1 y \otimes t^{n} - m b_2 x \otimes t^{m} + m\gd_{m, -n} (x,y)K.
\end{align*}
Naturally $\fg$ is regarded as a Lie subalgebra of $\hat{\fg}$.
A Cartan subalgebra $\hat{\fh}$ and a Borel subalgebra $\hat{\fb}$ are defined 
as follows:
\[ \hat{\fh} = \fh \oplus \C K \oplus \C d, \ \ \ \hat{\fb} = \hat{\fh} \oplus \fn_+ \oplus \fg \otimes t\C[t].
\] 
Set $\hat{\fn}_+ = \fn_+ \oplus \fg \otimes t\C[t]$.

We often consider $\fh^*$ as a subspace of $\hat{\fh}^*$ by setting $\langle K, \gl \rangle =  \langle d, \gl \rangle = 0$
for $\gl \in \fh^*$.
Let $\hat{\gD}$ be the root system of $\hat{\fg}$, $\hat{\gD}_+$ the set of positive roots,
$\hat{\gD}^{\mathrm{re}}$ the set of real roots, and 
$\hat{\gD}^{\mathrm{re}}_+ = \hat{\gD}^{\mathrm{re}} \cap \hat{\gD}_+$.
Denote by $\gd$ the indivisible imaginary root in $\hat{\gD}_+$.
Set $\hat{I} = I \sqcup \{ 0 \}$, $\ga_0 = \gd - \gt$,
$e_0 = f_\gt \otimes t$ and $f_0 = e_\gt \otimes t^{-1}$.
For $\ga = \gb + s \gd \in \hat{\gD}^{\mathrm{re}}$ with $\gb \in \gD$ and $s \in \Z$, define $\ga^\vee \in \hfh$ by
\[ \ga^{\vee} = \gb^{\vee} + \frac{2s}{(\gb,\gb)}K.
\]
Denote by $\gL_0 \in \hfh^*$ the unique element satisfying $\langle K, \gL_0 \rangle = 1$ and 
$\langle \fh, \gL_0 \rangle = \langle d, \gL_0 \rangle = 0$, and define $\hP, \hP^+ \subseteq \hfh^*$ by 
\[ \hP = P \oplus \Z \gL_0 \oplus \C \gd \ \ \ \text{and} \ \ \ \hP^+ = \big\{ \xi \in \hP \mid \langle \ga_i^\vee, \xi \rangle
   \ge 0 \ \text{for all} \ i \in \hI \,\big\}.
\]
Let $\hQ = \sum_{i \in \hI} \Z \ga_i$ and $\hQ^+ = \sum_{i \in \hI} \Z_{\ge 0} \ga_i$.
For $\xi_1, \xi_2 \in \hP$, we write $\xi_1 \le \xi_2$ if $\xi_2 - \xi_1 \in \hQ^+$.
Let $\hat{W}$ be the Weyl group of $\hat{\fg}$, and regard $W$ naturally as a subgroup of $\hat{W}$. 
Let $\ell\colon \hW \to \Z_{\ge 0}$ be the length function.
Denote by $\gS$ the group of Dynkin diagram automorphisms of $\hfg$.
A linear action of $\gS$ on $\hfh^*$ is defined by letting $\tau \in \gS$ act as follows:
\[ \tau(\ga_i) = \ga_{\tau(i)} \ \ \text{for} \ i \in \hI, \ \ \ \tau(\gL_0) = \varpi_{\tau(0)} + \gL_0 - 
   \frac{1}{2}\big( \varpi_{\tau(0)}, \varpi_{\tau(0)}\big) \gd.
\]
Let $\wti{W}$ be the subgroup of $GL(\hfh^*)$ generated by $\hW$ and $\gS$. 
Since $\tau s_i =s_{\tau(i)} \tau$ holds for $\tau \in \gS$ and $i \in \hI$, we have $\wti{W} = \hW \rtimes \gS$.
We also define an action of $\gS$ on $\hfg$ by letting $\tau \in \gS$ act as the Lie algebra automorphism defined by
\[ \tau(e_i) = e_{\tau(i)}, \ \ \tau(f_i) = f_{\tau(i)} \ \ \text{for} \ \ i \in \hI
   \ \ \text{and} \ \ \big\langle \tau(h), 
   \tau(\gl)\big\rangle = \big\langle h, \gl \big\rangle \ \ \text{for} \ h \in \hfh, \ \gl \in \hfh^*.
\]
The length function $\ell$ is extended on $\wti{W}$ by setting $\ell(w\tau) = \ell(w)$ for $w \in \hW,\tau \in \gS$.

Given a Lie algebra $\mathfrak{a}$, its \textit{loop algebra} $\bL\fa$ is defined by the tensor product
$\fa \otimes \C[t,t^{-1}]$ equipped with the Lie algebra structure given by $[x \otimes f, y \otimes g] = [x,y]\otimes fg$.
Let $\fa[t]$ and $t^s \fa[t]$ for $s \in \Z_{> 0}$ denote the Lie subalgebras $\fa \otimes \C[t]$ and $\fa \otimes t^s \C[t]$
respectively.
The Lie algebra $\fa[t]$ is called the \textit{current algebra} associated with $\fa$.

Denote by $V(\gl)$ the simple $\fg$-module with highest weight $\gl \in P^+$.
For $a \in \C^\times$, let $\text{ev}_a\colon \bL\fg \to \fg$ denote the \textit{evaluation map} defined by
$\text{ev}_a(x \otimes f) = f(a)x$.
Denote by $V(\gl,a)$ the simple $\bL\fg$-module defined by the pull-back of $V(\gl)$ with respect to $\text{ev}_a$,
which is called an \textit{evaluation module}. 
An evaluation module for $\fg[t]$ is similarly defined, and also denoted by $V(\gl,a)$ ($\gl \in P^+, a \in \C$).

For a finite-dimensional semisimple $\fh$-module $V$, define the $\fh$-character $\chh V$ by  
\[ \chh V = \sum_{\gl \in \fh^*} e^{\gl}\dim V_{\gl}  \in \Z[\fh^*],
\]
where $V_\gl=\{ v \in V \mid hv = \langle h, \gl \rangle v \ \text{for} \ h \in \fh\}$. 
For a finite-dimensional semisimple $\hfh$-module $\hV$, the $\hfh$-character $\chhh \hV \in \Z[\hfh^*]$ is defined similarly.
We will omit the subscript $\fh$ or $\hfh$ when they are obvious from the context.

\subsection{Demazure modules and generalizations}\label{Demazure}

For each $\xi \in \hW(\hP^+)$, we define a $\hfb$-module $D(\xi)$ as follows:
let $\gL$ be the unique element of $\hP^+$ such that $\xi \in \hW\gL$,
and denote by $\hV(\gL)$ the simple highest weight $\hfg$-module with highest weight $\gL$.
Let $v_\xi \in \hV(\gL)$ be an extremal weight vector with weight $\xi$,
and set $D(\xi) = U(\hfb)v_{\xi} \subseteq \hV(\gL)$.

\begin{Def} \normalfont
  The $\hfb$-module $D(\xi)$ is called a \textit{Demazure module}. 
\end{Def}

Note that, for $i \in \hI$, $D(\xi)$ is $f_i$-stable if and only if $\langle \ga_i^\vee, \xi \rangle \le 0$.
In this article we consider the following generalization of a Demazure module.
Let $\xi_1, \ldots,\xi_p$ be a sequence of elements of $\hW(\hP^+)$.
For each $1 \le j \le p$, let $\gL^j$ be the element of $\hP^+$ satisfying $\xi_j \in \hW \gL^j$,
and define a $\hfb$-submodule $D(\xi_1,\ldots,\xi_p)$ of $\hV(\gL^1) \otimes \cdots \otimes \hV(\gL^p)$ by
\[ D(\xi_1,\ldots,\xi_p) = U(\hfb)(v_{\xi_1} \otimes \cdots \otimes v_{\xi_p}).
\]
If $\langle \ga_i^\vee, \xi_j \rangle \le 0$ holds for all $1\le j \le p$, then
$D(\xi_1,\ldots,\xi_p)$ is $f_i$-stable.

Though it seems difficult to give characters of $D(\xi_1,\ldots,\xi_p)$ in general,
when the sequence $\xi_1, \ldots, \xi_p$ has some special property,
the character is given in terms of Demazure operators.
To explain this, let us recall a result in \cite{MR1887117}.
Denote by $\hfp_i$ for $i \in \hI$ the parabolic subalgebra $\hfb \oplus \C f_i$.
For a $\hfg$-module $V$, a $\hfb$-submodule $D$ of $V$ and $i \in \hI$,
let $F_{i}D = U(\hfp_i)D \subseteq V$.
For $w \in \hW$ with reduced expression $w = s_{i_1} \cdots s_{i_k}$,
we set 
\[ F_wD = F_{i_1}\cdots F_{i_k}D.
\]
Though the definition of $F_w$ depends on the choice of a reduced expression,
we will use this by abuse of notation
(most of the modules $F_w D$ in this article do not depend on the choices).
For $i \in \hI$, define a linear operator $\mathcal{D}_{i}$ on $\Z[\hP]$ by
\begin{equation*}
   \mathcal{D}_{i}(f) = \frac{f - e^{-\ga_i}\cdot s_i(f)}{1-e^{-\ga_i}},
\end{equation*}
where $s_i$ acts on $\Z[\hP]$ by $s_i(e^{\xi}) = e^{s_i\xi}$.
The operator $\mathcal{D}_{i}$ is called the \textit{Demazure operator} associated with $i$.
For $w \in \hW$ and its reduced expression $w = s_{i_1}\cdots s_{i_k}$, 
the operator $\D_w = \D_{i_1} \cdots \D_{i_k}$ is independent
of the choice of the expression \cite{MR1923198}.
The following theorem is a reformulation of \cite[Theorem 5]{MR1887117} for our setting 
(note that $D(\gL)$ is $1$-dimensional if $\gL \in \hP^+$):

\begin{Thm}\label{Thm:LLM}
  For sequences $\gL^1, \ldots,\gL^p$ of elements of $\hP^+$ and $w_1,\ldots,w_p$ of elements of $\hW$,
  we have
  \begin{align}\label{eq:LLM}
    \chhh F_{w_1}\Big(D&(\gL^1) \otimes F_{w_2}\Big(D(\gL^2) \otimes \cdots \otimes
    F_{w_{p-1}}\Big(D(\gL^{p-1}) \otimes F_{w_p}D(\gL^p) \Big) \!\cdots\! \Big)\!\Big) \nonumber \\
    &= \mathcal{D}_{w_{1}}\Big( e^{\gL_1}\cdot \D_{w_{2}}\Big(e^{\gL^2} \cdots \D_{w_{p-1}}
      \Big(e^{\gL_{p-1}} \cdot \mathcal{D}_{w_{p}}(e^{\gL_p})\Big)\!\cdots\!\Big)\!\Big).
  \end{align}
\end{Thm}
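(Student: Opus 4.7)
Since the theorem is stated as a reformulation of \cite[Theorem 5]{MR1887117}, the plan is to induct on $p$ and reduce the claim to the cited result, the one genuine simplification being that each $D(\gL^j)$ with $\gL^j \in \hP^+$ is one-dimensional (generated by the dominant extremal vector $v_{\gL^j}$), so that tensoring with $D(\gL^j)$ only shifts the $\hfh$-character by $e^{\gL^j}$.

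For the base case $p = 1$, we have $F_{w_1} D(\gL^1) = D(w_1 \gL^1)$, and $\chhh D(w_1 \gL^1) = \D_{w_1}(e^{\gL^1})$ is the classical Demazure character formula. For the inductive step, set
\[ M = F_{w_2}\bigl(D(\gL^2) \otimes \cdots \otimes F_{w_p} D(\gL^p)\bigr) \]
and assume by induction that $\chhh M$ equals the inner nested expression on the right-hand side of \eqref{eq:LLM}. Since $D(\gL^1) = \C v_{\gL^1}$ is one-dimensional, $N := D(\gL^1) \otimes M$ satisfies $\chhh N = e^{\gL^1} \cdot \chhh M$, so the theorem reduces to the identity
\[ \chhh F_{w_1} N = \D_{w_1}(\chhh N). \]

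The delicate point, and the main obstacle, is that this last identity is not a formal consequence of iterating the rule $\chhh F_i D = \D_i(\chhh D)$: such a rule holds only when $D$ has Demazure-type structure with respect to $\ga_i$, and this property must be preserved under each application of $F_i$ along a reduced expression for $w_1$. The heart of \cite{MR1887117} is to show, via the geometry of Bott-Samelson varieties (or equivalently via Littelmann path models), that nested modules of the form $F_{w_1}\bigl(D(\gL^1) \otimes \cdots\bigr)$ inductively retain this property, so that $\D_{w_1}$ may be evaluated factor by factor along a reduced expression. Concretely, I would recall the precise hypotheses under which \cite[Theorem 5]{MR1887117} applies, verify that our data (the lines $D(\gL^j) = \C v_{\gL^j}$ and elements $w_j \in \hW$) meet these hypotheses, and check that the character identity proved there translates directly into \eqref{eq:LLM}.
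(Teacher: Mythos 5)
Your proposal is correct and matches the paper's treatment: the paper likewise gives no independent argument but identifies the nested module on the left-hand side of (\ref{eq:LLM}) with a generalized Demazure module $V_{\mathbf{i},\mathbf{m}}$ in the sense of \cite{MR1887117} (using that $D(\gL)$ is one-dimensional for $\gL\in\hP^+$) and then invokes \cite[Theorem 5]{MR1887117}. Your honest flagging of the one non-formal step --- that $\chhh F_{w_1}N=\D_{w_1}(\chhh N)$ requires the Demazure-type structure established in \cite{MR1887117} rather than mere iteration of $\chhh F_iD=\D_i\chhh D$ --- is exactly where the cited theorem carries the weight in the paper as well.
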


\begin{Rem}\normalfont
  In \cite{MR1887117}, the authors studied $\hfb$-modules $V_{\bf i,m}$ called \textit{generalized Demazure modules}.
  The $\hfb$-module in the left-hand side of (\ref{eq:LLM}) is easily identified with 
  a generalized Demazure module (see [loc.\ cit., Subsection 1.1], in which the authors explain how
  a Demazure module is identified with a generalized Demazure module).
  Under this identification, the above equality follows from [loc.\ cit., Theorem 5].
\end{Rem}

In some cases, we can construct $D(\xi_1,\ldots,\xi_p)$ using $F_w$'s.
To see this, we need the following lemma.

\begin{Lem} \label{Lem:one_change}
  Let $\xi_1,\ldots,\xi_p$ be a sequence of elements of $\hW(\hP^+)$ and $i \in \hI$. 
  If $\langle \ga_i^\vee, \xi_j \rangle \ge 0$ holds
  for all $1 \le j \le p$, then we have
    \[ F_i D(\xi_1,\ldots,\xi_p) = D(s_i \xi_1,\ldots,s_i\xi_p).
    \]
\end{Lem}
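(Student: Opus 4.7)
The plan is to establish the two inclusions
$D(s_i\xi_1,\ldots,s_i\xi_p) \subseteq F_i D(\xi_1,\ldots,\xi_p)$ and
$F_i D(\xi_1,\ldots,\xi_p) \subseteq D(s_i\xi_1,\ldots,s_i\xi_p)$ separately, by tracking how $f_i$ and $e_i$ move the tensors
$v := v_{\xi_1}\otimes\cdots\otimes v_{\xi_p}$ and
$v' := v_{s_i\xi_1}\otimes\cdots\otimes v_{s_i\xi_p}$ through the $\f{sl}_2^{(i)}$-strings attached to the node $i$.
Setting $k_j = \langle \ga_i^\vee,\xi_j\rangle \ge 0$ and $N = \sum_j k_j$, the preliminary input from integrability of $\hV(\gL^j)$ and the defining property of an extremal weight vector is that $e_i v_{\xi_j} = 0$ and $f_i v_{s_i\xi_j} = 0$. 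Consequently the $\f{sl}_2^{(i)}$-string through $v_{\xi_j}$ has length $k_j+1$, with $f_i^{k_j}v_{\xi_j} \in \C^\times v_{s_i\xi_j}$ and $f_i^{k_j+1}v_{\xi_j}=0$, and symmetrically $e_i^{k_j}v_{s_i\xi_j} \in \C^\times v_{\xi_j}$ with $e_i^{k_j+1}v_{s_i\xi_j}=0$.

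For the first inclusion, I would apply $f_i^N$ to $v$ and use the multinomial Leibniz expansion
\[
  f_i^N v = \sum_{j_1+\cdots+j_p = N}\binom{N}{j_1,\ldots,j_p}f_i^{j_1}v_{\xi_1}\otimes\cdots\otimes f_i^{j_p}v_{\xi_p}.
\]
Any term with $j_l > k_l$ for some $l$ vanishes, and since $\sum_l k_l = N$ the only surviving multi-index is $j_l=k_l$ for all $l$. This yields $f_i^N v \in \C^\times v'$, so $v' \in F_i D(\xi_1,\ldots,\xi_p)$ and therefore $D(s_i\xi_1,\ldots,s_i\xi_p)=U(\hfb)v' \subseteq F_iD(\xi_1,\ldots,\xi_p)$.

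For the reverse inclusion, the mirror computation with $e_i$ (which lies in $\hfb$) in place of $f_i$ gives $e_i^N v' \in \C^\times v$, so $v \in U(\hfb)v' = D(s_i\xi_1,\ldots,s_i\xi_p)$, and hence $D(\xi_1,\ldots,\xi_p) \subseteq D(s_i\xi_1,\ldots,s_i\xi_p)$. Since $\langle \ga_i^\vee, s_i\xi_j\rangle = -k_j \le 0$ for every $j$, by the observation recalled immediately before the lemma the target $D(s_i\xi_1,\ldots,s_i\xi_p)$ is $f_i$-stable, hence $U(\hfp_i)$-stable. Applying $U(\hfp_i)\cdot(-)$ to the previous inclusion then gives $F_iD(\xi_1,\ldots,\xi_p) \subseteq D(s_i\xi_1,\ldots,s_i\xi_p)$, closing the argument.

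I do not anticipate a serious obstacle here: the only nontrivial ingredient is the collapse of the multinomial sum to a single nonzero term, which is a direct consequence of the identification of the $\f{sl}_2^{(i)}$-string lengths through the extremal vectors $v_{\xi_j}$ and $v_{s_i\xi_j}$. Everything else is a formal manipulation once this is in hand.
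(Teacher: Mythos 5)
Your proposal is correct and follows essentially the same route as the paper: the paper's proof observes that $e_iv_{\xi_j}=0$ and $f_iv_{s_i\xi_j}=0$ force $U(\mathfrak{sl}_{2,i})(v_{\xi_1}\otimes\cdots\otimes v_{\xi_p})=U(\mathfrak{sl}_{2,i})(v_{s_i\xi_1}\otimes\cdots\otimes v_{s_i\xi_p})$ (your multinomial collapse is exactly the "we easily see" step made explicit) and then concludes via the $\hfp_i$-stability of $D(s_i\xi_1,\ldots,s_i\xi_p)$, just as you do.
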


\begin{proof}
  Let $\mathfrak{sl}_{2,i}$ be the Lie subalgebra of $\hfg$ spanned by $\{e_i, \ga_i^\vee, f_i\}$.
  Since $e_iv_{\xi_j} =0$ and $f_i v_{s_i\xi_j} = 0$ hold for all $j$, we easily see that
  \[ U(\mathfrak{sl}_{2,i})(v_{\xi_1} \otimes \cdots \otimes v_{\xi_p}) 
     = U(\mathfrak{sl}_{2,i})(v_{s_i\xi_1} \otimes \cdots \otimes v_{s_i\xi_p}).
  \]
  Since $D(s_i \xi_1,\ldots,s_i\xi_p)$ is $\hfp_i$-stable, this implies the assertion. 
\end{proof}

Let $w_{1}, \ldots, w_{p}$ be a sequence of elements of $\hW$, 
and denote by $w_{[r,t]}$ the element $w_{r}w_{r+1} \cdots w_{t} \in \hW$ for $1 \le r \le t \le p$.
We assume that $\ell(w_{[1,p]}) = \sum_{j=1}^p \ell(w_{j})$.
Then for every $1 \le r \le t \le p$ and $\gL \in \hP^+$, if $w_{r} = s_{i_1} \cdots s_{i_{N(r)}}$ is a reduced expression of 
$w_r$, then 
\[ \langle \ga_{i_{k}}^\vee, s_{i_{k+1}} \cdots s_{i_{N(r)}}w_{[r+1,t]} \gL \rangle \ge 0
\]
holds for all $1 \le k \le N(r)$ since $\ell(w_{[r,t]}) = \sum_{j =r}^t \ell(w_{j})$.
Hence by applying Lemma \ref{Lem:one_change} several times, the following proposition is proved.

\begin{Prop}\label{Prop:isom}
  Let $\gL^1, \ldots, \gL^p$ be a sequence of elements of $\hP^+$, and $w_{1}, \ldots, w_{p}$ a sequence of elements of
  $\hW$ such that $\ell(w_{[1,p]}) = \sum_{j=1}^p \ell(w_{j})$.
  Then we have
  \begin{align*}\label{eq:multi_change}
      F_{w_{1}}\Big(D(\gL^1) \otimes F_{w_{2}}\Big(&D(\gL^2)\otimes \cdots
      \otimes F_{w_{p-1}}\Big(D(\gL^{p-1}) \otimes F_{w_{p}} 
       D(\gL^p)\Big)\!\cdots\! \Big)\!\Big) \nonumber \\
     &= D\Big(w_{[1,1]} \gL^1, w_{[1,2]} \gL^2, \ldots, w_{[1,p-1]}\gL^{p-1}, w_{[1, p]}\gL^p\Big). 
  \end{align*} 
\end{Prop}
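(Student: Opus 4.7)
The plan is to induct on $p$, at each step reducing to a chain of applications of Lemma~\ref{Lem:one_change}, with the hypothesis $\ell(w_{[1,p]})=\sum_j\ell(w_j)$ supplying the required positivity. First note that this length-additivity descends to every sub-interval: if $\ell(w_{[r,t]})$ were strictly less than $\sum_{j=r}^t\ell(w_j)$, then concatenating a shorter expression of $w_{[r,t]}$ with reduced expressions of the remaining $w_j$'s would give a too-short expression of $w_{[1,p]}$. In particular $\ell(w_{[2,p]})=\sum_{j=2}^p\ell(w_j)$, so the inductive hypothesis identifies the inner module with $D(w_{[2,2]}\gL^2,\ldots,w_{[2,p]}\gL^p)$. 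Since $\gL^1\in\hP^+$, the extremal vector $v_{\gL^1}$ is the highest weight vector of $\hV(\gL^1)$ and is annihilated by $\hfn_+$; hence $U(\hfb)(v_{\gL^1}\otimes v)=v_{\gL^1}\otimes U(\hfb)v$ for every $v$ in the second factor, which yields
\[
D(\gL^1)\otimes D(w_{[2,2]}\gL^2,\ldots,w_{[2,p]}\gL^p)=D(\gL^1,w_{[2,2]}\gL^2,\ldots,w_{[2,p]}\gL^p).
\]

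It remains to show that $F_{w_1}$ applied to this last module equals $D(w_{[1,1]}\gL^1,\ldots,w_{[1,p]}\gL^p)$. Fixing a reduced expression $w_1=s_{i_1}\cdots s_{i_N}$, I apply Lemma~\ref{Lem:one_change} iteratively from right to left: at the start of step $k$ (running $k=N,N-1,\ldots,1$) the current module is
\[
D\!\bigl(s_{i_{k+1}}\cdots s_{i_N}\gL^1,\,s_{i_{k+1}}\cdots s_{i_N}w_{[2,2]}\gL^2,\,\ldots,\,s_{i_{k+1}}\cdots s_{i_N}w_{[2,p]}\gL^p\bigr),
\]
and application of $F_{i_k}$ requires
\[
\langle\ga_{i_k}^\vee,\,s_{i_{k+1}}\cdots s_{i_N}\,w_{[2,j]}\,\gL^j\rangle\ge 0\quad\text{for every }1\le j\le p
\]
(with the convention $w_{[2,1]}=1$), after which Lemma~\ref{Lem:one_change} transforms the module by applying $s_{i_k}$ to each slot.

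The main step is verifying these positivity conditions. Since $\gL^j\in\hP^+$, it suffices to show that $(s_{i_{k+1}}\cdots s_{i_N}w_{[2,j]})^{-1}\ga_{i_k}\in\hgDre_+$, equivalently that $\ell(s_{i_k}s_{i_{k+1}}\cdots s_{i_N}w_{[2,j]})=N-k+1+\ell(w_{[2,j]})$. Choosing reduced expressions $R_2,\ldots,R_p$ of $w_2,\ldots,w_p$, the concatenation $s_{i_1}\cdots s_{i_N}R_2\cdots R_p$ has length $N+\sum_{j=2}^p\ell(w_j)=\ell(w_{[1,p]})$ and so is a reduced expression of $w_{[1,p]}$. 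Since any contiguous subword of a reduced expression is itself reduced (by the standard prefix/suffix argument combined with the exchange condition), the substring $s_{i_k}\cdots s_{i_N}R_2\cdots R_j$ is a reduced expression of $s_{i_k}\cdots s_{i_N}w_{[2,j]}$, giving exactly the length identity needed. Iterating Lemma~\ref{Lem:one_change} $N$ times then produces $D(w_1\gL^1,w_1w_{[2,2]}\gL^2,\ldots,w_1w_{[2,p]}\gL^p)=D(w_{[1,1]}\gL^1,\ldots,w_{[1,p]}\gL^p)$, closing the induction; the base case $p=1$ is the same iterative argument applied to the single dominant weight $\gL^1$.
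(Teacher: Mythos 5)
Your proof is correct and follows essentially the same route as the paper: the paper likewise deduces from $\ell(w_{[1,p]})=\sum_j\ell(w_j)$ that $\langle\ga_{i_k}^\vee, s_{i_{k+1}}\cdots s_{i_{N(r)}}w_{[r+1,t]}\gL\rangle\ge 0$ at every step and then applies Lemma~\ref{Lem:one_change} repeatedly from the inside out. Your write-up merely makes explicit the details the paper leaves implicit (the restriction of length-additivity to subintervals, the subword-reducedness argument, and the absorption of the one-dimensional $D(\gL^1)$), all of which are correct.
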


In conclusion, if there is a sequence $w_1,\ldots,w_p \in \hW$ satisfying $\xi_j = w_{[1,j]}\gL^j$ with
$\gL^j \in \hP^+$ and $\ell (w_{[1,p]})=\sum_{j = 1}^p \ell(w_j)$,
then the character of $D(\xi_1,\ldots,\xi_p)$ is obtained from Theorem \ref{Thm:LLM} and Proposition 2.5.

For later use, we need to generalize the above results for elements of $\wti{W}$.
Let $\tau$ be an element of $\gS$. 
For $\gL \in \hP^+$, we define a linear map $F_\tau\colon \hV(\gL) \to \hV(\tau\gL)$ by
$F_\tau(xv_\gL) = \tau(x)v_{\tau\gL}$ for $x \in U(\hfg)$, which is well-defined and 
bijective by \cite[Corollary 10.4]{MR1104219}.
Note that if $v \in \hV(\gL)_\xi$ with $\xi \in \hP$, then $F_\tau(v) \in \hV(\tau\gL)_{\tau \xi}$.
For an arbitrary sequence $\gL^1,\ldots,\gL^p$ of elements of $\hP^+$,
the tensor product $F_\tau^{\otimes p}$ defines an linear isomorphism $
\hV(\gL^1) \otimes \cdots \otimes \hV(\gL^p) \to \hV(\tau\gL^1) \otimes \cdots \otimes \hV(\tau\gL^p)$,
which we also denote by $F_\tau$ instead of $F_\tau^{\otimes p}$.
If $D$ is a $\hfb$-submodule of $\hV(\gL^1) \otimes \cdots \otimes \hV(\gL^p)$,
the image $F_\tau D$ is also a $\hfb$-submodule.
The following lemma is easily proved.

\begin{Lem}\label{Lem:tau}
  For a sequence $\xi_1, \ldots, \xi_p$ of elements of $\hW(\hP^+)$, we have
  \[ F_\tau  D(\xi_1,\ldots,\xi_p) = D(\tau\xi_1, \ldots,\tau\xi_p).
  \]
\end{Lem}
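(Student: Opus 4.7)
The plan is to exploit the intertwining property of $F_\tau$ with respect to the Lie algebra automorphism $\tau$, and then identify what $F_\tau$ does to each extremal vector $v_{\xi_j}$.

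First I would verify that $\tau(\hfb)=\hfb$. Since $\tau$ acts on $\hfg$ by $e_i \mapsto e_{\tau(i)}$, $f_i \mapsto f_{\tau(i)}$ and on $\hfh$ dually to its action on $\hfh^*$, it permutes the Chevalley generators of $\hfn_+$ and stabilizes $\hfh$, so $\tau(\hfb)=\hfb$.

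Next, directly from the defining property $F_\tau(xv_\gL)=\tau(x)v_{\tau\gL}$ on each single factor $\hV(\gL^j)$ and the componentwise definition of $F_\tau$ on the tensor product, an easy induction using the primitive coproduct shows
\[ F_\tau(x\cdot v)=\tau(x)\cdot F_\tau(v) \]
for all $x\in U(\hfg)$ and $v\in \hV(\gL^1)\otimes\cdots\otimes\hV(\gL^p)$. Applying this to the cyclic vector of $D(\xi_1,\ldots,\xi_p)$ and using $\tau(\hfb)=\hfb$ yields
\[ F_\tau D(\xi_1,\ldots,\xi_p)=U(\hfb)\bigl(F_\tau(v_{\xi_1})\otimes\cdots\otimes F_\tau(v_{\xi_p})\bigr). \]

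It then remains to show that $F_\tau(v_{\xi_j})$ is a nonzero scalar multiple of $v_{\tau\xi_j}$ for each $j$. Since $F_\tau$ is bijective, $F_\tau(v_{\xi_j})$ is nonzero, and it has weight $\tau\xi_j$ in $\hV(\tau\gL^j)$. The relation $\tau\hW\tau^{-1}=\hW$ combined with $\xi_j\in\hW\gL^j$ gives $\tau\xi_j\in\hW(\tau\gL^j)$, so the weight space $\hV(\tau\gL^j)_{\tau\xi_j}$ is one-dimensional by the standard fact that extremal weight spaces in integrable highest weight modules have multiplicity one. Hence $F_\tau(v_{\xi_j})=c_j v_{\tau\xi_j}$ for some $c_j\in\C^\times$, and substituting back gives the claimed equality.

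The main subtlety, more than a real obstacle, is to check cleanly that $\tau$, as defined via its action on the Chevalley generators and dually on $\hfh^*$, really does preserve $\hfb$ and that the weight-multiplicity-one property applies to $\tau\xi_j$; both are essentially definitional once the action of $\tau$ on $\hfg$ has been spelled out.
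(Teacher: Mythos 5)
Your proof is correct, and it is the standard argument: the paper itself offers no proof (it simply states that the lemma ``is easily proved''), and the intended reasoning is exactly what you wrote --- $\tau(\hfb)=\hfb$, the intertwining property $F_\tau(xv)=\tau(x)F_\tau(v)$ coming from the definition of $F_\tau$ and the primitive coproduct, and the identification $F_\tau(v_{\xi_j})\in\C^\times v_{\tau\xi_j}$ via the one-dimensionality of extremal weight spaces together with $\tau\hW\tau^{-1}=\hW$. No gaps.
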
 


For a $\hfb$-submodule $D$, it is easily seen that $F_\tau F_{i} D= F_{\tau(i)}F_\tau D$. 
Set $F_{w\tau}D = F_w F_\tau D$ for $w \in \hW$ and $\tau \in \gS$.
Now the following proposition is an easy generalization of Proposition \ref{Prop:isom}
($w_{[r,t]}$ are defined as above).

\begin{Prop}\label{Prop:character2}
  Let $\gL^1, \ldots, \gL^p$ be a sequence of elements of $\hP^+$, and $w_{1}, \ldots, w_{p}$ a sequence of elements of
  $\wti{W}$ such that $\ell(w_{[1,p]}) = \sum_{j=1}^p \ell(w_{j})$.
  Then we have
  \begin{align*}
     F_{w_{1}}\Big(D(\gL^1) \otimes F_{w_{2}}\Big(D&(\gL^2)\otimes \cdots
      \otimes F_{w_{p-1}}\Big(D(\gL^{p-1}) \otimes F_{w_{p}} 
       D(\gL^p)\Big)\!\cdots\! \Big)\!\Big) \nonumber \\
     &= D\Big(w_{[1,1]} \gL^1, w_{[1,2]} \gL^2, \ldots, w_{[1,p-1]}\gL^{p-1}, w_{[1, p]}\gL^p\Big).
  \end{align*} 
\end{Prop}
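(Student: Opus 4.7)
The plan is induction on $p$, closely mirroring the proof of Proposition \ref{Prop:isom} but inserting applications of Lemma \ref{Lem:tau} and the relation $F_\tau F_i = F_{\tau(i)} F_\tau$ (noted just before the proposition) to handle the diagram-automorphism factors. Write each $w_j = u_j \tau_j$ with $u_j \in \hW$ and $\tau_j \in \gS$, so that $\ell(w_j) = \ell(u_j)$, and note that the hypothesis $\ell(w_{[1,p]}) = \sum_j \ell(w_j)$ descends by sub-additivity to $\ell(w_{[r,s]}) = \sum_{j=r}^{s} \ell(w_j)$ for every $1 \le r \le s \le p$.

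For the base case $p = 1$, Lemma \ref{Lem:tau} gives $F_{\tau_1} D(\gL^1) = D(\tau_1 \gL^1)$, and since $\tau_1 \gL^1 \in \hP^+$, Proposition \ref{Prop:isom} (applied to a reduced expression of $u_1$) yields $F_{u_1} D(\tau_1 \gL^1) = D(u_1 \tau_1 \gL^1) = D(w_{[1,1]} \gL^1)$. For the inductive step, the hypothesis applied to the sub-sequence $w_2, \ldots, w_p$ rewrites the inner term as $D(w_{[2,2]} \gL^2, \ldots, w_{[2,p]} \gL^p)$. Because $v_{\gL^1}$ is a highest weight vector of $\hV(\gL^1)$, the standard computation
\[ U(\hfb)\bigl(v_{\gL^1} \otimes v_{w_{[2,2]}\gL^2} \otimes \cdots \otimes v_{w_{[2,p]}\gL^p}\bigr) = v_{\gL^1} \otimes U(\hfb)\bigl(v_{w_{[2,2]}\gL^2} \otimes \cdots \otimes v_{w_{[2,p]}\gL^p}\bigr) \]
shows that $D(\gL^1) \otimes D(w_{[2,2]}\gL^2, \ldots, w_{[2,p]}\gL^p) = D(\gL^1, w_{[2,2]}\gL^2, \ldots, w_{[2,p]}\gL^p)$. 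It then remains to apply $F_{w_1} = F_{u_1} F_{\tau_1}$: Lemma \ref{Lem:tau} produces $D(\tau_1 \gL^1, \tau_1 w_{[2,2]} \gL^2, \ldots, \tau_1 w_{[2,p]} \gL^p)$, and iterating Lemma \ref{Lem:one_change} along a reduced expression of $u_1$ produces $D(u_1 \tau_1 \gL^1, u_1 \tau_1 w_{[2,2]} \gL^2, \ldots) = D(w_{[1,1]}\gL^1, w_{[1,2]}\gL^2, \ldots, w_{[1,p]}\gL^p)$, as required.

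The one nontrivial point, and essentially the only real obstacle, is checking that the positivity hypothesis of Lemma \ref{Lem:one_change} holds at every stage of the $F_{u_1}$-iteration. Setting $\sigma_j = \tau_1 \cdots \tau_j$ and $\tilde u_j = \sigma_{j-1} u_j \sigma_{j-1}^{-1} \in \hW$, one rewrites
\[ w_{[1,p]} = \tilde u_1 \tilde u_2 \cdots \tilde u_p \cdot \sigma_p, \]
so that $\ell(\tilde u_1 \cdots \tilde u_p) = \ell(w_{[1,p]}) = \sum_j \ell(u_j) = \sum_j \ell(\tilde u_j)$. Thus the $\hW$-length additivity required by Proposition \ref{Prop:isom} is satisfied by the $\tilde u_j$, and this is precisely what guarantees the positivity conditions at each reflection appearing in the chosen reduced expression. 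Once this bookkeeping is settled, the argument reduces straightforwardly to the already-established case (Proposition \ref{Prop:isom}) combined with Lemma \ref{Lem:tau}.
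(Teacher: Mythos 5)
Your proof is correct and follows exactly the route the paper intends: the paper states Proposition \ref{Prop:character2} without proof as an ``easy generalization'' of Proposition \ref{Prop:isom}, to be obtained from Lemma \ref{Lem:one_change}, Lemma \ref{Lem:tau} and the commutation $F_\tau F_i = F_{\tau(i)}F_\tau$, and your induction with the conjugated elements $\tilde u_j = \sigma_{j-1}u_j\sigma_{j-1}^{-1}$ supplies precisely the length-additivity bookkeeping that makes that generalization go through.
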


For $\tau \in \gS$, define a linear operator $\mathcal{D}_\tau$ on $\Z[\hP]$ by $\mathcal{D}_{\tau}(e^\xi) =e^{\tau\xi}$.
Obviously $\chhh F_\tau D = \mathcal{D}_\tau \chhh D$ holds,
and we have $\D_{\tau} \D_{i} = \D_{\tau(i)} \D_{\tau}$ \cite[Lemma 4]{MR2235341}.
Set $\D_{w\tau} = \D_w\D_{\tau}$ for $w \in \hW$ and $\tau \in \gS$.
Now the following corollary is obvious from Theorem \ref{Thm:LLM}.

\begin{Cor}\label{Prop:character3}
  The equality {\normalfont(\ref{eq:LLM})} also holds for a sequence $w_1,\ldots,w_p$ of elements of $\wti{W}$ 
  {\normalfont(}instead of $\hat{W}${\normalfont)}.
\end{Cor}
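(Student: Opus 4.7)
The plan is to reduce Corollary~\ref{Prop:character3} to Theorem~\ref{Thm:LLM} by pushing the $\gS$-component of each $w_j$ to the innermost slot and performing parallel manipulations on both sides of (\ref{eq:LLM}). Write $w_j = u_j \tau_j$ with $u_j \in \hW$ and $\tau_j \in \gS$, and set $\sigma_0 = e$, $\sigma_j = \tau_1 \cdots \tau_j$. By induction on $p$, I would first establish the module-side identity
\begin{align*}
&F_{w_1}\bigl(D(\gL^1) \otimes F_{w_2}(D(\gL^2) \otimes \cdots \otimes F_{w_p} D(\gL^p)) \cdots\bigr) \\
&\qquad = F_{u_1'}\bigl(D(\gL^{\prime 1}) \otimes F_{u_2'}(D(\gL^{\prime 2}) \otimes \cdots \otimes F_{u_p'} D(\gL^{\prime p})) \cdots\bigr),
\end{align*}
where $u_j' = \sigma_{j-1} u_j \sigma_{j-1}^{-1} \in \hW$ and $\gL^{\prime j} = \sigma_j \gL^j \in \hP^+$. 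The inductive step uses only four ingredients already in the excerpt: $F_{u\tau} = F_u F_\tau$ by definition, $F_\tau(A \otimes B) = F_\tau A \otimes F_\tau B$ (the tensor-product definition of $F_\tau$), $F_\tau D(\gL) = D(\tau \gL)$ (Lemma~\ref{Lem:tau}), and $F_\tau F_i D = F_{\tau(i)} F_\tau D$ (stated just before Proposition~\ref{Prop:character2}). Iterating the last gives $F_\tau F_u D = F_{\tau u \tau^{-1}} F_\tau D$ for $u \in \hW$, which is the mechanism by which $F_{\tau_1}$ is peeled past $F_{u_2}, F_{u_3}, \ldots$ as the nested tensor product is unwrapped.

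Applying Theorem~\ref{Thm:LLM} to the rewritten right-hand side (whose indices now lie in $\hW$) expresses its character as
\[
\D_{u_1'}\bigl(e^{\gL^{\prime 1}} \cdot \D_{u_2'}(e^{\gL^{\prime 2}} \cdots \D_{u_p'}(e^{\gL^{\prime p}})) \cdots\bigr).
\]
It remains to identify this with the desired right-hand side of (\ref{eq:LLM}) for the original $w_j$'s. I would do this by running the identical induction on the character side, using the mirror identities $\D_{u\tau} = \D_u \D_\tau$ by definition, $\D_\tau(e^\xi \cdot f) = e^{\tau \xi} \cdot \D_\tau(f)$ (since $\D_\tau$ is the ring automorphism of $\Z[\hP]$ induced by $\tau$), and $\D_\tau \D_i = \D_{\tau(i)} \D_\tau$ (cited from \cite[Lemma 4]{MR2235341} in the excerpt). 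These are the exact character-theoretic mirrors of the four module-side identities, so the same induction runs verbatim and produces the same rewriting with $u_j'$, $\gL^{\prime j}$ in place of $w_j$, $\gL^j$.

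The main obstacle is purely bookkeeping: tracking the conjugates $\sigma_{j-1} u_j \sigma_{j-1}^{-1}$ and the twists $\sigma_j \gL^j$ through the $p$ levels of nesting, and verifying that the module- and character-side rewritings are step-by-step parallel. No new algebraic input beyond the commutation relations already recorded in the paper is required; the substantive content is that the pairs $(F_\tau, F_i)$ and $(\D_\tau, \D_i)$ satisfy the same braid-type relations, so Theorem~\ref{Thm:LLM} transports automatically from $\hW$ to $\wti{W} = \hW \rtimes \gS$.
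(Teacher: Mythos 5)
Your proposal is correct and follows exactly the route the paper intends: the paper records the relations $F_\tau F_i = F_{\tau(i)}F_\tau$, $\D_\tau\D_i = \D_{\tau(i)}\D_\tau$, $\chhh F_\tau D = \D_\tau\chhh D$ and Lemma \ref{Lem:tau}, and then declares the corollary ``obvious from Theorem \ref{Thm:LLM}''; your conjugation bookkeeping (pushing each $\tau_j$ inward to rewrite both sides with $u_j' = \sigma_{j-1}u_j\sigma_{j-1}^{-1} \in \hW$ and $\gL^{\prime j} = \sigma_j\gL^j$, then applying the theorem verbatim) is precisely the suppressed argument. No discrepancy to report.
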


\section{Quantum loop algebras}

\subsection{Definitions and basics}

Let $\C(q)$ denote the ring of rational functions in an indeterminate $q$.
Set $q_i = q^{d_i}$ for $i \in I$,
and 
\[ [l]_{q_i} = \frac{q_i^l - q_i^{-l}}{q_i-q_i^{-1}}, \ \ \ [s]_{q_i}! = [s]_{q_i} [s-1]_{q_i} \cdots [1]_{q_i}, \ \ \
   \begin{bmatrix} s \\ s' \end{bmatrix}_{\!q_i} = \frac{[s]_{q_i}!}{[s-s']_{q_i}![s']_{q_i}!}
\]
for $l \in \Z$ and $s, s' \in \Z_{\ge 0}$ with $s \ge s'$.
The quantum loop algebra $U_q(\bL\fg)$ is the associative $\C(q)$-algebra with generators
\[ x_{i,r}^{\pm} \ (i \in I, r \in \Z), \ \ \ k_i^{\pm 1} \ ( i \in I), \ \ \ h_{i,m} \ (i \in I, m \in \Z \setminus \{0\})
\]  
and the following relations ($i,j \in I, r,r' \in \Z, m,m' \in \Z\setminus \{ 0\})$:
\begin{align*}
  &k_ik_i^{-1} = k_i^{-1}k_i =1, & &[k_i,k_j] = [k_i, h_{j,m}] = [h_{i,m},h_{j,m'}] = 0, \\
  &k_i x_{j,m}^\pm k_i^{-1} = q_i^{\pm c_{ij}}x_{j,m}^\pm,  & 
  &[h_{i,m}, x_{j,r}^{\pm}]= \pm \frac{1}{m}[m c_{ij}]_{q_i}x_{j,r+m}^{\pm}, \\[-23pt]
\end{align*}
\begin{align*}
  [x_{i,r}^{+}, x_{j,r'}^{-}] = \gd_{ij} &\frac{\phi_{i,r+r'}^+ - \phi_{i,r+r'}^-}{q_i - q_i^{-1}},\\
  x_{i,r+1}^{\pm} x_{j,r'}^{\pm} -q_i^{\pm c_{ij}}x_{j,r'}^\pm x_{i,r+1}^\pm 
  &= q_i^{\pm c_{ij}}x_{i,r}^{\pm} x_{j,r'+1}^{\pm} - x_{j,r'+1}^{\pm}x_{i,r}^\pm,\\
  \sum_{\gs \in \mathfrak{S}_s} \sum_{k=0}^{s} (-1)^k \begin{bmatrix} s \\ k \end{bmatrix}_{\!q_i} 
    x_{i, r_{\gs(1)}}^\pm \cdots x_{i, r_{\gs(k)}}^\pm &x_{j,r'}^\pm x_{i,r_{\gs(k+1)}}^\pm \cdots x_{i,r_{\gs(s)}}^\pm
    = 0 \ \ \ \ \ \text{if} \ i \neq j
\end{align*}
for all sequences of integers $r_1, \ldots,r_s$, where $s = 1 - c_{ij}$, $\mathfrak{S}_s$ is the symmetric group on $s$ letters,
and $\phi_{i,r}^\pm$'s are determined by equating coefficients of powers of $u$ in the formula
\[ \sum_{r = 0}^{\infty} \phi_{i, \pm r}^\pm u^{\pm r} = k_i^{\pm 1} \exp \left( \pm(q_i - q_i^{-1})\sum_{r' = 1}^{\infty}
   h_{i, \pm r'} u^{\pm r'} \right),
\]
and $\phi_{i,\mp r}^{\pm} = 0$ for $r > 0$.
The algebra $U_q(\bL\fg)$ is isomorphic to the quotient of the quantum affine algebra 
$U_q'(\hfg)$ by the ideal generated by a certain central element 
\cite{MR914215,MR1301623}.
Denote by $U_q(\bL\fn_{\pm})$ and $U_q(\bL\fh)$ the subalgebras of $U_q(\bL\fg)$ generated by 
$\{x_{i,r}^\pm \mid i \in I, r \in \Z \}$ and 
$\big\{k^{\pm 1}_i,h_{i,m} \mid i \in I, m \in \Z \setminus \{ 0 \} \big\}$ respectively.
Then we have 
\begin{equation}\label{eq:triangular}
  U_q(\Lfg) = U_q(\bL\fn_-) U_q(\bL\fh) U_q(\bL\fn_+) 
\end{equation}
by the existence of a Poincar\'{e}-Birkhoff-Witt type basis \cite{MR1301623}.
Denote by $U_q(\fg)$ the subalgebra generated by $\{x_{i,0}^{\pm}, k^{\pm 1}_i \mid i \in I\}$, which is isomorphic to
the quantized enveloping algebra associated with $\fg$.
For a subset $J \subseteq I$, let $U_q(\bL\fg_J)$ denote the subalgebra generated by 
$\{k_i^{\pm 1}, h_{i,r},x_{i,s}^{\pm}\mid i\in J, r \in \Z \setminus \{0\}, s \in \Z\}$.
We also define $U_q(\bL\fh_J)$ in an obvious way.
When $J = \{i\}$ for some $i \in I$, we simply write $U_q(\bL\fg_i)$ and $U_q(\bL\fh_i)$.

The algebra $U_q(\bL\fg)$ has a Hopf algebra structure \cite{MR1227098,MR1300632}.
In particular if $V$ and $W$ are $U_q(\Lfg)$-modules, then $V\otimes W$ and $V^*$ are $U_q(\Lfg)$-modules, and
we have $(V \otimes W)^* \cong W^* \otimes V^*$.

\subsection{Finite-dimensional modules}\label{Subsection:QAA}

For a $U_q(\fg)$-module $V$ and $\gl \in P$, set 
\[ V_\gl = \big\{v \in V \mid k_iv = q_i^{\langle \ga_i^\vee, \gl \rangle} v \ \text{for} \ i \in I\big\}.
\]
We say $V$ is \textit{of type $1$} if $V$ satisfies
\[ V = \bigoplus_{\gl \in P} V_{\gl}.
\]
For a finite-dimensional $U_q(\fg)$-module $V$ of type $1$, define its character $\ch V$ by
\[ \ch V = \sum_{\gl \in P} e^{\gl}\dim V_\gl  \in \Z[P].
\]
The category of finite-dimensional $U_q(\fg)$-modules of type $1$ is semisimple.
For $\gl \in P^+$, let $V_q(\gl)$ denote the $U_q(\fg)$-module generated by a nonzero vector $v_\gl$ with relations
\[ x_{i,0}^+v_\gl = 0, \ \ k_iv_\gl = q_i^{\langle \ga_i^\vee, \gl\rangle}v_\gl, \ \ 
  \big(x_{i,0}^-\big)^{\langle \ga_i^\vee, \gl\rangle + 1} v_\gl = 0 \ \ \ \text{for} \ i \in I.
\]
The module $V_q(\gl)$ is simple, finite-dimensional and of type $1$,
and every simple finite-dimensional $U_q(\fg)$-module of type $1$ is isomorphic to some $V_q(\gl)$.
Moreover, we have $\ch V_q(\gl) = \ch V(\gl)$.
For details of these results, see \cite{MR1300632} for example.

Now we recall the basic results on finite-dimensional $U_q(\Lfg)$-modules.
Let $P_q^+$ denote the monoid (under coordinate-wise multiplication) of $I$-tuples 
of polynomials $\bm{\pi}= \big(\bm{\pi}_1(u),\ldots,\bm{\pi}_n(u)\big)$
such that each $\bm{\pi}_i(u)$ is expressed as
\[ \bm{\pi}_i(u) = (1 -a_{1}u)(1-a_{2}u) \cdots (1-a_{k}u)
\]
for some $k \ge 0$ and $a_{j} \in \C(q)^\times$.
In other words, $P_{q}^+$ is a free abelian monoid generated by $\{ \bm{\varpi}_{i,a} \mid i \in I, a \in \C(q)^{\times}\}$
where 
\[ \big(\bm{\varpi}_{i,a}\big)_{\!j}(u) = \begin{cases} 1 - au & \text{if} \ j = i,\\
                                                    1      & \text{otherwise}.
                                      \end{cases}
\] 
Denote by $P_q$ the corresponding free abelian group, which is called the \textit{$\ell$-weight lattice}.
We say $\bm{\rho} \in P_q$ is \textit{dominant} if $\bm{\rho} \in P_q^+$.
Define a homomorphism $\wt\colon P_q \to P$ by 
\[ \wt(\bm{\varpi}_{i,a}) = \varpi_i \ \ \ \text{for all} \ a \in \C(q)^\times.
\]
A nonzero vector $v$ of a $U_q(\bL\fg)$-module $V$ is said to be an 
\textit{$\ell$-weight vector} with $\ell$-weight $\bm{\rho} \in P_q$ if
there is some $N \in \Z_{> 0}$ satisfying
\[ (\phi_{i,\pm r}^{\pm} - \ggg_{i,\pm r}^{\pm})^N v = 0
\]
for all $i \in I$ and $r \in \Z_{\ge 0}$, 
where $\ggg_{i,\pm r}^{\pm}$ are the rational functions in $q$ determined by the formula
\begin{equation*}
   \sum_{r=0}^{\infty} \ggg_{i,r}^+ u^r = q^{\langle \ga_i^\vee, 
   \wt(\bm{\rho})\rangle}_i\frac{\bm{\rho}_i(q_i^{-1}u)}{\bm{\rho}_i(q_iu)}
   = \sum_{r=0}^{\infty} \ggg_{i,-r}^- u^{-r},
\end{equation*}
in the sense that the left- and right-hand sides are
the Laurent expansions of the middle term about $u=0$ and $u=\infty$, respectively.
Denote by $V_{\bm{\rho}}$ the subspace consisting of $\ell$-weight vectors with $\ell$-weight $\bm{\rho}$.
If $V = \bigoplus_{\bm{\rho} \in P_q} V_{\bm{\rho}}$ holds, we say $V$ is an \textit{$\ell$-weight module}.
For an $\ell$-weight module $V$ and $\mu \in P$, we have
\[ V_{\mu} = \bigoplus_{\begin{smallmatrix} \bm{\rho} \in P_q \\ \wt(\bm{\rho}) = \mu \end{smallmatrix}} V_{\bm{\rho}},
\]
since $\phi_{i,0}^{+} = k_i$ and $\ggg_{i,0}^{+} = q_i^{\langle \ga_i^\vee, \wt(\bm{\rho}) \rangle}$.
In particular, every $\ell$-weight module is of type $1$ as a $U_q(\fg)$-module.
We say a $U_q(\Lfg)$-module $V$ is \textit{$\ell$-highest weight} with $\ell$-highest weight vector $v$ and
$\ell$-highest weight $\bm{\pi} \in P_q^+$ if $v \in V_{\bm{\pi}}$, $x_{i,r}^+v = 0$ ($i \in I, r\in \Z)$
and $U_q(\Lfn_-)v = V$ hold.
A standard argument using (\ref{eq:triangular}) shows that for each $\bm{\pi} \in P_q^+$,
there exists a unique simple $\ell$-highest weight module $L_q(\bm{\pi})$ 
with $\ell$-highest weight $\bm{\pi}$ (up to isomorphism).
By $v_{\bm{\pi}}$ we denote an $\ell$-highest weight vector of $L_q(\bm{\pi})$.

\begin{Thm}[\cite{MR1357195}]
  For every $\bm{\pi} \in P_q^+$, $L_q(\bm{\pi})$ is finite-dimensional and $\ell$-weight.
  Moreover, every simple finite-dimensional $\ell$-weight $U_q(\Lfg)$-module is isomorphic to $L_q(\bm{\pi})$ for some
 $\bm{\pi} \in P_q^+$.
\end{Thm}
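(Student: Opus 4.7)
The statement has two independent halves. For the \emph{existence} assertion that $L_q(\bm{\pi})$ is finite-dimensional and $\ell$-weight for every $\bm{\pi} \in P_q^+$, the plan is to reduce everything to a tensor-product construction from the rank-one case. Step one: prove the theorem directly for $\fg = \mathfrak{sl}_2$---for a polynomial $\bm{\pi}$, an explicit PBW analysis inside the universal $\ell$-highest weight module shows that $L_q(\bm{\pi})$ has dimension $\deg \bm{\pi} + 1$. Step two: construct, for each $i \in I$ and $a \in \C(q)^\times$, a finite-dimensional fundamental module $L_q(\bm{\varpi}_{i,a})$---either as the pull-back of a type-$1$ evaluation module when an evaluation homomorphism is available, or via specialization of an integrable highest-weight $U_q'(\hfg)$-module through a $\C[q,q^{-1}]$-integral form. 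Step three: for a general $\bm{\pi} = \prod_j \bm{\varpi}_{i_j,a_j}$, form the tensor product of the corresponding fundamentals; using the Drinfeld coproduct formula for $\phi_{i,r}^{\pm}$, verify that the tensor product of the $\ell$-highest weight vectors is annihilated by $U_q(\bL\fn_+)$ and carries $\ell$-weight $\bm{\pi}$. The simple quotient of the $U_q(\bL\fg)$-submodule it generates is $L_q(\bm{\pi})$, which is therefore finite-dimensional. The $\ell$-weight decomposition of $L_q(\bm{\pi})$ is then automatic, since $U_q(\bL\fh)$ is commutative and decomposes any finite-dimensional module into generalized eigenspaces.

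For the classification half, let $V$ be a simple finite-dimensional $\ell$-weight $U_q(\bL\fg)$-module and pick $\gl \in P$ maximal among its $U_q(\fg)$-weights, so $V_\gl \neq 0$ but $V_{\gl + \ga_i} = 0$ for each $i \in I$. Because every $\phi_{i,r}^{\pm}$ commutes with all of the $k_j$ and hence preserves $U_q(\fg)$-weight spaces, the subspace $V_\gl$ is $U_q(\bL\fh)$-stable, and the $\ell$-weight hypothesis on $V$ supplies an $\ell$-weight vector $v \in V_\gl$ with some $\ell$-weight $\bm{\rho}$. For every $i$ and $r$ the vector $x_{i,r}^+ v$ lies in $V_{\gl + \ga_i} = 0$, so $v$ is $\ell$-highest weight. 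Simplicity of $V$ combined with the triangular decomposition (\ref{eq:triangular}) gives $V = U_q(\bL\fn_-) v \cong L_q(\bm{\rho})$. To upgrade $\bm{\rho}$ to an element of $P_q^+$, fix $i \in I$ and restrict to the rank-one subalgebra: the cyclic $U_q(\bL\fg_i)$-submodule $U_q(\bL\fg_i) v \subseteq V$ is finite-dimensional and $\ell$-highest weight with $\ell$-highest weight determined by $\bm{\rho}_i$; the $\mathfrak{sl}_2$ case established in step one of the first half then forces $\bm{\rho}_i$ to be a polynomial, for every $i$.

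The main obstacle is the rank-one case itself, which is genuinely delicate in both directions. In the necessary direction---finite-dimensionality forces polynomiality---one must exploit the relations $[x_{1,r}^+, x_{1,s}^-] \propto \phi^{\pm}_{1,r+s}$ to extract from a finite-dimensional module a true polynomial recurrence on the sequence of $U_q(\bL\fh)$-eigenvalues, which is the core of Chari--Pressley's original argument. In the sufficient direction---polynomials produce finite-dimensional quotients---one needs a Weyl-module style upper bound on $\dim L_q(\bm{\pi})$; in types lacking an evaluation homomorphism this requires passing to an integral form of $U_q'(\hfg)$ and specializing integrable highest-weight representations of the affine Kac--Moody algebra $\hfg$, and this global construction of fundamental representations is the most technical ingredient.
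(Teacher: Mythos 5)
You should first note that the paper does not prove this statement at all: it is quoted from Chari--Pressley \cite{MR1357195}, and the Remark immediately following it explains that the only point needing care beyond [loc.\ cit.] is that $\bm{\pi}\in P_q^+$ forces $L_q(\bm{\pi})$ to be $\ell$-weight, which is taken from \cite[Theorem 4.1]{MR1810773}. Your sketch reconstructs exactly the standard Chari--Pressley route (rank-one analysis, fundamental modules, tensor products, extraction of an $\ell$-highest weight vector from a maximal $U_q(\fg)$-weight space), so in architecture it agrees with the cited sources rather than offering an alternative.

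Two concrete points are wrong or underargued. First, your rank-one claim that $\dim L_q(\bm{\pi})=\deg\bm{\pi}+1$ is false in general: this holds only when the roots of $\bm{\pi}$ form a single $q$-string, i.e.\ $\bm{\pi}=\bm{\pi}_{m,a}$, in which case $L_q(\bm{\pi})$ is an evaluation module. For generic roots, e.g.\ $\bm{\pi}=(1-au)(1-bu)$ with $b\notin aq^{\pm 2}$, one has $L_q(\bm{\pi})\cong L_q(\bm{\varpi}_a)\otimes L_q(\bm{\varpi}_b)$ of dimension $4$, not $3$. This does not sink the argument---for finite-dimensionality you only need the upper bound coming from the tensor product of fundamentals---but the stated formula should be replaced by that bound. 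Second, the assertion that the $\ell$-weight property of $L_q(\bm{\pi})$ is ``automatic'' from commutativity of $U_q(\bL\fh)$ conflates two things: the decomposition into joint generalized eigenspaces is indeed automatic in finite dimension, but the definition of an $\ell$-weight requires the generalized eigenvalues $\ggg_{i,\pm r}^{\pm}$ to be the Laurent coefficients of $q_i^{\langle\ga_i^\vee,\wt\bm{\rho}\rangle}\bm{\rho}_i(q_i^{-1}u)/\bm{\rho}_i(q_iu)$ for an $I$-tuple of polynomials with roots in $\C(q)^\times$, i.e.\ to come from an element of $P_q$. That the eigenvalues have this rational form on every finite-dimensional module is a theorem of Frenkel--Reshetikhin/Frenkel--Mukhin, and it is precisely the ingredient the paper imports via \cite[Theorem 4.1]{MR1810773}; your proof needs to invoke it (or reprove it) rather than dismiss it. With those two repairs the proposal is a faithful outline of the argument the paper delegates to its references, including the genuinely delicate rank-one polynomiality recurrence, which you correctly identify as the technical core of the classification half.
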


\begin{Rem}\normalfont
  For every sequence of (not necessarily splitting) polynomials 
  $\bm{\pi} = \big(\bm{\pi}_1(u),\ldots,\bm{\pi}_n(u)\big)$ such that $\bm{\pi}_i(0) = 1$ ($i \in I$),
  the simple module $L_q(\bm{\pi})$ defined as above is finite-dimensional.
  For later use, however, it is more convenient to restrict our consideration to $L_q(\bm{\pi})$ with $\bm{\pi} \in P_q^+$.
  In particular, this restriction makes it easier to apply the theory of $q$-characters.
  Since $\bm{\pi} \in P_q^+$ implies that $L_q(\bm{\pi})$ is $\ell$-weight by \cite[Theorem 4.1]{MR1810773}, 
  the above theorem follows from \cite{MR1357195}.
\end{Rem}

Let $i \mapsto \bar{i}$ be the bijection $I \to I$ determined by $\ga_{\bar{i}} = -w_\circ(\ga_i)$.
We define endomorphisms $\bm{\pi} \mapsto \bm{\pi}^*$ and $\bm{\pi} \mapsto {}^*\!\bm{\pi}$ of the monoid $P_q^+$ by setting
\[ \bm{\varpi}_{i,a}^* = \bm{\varpi}_{\bar{i},aq^{-r^{\vee}h^\vee}}, \ \ \ 
   {}^*\!\bm{\varpi}_{i,a}= \bm{\varpi}_{\bar{i},a^{-1}q^{-r^{\vee}h^{\vee}}}
\]
respectively, where $h^{\vee}$ is the dual Coxeter number of $\fg$ and $r^{\vee} = \max\{c_{ij}c_{ji} \mid i \neq j\}$.
By \cite[Proposition 1.6]{MR1367675}, there is a unique involution $\gs$ of $U_q(\Lfg)$ such that 
\[ \gs(x_{i,r}^\pm) = -x_{i,-r}^{\mp}, \ \ \  \gs(h_{i,m}) = -h_{i,-m}, \ \ \ \gs(k_i^{\pm 1}) = k_i^{\mp 1},\ \ \
   \gs(\phi_{i,r}^{\pm}) = \phi_{i,-r}^{\mp}
\] 
for $i \in I, r \in \Z$ and $m \in \Z\setminus \{ 0\}$, and $\gs$ is also a coalgebra anti-involution.
For a $U_q(\Lfg)$-module $V$, denote by $\gs^*V$ its pull-back with respect to $\gs$.
As a consequence of \cite[Corollary 6.9]{MR1810773}, we have the following lemma.

\begin{Lem} \label{Lem:dual}
  For every $\bm{\pi} \in P_q^+$, we have
  \[ L_q(\bm{\pi})^* \cong L_q(\bm{\pi}^*) \ \ \ \text{and} \ \ \ \gs^*L_q(\bm{\pi}) \cong L_q({}^*\!\bm{\pi})
  \]
  as $U_q(\Lfg)$-modules.
\end{Lem}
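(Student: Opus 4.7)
The plan is to deduce both isomorphisms from the cited [Corollary 6.9]{MR1810773}, which identifies (up to scalar) a unique $\ell$-lowest weight vector $v^- \in L_q(\bm{\pi})$ and computes its $\ell$-weight, together with standard Hopf-algebraic manipulations. I will denote this $\ell$-lowest weight by $\bm{\pi}^-$; in terms of generators, if $\bm{\pi} = \prod \bm{\varpi}_{i,a}^{m_{i,a}}$ then $\bm{\pi}^-$ is obtained (up to inversion) by the bar operation $a \mapsto aq^{r^\vee h^\vee}$ combined with the substitution $i \mapsto \bar i$. The vector $v^-$ is characterized by $x_{i,r}^- v^- = 0$ for all $i,r$ together with $U_q(\bL\fn_+) v^- = L_q(\bm{\pi})$.

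For the first isomorphism $L_q(\bm{\pi})^* \cong L_q(\bm{\pi}^*)$, I would pass to the dual module via the antipode $S$ of the Hopf algebra $U_q(\bL\fg)$, acting on the linear dual of $L_q(\bm{\pi})$. A routine computation with $S$ shows that the linear functional dual to $v^-$ is annihilated by all $x_{i,r}^+$ and is a cyclic generator of $L_q(\bm{\pi})^*$ under $U_q(\bL\fn_-)$, so $L_q(\bm{\pi})^*$ is $\ell$-highest weight; its $\ell$-highest weight is obtained by applying $S$ to the eigenvalues of the $\phi_{i,r}^\pm$ on $v^-$, which amounts to inverting the $\ell$-weight $\bm{\pi}^-$. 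Combining this inversion with the bar operation built into $\bm{\pi}^-$ yields exactly $\bm{\pi}^*$, since by definition $\bm{\varpi}_{i,a}^* = \bm{\varpi}_{\bar i, aq^{-r^\vee h^\vee}}$. Simplicity of $L_q(\bm{\pi})^*$ (dual of a simple finite-dimensional module) then gives the first isomorphism.

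For the second isomorphism $\gs^*L_q(\bm{\pi}) \cong L_q({}^*\!\bm{\pi})$, I would use the formulas $\gs(x_{i,r}^\pm) = -x_{i,-r}^\mp$ and $\gs(\phi_{i,r}^\pm) = \phi_{i,-r}^\mp$ directly. They imply that in $\gs^* L_q(\bm{\pi})$ the vector $v^-$ satisfies $x_{i,r}^+ \cdot v^- = -x_{i,-r}^- v^- = 0$ and generates the module under the twisted $U_q(\bL\fn_-)$-action, so $\gs^* L_q(\bm{\pi})$ is $\ell$-highest weight. The eigenvalues of $\phi_{i,r}^\pm$ on $v^-$ in $\gs^*L_q(\bm{\pi})$ are those of $\phi_{i,-r}^\mp$ on $v^-$ in $L_q(\bm{\pi})$, which corresponds to the substitution $u \mapsto u^{-1}$ in the $\ell$-weight generating function. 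Inverting the spectral parameter in $\bm{\pi}^-$ then matches the defining transformation ${}^*\!\bm{\varpi}_{i,a} = \bm{\varpi}_{\bar i, a^{-1}q^{-r^\vee h^\vee}}$.

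The main obstacle in both arguments is the careful bookkeeping of the shifts by $q^{\pm r^\vee h^\vee}$: these arise from the antipode $S$ of $U_q(\bL\fg)$ and from the Chari--Pressley formula for $\bm{\pi}^-$, and one must verify that they combine correctly with the Dynkin involution $i \mapsto \bar i$ to produce the exact operations $\bm{\pi} \mapsto \bm{\pi}^*$ and $\bm{\pi} \mapsto {}^*\!\bm{\pi}$ defined in the paper. Once these identifications are made, the conclusion follows immediately from the classification of simple $\ell$-highest weight modules stated above.
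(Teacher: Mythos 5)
Your overall strategy coincides with the paper's: the paper offers no argument beyond citing [FM, Corollary~6.9], and your sketch is the standard deduction from the lowest $\ell$-weight monomial together with the classification of simple $\ell$-highest weight modules. Your treatment of the second isomorphism is correct and essentially complete: since $\gs$ is an algebra involution (no coproduct enters), the eigenvalue of $\phi_{i,r}^{\pm}$ on $v^-$ in $\gs^*L_q(\bm{\pi})$ really is that of $\phi_{i,-r}^{\mp}$ in $L_q(\bm{\pi})$, and the substitution $u\mapsto u^{-1}$ followed by inversion, applied to the lowest $\ell$-weight $\prod\bm{\varpi}_{\bar i,aq^{r^\vee h^\vee}}^{-m_{i,a}}$, does land on ${}^*\!\bm{\pi}$.

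The first isomorphism, however, contains a genuine gap at the step where you assert that applying $S$ ``amounts to inverting the $\ell$-weight $\bm{\pi}^-$''. Plain inversion of $\bm{\pi}^-=\prod\bm{\varpi}_{\bar i,aq^{r^\vee h^\vee}}^{-m_{i,a}}$ gives $\prod\bm{\varpi}_{\bar i,aq^{+r^\vee h^\vee}}^{m_{i,a}}$, whereas $\bm{\pi}^*=\prod\bm{\varpi}_{\bar i,aq^{-r^\vee h^\vee}}^{m_{i,a}}$; the two differ by the double-dual shift $q^{2r^\vee h^\vee}$ of the spectral parameter. A rank-one check makes this concrete: by Proposition \ref{Prop:sl2} the lowest $\ell$-weight of $L_q(\bm{\varpi}_{1,a})$ for $\mathfrak{sl}_2$ is $\bm{\varpi}_{1,aq^2}^{-1}$, whose inverse is $\bm{\varpi}_{1,aq^{2}}$, not $\bm{\varpi}_{1,a}^*=\bm{\varpi}_{1,aq^{-2}}$. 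So either your description of the action of $S$ on $\ell$-weights or your conclusion must fail, and it is the former: the antipode is defined on the Drinfeld--Jimbo generators, and its effect on the currents $\phi_i^{\pm}(u)$ is \emph{not} plain inversion --- it carries precisely the extra spectral shift you are missing (the same phenomenon responsible for $V^{**}\not\cong V$). Extracting that shift is the real content hiding behind your ``routine computation with $S$'', and it is exactly what the cited corollary of Frenkel--Mukhin (equivalently, the Chari--Pressley duality computation) supplies; as written, your first argument would identify $L_q(\bm{\pi})^*$ with the wrong simple module.
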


\subsection{Minimal affinizations}\label{subsection:minimal}

Here we recall the definition of minimal affinizations and their classifications when the shape of 
the Dynkin diagram of $\fg$ is a straight line, i.e., $\fg$ is of type $ABCFG$.

\begin{Def}[\cite{MR1367675}] \normalfont Let $\gl \in P^+$. \\
  (i) A simple finite-dimensional $U_q(\Lfg)$-module $L_q(\bm{\pi})$ is said to be an \textit{affinization} of $V_q(\gl)$ if
      $\wt(\bm{\pi}) = \gl$.\\
  (ii) Affinizations $V$ and $W$ of $V_q(\gl)$ are said to be \textit{equivalent}
    if they are isomorphic as $U_q(\fg)$-modules.
    We denote by $[V]$ the equivalence class of $V$.
\end{Def}

If $V$ is an affinization of $V_q(\gl)$, as a $U_q(\fg)$-module we have 
\[ V \cong V_q(\gl) \oplus \bigoplus_{\mu < \gl} V_q(\mu)^{\oplus m_{\mu}(V)}
\]
with some $m_{\mu}(V) \in \Z_{\ge 0}$. 
Let $V$ and $W$ be affinizations of $V_q(\gl)$, and define $m_{\mu}(V), m_{\mu}(W)$ as above.
We write $[V] \le [W]$ if for all $\mu \in P^+$, either of the following holds:
\begin{enumerate}
  \item[(i)] $m_\mu(V) \le m_\mu(W)$, or
  \item[(ii)] there exists some $\nu > \mu$ such that $m_{\nu}(V) < m_{\nu}(W)$. 
\end{enumerate}
Then $\le$ defines a partial ordering on the set of equivalence classes of affinizations of $V_q(\gl)$
\cite[Proposition 3.7]{MR1367675}.

\begin{Def}[\cite{MR1367675}] \normalfont
  We say an affinization $V$ of $V_q(\gl)$ is \textit{minimal} if $[V]$ is minimal in the set of 
  equivalence classes of affinizations of $V_q(\gl)$ with respect to this ordering. 
\end{Def}

For $i \in I$, $a \in \C(q)^\times$ and $m \in \Z_{\ge 0}$, define $\bm{\pi}_{m,a}^{(i)} \in P_q^+$ by
\[ \bm{\pi}_{m,a}^{(i)} = \prod_{k = 1}^{m} \bm{\varpi}_{i, aq_i^{m-2k+1}}.
\]
Note that $\bm{\pi}_{0,a}^{(i)}$ is the unit element of the monoid $P_q^+$ for all $i \in I$ and $a \in \C(q)^\times$. 
For $\fg$ of type $ABCFG$, minimal affinizations are completely classified.

\begin{Thm}[\cite{MR1367675,MR1347873}]\label{Thm:Classification}
  Assume that $\fg$ is of type $ABCFG$. For each $\gl\in P^+$,
  there exists a unique minimal affinization of $V_q(\gl)$ up to equivalence.
  Moreover for $\gl = \sum_{i \in I} \gl_i \varpi_i$,
  $L_q(\bm{\pi})$ is a minimal affinization of $V_q(\gl)$ if and only if $\bm{\pi}$ is in the form
  $\prod_{i \in I} \bm{\pi}_{\gl_i, a_i}^{(i)}$ with $(a_i)_{i \in I}$ satisfying one of the following conditions:
  \begin{enumerate}
    \item[(I)] For all $1 \le i < j \le n$, $a_i/a_j = \prod_{i\le k <j} c_k(\gl)$,
    \item[(II)] For all $1 \le i < j \le n$, $a_i/a_j = \prod_{i \le k < j} c_k(\gl)^{-1}$,
  \end{enumerate} 
  where we set $c_k(\gl) = q^{d_k\gl_k + d_{k+1}\gl_{k+1} + d_k -c_{k,k+1}-1}$.
\end{Thm}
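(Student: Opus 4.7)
The plan is to reduce the classification to rank-two subdiagrams and then assemble the local constraints, exploiting the fact that in types $ABCFG$ the Dynkin diagram of $\fg$ is a straight line.

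First I would establish the coarse shape of the Drinfeld polynomials: for a minimal $L_q(\bm{\pi})$ with $\wt(\bm{\pi}) = \gl$, I expect each $\bm{\pi}_i(u)$ to factor as $\bm{\pi}_{\gl_i,a_i}^{(i)}$. The argument should go by contradiction -- if some $\bm{\pi}_i$ is not in this ``regular string'' form, then by replacing a factor with an appropriate Kirillov--Reshetikhin polynomial one produces another affinization whose $U_q(\fg)$-decomposition is strictly smaller in the partial order on equivalence classes of affinizations. Here one uses that the factors $L_q(\bm{\pi}_{m,a}^{(i)})$ are the ``building blocks'' with the tightest $U_q(\fg)$-structure, a fact that can in turn be extracted from known $U_q(\fg)$-decompositions of Kirillov--Reshetikhin modules.

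Next I would perform the rank-two analysis. Restriction of $L_q(\bm{\pi})$ to the subalgebra $U_q(\bL \fg_{\{i,i+1\}})$ sends an affinization of $V_q(\gl)$ to a direct sum of affinizations; a cyclicity argument shows that the restriction of a minimal affinization contains a minimal affinization of the $\fg_{\{i,i+1\}}$-module generated by the highest weight vector. For each of the rank-two types $A_2$, $B_2$, $C_2$, $G_2$ one then classifies minimal affinizations directly: compute the $U_q(\fg_{\{i,i+1\}})$-character of $L_q(\bm{\pi}_{m,a}^{(i)}) \otimes L_q(\bm{\pi}_{m',a'}^{(i+1)})$ as a function of the ratio $a/a'$, and identify precisely the two ``critical'' values $c_i(\gl)^{\pm 1}$ at which the decomposition is minimal. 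These computations exploit the $R$-matrix structure of tensor products of Kirillov--Reshetikhin modules, and are where the specific exponent $d_i\gl_i + d_{i+1}\gl_{i+1} + d_i - c_{i,i+1} - 1$ in $c_i(\gl)$ is pinned down.

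The final step is the assembly. Applying the rank-two result to every adjacent pair $\{i,i+1\}$ forces $a_i/a_{i+1} = c_i(\gl)^{\pm 1}$, and the signs must be globally consistent along the straight-line diagram -- otherwise combining two adjacent local constraints with opposite signs would already yield a non-minimal decomposition on a three-node subdiagram. This isolates exactly the two families (I) and (II). Existence is obtained by constructing $L_q(\bm{\pi})$ as the simple head of the ordered tensor product $\bigotimes_{i \in I} L_q(\bm{\pi}_{\gl_i,a_i}^{(i)})$, and uniqueness up to equivalence of the resulting $U_q(\fg)$-module follows because (I) and (II) are interchanged by the duality $\bm{\pi} \mapsto \bm{\pi}^*$ of Lemma \ref{Lem:dual}, hence give isomorphic $U_q(\fg)$-modules. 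The main obstacle throughout will be the rank-two analysis in the non-simply-laced cases $B_2$, $C_2$, $G_2$: tracing how the pole/zero patterns of the $R$-matrix (or equivalently the $q$-characters) conspire to produce the exact exponent in $c_i(\gl)$ requires a separate and fairly intricate computation in each type, and the detailed structure of the corresponding Kirillov--Reshetikhin modules is essential.
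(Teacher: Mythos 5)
This theorem is not proved in the paper at all: it is quoted (with a normalization adjustment, as the author notes in the following Remark) from Chari \cite{MR1367675} and Chari--Pressley \cite{MR1347873}, so there is no in-paper argument to compare yours against. Judged against the strategy of those references, your outline is a reasonable reconstruction of the actual route: reduction to adjacent rank-two subdiagrams via the restriction lemma (the analogue of \cite[Lemma 2.3]{MR1402568} quoted in Subsection 3.3), a direct rank-two classification, and an assembly step. As a roadmap it points in the right direction, but essentially all of the mathematical content is deferred to steps you only gesture at: that each $\bm{\pi}_i$ must be a single $q_i$-string, that minimality of $L_q(\bm{\pi})$ forces minimality of the rank-two restrictions (this is not automatic and requires a multiplicity comparison), and above all the rank-two computations that pin down the exponent in $c_k(\gl)$ --- which you correctly identify as the crux and then do not carry out. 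Those computations are the theorems of the cited papers.

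Two concrete issues in the parts you do spell out. First, the duality you invoke for uniqueness does not do what you claim: $\bm{\pi}\mapsto\bm{\pi}^*$ merely translates every spectral parameter by the same factor $q^{-r^\vee h^\vee}$ (and fixes the node since $\bar{\imath}=i$ in types $BCFG$), so it preserves all ratios $a_i/a_j$ and maps family (I) to family (I). The involution that inverts the ratios and hence interchanges (I) and (II) is $\bm{\pi}\mapsto {}^*\!\bm{\pi}$, with $\gs^*L_q(\bm{\pi})\cong L_q({}^*\!\bm{\pi})$; since $\gs$ restricts to a Chevalley-type involution of $U_q(\fg)$ and $-w_\circ=\id$ in these types, this does give equivalent affinizations, but your argument as written uses the wrong map. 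Second, your ``sign consistency on three-node subdiagrams'' assembly tacitly assumes all $\gl_k>0$: when $\gl_{i+1}=0$ the adjacent constraints at $i$ and $i+1$ degenerate (the value of $a_{i+1}$ is immaterial), and the genuine constraint is between the nearest nodes carrying nonzero weight, which is exactly why the theorem states the condition as $a_i/a_j=\prod_{i\le k<j}c_k(\gl)^{\pm1}$ for all pairs rather than only adjacent ones. A complete write-up would have to handle this case separately, as Chari--Pressley do.
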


\begin{Rem}\normalfont
   (i) Because of different normalizations in some definitions, the conditions of $a_i$'s  
       are rewritten in a slightly different way from the ones in \cite{MR1367675,MR1347873}. \\
   (ii) 
   The situation is more complicated in type $DE$ because of the existence of a trivalent node,
   and the number of the equivalence classes of minimal affinizations of $V_q(\gl)$ differs depending on $\gl$.
   In this case, the classification has been achieved except for $\gl$ orthogonal to the trivalent node
   (see \cite{MR1376937,MR1402568}).
   We omit the details since we do not consider this case in this article.
\end{Rem}

\begin{Def} \normalfont
  The simple modules $L_q(\bm{\pi}_{m,a}^{(i)})$, which are minimal affinizations of $V_q(m\varpi_i)$,
  are called \textit{Kirillov-Reshetikhin modules}.
  Among them, the ones with $m = 1$ are called \textit{fundamental modules}.
\end{Def}

For a nonempty subset $J \subseteq I$ such that $\fg_J$ is simple, 
denote by $P_{q,J}$ the $\ell$-weight lattice of $U_q(\bL\fg_J)$,
and define a map $P_q \ni \bm{\rho} \mapsto \bm{\rho}_J \in P_{q,J}$ by letting $\bm{\rho}_J$ be the $J$-tuple 
$\big(\bm{\rho}_i(u)\big)_{i \in J}$.

\begin{Lem}[{\cite[Lemma 2.3]{MR1402568}}]
  For every $\bm{\pi} \in P_q^+$, the $U_q(\bL\fg_J)$-submodule of $L_q(\bm{\pi})$ generated by an $\ell$-highest weight 
  vector $v_{\bm{\pi}}$ is isomorphic to the simple $U_q(\bL\fg_J)$-module with $\ell$-highest weight $\bm{\pi}_J$.  
\end{Lem}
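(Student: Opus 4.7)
The plan is to establish two things: (i) $v_{\bm{\pi}}$ is an $\ell$-highest weight vector for the action of $U_q(\bL\fg_J)$ with $\ell$-weight $\bm{\pi}_J$, so that $M := U_q(\bL\fg_J) v_{\bm{\pi}}$ becomes a finite-dimensional $\ell$-highest weight $U_q(\bL\fg_J)$-module; and (ii) $M$ is simple, hence isomorphic to the simple $U_q(\bL\fg_J)$-module with $\ell$-highest weight $\bm{\pi}_J$.

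For (i), I would directly verify the defining conditions. The vanishing $x_{i,r}^+ v_{\bm{\pi}} = 0$ for $i \in J$, $r \in \Z$ is automatic, since it holds for all $i \in I$ by the assumption on $v_{\bm{\pi}}$. For the $\ell$-weight, the eigenvalues of $\phi_{i,r}^\pm$ on $v_{\bm{\pi}}$ are governed by the generating series
\[
q_i^{\langle \ga_i^\vee, \wt(\bm{\pi}) \rangle} \frac{\bm{\pi}_i(q_i^{-1} u)}{\bm{\pi}_i(q_i u)}.
\]
For $i \in J$ this depends only on the $i$-th polynomial $\bm{\pi}_i(u)$ and on $\langle \ga_i^\vee, \wt(\bm{\pi}) \rangle = \deg \bm{\pi}_i(u)$, both of which agree with the corresponding data for $\bm{\pi}_J$ viewed as an element of $P_{q,J}^+$; thus the eigenvalues are exactly those prescribed by $\bm{\pi}_J$ under the $U_q(\bL\fg_J)$-action. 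Consequently $M$ is an $\ell$-highest weight $U_q(\bL\fg_J)$-module of $\ell$-weight $\bm{\pi}_J$, and therefore surjects onto the unique simple $U_q(\bL\fg_J)$-module with this $\ell$-highest weight.

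For (ii), I would exclude proper non-trivial submodules by a duality argument. Using Lemma \ref{Lem:dual}, $L_q(\bm{\pi})^* \cong L_q(\bm{\pi}^*)$; applying (i) to $L_q(\bm{\pi}^*)$, the submodule $M' := U_q(\bL\fg_J) v_{\bm{\pi}^*}$ inside $L_q(\bm{\pi}^*)$ is itself $\ell$-highest weight over $U_q(\bL\fg_J)$. Tracing through the duality isomorphism and the Hopf structure, I would then identify $M'$ with a suitable antipode- or $\gs$-twist of $M^*$, so that $M^*$ is also $\ell$-highest weight for $U_q(\bL\fg_J)$ with an identifiable $\ell$-highest weight. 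A finite-dimensional $\ell$-highest weight module whose dual is again $\ell$-highest weight must be simple: a non-trivial proper submodule $N \subsetneq M$ would dualize to a non-trivial proper quotient of $M^*$, contradicting the uniqueness of an $\ell$-highest weight vector in $M^*$.

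The main obstacle is the simplicity step. The delicate point is that the $*$-operation on $P_q^+$ is defined using $w_\circ$ for the full $\fg$, while the analogous operation on $P_{q,J}^+$ uses the (in general different) longest element of the Weyl group of $\fg_J$, so that $(\bm{\pi}^*)_J$ need not coincide with the analogous $*$-operation applied to $\bm{\pi}_J$. One must therefore identify the precise $\ell$-highest weight of $M^*$ via an honest computation with the coproduct of $U_q(\bL\fg_J)$ rather than a formal commutation of $*$ with restriction; this bookkeeping is the technical heart of the argument.
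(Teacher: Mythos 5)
First, note that the paper offers no proof of this statement: it is quoted verbatim from Chari--Pressley \cite[Lemma 2.3]{MR1402568}, so your proposal can only be measured against the standard argument there. Your step (i) is fine: $v_{\bm{\pi}}$ is killed by $x_{i,r}^+$ for $i\in J$, and its $\phi_{i,r}^{\pm}$-eigenvalues for $i\in J$ depend only on $\bm{\pi}_i(u)$, so $M=U_q(\bL\fg_J)v_{\bm{\pi}}$ is $\ell$-highest weight of $\ell$-weight $\bm{\pi}_J$ and surjects onto the simple module with that $\ell$-highest weight.

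The gap is in step (ii), and it is exactly at the point you flag but do not resolve. Your plan is to show that $M^*$ is again $\ell$-highest weight over $U_q(\bL\fg_J)$ by comparing it with $M'=U_q(\bL\fg_J)v_{\bm{\pi}^*}\subseteq L_q(\bm{\pi}^*)\cong L_q(\bm{\pi})^*$. But $M^*$ is a \emph{quotient} of $L_q(\bm{\pi})^*$ (restriction of functionals to $M$), while $M'$ is a \emph{submodule}, and the generator of $M'$ corresponds to the functional supported on the lowest weight space $L_q(\bm{\pi})_{w_\circ\gl}$. When $J\neq I$ the module $M$ has weights only in $\gl-\sum_{i\in J}\Z_{\ge 0}\ga_i$, so in general it does not meet the weight space $L_q(\bm{\pi})_{w_\circ\gl}$ at all; the natural map $M'\to M^*$ therefore kills the generator of $M'$ and gives no information. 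What you would actually need is that $M^*$ is generated by the functional dual to the weight space of $M$ at $w_\circ^J\gl$ ($w_\circ^J$ the longest element of the Weyl group of $\fg_J$), and nothing in the proposal produces that. Also, your stated criterion is justified loosely: a proper quotient of an $\ell$-highest weight module is not in itself a contradiction (the simple quotient is one); the correct version is that the dual of the simple quotient of $M$ embeds in $M^*$ and contains the one-dimensional top weight space of $M^*$, hence all of $M^*$ if the latter is $\ell$-highest weight.

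The standard proof avoids duality entirely and is much shorter. Suppose $N\subsetneq M$ is a nonzero $U_q(\bL\fg_J)$-submodule, and pick $0\neq w\in N_{\gl-\eta}$ with $\gl-\eta$ a maximal weight of $N$, so $\eta\in\sum_{i\in J}\Z_{\ge 0}\ga_i$ and $x_{i,r}^+w=0$ for all $i\in J$, $r\in\Z$. For $j\notin J$ the vector $x_{j,r}^+w$ lies in $L_q(\bm{\pi})_{\gl-\eta+\ga_j}$, and $\gl-\eta+\ga_j\not\le\gl$ because the coefficient of $\ga_j$ in $\eta-\ga_j$ is negative; hence this weight space is zero and $w$ is annihilated by \emph{all} $x_{i,r}^+$, $i\in I$. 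By simplicity of $L_q(\bm{\pi})$ and the triangular decomposition (\ref{eq:triangular}), such a vector must lie in the one-dimensional top weight space, forcing $\eta=0$ and $N=M$. You should replace the duality argument by this weight computation (or complete the duality bookkeeping, which is genuinely delicate for the reason above).
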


For $\mu = \sum_{i \in I} \mu_i \varpi_i \in P$, write $\mu_J = \sum_{i \in J} \mu_i\varpi_i$.
From this lemma and Theorem \ref{Thm:Classification}, the following corollary is easily proved.

\begin{Cor}\label{Cor:restriction}
  Assume that $\fg$ is of type $ABCFG$.
  If $L_q(\bm{\pi})$ is a minimal affinization of $V_q(\gl)$, then the $U_q(\bL\fg_J)$-submodule
  of $L_q(\bm{\pi})$ generated by $v_{\bm{\pi}}$ is  a minimal affinization of the 
  simple $U_q(\fg_J)$-module with highest weight $\gl_J$. 
\end{Cor}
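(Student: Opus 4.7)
The plan is straightforward and splits into two parts: identifying the submodule, and then checking that the identified module satisfies the conditions of Theorem \ref{Thm:Classification} for $\fg_J$. First, I would invoke the lemma stated immediately above the corollary to identify the $U_q(\bL\fg_J)$-submodule of $L_q(\bm{\pi})$ generated by $v_{\bm{\pi}}$ with the simple $U_q(\bL\fg_J)$-module $L_q(\bm{\pi}_J)$. Since $\wt(\bm{\pi}_J)=\gl_J$, this submodule is automatically an affinization of $V_q(\gl_J)$ viewed as a $U_q(\fg_J)$-module; the real content is therefore minimality.

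Next, I would apply Theorem \ref{Thm:Classification} to $\fg$ to write $\bm{\pi}=\prod_{i\in I}\bm{\pi}_{\gl_i,a_i}^{(i)}$ with $(a_i)_{i\in I}$ satisfying condition (I) or (II). Restricted to $J$, this gives $\bm{\pi}_J=\prod_{i\in J}\bm{\pi}_{\gl_i,a_i}^{(i)}$ with the same family $(a_i)_{i\in J}$. Because $\fg$ is of type $ABCFG$ (straight-line Dynkin diagram) and $\fg_J$ is simple, $J$ must be a connected interval $\{i_0,i_0+1,\ldots,j_0\}$, and hence $\fg_J$ is again of type $ABCFG$, so Theorem \ref{Thm:Classification} is available for $\fg_J$.

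The key observation is that conditions (I) and (II) are local in the following sense: they are equivalent to the pairwise conditions $a_k/a_{k+1}=c_k(\gl)^{\pm 1}$ for consecutive $k,k+1$, and the quantity
\[ c_k(\gl)=q^{d_k\gl_k+d_{k+1}\gl_{k+1}+d_k-c_{k,k+1}-1} \]
depends only on the local data $\gl_k,\gl_{k+1},d_k,d_{k+1},c_{k,k+1}$. For $k,k+1\in J$ this data is identical when computed in $\fg_J$ together with $\gl_J$, so $c_k(\gl)=c_k(\gl_J)$ for every such $k$. Consequently the restriction $(a_i)_{i\in J}$ satisfies condition (I) (resp. (II)) for $(\fg_J,\gl_J)$, and a second application of Theorem \ref{Thm:Classification} concludes that $L_q(\bm{\pi}_J)$ is a minimal affinization of $V_q(\gl_J)$.

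There is no substantial obstacle here; the only point requiring attention is the identification of the combinatorial data attached to consecutive pairs in $J$ with the Cartan data of $\fg_J$, which is immediate from the fact that $\fg_J$ is defined as the subalgebra generated by $\{e_i,f_i\mid i\in J\}$ (so that $d_i$ and $c_{ij}$ for $i,j\in J$ are inherited unchanged). One should also note that minimality of $L_q(\bm{\pi}_J)$ is independent of which of the two alternatives (I), (II) holds for $(a_i)_{i\in I}$, since the same alternative carries over to $(a_i)_{i\in J}$.
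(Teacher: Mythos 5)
Your proposal is correct and is essentially the paper's own argument: the paper proves the corollary exactly by combining the preceding lemma (identifying the submodule with $L_q(\bm{\pi}_J)$) with Theorem \ref{Thm:Classification} applied to both $\fg$ and $\fg_J$, observing that conditions (I)/(II) are determined by the consecutive ratios $a_k/a_{k+1}$.

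One small point you wave through too quickly: the integers $d_i$ are \emph{not} always inherited unchanged, since for $\fg_J$ they must be renormalized to be coprime (e.g.\ a type $A$ subdiagram of $B_n$ consisting of long roots has all inherited $d_i=2$), and correspondingly the subalgebra $U_q(\bL\fg_J)$ is a quantum loop algebra with deformation parameter $q^g$, $g=\gcd\{d_i\mid i\in J\}$. The identity $c_k(\gl)=c_k(\gl_J)$ still holds because the discrepancy in the exponent is $(g-1)(-c_{k,k+1}-1)$, and whenever $g>1$ the subdiagram is simply laced so $c_{k,k+1}=-1$; this cancellation deserves a line rather than the word ``immediate.''
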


In type $A$, the structure of minimal affinizations are much simpler than that of the other types.

\begin{Thm}[{\cite[Section 2]{MR841713}, \cite[Theorem 3.1]{MR1402568}}]\label{Thm;typeA}
  Assume that $\fg$ is of type $A$.
  If $L_q(\bm{\pi})$ is a minimal affinization of $V_q(\gl)$, then 
  $L_q(\bm{\pi})$ is isomorphic to $V_q(\gl)$ as a $U_q(\fg)$-module. 
\end{Thm}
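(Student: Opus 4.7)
The plan is to exhibit one explicit minimal affinization of $V_q(\gl)$ which is visibly isomorphic to $V_q(\gl)$ as a $U_q(\fg)$-module, and then invoke the uniqueness statement in Theorem \ref{Thm:Classification} to conclude that every minimal affinization has the same $U_q(\fg)$-structure.

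First, I would use the fact that in type $A$ (and only in type $A$ among the series considered here) there exists, for each $a \in \C(q)^\times$, a Jimbo evaluation homomorphism $\mathrm{ev}_a \colon U_q(\bL\fg) \to U_q(\fg)$. Pulling back $V_q(\gl)$ through $\mathrm{ev}_a$ produces a $U_q(\bL\fg)$-module $V_q(\gl, a)$ whose restriction to $U_q(\fg)$ is literally $V_q(\gl)$; in particular, it is an affinization of $V_q(\gl)$ in the sense of Definition above, and it is simple as a $U_q(\bL\fg)$-module since it is already simple over $U_q(\fg)$. The unique highest weight vector $v_\gl$ is an $\ell$-highest weight vector because $U_q(\bL\fh)$ acts on the one-dimensional weight space $V_q(\gl)_\gl$ by scalars via $\mathrm{ev}_a$.

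Second, I would compute the $\ell$-highest weight $\bm{\pi}^{(a)}$ of $V_q(\gl, a)$. Explicitly tracking the image of the Drinfeld generators $\phi_{i,r}^\pm$ under $\mathrm{ev}_a$ and evaluating on $v_\gl$, one obtains closed formulas for the eigenvalues $\ggg_{i,\pm r}^\pm$, from which one reads off that each $\bm{\pi}^{(a)}_i(u)$ is a product of exactly $\gl_i$ linear factors with uniformly spaced roots, so $\bm{\pi}^{(a)} = \prod_{i \in I} \bm{\pi}_{\gl_i, a_i}^{(i)}$ for a sequence $(a_i)_{i \in I}$ whose ratios depend on $a$, the $d_i$, and $\gl$. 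Checking that the ratios $a_i/a_j$ match the products of $c_k(\gl)$ prescribed by Theorem \ref{Thm:Classification} then shows that $V_q(\gl, a) \cong L_q(\bm{\pi}^{(a)})$ is a minimal affinization of $V_q(\gl)$.

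Finally, by the uniqueness clause in Theorem \ref{Thm:Classification}, every minimal affinization $L_q(\bm{\pi})$ is equivalent to $V_q(\gl, a)$, i.e.\ isomorphic to it as a $U_q(\fg)$-module, and therefore isomorphic to $V_q(\gl)$ itself.

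The main obstacle I foresee is the second step: the $\ell$-weight calculation is straightforward in principle but requires careful bookkeeping of the scalar shifts inherent in the Jimbo evaluation map, in translating between Drinfeld and Chevalley generators, and in matching the resulting parameters $a_i$ with the minimality conditions (I) or (II). All other steps are essentially formal consequences of the existence of the evaluation module and of the already-established classification.
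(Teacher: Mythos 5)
Your proposal is correct and is essentially the argument behind the sources the paper cites for this statement (the paper offers no proof of its own, deferring to Jimbo's evaluation homomorphism and Chari--Pressley's Theorem 3.1 in the simply laced case): construct the evaluation module $V_q(\gl,a)$ in type $A$, observe it restricts to $V_q(\gl)$ over $U_q(\fg)$, identify it as the minimal affinization, and conclude by uniqueness. One simplification worth noting: your second step (matching the Drinfeld polynomials of $V_q(\gl,a)$ against conditions (I)/(II) of Theorem \ref{Thm:Classification}) is not needed for minimality, since $m_\mu\big(V_q(\gl,a)\big)=0$ for all $\mu<\gl$ forces $[V_q(\gl,a)]\le[W]$ for \emph{every} affinization $W$ directly from the definition of the partial order, so $V_q(\gl,a)$ is minimal and the uniqueness clause alone finishes the proof.
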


\subsection{Classical limits}

In this subsection we assume that $\fg$ is classical, since some of the results below 
(for example Proposition \ref{Prop:A_lattice}) have been proved under this assumption. 

Let $\mathbf{A}=\C[q,q^{-1}] \subseteq \C(q)$.
An $\bf{A}$-submodule $L$ of a $\C(q)$-vector space $V$ is called an $\bf{A}$\textit{-lattice} if 
$L$ is a free $\bf{A}$-module and $\C(q) \otimes_{\bf{A}} L = V$ holds.
Set $\big(x_{i,r}^{\pm}\big)^{(k)} = \big(x_{i,r}^{\pm}\big)^k/[k]_{q_i}!$ for $i \in I$, $r \in \Z$ and $k \in \Z_{\ge 0}$,
and denote by $U_{\mathbf{A}}(\bL\fg)$ the $\mathbf{A}$-subalgebra of $U_q(\bL\fg)$ generated by 
$\big\{ k_i^{\pm 1},  \big(x_{i,r}^{\pm}\big)^{(k)}\bigm| i \in I, r \in \Z, k \in \Z_{> 0} \big\}$.
Define $U_{\mathbf{A}}(\fg)$ in a similar way.
Then $U_{\mathbf{A}}(\Lfg)$ and $U_{\mathbf{A}}(\fg)$ are $\bf{A}$-lattices of $U_q(\Lfg)$ and $U_q(\fg)$ respectively
\cite[Lemma 2.1]{MR1836791}, \cite{MR1227098}.
Set
\[ U_1(\Lfg) = \C \otimes_{\bf{A}} U_{\bf{A}}(\Lfg) \ \ \text{and} \ \ U_1(\fg) = \C \otimes_{\bf{A}} U_{\bf{A}}(\fg),
\]
where $\C$ is regarded as an $\bf{A}$-module by letting $q$ act by $1$.
As shown in the proof of \cite[Lemma 2.1]{MR1836791},
the $\bf{A}$-lattice of the quantum affine algebra $U_q'(\hat{\fg})$ 
in \cite{MR1227098} is mapped onto $U_{\bf{A}}(\Lfg)$ under the canonical projection. 
Hence the following proposition is proved from \cite{MR1227098}, \cite[Proposition 9.3.10]{MR1300632}.

\begin{Prop}\label{Prop:classical_limit}
   The universal enveloping algebra $U(\Lfg)$ is isomorphic to the quotient of $U_1(\Lfg)$ by the ideal generated by 
   $1 \otimes k_i-1\otimes 1$ {\normalfont(}$i \in I${\normalfont)}. 
   In particular if $V$ is a $U_1(\Lfg)$-module on which $1\otimes k_i$'s act by $1$, then $V$ is an $\Lfg$-module.
   A similar statement also holds for $U(\fg)$ and $U_1(\fg)$.
\end{Prop}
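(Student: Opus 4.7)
The plan is to deduce this proposition from the known classical specialization of the quantum affine algebra $U_q'(\hfg)$ together with the compatibility of $\mathbf{A}$-lattices under the canonical projection $U_q'(\hfg) \twoheadrightarrow U_q(\Lfg)$ already recorded in the excerpt.

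First I would invoke \cite[Proposition 9.3.10]{MR1300632}, according to which the specialization $\C \otimes_\mathbf{A} U_\mathbf{A}'(\hfg)$ modulo the ideal generated by $1\otimes k_i - 1\otimes 1$ for all $i \in \hI$ is isomorphic to $U([\hfg,\hfg])$. Since $\Lfg = [\hfg,\hfg]/\C K$, this already gives $U(\Lfg) \cong U([\hfg,\hfg])/\langle K \rangle$.

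Next, I would exploit the surjection $U_\mathbf{A}'(\hfg) \twoheadrightarrow U_\mathbf{A}(\Lfg)$ noted in the excerpt. Tensoring it over $\mathbf{A}$ with $\C$ yields a surjection $\C \otimes_\mathbf{A} U_\mathbf{A}'(\hfg) \twoheadrightarrow U_1(\Lfg)$ whose kernel is generated by the specialization of the central element that is responsible for the quotient $U_q'(\hfg) \to U_q(\Lfg)$, and at the classical limit this specialization is (a scalar multiple of) the class of $K$. Combining the two steps yields
\[
U_1(\Lfg)\big/\bigl\langle 1\otimes k_i - 1\otimes 1 \bigm| i \in I\bigr\rangle \;\cong\; U(\Lfg),
\]
where one uses that imposing $1\otimes k_i = 1$ for $i \in I$ together with the vanishing of the relevant central element forces $1\otimes k_0 = 1$ as well. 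The ``in particular'' assertion is then immediate, since any $U_1(\Lfg)$-module on which each $1\otimes k_i$ acts as the identity factors through this quotient and so is an $\Lfg$-module. The analogue for $U(\fg)$ and $U_1(\fg)$ is the classical finite-type specialization theorem for the Lusztig integral form and follows by the same strategy.

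The main technical point that needs to be checked carefully is the identification, at the classical limit, of the central element defining the quotient $U_q'(\hfg) \to U_q(\Lfg)$ with a scalar multiple of $K$; this is essentially what \cite{MR1227098} establishes, and once it is granted the remainder of the argument is routine diagram-chasing through the commutative square of surjections.
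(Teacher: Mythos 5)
Your proposal is correct and follows essentially the same route as the paper: the paper likewise combines the fact (from the proof of Chari's Lemma 2.1 in \cite{MR1836791}) that Lusztig's integral form of $U_q'(\hfg)$ maps onto $U_{\mathbf{A}}(\Lfg)$ under the canonical projection with the classical specialization results of \cite{MR1227098} and \cite[Proposition 9.3.10]{MR1300632}. The only difference is that you make explicit the bookkeeping of the central element and its identification with (a multiple of) $K$ at $q=1$, which the paper leaves implicit in the citations.
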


Let $P_{\mathbf{A}}^+$ be the submonoid of $P_q^+$ generated by $\{ \bm{\varpi}_{i,a} \mid i \in I, a \in \C^\times q^\Z\}$.
For $\bm{\pi} \in P_{\mathbf{A}}^+$, we set 
$L_{\mathbf{A}}(\bm{\pi}) = U_{\mathbf{A}}(\Lfg)v_{\bm{\pi}} \subseteq L_q(\bm{\pi})$.

\begin{Prop}[\cite{MR1850556,MR1836791}]\label{Prop:A_lattice} \ \\
  {\normalfont(i)} $L_{\mathbf{A}}(\bm{\pi})$ is spanned by the vectors 
      \[ \big(x_{i_1,l_1}^-\big)^{(s_1)} \big(x_{i_2,l_2}^-\big)^{(s_2)} \cdots \big(x_{i_p,l_p}^-\big)^{(s_p)} v_{\bm{\pi}}
      \]
      for $p \ge 0$, $i_j \in I$, $s_j \in \Z_{\ge 0}$ and $0 \le l_j \le N$ with sufficiently large $N$.\\[3pt]
  {\normalfont(ii)} 
      $L_{\mathbf{A}}(\bm{\pi})$ is an $\mathbf{A}$-lattice of $L_q(\bm{\pi})$.
\end{Prop}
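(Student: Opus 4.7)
The plan is to prove (i) first and then deduce (ii) from it. The heart of (i) is to establish the identity $L_{\mathbf{A}}(\bm{\pi}) = U_{\mathbf{A}}(\bL\fn_-)\,v_{\bm{\pi}}$, where $U_{\mathbf{A}}(\bL\fn_-)$ denotes the $\mathbf{A}$-subalgebra of $U_q(\bL\fg)$ generated by the divided powers $\big(x_{i,r}^-\big)^{(k)}$. For this, I would first invoke an integral triangular decomposition $U_{\mathbf{A}}(\bL\fg) = U_{\mathbf{A}}(\bL\fn_-) \cdot U_{\mathbf{A}}(\bL\fh) \cdot U_{\mathbf{A}}(\bL\fn_+)$ lifting \eqref{eq:triangular}; this is a standard PBW-type result for the integral form. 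The $\ell$-highest weight condition $x_{i,r}^+ v_{\bm{\pi}} = 0$ immediately yields $U_{\mathbf{A}}(\bL\fn_+) v_{\bm{\pi}} \subseteq \mathbf{A} v_{\bm{\pi}}$. For the Cartan factor, the hypothesis $\bm{\pi} \in P_{\mathbf{A}}^+$ guarantees that the spectral parameters lie in $\C^\times q^{\Z} \subseteq \mathbf{A}^\times$, so the eigenvalues of the $\phi_{i,r}^\pm$ on $v_{\bm{\pi}}$ belong to $\mathbf{A}$; combined with the fact that an $\mathbf{A}$-form of $U_q(\bL\fh)$ can be generated by integral commutators of divided powers of $x_{i,r}^\pm$, this gives $U_{\mathbf{A}}(\bL\fh) v_{\bm{\pi}} \subseteq \mathbf{A} v_{\bm{\pi}}$.

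Next, I would bound the indices $l_j$. Because $L_q(\bm{\pi})$ is finite-dimensional, only finitely many distinct Drinfeld--Cartan eigenvalues occur on it, which forces polynomial recursion relations expressing $x_{i,r}^-$ for $r \ge N$ and for $r < 0$ as $\mathbf{A}$-linear combinations of $x_{i,s}^-$ with $0 \le s < N$, modulo commutator terms involving divided-power monomials of strictly lower total degree. Iterating such reductions on every PBW-type monomial from the previous step brings it into the form asserted in (i).

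For (ii), the equality $\C(q) \otimes_{\mathbf{A}} L_{\mathbf{A}}(\bm{\pi}) = L_q(\bm{\pi})$ is immediate because $L_q(\bm{\pi}) = U_q(\bL\fg) v_{\bm{\pi}}$ and $U_{\mathbf{A}}(\bL\fg)$ is an $\mathbf{A}$-form of $U_q(\bL\fg)$. Freeness follows by decomposing $L_{\mathbf{A}}(\bm{\pi})$ into weight spaces: part (i), together with the bound on $l_j$ and the finiteness of $\dim L_q(\bm{\pi})_\mu$, implies that each weight space $L_{\mathbf{A}}(\bm{\pi})_\mu$ is generated over $\mathbf{A}$ by finitely many vectors; being a finitely generated torsion-free module over the PID $\mathbf{A}$, it is free, and hence so is the direct sum $L_{\mathbf{A}}(\bm{\pi}) = \bigoplus_\mu L_{\mathbf{A}}(\bm{\pi})_\mu$.

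The main obstacle is the integral reduction in the second paragraph: generically (over $\C(q)$) the recursion relations eliminating $x_{i,r}^-$ with $r \ge N$ or $r < 0$ follow from finite-dimensionality, but proving that these relations can be realized with coefficients in $\mathbf{A}$ requires a delicate analysis of the Drinfeld-type quadratic relations and divided-power commutators, which is the technical content of the cited works \cite{MR1836791,MR1850556}.
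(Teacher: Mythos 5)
The paper does not actually prove this proposition: it is quoted with the attribution \cite{MR1850556,MR1836791} and used as a black box, so your attempt can only be measured against those references. Your outline follows the same route they take: an integral triangular decomposition reducing $L_{\mathbf{A}}(\bm{\pi})=U_{\mathbf{A}}(\bL\fg)v_{\bm{\pi}}$ to $U_{\mathbf{A}}(\bL\fn_-)v_{\bm{\pi}}$, a reduction of the loop degrees $l_j$ into a bounded window $0\le l_j\le N$, and then for (ii) the observation that each weight space is a finitely generated torsion-free module over the PID $\mathbf{A}=\C[q,q^{-1}]$, hence free. The first and third steps are correct as written; in particular you are right that the hypothesis $\bm{\pi}\in P_{\mathbf{A}}^+$ is exactly what puts the eigenvalues of the integral Cartan generators on $v_{\bm{\pi}}$ into $\mathbf{A}$ (the roots of $\bm{\pi}_i$ lie in $\C^\times q^{\Z}$, so all elementary symmetric functions of them lie in $\mathbf{A}$ and the top one is a unit).

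The one genuine gap is your second paragraph, and while you have located it correctly you have slightly misattributed its source. The recursion eliminating $x^-_{i,r}$ for $r\ge N$ or $r<0$ is not ``forced by finite-dimensionality''; it is an algebraic identity in the integral form (the Garland-type identities of \cite{MR1850556}), asserting that $(x^+_{i,0})^{(s)}(x^-_{i,1})^{(s+1)}$ is congruent, modulo the left ideal generated by the augmentation ideal of $U_{\mathbf{A}}(\bL\fn_+)$, to $(-1)^s\sum_{k}x^-_{i,s+1-k}\Lambda_{i,k}$ with $\Lambda_{i,k}\in U_{\mathbf{A}}(\bL\fh)$ acting on $v_{\bm{\pi}}$ by the coefficients of $\bm{\pi}_i(u)$; applied to $v_{\bm{\pi}}$ this yields an $\mathbf{A}$-linear recursion of length $\deg\bm{\pi}_i$ with unit leading coefficient, handling length-one monomials, after which longer monomials are reduced by induction on length using the quadratic Drinfeld relation $x^-_{i,r+1}x^-_{j,r'}-q_i^{-c_{ij}}x^-_{j,r'}x^-_{i,r+1}=q_i^{-c_{ij}}x^-_{i,r}x^-_{j,r'+1}-x^-_{j,r'+1}x^-_{i,r}$ to shuffle loop degrees between adjacent factors (this is the induction carried out in \cite{MR1836791}). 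Since you explicitly defer exactly this step to the cited works --- which is also all the present paper does --- your proposal is an accurate reconstruction of the intended argument, but not a self-contained proof.
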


Set 
\[ \overline{L_q(\bm{\pi})} = \C \otimes_{\mathbf{A}} L_{\mathbf{A}}(\bm{\pi}),
\]
which is called the \textit{classical limit} of $L_q(\bm{\pi})$.
$\ol{L_q(\bm{\pi})}$ is an $\Lfg$-module by Proposition \ref{Prop:classical_limit}, and we have
\begin{equation}\label{eq:eq_ch}
  \ch L_q(\bm{\pi}) = \ch \ol{L_q(\bm{\pi})}.
\end{equation}
Since $\ch V_q(\mu) = \ch V(\mu)$ holds for all $\mu \in P^+$, we also have 
\begin{equation}\label{eq:eq_multi}
  [L_q(\bm{\pi}): V_q(\mu)] = [\ol{L_q(\bm{\pi})}: V(\mu)]
\end{equation}
for all $\mu \in P^+$, where the left- and right-hand sides are the multiplicities as a $U_q(\fg)$-module and $\fg$-module respectively.

\section{Main theorems and corollaries}\label{Section:Main}

In this section, we assume that $\fg$ is of type $ABC$.

\subsection{Graded limit}

Let $\gl\in P^+$, and let 
$\bm{\pi} \in P_{\mathbf{A}}^+$ be such that $L_q(\bm{\pi})$ is a minimal affinization of $V_q(\gl)$
(such an element exists by Theorem \ref{Thm:Classification}).
Denote by $\ol{\bm{\pi}} = \big(\ol{\bm{\pi}}_1(u),\ldots,\ol{\bm{\pi}}_n(u)\big)$ the $I$-tuple of polynomials 
with coefficients in $\C$ obtained from $\bm{\pi}$ by evaluating $q$ at $1$. 
From Theorem \ref{Thm:Classification}, we easily see that there exists a unique nonzero complex number $a$ satisfying
\[ \ol{\bm{\pi}}_i(u) = (1 -au)^{\langle \ga_i^\vee, \gl \rangle} \ \ \ \text{for all} \ i \in I.
\]
Hence the following lemma is proved from \cite[Lemma 4.7]{MR1850556}.

\begin{Lem}\label{Lem:surj}
  There exists a surjective $\Lfg$-module homomorphism from the classical limit $\ol{L_q(\bm{\pi})}$ 
  to the evaluation module $V(\gl, a)$.
\end{Lem}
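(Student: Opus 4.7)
My strategy is to exhibit a nonzero $\Lfg$-module homomorphism $\ol{L_q(\bm{\pi})} \to V(\gl, a)$ sending $\ol{v}_{\bm{\pi}}$ to the highest weight vector $v_\gl$; surjectivity then follows automatically, since $V(\gl,a)$ is a simple $\Lfg$-module. The underlying observation is that the specialization formula $\ol{\bm{\pi}}_i(u) = (1-au)^{\langle\ga_i^\vee,\gl\rangle}$, valid for a \emph{common} root $a$ (as forced by Theorem \ref{Thm:Classification} combined with $c_k(\gl)|_{q=1}=1$), records exactly the data of the evaluation module $V(\gl,a)$. A first ingredient is cyclicity: by Proposition \ref{Prop:A_lattice}(i) the spanning vectors descend modulo $q-1$ to a $U(\bL\fn_-)$-spanning set for $\ol{L_q(\bm{\pi})}$, so $\ol{L_q(\bm{\pi})}$ is generated as an $\Lfg$-module by $\ol{v}_{\bm{\pi}}$.

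The technical heart of the argument is to identify the $\bL\fh$-action on $\ol{v}_{\bm{\pi}}$. In $L_q(\bm{\pi})$, the vector $v_{\bm{\pi}}$ is an eigenvector of each $\phi_{i,\pm r}^\pm$ with eigenvalue $\ggg_{i,\pm r}^\pm$ extracted from the generating function $q_i^{\langle\ga_i^\vee,\gl\rangle}\bm{\pi}_i(q_i^{-1}u)/\bm{\pi}_i(q_iu)$. Expanding the defining formula $\phi_i^+(u) = k_i\exp\bigl((q_i-q_i^{-1})\sum_{m>0}h_{i,m}u^m\bigr)$ and extracting the $O(q-1)$ term at $q = 1$, one reads off the actions of each $h_{i,m}$, and hence of each $h\otimes t^r$, on $\ol{v}_{\bm{\pi}}$. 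The common-root specialization $\ol{\bm{\pi}}_i(u) = (1-au)^{\langle\ga_i^\vee,\gl\rangle}$ causes the computation to collapse to $(h\otimes t^r)\ol{v}_{\bm{\pi}} = a^r\langle h,\gl\rangle\, \ol{v}_{\bm{\pi}}$ for every $h \in \fh$ and $r \in \Z$, matching exactly the $\bL\fh$-action on $v_\gl \in V(\gl,a)$.

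With this dictionary in place, the classical limits of the remaining quantum identities $x_{i,r}^+v_{\bm{\pi}} = 0$ and $\bigl(x_{i,0}^-\bigr)^{\langle\ga_i^\vee,\gl\rangle+1}v_{\bm{\pi}} = 0$ yield $(e_i\otimes t^r)\ol{v}_{\bm{\pi}} = 0$ and $f_i^{\langle\ga_i^\vee,\gl\rangle+1}\ol{v}_{\bm{\pi}} = 0$ respectively. Combined with the Cartan action above, these are precisely the relations defining $V(\gl,a)$ as a cyclic $\Lfg$-module with cyclic vector $v_\gl$, which is the universality statement recorded in \cite[Lemma 4.7]{MR1850556}. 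Invoking that result produces the desired map $\ol{L_q(\bm{\pi})} \to V(\gl,a)$, and simplicity of $V(\gl,a)$ forces it to be surjective. I expect the main obstacle to be the second paragraph: at $q=1$ the factor $q_i - q_i^{-1}$ in the generating function vanishes while the individual $h_{i,m}$-actions remain finite, so the translation of the quantum eigenvalues into loop-Cartan scalars requires a careful $O(q-1)$-expansion, and it is precisely the common-root form of $\ol{\bm{\pi}}$ that makes the answer collapse to the pleasant scalar $a^r\langle h,\gl\rangle$.
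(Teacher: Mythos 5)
Your overall route is the paper's: the published proof consists of the observation that $\ol{\bm{\pi}}_i(u)=(1-au)^{\langle\ga_i^\vee,\gl\rangle}$ for a single $a$ (exactly your remark that $c_k(\gl)|_{q=1}=1$) followed by a citation of \cite[Lemma 4.7]{MR1850556}, and your first two paragraphs correctly unpack what lies behind that citation: cyclicity of $\ol{L_q(\bm{\pi})}$ over $\ol{v}_{\bm{\pi}}$ via Proposition \ref{Prop:A_lattice}(i), and the $O(q-1)$ expansion of $\phi_i^{\pm}(u)$ giving $(h\otimes t^r)\ol{v}_{\bm{\pi}}=a^r\langle h,\gl\rangle\,\ol{v}_{\bm{\pi}}$.

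The final step, however, does not close as written. The relations you list --- $\fn_+\otimes\C[t,t^{-1}]$ annihilates the generator, $h\otimes t^r$ acts by $a^r\langle h,\gl\rangle$, and $f_i^{\langle\ga_i^\vee,\gl\rangle+1}$ annihilates it --- are \emph{not} a presentation of $V(\gl,a)$: the universal cyclic $\Lfg$-module with these relations is the Weyl module $W(\gl,a)$ of Chari--Pressley, which is in general strictly larger than $V(\gl,a)$ (already for $\fg=\mathfrak{sl}_2$ and $\gl=2\varpi_1$ one has $\dim W(\gl,a)=4>3=\dim V(\gl)$). Moreover, even granting such a presentation, checking that $\ol{v}_{\bm{\pi}}$ satisfies the defining relations of a module $M$ produces a surjection \emph{from} $M$ \emph{onto} $\ol{L_q(\bm{\pi})}$, not a map $\ol{L_q(\bm{\pi})}\to M$; so the universality argument as you state it points in the wrong direction. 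The correct conclusion is: your relations exhibit $\ol{L_q(\bm{\pi})}$ as a nonzero quotient of $W(\gl,a)$; since $W(\gl,a)$ is generated by a weight vector whose weight $\gl$ is maximal and whose weight space is one-dimensional, it has a unique maximal proper submodule, and its unique irreducible quotient is $V(\gl,a)$; hence every nonzero quotient of $W(\gl,a)$, in particular $\ol{L_q(\bm{\pi})}$, surjects onto $V(\gl,a)$. This unique-irreducible-quotient step is the content actually supplied by \cite[Lemma 4.7]{MR1850556}, so the citation is apt even though your paraphrase of it is not; with that one step inserted the argument is complete.
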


Define a Lie algebra automorphism $\tau_a\colon \fg[t] \to \fg[t]$ by 
\[ \tau_a\big(x \otimes f(t)\big) = x \otimes f(t - a) \ \ \ \text{for} \ x \in \fg, f \in \C[t].
\]
We consider $\ol{L_q(\bm{\pi})}$ as a $\fg[t]$-module by restriction, and
define a $\fg[t]$-module $L(\bm{\pi})$ by the pull-back $\tau_a^*\ol{L_q(\bm{\pi})}$.

\begin{Def}\normalfont
  We call the $\fg[t]$-module $L(\bm{\pi})$ the \textit{graded limit} of the minimal affinization $L_q(\bm{\pi})$. 
\end{Def}

In fact, $L(\bm{\pi})$ turns out to be a graded $\fg[t]$-module from our main theorems,
which justifies the name ``graded limit''.
We see from Proposition \ref{Prop:A_lattice} (i) and the construction that 
the vector $\ol{v}_{\bm{\pi}} = 1 \otimes v_{\bm{\pi}}$ generates $L(\bm{\pi})$ as a $\fg[t]$-module. 
Elementary properties of $L(\bm{\pi})$ are as follows.

\begin{Lem}\label{Lem:elementary_minimal}
  {\normalfont (i)} There exists a surjective $\fg[t]$-module homomorphism from $L(\bm{\pi})$ to $V(\gl,0)$. \\
  {\normalfont (ii)} The vector $\ol{v}_{\bm{\pi}}$ satisfies the relations
    \begin{align*}
       \fn_+[t]\ol{v}_{\bm{\pi}} = 0&, \ \ (h\otimes t^s)\ol{v}_{\bm{\pi}} = \gd_{s0}\langle h, \gl \rangle \ol{v}_{\bm{\pi}} 
       \ \ \text{for} \ h \in \fh, s \ge 0, \ \ \text{and} \\
       &f_i^{\langle\ga_i^\vee, \gl\rangle+1}\ol{v}_{\bm{\pi}} = 0 \ \ \text{for} \ i \in I.
    \end{align*}
  {\normalfont (iii)} We have
    \[ \ch L_q(\bm{\pi}) = \ch L(\bm{\pi}).
    \]
  {\normalfont (iv)} For every $\mu \in P^+$, we have
    \[ [L_q(\bm{\pi}) : V_q(\mu)] = [L(\bm{\pi}): V(\mu)].
    \]
\end{Lem}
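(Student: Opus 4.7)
The statement decomposes naturally into two groups. Parts (iii) and (iv) fall out almost for free: the shift automorphism $\tau_a$ acts as the identity on $\fg \otimes 1 = \fg \subset \fg[t]$, so as $\fg$-modules $L(\bm{\pi}) = \tau_a^*\ol{L_q(\bm{\pi})}$ is simply $\ol{L_q(\bm{\pi})}$ restricted to $\fg$. Combined with the identities $\ch L_q(\bm{\pi}) = \ch \ol{L_q(\bm{\pi})}$ and $[L_q(\bm{\pi}): V_q(\mu)] = [\ol{L_q(\bm{\pi})}: V(\mu)]$ recorded in (\ref{eq:eq_ch}) and (\ref{eq:eq_multi}), this yields (iii) and (iv) immediately.

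For (i), I would begin with the surjective $\bL\fg$-homomorphism $\ol{L_q(\bm{\pi})} \twoheadrightarrow V(\gl,a)$ supplied by Lemma \ref{Lem:surj}, restrict it to a $\fg[t]$-homomorphism, and then pull back by $\tau_a$. A direct check on generators gives $\mathrm{ev}_a \circ \tau_a = \mathrm{ev}_0$, so $\tau_a^* V(\gl,a) \cong V(\gl,0)$, producing the desired surjection $L(\bm{\pi}) \twoheadrightarrow V(\gl,0)$.

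Part (ii) requires three assertions about $\ol{v}_{\bm{\pi}}$. For $\fn_+[t]\ol{v}_{\bm{\pi}} = 0$, I would use that the $\ell$-highest weight condition $x_{i,r}^+ v_{\bm{\pi}} = 0$ translates under the classical limit to $(e_i \otimes t^r)\ol{v}_{\bm{\pi}} = 0$ (up to a nonzero scalar coming from the normalization in Proposition \ref{Prop:classical_limit}), whence $\fn_+[t, t^{-1}] \ol{v}_{\bm{\pi}} = 0$ in $\ol{L_q(\bm{\pi})}$; since $\tau_a$ preserves the subspace $\fn_+[t]$ setwise, the annihilation persists in $L(\bm{\pi})$. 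For $f_i^{\langle\ga_i^\vee,\gl\rangle+1}\ol{v}_{\bm{\pi}} = 0$, observe that the $U_q(\fg)$-submodule $U_q(\fg)v_{\bm{\pi}} \subseteq L_q(\bm{\pi})$ is a highest weight $U_q(\fg)$-module with highest weight $\gl$ and one-dimensional $\gl$-weight space, hence isomorphic to $V_q(\gl)$; the relation $\big(x_{i,0}^-\big)^{\langle\ga_i^\vee,\gl\rangle+1}v_{\bm{\pi}} = 0$ then descends to the classical limit and is preserved by $\tau_a$ since $f_i \in \fg \otimes 1$ is $\tau_a$-fixed.

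The Cartan eigenvalues need slightly more care, and this is where I expect the main (though still mild) obstacle. The aim is to show that $\fh[t]$ acts on $\ol{v}_{\bm{\pi}}$ by scalars and to identify those scalars as $\gd_{s0}\langle h, \gl\rangle$. The first follows from a dimension count: the $\gl$-weight space of $L_q(\bm{\pi})$ is one-dimensional (it is $\ell$-highest weight with $\ell$-weight $\bm{\pi}$), and since $L_{\mathbf{A}}(\bm{\pi})$ is an $\mathbf{A}$-lattice by Proposition \ref{Prop:A_lattice}, the same is true for $\ol{L_q(\bm{\pi})}$ and hence for $L(\bm{\pi})$; because $\fh[t]$ preserves weight spaces, its action on $\ol{v}_{\bm{\pi}}$ must be scalar. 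To identify the scalars, apply the surjection from (i): the image of $\ol{v}_{\bm{\pi}}$ in $V(\gl,0)$ is a nonzero multiple of $v_\gl$, on which $(h \otimes t^s)$ acts by $\gd_{s0}\langle h, \gl\rangle$; this forces the same eigenvalues on $\ol{v}_{\bm{\pi}}$ itself.
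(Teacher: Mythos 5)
Your proof is correct and follows the same route as the paper, which simply states that (i) follows from Lemma \ref{Lem:surj}, (ii) from the construction of $L(\bm{\pi})$ together with (i), and (iii)--(iv) from (\ref{eq:eq_ch}) and (\ref{eq:eq_multi}) since $\tau_a$ fixes $\fg\otimes 1$. You have merely filled in the details of exactly that sketch, and all the details (the identity $\mathrm{ev}_a\circ\tau_a=\mathrm{ev}_0$, the one-dimensionality of the $\gl$-weight space to get the $\fh[t]$-eigenvalues, and the descent of the $\ell$-highest weight relations to the classical limit) check out.
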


\begin{proof}
  The assertion (i) follows from Lemma \ref{Lem:surj},
  and (ii) is proved from the construction of $L(\bm{\pi})$ and (i).
  The assertions (iii) and (iv) are consequences of (\ref{eq:eq_ch}) and (\ref{eq:eq_multi})
  since $\ol{L_q(\bm{\pi})} \cong L(\bm{\pi})$ as $\fg$-modules.
\end{proof}

The following corollary is obvious from Theorem \ref{Thm;typeA} and Lemma \ref{Lem:elementary_minimal}.

\begin{Cor}\label{Prop:typeA}
  When $\fg$ is of type $A$, 
  the graded limit $L(\bm{\pi})$ is isomorphic to the evaluation module $V(\gl, 0)$.
\end{Cor}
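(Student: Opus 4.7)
The strategy is a short dimension-counting argument applied to a surjection that is already available. First, Lemma \ref{Lem:elementary_minimal}(i) provides a surjective $\fg[t]$-module homomorphism
\[
    \varphi\colon L(\bm{\pi}) \twoheadrightarrow V(\gl,0).
\]
Both sides are finite-dimensional ($L(\bm{\pi})$ is a quotient of an $\mathbf{A}$-lattice of the finite-dimensional module $L_q(\bm{\pi})$, cf.\ Proposition \ref{Prop:A_lattice}), so it suffices to show that the two dimensions agree; then $\varphi$ is forced to be an isomorphism.

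To match dimensions, I would combine two facts established just above. Lemma \ref{Lem:elementary_minimal}(iii) yields the character identity $\ch L(\bm{\pi}) = \ch L_q(\bm{\pi})$, and in particular $\dim L(\bm{\pi}) = \dim L_q(\bm{\pi})$. On the quantum side, Theorem \ref{Thm;typeA} says that in type $A$ the minimal affinization satisfies $L_q(\bm{\pi}) \cong V_q(\gl)$ as a $U_q(\fg)$-module, and therefore
\[
   \dim L_q(\bm{\pi}) = \dim V_q(\gl) = \dim V(\gl) = \dim V(\gl,0),
\]
using the standard equality $\ch V_q(\gl) = \ch V(\gl)$ recalled in Subsection~3.2. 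Chaining these identities gives $\dim L(\bm{\pi}) = \dim V(\gl,0)$, which upgrades $\varphi$ to an isomorphism of $\fg[t]$-modules.

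There is no real obstacle here: the statement is essentially a formal consequence of the surjection in Lemma \ref{Lem:elementary_minimal}(i), the character invariance under passage to the graded limit in Lemma \ref{Lem:elementary_minimal}(iii), and the type-$A$ rigidity Theorem \ref{Thm;typeA}. The only thing to be careful about is that we really do have a \emph{$\fg[t]$-module} (not merely $\fg$-module) surjection in Lemma \ref{Lem:elementary_minimal}(i), which is what makes the comparison take place in the correct category.
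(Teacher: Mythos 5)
Your proof is correct and is exactly the argument the paper intends: it deduces the corollary from the surjection in Lemma \ref{Lem:elementary_minimal}(i) together with the dimension count coming from Lemma \ref{Lem:elementary_minimal}(iii) and Theorem \ref{Thm;typeA} (the paper merely states this is ``obvious'' from those two results). No discrepancies.
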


\subsection{Main theorems}\label{Main}

Throughout the rest of this section, we fix $\gl = \sum_{i \in I} \gl_i \varpi_i \in P^+$ and
$\bm{\pi} \in P^+_{\mathbf{A}}$ such that $L_q(\bm{\pi})$ is a minimal affinization of $V_q(\gl)$.

In this subsection, we shall state our main theorems.
(Although these are trivial in type $A$, we include this type for completeness.)
Their proofs are given in the next section.

Let us define an $n$-tuple $\xi_1,\ldots,\xi_n$ of elements of $\hP$
as follows.
If $\fg$ is of type $A$, then $\xi_i = \gl_i(\varpi_i + \gL_0)$ for all $i \in I$.
If $\fg$ is of type $B$, then
\[ \xi_i = \begin{cases}
             \gl_i(\varpi_i + \gL_0) & \text{if} \ 1 \le i \le n-1, \\
             \gl_n \varpi_n + \lceil \gl_n/2 \rceil \gL_0 & \text{if}\ i = n,
           \end{cases}
\]
where $\lceil s \rceil = \min\{r \in \Z \mid r \ge s\}$.
If $\fg$ is of type $C$, the definition is a little more complicated.
Let $J = \{1 \le i \le n-1 \mid \gl_i >0\} \subseteq I$ and $\hat{J} = \{ 0 \} \sqcup J \subseteq \hI$.
We define $i^{\flat} \in \hI$ for each $i \in I$ by
\[ i^{\flat} = \begin{cases} 
                        \max\{j \in \hat{J} \mid j < i\} & \text{if} \ i \in J,\\
                        i & \text{otherwise}.
          \end{cases}
\]
For $i \in \hat{J}$, we define $p_i \in \{0,1\}$ as follows: if $i = \max\{j \in \hat{J}\}$, then $p_i = 0$.
Otherwise, we define $p_i$ recursively by
\[ p_{i^{\flat}} \equiv \gl_i + p_{i}  \ \ \ \text{mod $2$}.
\]
Set $p_i = 0$ for $i \in I \setminus J$.
Now $\xi_1,\ldots,\xi_n$ is defined by
\[ \xi_i = p_{i^{\flat}} \varpi_{i^{\flat}} + (\gl_i - p_i) \varpi_i + \frac{d_i}{2}\big(\gl_i - p_i + p_{i^{\flat}}\big)\gL_0.
\]
Note that $\sum_{i \in I} \xi_i \in \gl + \Z_{>0} \gL_0$ holds in all cases. 
Our first main theorem is the following.

\begin{Thm}\label{Thm:Main2}
  The graded limit $L(\bm{\pi})$ is isomorphic to $D(w_\circ\xi_1, \ldots, w_\circ\xi_n)$ as a $\fg[t]$-module.
\end{Thm}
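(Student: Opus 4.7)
The plan is to follow the three-surjection strategy indicated in the introduction: construct surjective $\fg[t]$-module homomorphisms
\[
D(w_\circ\xi_1,\ldots,w_\circ\xi_n) \twoheadrightarrow M(\gl) \twoheadrightarrow L(\bm{\pi}) \twoheadrightarrow D(w_\circ\xi_1,\ldots,w_\circ\xi_n),
\]
where $M(\gl)$ is the cyclic $\fg[t]$-module with generator $v$ subject to the relations listed in Theorem~\ref{Thm:Intro2}. Since each map preserves a chosen cyclic generator, the composition is an isomorphism, so all three maps must be isomorphisms; this simultaneously establishes Theorem~\ref{Thm:Intro2} and the present theorem.

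For the first surjection, I would use Proposition~\ref{Prop:character2} to realise $D(w_\circ\xi_1,\ldots,w_\circ\xi_n)$ as the iterated construction
\[
F_{w_\circ w_1}\bigl(D(\gL^1) \otimes F_{w_2}\bigl(D(\gL^2) \otimes \cdots \otimes F_{w_n}D(\gL^n)\bigr)\cdots\bigr)
\]
for suitable $\gL^i \in \hP^+$ and $w_i \in \wti{W}$ with $\ell(w_\circ w_{[1,n]}) = \sum_j \ell(w_j)$. I would then determine the defining relations of this module inductively by peeling off the operators $F_{w_\circ w_1}, F_{w_2}, \ldots$ one at a time. The key point is that each $\gL^j$ is supported on the $j$-th fundamental weight together with $\gL_0$, which severely restricts which $f_\alpha \otimes t^k$ can move between tensor factors; this is what forces the relations $t^2\fn_-[t]v = 0$ and $(f_\alpha \otimes t)v = 0$ for $\alpha \in \gD_+^1$ on the cyclic vector $v_{w_\circ\xi_1} \otimes \cdots \otimes v_{w_\circ\xi_n}$.

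For the second surjection, Lemma~\ref{Lem:elementary_minimal}(ii) already supplies the $\fn_+[t]$-annihilation, Cartan-scalar, and Serre relations on $\ol{v}_{\bm\pi}$, leaving only $t^2\fn_-[t]\ol{v}_{\bm\pi} = 0$ and $(f_\alpha \otimes t)\ol{v}_{\bm\pi} = 0$ for every $\alpha \in \gD_+^1$ to verify. I would prove these by $q$-character analysis: the existence of a nonzero such element in $L(\bm\pi)$ would force an $\ell$-weight $\bm\nu \le \bm\pi$ in $L_q(\bm\pi)$ of a prohibited shape. Corollary~\ref{Cor:restriction} allows a root-by-root reduction, since for $\alpha \in \gD_+^1$ supported on an index set $J$ the $U_q(\bL\fg_J)$-submodule generated by $v_{\bm\pi}$ is again a minimal affinization; an induction on $|J|$ reduces the statement either to the type-$A$ case (settled by Corollary~\ref{Prop:typeA}) or to low-rank type $BC$ base cases handled by direct $q$-character computation.

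The third surjection is the step I expect to be the main obstacle. Here I need a nonzero $\fg[t]$-homomorphism $L(\bm\pi) \to D(w_\circ\xi_1,\ldots,w_\circ\xi_n)$ sending $\ol{v}_{\bm\pi}$ to $v_{w_\circ\xi_1} \otimes \cdots \otimes v_{w_\circ\xi_n}$. The natural strategy is to realise $D$ as a $\fg[t]$-submodule of the classical limit of a tensor product of Kirillov--Reshetikhin modules $L_q(\bm{\pi}_{\gl_i,a_i}^{(i)})$ whose spectral parameters match those prescribed by Chari--Pressley's Theorem~\ref{Thm:Classification}, so that the classical limit of $v_{\bm\pi}$ lands on the desired tensor of extremal vectors; one then verifies via $q$-characters that this assignment is well-defined and surjective. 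The delicate point is that the intricate choice of $\xi_i$ in type $C$ (with its auxiliary indices $i^\flat$ and parities $p_i$) must exactly match the spectral-parameter patterns (I) and (II) of Theorem~\ref{Thm:Classification}; reconciling these combinatorial ingredients with the Demazure construction, and simultaneously ruling out accidental vanishing of the candidate map, is where I expect the proof to require the most technical care.
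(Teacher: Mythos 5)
Your three-surjection plan is exactly the paper's proof: Subsection 5.1 computes the full annihilator of the generator of $D(w_\circ\xi_1,\ldots,w_\circ\xi_n)$ inductively by peeling off the iterated $F_w$-construction (using Joseph's results and the Demazure character formula to control each step) and then checks that this annihilator kills the generator of $M(\gl)$; Subsection 5.3 proves $M(\gl)\twoheadrightarrow L(\bm{\pi})$ by precisely the $q$-character and restriction-to-$U_q(\bL\fg_J)$ induction you describe (with the Chari--Pressley lemma on $v_p(i,n)$ as the key external input); and Subsection 5.4 realises $D$ via tensor products of graded limits of Kirillov--Reshetikhin modules, with the type $C$ combinatorics of $i^\flat$ and $p_i$ absorbed into an extra lemma identifying $L(\bm{\rho})$ with $D(-\varpi_r-\varpi_s+\gL_0)$ for minimal affinizations of $V_q(\varpi_r+\varpi_s)$. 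One caution on your first step: to obtain $D\twoheadrightarrow M(\gl)$ you must show that the \emph{entire} annihilator of $v_{w_\circ\xi_1}\otimes\cdots\otimes v_{w_\circ\xi_n}$ --- which a priori contains powers $x_\ga^{\rho(\ga)+1}$ with exponents not among the relations of $M(\gl)$, and in type $C$ the mixed monomials $x_\ga^{\rho(\ga)-2k+1}x_\gb^k$ --- annihilates $v_M$, whereas merely verifying that the relations of $M(\gl)$ hold on the extremal tensor only yields the opposite surjection.
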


In type $A$, $\xi_i\in \hP^+$ holds for all $i$.
Thus the $\fg[t]$-module $D(w_\circ\xi_1,\ldots, w_\circ\xi_n)$ is the submodule of
$V(\gl_1\varpi_1,0) \otimes \cdots \otimes V(\gl_n\varpi_n, 0)$
generated by the tensor product of highest weight vectors, which is isomorphic to $V(\gl,0)$.
Hence the theorem follows from Corollary \ref{Prop:typeA}.

Let $\gD_+^1$ be a subset of $\gD_+$ defined by 
\[ \gD_+^1 = \Big\{ \ga \in \gD_+ \Bigm| \ga = \sum_{i \in I} n_i \ga_i \ \text{with} \ n_i \le 1 \ \text{for all} \ i \in I
   \Big\}.
\]
The second main theorem is the following.

\begin{Thm}\label{Thm:Main1}
  The graded limit $L(\bm{\pi})$ is isomorphic to the cyclic $\fg[t]$-module generated by a nonzero vector
  $v$ with relations
  \begin{align*}\label{eq:rel}
    \fn_+[t]v = 0, \ \ \ &(h \otimes t^s)v= \gd_{s0}\langle h, \gl \rangle v \ \text{for} \ h \in \fh, s \ge 0,\ \ \
    f_i^{\gl_i+1}v = 0 \  \text{for} \ i \in I, \nonumber \\
    &t^2\fn_-[t]v=0 \ \ \text{and} \ \ (f_{\ga}\otimes t)v= 0 \  \text{for} \ \ga \in \gD_+^1.
  \end{align*}
\end{Thm}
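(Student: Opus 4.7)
The plan is to establish both this theorem and Theorem \ref{Thm:Main2} simultaneously via a cycle of three surjective $\fg[t]$-module homomorphisms
\[ D(w_\circ \xi_1, \ldots, w_\circ \xi_n) \twoheadrightarrow M(\gl) \twoheadrightarrow L(\bm{\pi}) \twoheadrightarrow D(w_\circ \xi_1, \ldots, w_\circ \xi_n), \]
where $M(\gl)$ denotes the cyclic $\fg[t]$-module presented by the relations in the statement. Since each of these modules is finite-dimensional, once the cycle is closed, every arrow is forced to be an isomorphism, yielding $L(\bm{\pi}) \cong M(\gl)$ as required.

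For the middle arrow $M(\gl) \twoheadrightarrow L(\bm{\pi})$, I would check that the cyclic generator $\ol{v}_{\bm{\pi}}$ of $L(\bm{\pi})$ satisfies the defining relations of $M(\gl)$. The first three relations are already provided by Lemma \ref{Lem:elementary_minimal}(ii), so the work lies in establishing
$t^2 \fn_-[t] \ol{v}_{\bm{\pi}} = 0$ and $(f_\ga \otimes t)\ol{v}_{\bm{\pi}} = 0$ for $\ga \in \gD_+^1$. I would lift these to the quantum setting and exploit the theory of $q$-characters: the $\ell$-weights of minimal affinizations are tightly constrained (Chari--Pressley, Mukhin--Young), which translates into restrictions on the eigenvalues of the $\phi_{i,r}^\pm$ and hence on how the $h \otimes t^s$ can act on $\ol{v}_{\bm{\pi}}$. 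Combined with $[h \otimes t^r, f_\ga \otimes t^s] = -\langle h, \ga\rangle (f_\ga \otimes t^{r+s})$, this yields the required vanishings. Restriction to rank-$1$ subalgebras $U_q(\bL\fg_i)$ via Corollary \ref{Cor:restriction} handles the simple-root case through the known structure of Kirillov-Reshetikhin modules, while rank-$2$ restrictions propagate the conclusion to all $\ga \in \gD_+^1$.

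For the arrow $D(w_\circ \xi_1, \ldots, w_\circ \xi_n) \twoheadrightarrow M(\gl)$, I would determine the relations satisfied by the cyclic vector $v_D$ inductively using the presentation
\[ D(w_\circ \xi_1, \ldots, w_\circ \xi_n) = F_{w_\circ w_1}\bigl(D(\gL^1) \otimes F_{w_2}\bigl(D(\gL^2) \otimes \cdots \otimes F_{w_n} D(\gL^n)\bigr)\!\bigr) \]
provided by Proposition \ref{Prop:character2} (cf.\ (\ref{eq:intro})). The Cartan and $\fn_+[t]$ relations follow from the highest-weight structure; the Serre relations $f_i^{\gl_i + 1}v_D = 0$ follow from the action on the outermost tensor factor. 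To obtain $t^2 \fn_-[t] v_D = 0$ and $(f_\ga \otimes t) v_D = 0$ for $\ga \in \gD_+^1$, I would argue by induction on the nested $F_{w_i}$-construction, using the explicit shape of $\xi_i$ (including the type-$C$ adjustment through $p_i$) to show that applying each successive $F_{w_i}$ preserves the relations already established, and that the degree-$t$ and degree-$t^2$ components of the $f_\ga$-actions on the outermost factor vanish by construction.

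The main obstacle will be the verification of $(f_\ga \otimes t)\ol{v}_{\bm{\pi}} = 0$ for \emph{non-simple} $\ga \in \gD_+^1$. Such identities cannot be reduced to the simple-root case by Lie-algebraic commutator manipulations alone, so I expect to rely on the combinatorics of $\ell$-weights under rank-$2$ restrictions together with the specific short-root behavior; in type $C$, the delicate choice of the $p_i$'s in defining $\xi_i$ reflects precisely this difficulty, so the type-$C$ computations will require the most careful bookkeeping.
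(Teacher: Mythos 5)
Your overall strategy --- closing a cycle of three surjections $D(w_\circ\xi_1,\ldots,w_\circ\xi_n) \twoheadrightarrow M(\gl) \twoheadrightarrow L(\bm{\pi}) \twoheadrightarrow D(w_\circ\xi_1,\ldots,w_\circ\xi_n)$ between finite-dimensional modules --- is exactly the paper's, and your sketch of the middle arrow (checking the relations on $\ol{v}_{\bm{\pi}}$ via $q$-characters and rank-one and rank-two reductions) matches Subsection \ref{Proof2}. However, there are two genuine gaps. First, your construction of the first arrow goes the wrong way: verifying that $v_D$ satisfies the defining relations of $M(\gl)$ produces a surjection $M(\gl)\twoheadrightarrow D$, not $D \twoheadrightarrow M(\gl)$. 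For the direction you actually need, you must show that \emph{every} element of $U(\fg[t])$ annihilating $v_D$ also annihilates the generator $v_M$ of $M(\gl)$; that is, you must determine the full annihilator of $v_D$ and check that it kills $v_M$. This is the hard part of the argument (Proposition \ref{Prop:annihilator} in the paper): the induction through the nested $F_{w_i}$'s has to control the annihilator at each stage, using Joseph's results on Demazure modules together with the character identity of Lakshmibai--Littelmann--Magyar, and in type $C$ the annihilator is \emph{not} generated by powers $x_\ga^{\rho(\ga)+1}$ of affine root vectors alone --- there are mixed generators $x_\ga^{\rho(\ga)-2k+1}x_\gb^{k}$ attached to certain pairs of real roots, and showing that these annihilate $v_M$ is itself a nontrivial induction. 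With only the inclusion you describe, the dimension count does not close and nothing forces $L(\bm{\pi})\cong M(\gl)$.

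Second, you never address the third arrow $L(\bm{\pi}) \twoheadrightarrow D(w_\circ\xi_1,\ldots,w_\circ\xi_n)$, which is an independent and substantial piece of the proof (Subsection \ref{Proof3}): one identifies graded limits of Kirillov--Reshetikhin modules with Demazure modules, maps $L_q(\bm{\pi})$ to a tensor product of such modules via Chari's tensor-product theorem for $\ell$-highest weight vectors, and in type $C$ handles the minimal affinization of $V_q(\varpi_r+\varpi_s)$ by a separate $q$-character computation. Without this arrow the chain of inequalities $\dim D \ge \dim M(\gl) \ge \dim L(\bm{\pi}) \ge \dim D$ is missing its last link, so neither this theorem nor Theorem \ref{Thm:Main2} follows from what you have outlined.
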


When $\fg$ is of type $A$, $\gD_+^1 = \gD_+$ holds and therefore the theorem is easily proved from Corollary \ref{Prop:typeA}.

\begin{Rem}\normalfont
  Theorem \ref{Thm:Main1} implies that $L(\bm{\pi})$ is a projective object in a certain full-subcategory of 
  the category of $\fg \otimes \big(\C[t]/t^2\C[t]\big)$-modules 
  introduced in \cite{MR2763623}.
  In particular this theorem, together with \cite[Theorem 6.1]{MR2342293}, 
  gives a proof to \cite[Conjecture 1.13]{MR2763623} in type $B$.   
\end{Rem}

\begin{Rem}\normalfont
  As stated in the introduction, these two theorems are equivalent to
  \cite[Conjecture 3.20]{MR2587436} in type $AB$.
\end{Rem}

\subsection{Corollaries}\label{Subsection:Corollaries}

Here we shall give several corollaries on minimal affinizations $L_q(\bm{\pi})$,
which are obtained from the corresponding statements on $L(\bm{\pi})$ by applying Lemma \ref{Lem:elementary_minimal} (iii), (iv).

First we apply the results in Subsection \ref{Demazure} to the module 
$D(w_\circ\xi_1,\ldots,w_\circ\xi_n)$ in Theorem \ref{Thm:Main2}.
Let us define  $w_{i} \in \wti{W}$ for each $1 \le i \le n$ as follows:\\
(i) If $\fg$ is of type $A$, then $w_i = \id$ for all $1 \le i \le n$.\\
(ii) If $\fg$ is of type $B$, then
\[ w_i = s_{i-1} s_{i-2} \cdots s_1 \tau,
\]
where $\tau$ denotes the element of $\gS$ which exchanges the nodes $0$ and $1$. \\
(iii) If $\fg$ is of type $C$, then  
\[ w_i = s_{i-1} s_{i-2} \cdots s_1 s_0.
\]
For $1 \le r \le t \le n$, denote by $w_{[r,t]}$ the product $w_{r} w_{r+1} \cdots w_{t} \in \wti{W}$.
If $r > t$, we set $w_{[r,t]} = \id$.

\begin{Lem}\label{Lem:BC}
  {\normalfont(i)} Let $i \in I$.\\
       {\normalfont(a)}  When $\fg$ is of type $B$, we have
         \begin{align*} 
          w_{[1,i]} (\gL_0) &\equiv \varpi_{i} + \gd_{in} \varpi_n + \gL_0 \ \ \text{mod} \ \Q \gd, \ \text{and}\\  
          w_{[1,i]} (\varpi_n + \gL_0) &= \varpi_n + \gL_0.
         \end{align*}
       {\normalfont(b)}  
          When $\fg$ is of type $C$, we have
          \begin{align*}
              w_{[1,i]}(\varpi_j + \gL_0) &\equiv \varpi_{i-j} + \varpi_i + \gL_0 \ \ \text{mod} \ \Q \gd \ \ 
              (0 \le j< i), \  \text{and}\\
              w_{[1,i]}(\varpi_{j} + \gL_0) &= \varpi_j + \gL_0 \ \ (i\le j).
         \end{align*}
  {\normalfont(ii)} 
    We have $\ell(w_{[1,n]}) = \sum_{i=1}^{n} \ell(w_i)$.
\end{Lem}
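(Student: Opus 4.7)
The proof of (i) proceeds by induction on $i$, using the factorization $w_{[1,i]} = w_{[1,i-1]}\, w_i$. The key observation is that any simple reflection $s_k$ with $k \in I$ fixes both $\gL_0$ and $\gd$ (since $\langle \ga_k^\vee, \gL_0\rangle = 0 = \langle \ga_k^\vee, \gd\rangle$), and $\tau$ also fixes $\gd$; thus each $w_i$ carries only one ``affine'' contribution, coming from its unique factor $s_0$ (type $C$) or $\tau$ (type $B$), and modulo $\Q\gd$ the entire computation reduces to the combinatorics of simple reflections on the finite weight lattice $P$.

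For the base case $i = 1$ in type $B$, the first equation follows directly from the formula $\tau(\gL_0) = \varpi_1 + \gL_0 - \tfrac{1}{2}(\varpi_1,\varpi_1)\gd$ recorded in Section~\ref{Section:Lie}. For the second, working in the $\gee$-basis, the relations $\tau(\ga_k) = \ga_k$ for $k \ge 2$ and $\tau(\ga_1) = \ga_0 = \gd - \gee_1 - \gee_2$ force $\tau(\gee_1) = \gd - \gee_1$ and $\tau(\gee_k) = \gee_k$ for $k \ge 2$, after which $\tau(\varpi_n + \gL_0) = \varpi_n + \gL_0$ is a short direct computation. For type $C$, apply $s_0(\xi) = \xi - \langle \ga_0^\vee,\xi\rangle \ga_0$ with $\ga_0 = \gd - 2\varpi_1$ and $\ga_0^\vee = K - \gt^\vee$; since $\gt^\vee = \sum_{i \in I} \ga_i^\vee$ in type $C$, we have $\langle \gt^\vee, \varpi_j\rangle = 1$ for all $j \ge 1$, so $s_0$ fixes $\varpi_j + \gL_0$ for $j \ge 1$ and sends $\gL_0$ to $2\varpi_1 + \gL_0 - \gd$, confirming the $i = 1$ case.

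For the inductive step, write $w_i = s_{i-1}\cdots s_1 \gs_i$ with $\gs_i \in \{\tau, s_0\}$; the subword $s_{i-1}\cdots s_1$ acts on $\{\gee_1,\ldots,\gee_n\}$ as the cyclic permutation $\gee_1 \mapsto \gee_i$, $\gee_k \mapsto \gee_{k-1}$ for $2 \le k \le i$, and fixes $\gee_k$ for $k > i$. Combined with the base case this yields an explicit expression for $w_i$ on each weight in the statement, and the induction hypothesis applied to $w_{[1,i-1]}$ then completes the computation. In type $C$, the hypothesis covers $w_{[1,i-1]}(\varpi_j + \gL_0)$ for every $j$, so any required $w_{[1,i-1]}(\varpi_j)$ is recovered as $w_{[1,i-1]}(\varpi_j+\gL_0) - w_{[1,i-1]}(\gL_0)$, reducing the argument to a case split $j=0$, $1 \le j < i$, $j \ge i$. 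In type $B$, since the statement only tracks $\gL_0$ and $\varpi_n + \gL_0$, we strengthen the induction to additionally prove $w_{[1,i]}(\gee_k) = \gd - \gee_{i-k+1}$ for $1 \le k \le i$ and $w_{[1,i]}(\gee_k) = \gee_k$ for $k > i$, which is maintained automatically by the same permutation analysis.

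For (ii), the natural concatenation of reduced expressions for each $w_i$ produces a word of total length $\sum_i \ell(w_i)$ expressing $w_{[1,n]}$, and we show it is reduced via the standard criterion $\ell(w) = \#\{\ga \in \hgDre_+ \mid w^{-1}\ga \in -\hgDre_+\}$: at each stage we exhibit $\ell(w_i)$ fresh inversions contributed by $w_i$ that are disjoint from those of $w_{[1,i-1]}$. The main obstacle I anticipate is the bookkeeping in the type $C$ inductive step, where the subtraction trick combined with the three-case split must be tracked carefully, especially at the boundaries $j = 0$ (using $\varpi_0 = 0$) and $j = i - 1$ (where the hypothesis changes regime); however, as all manipulations are linear and the pattern is uniform in $i$, the verification is routine.
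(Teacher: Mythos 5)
Your treatment of (i) is correct and is essentially a fully worked-out version of what the paper compresses into the phrase ``direct calculations'': I checked the base cases (in type $B$, $\tau(\gee_1)=\gd-\gee_1$ and $\tau(\gee_k)=\gee_k$ for $k\ge 2$, giving $\tau(\varpi_n+\gL_0)=\varpi_n+\gL_0$; in type $C$, $\langle\ga_0^\vee,\varpi_j+\gL_0\rangle=1-\langle\gt^\vee,\varpi_j\rangle=0$ for $j\ge 1$) and the inductive bookkeeping (the cyclic action of $s_{i-1}\cdots s_1$ on the $\gee_k$, the strengthened hypothesis $w_{[1,i]}(\gee_k)=\gd-\gee_{i-k+1}$ in type $B$, and the subtraction trick $w_{[1,i-1]}(\varpi_j)=w_{[1,i-1]}(\varpi_j+\gL_0)-w_{[1,i-1]}(\gL_0)$ in type $C$), and everything closes up, including the boundary cases $j=0$ and $j=i-1$. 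One simplification: for the identities asserted as exact equalities you need not track $\gd$-coefficients at all, since $w\in\wti{W}$ preserves $(\,\cdot\,,\,\cdot\,)$ and $(\gd,\gL)=\langle K,\gL\rangle\neq 0$ for the level-one weights involved, so once the statement is known mod $\Q\gd$ the discrepancy is forced to vanish. Where you genuinely diverge from the paper is (ii). The paper deduces (ii) directly from the orbit computation in (i): applying the concatenated word letter by letter to the dominant weight $\gL_0$, each simple reflection $s_j$ that occurs pairs strictly positively with the current weight, i.e.\ subtracts a positive multiple of $\ga_j$, and for a dominant weight this forces each partial product to go up in length, so the word is reduced. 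Your alternative of counting inversions via $\ell(w)=\#\{\ga\in\hgDre_+\mid w^{-1}\ga\in-\hgDre_+\}$ is a valid criterion, but it obliges you to actually exhibit the inversion sets of each $w_i$ and show they stay disjoint after twisting by $w_{[1,i-1]}$ (with the added wrinkle that the $\gS$-factors permute the roots); as written this step is a promissory note rather than an argument, and it does not reuse the work already done in (i). I would switch to the dominant-weight criterion, which finishes (ii) in two lines given your computation of the $\wti{W}$-orbit of $\gL_0$.
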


\begin{proof}
    The assertion (i) is proved by direct calculations.
    When $\fg$ is of type $B$, 
   by applying the sequence
   \[ w_{[1,n]} = \tau(s_1 \tau) \cdots (s_{n-1} \cdots s_1 \tau)
   \]
   to $\gL_0$,
    we see that each reflection $s_j$ changes the weight by a positive multiple of 
    $\ga_j$, which implies the assertion (ii).
    The proof for type $C$ is the same.
\end{proof}

Let $\xi_1,\ldots,\xi_n \in \hP$ be the elements defined in the previous subsection,
and set $\gL^i = (w_{[1,i]})^{-1}\xi_i$ for each $i \in I$.
Using Lemma \ref{Lem:BC} (i), the following assertions are easily checked:\\
(i) If $\fg$ is of type $A$, then $\gL^i = \xi_i$ for all $i$.\\
(ii) If $\fg$ is of type $B$, then 
    \[ \gL^i \equiv \begin{cases} \gl_i \gL_0 & (1 \le i \le n-1) \\
                             \ol{\gl}_n \varpi_n + \lceil \gl_n/2 \rceil \gL_0 & (i = n)  
               \end{cases}
       \ \ \ \text{mod} \ \Q \gd,
    \]
    where $\ol{\gl}_n = 0$ if $\gl_n$ is even, and $\ol{\gl}_n = 1$ otherwise. \\
(iii) If $\fg$ is of type $C$, then
     \[ \gL^i \equiv \begin{cases} p_{i^\flat} \varpi_{i - i^\flat} + \frac{1}{2}\big(\gl_i - p_i + p_{i^\flat}\big) \gL_0 
                                                      & (1 \le i \le n-1)\\
                              \gl_n (\varpi_n+\gL_0) ( = \xi_n) & (i = n)
                \end{cases}
        \ \ \ \text{mod} \ \Q\gd.
     \]
In particular, $\gL^i \in \hP^+$ for all $i \in I$.
Since $w_\circ\xi_i = w_\circ w_{[1,i]}\gL^i$, we have from Theorem \ref{Thm:Main2} and Proposition \ref{Prop:character2} that
\begin{align}\label{eq:MAisom}
   L(\bm{\pi})& \cong D(w_\circ \xi_1,\ldots,w_\circ\xi_n) \\
   &= F_{w_\circ w_1}\Big(D(\gL^1) \otimes F_{w_2}\Big(D(\gL^2) \otimes \cdots \otimes 
   F_{w_{n-1}}\Big( D(\gL^{n-1}) \otimes F_{w_n} D(\gL^n)\Big)\! \cdots \!\Big)\!\Big).\nonumber
\end{align}
Now the following character formula for $L_q(\bm{\pi})$
is obtained using Corollary \ref{Prop:character3} and Lemma \ref{Lem:elementary_minimal} (iii).

\begin{Cor}\label{Cor:Cor2}
  \[ \ch L_q(\bm{\pi}) = \D_{w_\circ w_{1}}\Big(e^{\gL^1}\cdot \D_{w_2}\Big(e^{\gL^2} \cdots 
     \D_{w_{n-1}}\Big(e^{\gL^{n-1}}\cdot \D_{w_{n}}(e^{\gL^n})\Big)\!\cdots\!\Big)\!\Big)\Big|_{e^{\gL_0} =e^\gd= 1}.
  \]
\end{Cor}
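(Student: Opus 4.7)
The plan is to read the claimed character formula as a direct concatenation of three ingredients already assembled in the paper: the explicit realization (4.3.1) of $L(\bm{\pi})$ as an iterated $F_w$-module, the generalized Demazure character formula of Corollary~\ref{Prop:character3}, and the equality of characters $\ch L_q(\bm{\pi}) = \ch L(\bm{\pi})$ from Lemma~\ref{Lem:elementary_minimal}(iii).

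First I would invoke the isomorphism
\[
L(\bm{\pi}) \;\cong\; F_{w_\circ w_1}\!\Big(D(\gL^1) \otimes F_{w_2}\!\big(D(\gL^2) \otimes \cdots \otimes F_{w_n}D(\gL^n)\big)\!\cdots\!\Big),
\]
which is exactly (4.3.1), established from Theorem~\ref{Thm:Main2} and Proposition~\ref{Prop:character2} together with the identifications $\gL^i = (w_{[1,i]})^{-1}\xi_i$ and $w_\circ \xi_i = w_\circ w_{[1,i]}\gL^i$. Since $w_\circ w_1, w_2, \ldots, w_n \in \wti{W}$ and each $\gL^i \in \hP^+$, Corollary~\ref{Prop:character3} applies to the right-hand side and yields
\[
\chhh L(\bm{\pi}) \;=\; \D_{w_\circ w_1}\!\Big(e^{\gL^1}\cdot \D_{w_2}\!\big(e^{\gL^2}\cdots \D_{w_{n-1}}\big(e^{\gL^{n-1}}\cdot \D_{w_n}(e^{\gL^n})\big)\!\cdots\!\big)\!\Big).
\]

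Next I would pass from the $\hfh$-character to the $\fh$-character by specializing $e^{\gL_0}=e^{\gd}=1$. Since $L(\bm{\pi})$ is a $\fg[t]$-module, the central element $K$ acts as $0$ and the degree operator $d$ only records the $t$-grading, so each $\hfh$-weight space projects onto the corresponding $\fh$-weight space under this specialization; hence the left-hand side becomes $\ch L(\bm{\pi})$. Finally, by Lemma~\ref{Lem:elementary_minimal}(iii) we have $\ch L_q(\bm{\pi}) = \ch L(\bm{\pi})$, and the desired formula follows.

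There is essentially no substantive obstacle here: the corollary is obtained by juxtaposing results already proved. The only point that needs a brief sanity check is that Corollary~\ref{Prop:character3} is genuinely applicable with the first entry $w_\circ w_1 \in \wti{W}$ rather than $w_1 \in \hW$, but this is precisely the content of the extension from Theorem~\ref{Thm:LLM} to Corollary~\ref{Prop:character3}, which rests on the commutation $\D_\tau \D_i = \D_{\tau(i)}\D_\tau$ for $\tau \in \gS$ and the compatibility $\chhh F_\tau D = \D_\tau \chhh D$ recorded just before Corollary~\ref{Prop:character3}.
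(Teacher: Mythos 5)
Your proposal is correct and follows exactly the paper's route: the paper derives the formula by combining the isomorphism (\ref{eq:MAisom}) (from Theorem \ref{Thm:Main2} and Proposition \ref{Prop:character2}) with Corollary \ref{Prop:character3} and Lemma \ref{Lem:elementary_minimal}(iii), which is precisely the juxtaposition you describe. Your extra remarks on the specialization $e^{\gL_0}=e^{\gd}=1$ and on the applicability of Corollary \ref{Prop:character3} to $w_\circ w_1\in\wti{W}$ are accurate sanity checks that the paper leaves implicit.
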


The next corollary gives multiplicities of $U_q(\fg)$-modules in $L_q(\bm{\pi})$ in terms of crystal bases
(for the basics of crystal bases, see \cite{MR1881971}).
To state it we need to prepare several notation.
Let $B(\gL)$ be the crystal basis of $\hV(\gL)$ for $\gL\in \hP^+$, and $u_\gL$ its highest weight element.
For $\tau \in \gS$, let $b \mapsto b^{\tau}$ denote the bijection from $B(\gL)$ to $B(\tau\gL)$ 
satisfying
\[ u_\gL^\tau = u_{\tau \gL} \ \ \text{and} \ \ \tilde{f}_{\tau(i)} (b^\tau)= (\tilde{f}_ib)^\tau 
   \ \ \text{for} \ i \in \hI,\ b \in B(\gL),
\]
where $\tilde{f}_i$ are the Kashiwara operators.
Let $\gL_{(1)},\ldots, \gL_{(p)}$ be an arbitrary sequence of elements of $\hP^+$.
For a subset $T$ of the crystal basis
$B\big(\gL_{(1)}\big) \otimes \cdots \otimes B\big(\gL_{(p)}\big)$,
we define a subset $\cF_\tau T$ by
\[ \cF_\tau T = \big\{b_1^{\tau} \otimes \cdots \otimes b_p^{\tau} \bigm| b_1\otimes \cdots \otimes b_p \in T\big\} 
   \subseteq B\big(\tau \gL_{(1)}\big) \otimes \cdots \otimes B\big(\tau \gL_{(p)}\big).
\]
For $w \in \hW$ with reduced expression $w = s_{i_1} \cdots s_{i_k}$, we also define a subset $\cF_w T$ by
\[ \cF_w T = \big\{\tilde{f}_{i_1}^{s_1} \tilde{f}_{i_2}^{s_2} \cdots \tilde{f}_{i_k}^{s_k} (b) \bigm| 
    s_j \ge 0, b \in T\big\} \setminus \{ 0\} \subseteq B\big(\gL_{(1)}\big) \otimes \cdots \otimes B\big( \gL_{(p)}\big),
\]
and set $\cF_{w\tau } = \cF_{w}\cF_{\tau}$.
Now let us define a subset $Z'$ of a $U_q(\hfg)$-crystal basis by
\begin{align*}
      Z' = &\cF_{w_\circ w_1}\Big(u_{\gL^1} \otimes \cF_{w_2}
      \Big(u_{\gL^2} \otimes\cdots \otimes \cF_{w_{n-1}}\Big(u_{\gL^{n-1}} \otimes
      \cF_{w_n}(u_{\gL^n})\Big) \!\cdots\! \Big)\!\Big).
\end{align*}
Since this is a crystal analogue of the right-hand side of (\ref{eq:MAisom})
(see \cite{MR1887117}, in which $B(\gL)$ are realized using LS paths),
the classically highest weight elements in $Z'$ 
(i.e., elements annihilated by $\tilde{e}_i$ for $i \in I$) are in one-to-one correspondence
with the simple $\fg$-module components of $L(\bm{\pi})$.
Note that $\cF_{w_\circ}$ generates no new classically highest weight elements since $w_\circ \in W$.
Hence the same statement also holds for 
\begin{align*}
   Z = \cF_{w_1}\Big(u_{\gL^1} \otimes \cF_{w_2} \Big( u_\gL^2 \otimes \cdots \otimes \cF_{w_{n-1}}\Big(u_{\gL^{n-1}} \otimes
      \cF_{w_n}(u_{\gL^n})\Big) \!\cdots\! \Big) \!\Big)
\end{align*}
instead of $Z'$.
From this and Lemma \ref{Lem:elementary_minimal} (iv), we have the following corollary.

\begin{Cor}\label{Cor:crystal}
  For every $\mu \in P^+$, we have
  \begin{equation*}\label{eq:Cor2}
     \big[L_q(\bm{\pi}): V_q(\mu)\big]= \# \big\{ b \in Z \bigm| \text{$\fh$-weight of $b$ is $\mu$},\
                                                  \tilde{e}_i(b) = 0 \ \text{for} \ i \in I \big\}.
  \end{equation*}
\end{Cor}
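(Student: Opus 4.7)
The plan is to deduce the corollary by combining the isomorphism (\ref{eq:MAisom}) with a crystal-basis version of the generalized Demazure construction, and then to compare classically highest weight elements on both sides.

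First, Lemma \ref{Lem:elementary_minimal}(iv) reduces the statement to computing the multiplicities $[L(\bm{\pi}):V(\mu)]$ of the graded limit. By Theorem \ref{Thm:Main2} and the identity (\ref{eq:MAisom}), $L(\bm{\pi})$ is isomorphic, as a $\fg$-module, to the iterated module
\[
M := F_{w_\circ w_1}\Big(D(\gL^1)\otimes F_{w_2}\Big(D(\gL^2)\otimes\cdots\otimes F_{w_n}D(\gL^n)\Big)\!\cdots\!\Big),
\]
which is a generalized Demazure module in the sense of \cite{MR1887117}. The operation $\cF_w$ on subsets of a tensor product of crystal bases is the crystal-theoretic analogue of the operation $F_w$ on $\hfb$-submodules: applying $\tilde f_i$ reflects the Demazure character formula $\D_i$ at the level of crystals, and the LS-path realization of Littelmann-Magyar shows that the crystal basis of $F_i(D\otimes \cdots)$ is obtained from the crystal basis of $D\otimes\cdots$ by applying the string of Kashiwara operators $\{\tilde f_i^s\}_{s\ge 0}$ and deleting~$0$. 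Inductively, the crystal basis of $M$ is precisely the subset $Z'$ of $B(\gL^1)\otimes\cdots\otimes B(\gL^n)$ defined above.

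Second, the $U_q(\fg)$-submodule structure of $M$ can be read off from its crystal basis: each connected component of the $\fg$-crystal $Z'$ contributes one simple $U_q(\fg)$-summand $V_q(\mu)$, with $\mu$ equal to the $\fh$-weight of the unique classically highest weight element of that component (i.e.\ the element annihilated by all $\tilde e_i$ for $i\in I$). Hence
\[
[L(\bm{\pi}):V(\mu)] = \#\bigl\{b\in Z'\bigm|\wt(b)=\mu,\ \tilde e_i b=0\text{ for }i\in I\bigr\}.
\]

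Finally, it remains to replace $Z'$ by $Z$. Since $w_\circ\in W\subseteq \hW$, the operator $\cF_{w_\circ}$ applies only Kashiwara operators $\tilde f_i$ with $i\in I$; but any $\tilde f_i$ with $i\in I$ sends a classically highest weight element to a non-classically-highest-weight one, while a classically highest weight element $b'=\tilde f_{i_1}^{s_1}\cdots\tilde f_{i_k}^{s_k}(b)$ with $i_j\in I$ must in fact have $s_1=\cdots=s_k=0$, i.e.\ $b'=b$. Therefore the classically highest weight elements of $\cF_{w_\circ}T$ coincide with those of $T$ for any $T$, and in particular the set of classically highest weight elements of $Z'$ equals that of $Z$. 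Combining the three steps yields the corollary.

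The main obstacle is the second step: rigorously identifying the crystal basis of the iterated module $M$ with the set $Z'$. This is where one really needs \cite{MR1887117}, namely the crystal version of Theorem \ref{Thm:LLM} via LS paths, together with the compatibility of the operators $\cF_i$, $\cF_\tau$ with tensor products and with the $F_i$, $F_\tau$ on the module side. Once this identification is in hand, the remaining steps are essentially bookkeeping.
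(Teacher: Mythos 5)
Your argument is correct and follows essentially the same route as the paper: reduce via Lemma \ref{Lem:elementary_minimal}(iv) to the graded limit, identify $L(\bm{\pi})$ with the iterated module of (\ref{eq:MAisom}), invoke the LS-path/crystal analogue from \cite{MR1887117} to match classically highest weight elements of $Z'$ with simple $\fg$-components, and observe that $\cF_{w_\circ}$ creates no new classically highest weight elements so $Z'$ may be replaced by $Z$. The only difference is that you spell out the last step in more detail than the paper does.
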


\begin{Rem}\label{Rem}\normalfont
  Assume that $\fg$ is of type $BC$, and let $J = \{0,1,\ldots,n-1\} \subseteq \hI$, 
  $U_q(\hfg_J)$ be the subalgebra of $U_q(\hfg)$ whose set of simple roots are $J$, and $W_J$ its Weyl group.
  Since $w_i$ belongs to $W_J \rtimes \gS$ for all $i \in I$, 
  we can regard $Z$ as a subset of a $U_q(\hfg_J)$-crystal basis.
  In view of this, Corollary \ref{Cor:crystal} implies that multiplicities of $L_q(\bm{\pi})$ are expressed
  in terms of crystal bases of \textit{finite type} ($D_n$ and $C_n$ respectively).
\end{Rem}

Finally we prove the formula for the limit of normalized characters of minimal affinizations, which has been conjectured in 
\cite[Conjecture 6.3]{MY}.
Let $J$ be a subset of $I$, and set
$\gD_+^J = \gD_+ \cap\Big( \sum_{ i \in J} \Z \ga_i\Big)$
and 
\[ \gD_+^{1,J} = \Big\{ \ga \in \gD_+ \Bigm| \ga = \sum_{i \in I} n_i\ga_i \ \text{with} \ n_i \le 1 \ \text{if} \ i \notin J
   \Big\}.
\] 
Assume that $\gl^1, \gl^2, \ldots$ is an infinite sequence of elements of $P^+$ such that
\[ \langle \ga_i^\vee, \gl^k \rangle = 0 \ \ \text{for all} \ i \in J,\ k\in \Z_{>0} \ \ \text{and} \ \ 
   \lim_{k \to \infty} \langle \ga_i^\vee, \gl^k \rangle = \infty \ \ \text{for all} \ i \notin J.
\]

\begin{Cor}\label{Cor:Cor3}
  Let $\bm{\pi}^1,\bm{\pi}^2,\ldots$ be an infinite sequence of elements of $P^+_{\mathbf{A}}$ 
  such that $L_q(\bm{\pi}^k)$ is a minimal affinization of $V_q(\gl^k)$.
  Then $\lim_{k \to \infty} e^{-\gl^k}\mathrm{ch}\, L_q(\bm{\pi}^k)$ exists, and
  \begin{equation}\label{eq:limit}
     \lim_{k \to \infty} e^{-\gl^k}\mathrm{ch} \, L_q(\bm{\pi}^k) 
     = \prod_{\ga \in \gD_+\setminus \gD_+^J} \frac{1}{1 - e^{-\ga}} \cdot 
       \prod_{\ga \in \gD_+ \setminus \gD_+^{1,J}} \frac{1}{1 -e^{-\ga}}.
  \end{equation}  
\end{Cor}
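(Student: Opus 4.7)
The plan is to reduce, via Lemma \ref{Lem:elementary_minimal}\,(iii), to computing $\lim_{k \to \infty} e^{-\gl^k}\ch L(\bm{\pi}^k)$ for the graded limits, and then to derive a sharp upper bound on the character of $L(\bm{\pi}^k)$ using the explicit presentation from Theorem \ref{Thm:Main1}.  The key observation is that under the hypothesis $\langle \ga_j^\vee, \gl^k \rangle = 0$ for $j \in J$, the Serre relation $f_j^{\gl_j^k + 1}\ol{v}_{\bm{\pi}^k} = 0$ collapses to $f_j\,\ol{v}_{\bm{\pi}^k} = 0$, and this allows one to enlarge the set of $t$-level vanishings from $\gD_+^1$ to the larger $\gD_+^{1,J}$.

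\medskip

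The central step is an induction on the height of $\ga$ showing that $(f_\ga\otimes t)\,\ol{v}_{\bm{\pi}^k} = 0$ for every $\ga \in \gD_+^{1,J}$.  For $\ga \in \gD_+^{1,J} \setminus \gD_+^1$, some coefficient $n_j$ with $j \in J$ satisfies $n_j \ge 2$; since root coefficients in types $ABC$ never exceed $2$, a direct case check on the root systems of $B_n$ and $C_n$ produces $j \in J$ with $n_j = 2$ for which $\ga - \ga_j \in \gD_+$, and this $\ga - \ga_j$ then automatically lies in $\gD_+^{1,J}$ with strictly smaller height.  Choosing a nonzero scalar $c$ with $[f_{\ga - \ga_j},\, f_j] = c\, f_\ga$, and using that $t^2\fn_-[t]$ annihilates $\ol{v}_{\bm{\pi}^k}$ so that commutators in $\fg[t]$ and their action on $\ol v_{\bm{\pi}^k}$ behave as expected, one has
\[
   c\,(f_\ga \otimes t)\,\ol{v}_{\bm{\pi}^k}
   \;=\; \bigl[f_{\ga - \ga_j}\otimes t,\, f_j\bigr]\ol{v}_{\bm{\pi}^k}
   \;=\; (f_{\ga - \ga_j}\otimes t)\,f_j\,\ol{v}_{\bm{\pi}^k}
         - f_j\,(f_{\ga - \ga_j}\otimes t)\,\ol{v}_{\bm{\pi}^k},
\]
both summands vanishing, the first because $f_j\,\ol{v}_{\bm{\pi}^k} = 0$ and the second by the inductive hypothesis.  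A PBW reordering combined with the fact that $t^2\fg[t]$ acts by zero on all of $L(\bm{\pi}^k)$ (so $L(\bm{\pi}^k)$ descends to a module over $\fg \ltimes (\fg\otimes t)$, with $\fg\otimes t$ abelian), together with $U(\fg)\,\ol{v}_{\bm{\pi}^k}\cong V(\gl^k)$ via Serre, yields the sharpened character bound
\[
   \ch L(\bm{\pi}^k) \;\le\; \ch V(\gl^k)\cdot \prod_{\ga \in \gD_+ \setminus \gD_+^{1,J}}\frac{1}{1 - e^{-\ga}}.
\]

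\medskip

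Finally, taking $k\to\infty$, a standard Weyl-character asymptotic shows $\lim_k e^{-\gl^k}\ch V(\gl^k) = \prod_{\ga \in \gD_+ \setminus \gD_+^J}\frac{1}{1-e^{-\ga}}$: in the Weyl-character numerator only elements of the stabilizer $W_J$ survive the limit, and the Weyl denominator identity applied to the Levi $\fg_J$ cancels the factors indexed by $\gD_+^J$.  Substituting, the limit of the upper bound agrees with the right-hand side of \eqref{eq:limit}.  Since existence of the limit has already been established in \cite{MR2982441, MY}, it remains to verify a matching lower bound; I plan to extract this either by an asymptotic analysis of the iterated Demazure operator expression in Corollary \ref{Cor:Cor2}, or via the Demazure realization of Theorem \ref{Thm:Main2} by producing, for each fixed weight shift $\mu$, enough linearly independent vectors in $L(\bm{\pi}^k)_{\gl^k + \mu}$ and showing stabilization as $k \to \infty$.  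The principal obstacles are the root-system induction in paragraph two (which must be checked uniformly across types $B$ and $C$) and establishing the asymptotic sharpness of the upper bound.
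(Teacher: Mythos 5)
Your upper-bound half is sound and is essentially the paper's argument in disguise: the relations you derive (namely $f_\ga\,\ol{v}_{\bm{\pi}^k}=0$ for $\ga\in\gD_+^J$ and, by your height induction, $(f_\ga\otimes t)\,\ol{v}_{\bm{\pi}^k}=0$ for $\ga\in\gD_+^{1,J}$, on top of those of Theorem \ref{Thm:Main1}) say precisely that $L(\bm{\pi}^k)$ is a quotient of the induced module $M^k=U(\fg[t])\otimes_{U(\fa_J)}\C v^k$, where $\fa_J= \fn_+[t] \oplus \fh[t] \oplus \bigoplus_{\ga \in \gD_+^J} \C f_\ga \oplus \bigoplus_{\ga \in \gD_+^{1,J}} \C (f_\ga \otimes t) \oplus t^2\fn_- [t]$; by PBW the normalized character of $M^k$ is exactly the right-hand side of (\ref{eq:limit}), for every $k$. (Your root-system induction does check out in types $B$ and $C$: for $\ga\in\gD_+^{1,J}\setminus\gD_+^1$ one subtracts $\ga_q$ at the smallest index $q$ with coefficient $2$, and $\ga-\ga_q$ remains a root in $\gD_+^{1,J}$.)

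The genuine gap is the lower bound, which you only sketch as a plan with two candidate strategies and do not carry out --- and it is the half that carries the real content, since an upper bound plus abstract existence of the limit does not determine its value. The paper closes it by exploiting that Theorem \ref{Thm:Main1} is a \emph{presentation}, not merely a list of relations that hold: the kernel of the surjection $\Phi^k\colon M^k\to L(\bm{\pi}^k)$ is therefore generated by the vectors $w^k_i=f_i^{\langle\ga_i^\vee,\gl^k\rangle+1}v^k$ with $i\notin J$ (the relations for $i\in J$ and all the others already hold in $M^k$). A short computation gives $\fn_+ w^k_i=0$, so $\ker\Phi^k=\sum_{i\notin J}U(\fn_-[t])U(t\fh[t])U(t\fn_+[t])w^k_i$ is supported in weights of the form $\gl^k-\big(\langle\ga_i^\vee,\gl^k\rangle+1\big)\ga_i+w_\circ\hQ^+$; for a fixed weight shift $\gb$ and fixed $t$-degree $s$ these are unreachable once $\langle\ga_i^\vee,\gl^k\rangle$ is large, whence $\dim(M^k)^s_{\gl^k-\gb}=\dim L(\bm{\pi}^k)^s_{\gl^k-\gb}$ for $k\gg0$ and the limit is exactly $e^{-\gl^k}\ch M^k$. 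If you pursue your own route, note that your argument so far never invokes the ``defining'' direction of Theorem \ref{Thm:Main1} (only that the relations hold), and some such input --- either the presentation or the Demazure realization of Theorem \ref{Thm:Main2} --- is unavoidable for the matching lower bound.
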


\begin{proof}
  By Lemma \ref{Lem:elementary_minimal} (iii), 
  it suffices to show that $\lim_{k \to \infty} e^{-\gl^k}\chh L(\bm{\pi}^k)$ coincides with the right-hand side
  of (\ref{eq:limit}).
  Define a Lie subalgebra $\fa_J$ of $\fg[t]$ by 
  \[ \fa_J= \fn_+[t] \oplus \fh[t] \oplus \bigoplus_{\ga \in \gD_+^J} \C f_\ga \oplus \bigoplus_{\ga \in \gD_+^{1,J}} 
     \C (f_\ga \otimes t) \oplus t^2\fn_- [t].
  \]
  For each $\gl^k$, let $\C v^k$ be a $1$-dimensional $\fa_J$-module defined by
  \[  hv^k = \langle h, \gl^k \rangle v^k \ \ \text{for} \ h \in \fh, \ \ \  (\fa_J \cap \hfn_+)v^k 
      = f_\ga v^k =0 \ \ \text{for} \ \ga \in \gD_+^{J},
  \]
  and define a $\fg[t]$-module $M^k$ by
  \[ M^k =U(\fg[t]) \otimes_{U(\fa_J)} \C v^k.
  \]
  For all $k$, $e^{-\gl^k} \mathrm{ch}_{\fh}\, M^k$ coincides with the right-hand side of (\ref{eq:limit}).
  Note that both $M^k$ and $L(\bm{\pi}^k)$ have natural $\Z_{\ge 0}$-gradings,
  which we normalize so that the degrees of $v^k$ and $\ol{v}_{\bm{\pi}^k}$ are $0$.
  We denote these gradings by superscripts.
  By Theorem \ref{Thm:Main1}, there exists a surjective homomorphism $\Phi^k \colon M^k \to L(\bm{\pi}^k)$, and 
  $\ker \Phi^k$ is generated by the vectors $w^k_i = f_i^{\langle \ga_i^\vee, \gl^k \rangle +1}v^k$ for $i \notin J$.
  A standard calculation shows $\fn_+w^k_i = 0$, which implies $\ker \Phi^k = \sum_{i\notin J}
  U(\fn_-[t])U(t\fh[t])U(t\fn_+[t])w^k_i$.
  Hence if $\gb \in Q^+$ and $s \ge 0$ satisfies $(\ker \Phi^k)_{\gl^k - \gb}^s \neq 0$, there exists some $i \notin J$
  satisfying
  \[  \gl^k - \gb + s\gd\in \gl^k - \big(\langle \ga_i^\vee, \gl^k \rangle +1\big)\ga_i + w_\circ\hQ^+.
  \]
  When $\gb$ and $s$ are fixed, this does not occur for sufficiently large $k$, which implies 
  \[ \dim (M^k)_{\gl^k - \gb}^s = \dim L(\bm{\pi}^k)_{\gl^k - \gb}^s \ \ \ \text{if} \ k \gg 0.
  \]
  Since $(M^k)_{\gl^k -\gb}^s = 0$ except for finitely many $s$ if $\gb$ is fixed, the assertion follows.
\end{proof}

\subsection{Example}\label{Subsection:Example}

In order to illustrate how Corollary \ref{Cor:crystal} is applied,
we will give an explicit decomposition for minimal affinizations of type $B$ in a special case.
Results in this subsection have previously been given in \cite{MR2587436} in a different way.

Assume that $\fg$ is of type $B_n$.
For convenience we relabel (in this subsection only) the vertices of the Dynkin diagram of $\hfg$
as follows:

\[ \xygraph{!~:{@{=}|@{>}}
    \bullet ([]!{+(0,-.3)} {1}) - [r]
    \bullet ([]!{+(.3,-.3)} {2}) (
        - [d] \bullet ([]!{+(.3,0)} {3}),
         - [r] \cdots - [r] \bullet ([]!{+(0,-.3)} {n})
         : [r] \bullet ([]!{+(0,-.3)} {n+1})
    )},
\]
and we let $\fg$ be the Lie subalgebra corresponding to the subdiagram with vertices $I = \{1,2,4,\ldots,n+1\}$.
In the rest of this subsection we use the new labels for $\varpi_i$, $\ga_i$, etc.,
and consider the case where $\gl = \sum_{i \in I}m_i \varpi_i \in P^+$ with $m_i = 0$ for $i >4$,
that is, $\gl = m_1\varpi_1 + m_2 \varpi_2 + m_4 \varpi_4$.

In this setting, the subset $Z$ defined in the previous subsection becomes
\[ Z = u_{m_1\gL_1} \otimes \mathcal{F}_{s_3}\Big(u_{m_2\gL_3} \otimes \mathcal{F}_{s_2s_1}(u_{m_4 \gL_1})\Big),
\]
where $\gL_i$ denote the fundamental weights of $\hfg$ (with respect to the new labels).
Let $J = \{1,2,3\}$ and $U_q(\hfg_J)$ be the subalgebra of $U_q(\hfg)$ of type $A_3$  defined similarly as in Remark \ref{Rem}.
Then as in the remark we can regard $Z$ as a subset of the $U_q(\hfg_J)$-crystal basis
$B_J(m_1\gL_1) \otimes B_J(m_2\gL_3) \otimes B_J(m_4\gL_1)$,
where $B_J(\nu)$ denotes the crystal basis of the simple $U_q(\hfg_J)$-module with highest weight $\nu$.
In the sequel, we regard $Z$ in this way.

Define a subset $W \subseteq B_J(m_2\gL_3) \otimes B_J(m_4\gL_1)$ by
\[ W = \{b \in B_J(m_2\gL_3) \otimes B_J(m_4\gL_1)\mid \tilde{e}_1^{m_1+1}(b) = 0, \ \tilde{e}_2(b) = 0\}.
\]
Then by the tensor product rule (see \cite{MR1881971}), we have
\begin{equation}\label{eq:example}
   \{b \in Z \mid \tilde{e}_i(b) = 0 \ \text{for} \ i =1,2\} = u_{m_1\gL_1} \otimes W.
\end{equation}
As \cite[Theorem 8.2.1]{MR1881971}, let us  identify $B_J(m_2\gL_3)$ and $B_J(m_4\gL_1)$ with the sets of 
semistandard tableaux of rectangle shape $3 \times m_2$ and $1 \times m_4$ respectively.
Then it is easily checked that $W$ is the set consisting of the elements of the form

\begin{align*}
   &\fbox{$1\ \ \ \ \ \cdots \hspace{14.7pt} 1$}  \\[-5pt]
    &\fbox{$2 \ \ \ \ \ \cdots \hspace{14.7pt} 2$}  \otimes \fbox{$1\cdots 1$}\hspace{-2pt}
    \underbrace{\fbox{$2\cdots 2$}}_y
   \hspace{-2pt}\underbrace{\fbox{$4\cdots4$}}_z \\[-20pt]
    & \fbox{$3\cdots3$}\hspace{-2pt}\underbrace{\fbox{$4\cdots4$}}_x
\end{align*}
with $x,y,z \ge 0$, $x \le m_2$, $y \le m_1$, and $y + z \le m_4$.
Note that the weight of this element is 
\[ m_2 \gL_3 + m_4\gL_1 - (y+z)\ga_1 - z\ga_2 - (x+z) \ga_3.
\]
Now by Corollary \ref{Cor:crystal} and (\ref{eq:example}), we have that a minimal affinization $L_q(\bm{\pi})$ of $V_q(\gl)$ 
decomposes as follows:
\[ L_q(\bm{\pi}) \cong \bigoplus_{\begin{smallmatrix} x,y,z \ge 0, \\ x \le m_2, y \le m_1, y+z \le m_4\end{smallmatrix}} 
    V_q\Big((m_1+m_4) \varpi_1 - (y+z)\ga_1 - z\ga_2 - (x+z) \ol{\ga}_3\Big),
\]
where $\ol{\ga}_3$ denotes the restriction of $\ga_3$ on $\fh$ (note that $\gL_3|_{\fh}= 0$).
It is easily checked that this result coincides with that given in \cite[Proposition 5.7]{MR2587436}.

\section{Proof of main theorems}

Throughout this section, we assume that $\fg$ is of type $BC$ unless specified otherwise,
and fix $\gl = \sum_{i \in I} \gl_i \varpi_i \in P^+$ and $\bm{\pi} \in P_{\mathbf{A}}^+$ such that 
$L_q(\bm{\pi})$ is a minimal affinization of $V_q(\gl)$.
Here we use the usual labeling given in Section \ref{Section:Lie}, not the one in Subsection \ref{Subsection:Example}. 
We freely use the notation $\xi_i$, $\gL^i$, etc., defined in Subsections \ref{Main} and \ref{Subsection:Corollaries}.

Let $M(\gl)$ denote the $\fg[t]$-module defined in terms of generators and relations in Theorem \ref{Thm:Main1}.
We shall verify one by one the existence of surjective homomorphisms
\[ D(w_\circ\xi_1, \ldots,w_\circ\xi_n) \twoheadrightarrow M(\gl), \ \ \ M(\gl) \twoheadrightarrow L(\bm{\pi}), \ \ \ 
   L(\bm{\pi}) \twoheadrightarrow D(w_\circ\xi_1, \ldots,w_\circ\xi_n),
\]
which proves Theorems \ref{Thm:Main2} and  \ref{Thm:Main1} simultaneously.
For the proof of the latter two, we need some results on $q$-characters. 
These are recalled in Subsection \ref{Subsection:q-char}.

\subsection{Proof of \boldmath$D(w_\circ\xi_1, \ldots,w_\circ\xi_n) \twoheadrightarrow M(\gl)$}\label{Proof1}

Let us prepare several notation.
For $1 \le p \le q \le n$, set $\ga_{p,q} = \ga_p + \ga_{p+1} + \cdots + \ga_q$.
We have
\[ \gD_+ = \begin{cases} \{\ga_{p,q} \mid p\le q\} \sqcup \{\ga_{p,n} + \ga_{q,n} \mid p<q\} & \fg = B_n,\\
                         \{\ga_{p,q} \mid p \le q \} \sqcup \{\ga_{p,n} + \ga_{q,n-1} \mid p \le q <n \} & \fg =C_n.
           \end{cases}
\]
Set $\ga_{p,q} = 0 $ if $p > q$.
Let $v_M$ denote the generator of $M(\gl)$ in the definition,
and set $D = D(w_\circ\xi_1, \ldots,w_\circ\xi_n)$ and $v_D = v_{\xi_1} \otimes \cdots \otimes v_{\xi_n} \in D$.
Note that $v_D$ generates $D$ as a $\fg[t]$-module.
Recall that $D$ is by definition a module over the Lie algebra $\fg[t] \oplus \C K \oplus \C d$,
and $K$ and $d$ act on $v_D$ by some scalar multiplications.
In this subsection, we also view $M(\gl)$ as a module over this Lie algebra by letting 
$K$ and $d$ act on $v_M$ by the same multiplications. 

For $\ga = \gb + s\gd \in \hat{\gD}^{\mathrm{re}}$, we define an nonnegative integer $\rho(\ga)$ by
\[ \rho(\ga) = \sum_{k=1}^n \max\big\{0, -\langle \ga^\vee,\xi_k\rangle\big\}
   = \sum_{k=1}^n \max \Big\{0, - \langle \gb^\vee, \xi_k\rangle - \frac{2s}{(\gb,\gb)} \langle K, \xi_k \rangle \Big\}. 
\] 
The following assertions are checked by direct calculations.\\[6pt]
    {\normalfont(i)} For $\gb + s \gd \in \hgDre_+$, we have 
       $\rho(\gb+s\gd) = 0$ unless $-\gb \in \gD_+ \setminus \gD_+^1$ and $s = 1$.\\[3pt]
    {\normalfont(ii)} If $\fg$ is of type $B$ and $p < q$, we have
      \[ \rho\big(-(\ga_{p,n} + \ga_{q,n})+ \gd\big)  = \sum_{k = q}^{n-1} \gl_k + \big\lfloor \gl_n/2 \big\rfloor.
      \] 
    {\normalfont(iii)} If $\fg$ is of type $C$ and $p<q$, we have  
    \begin{align*}
      \rho\big(-(\ga_{p,n} &+ \ga_{q,n-1})+\gd\big) \\
                      =&\begin{cases} \sum_{k=q}^{n-1} \gl_k -1 & \text{if} \ \sum_{k=q}^{n-1}\gl_k \in 2\Z_{\ge 0} + 1
                                        \ \text{and} \ 
                                      \gl_k=0 \ \text{for all} \ p\le k <q,\\[3pt]
                                      \sum_{k=q}^{n-1} \gl_k & \text{otherwise},
                        \end{cases}
    \end{align*}
    and $\rho\big(-(\ga_{q,n} + \ga_{q,n-1})+\gd\big) = \Big\lfloor \sum_{k = q}^{n-1} \gl_k/2 \Big\rfloor$.\\  

For $\ga = \gb + s\gd \in \hgDre$, we denote by $x_\ga$ the vector $e_\gb \otimes t^s \in \hfg$.
The following proposition is essential in this subsection.

\begin{Prop}\label{Prop:annihilator}
  {\normalfont(i)} If $\fg$ is of type $B$, we have 
      \[ \mathrm{Ann}_{U(\hfn_+)}v_D = U(\hfn_+)\bigg(\sum_{\ga \in \hgDre_+} \C x_\ga^{\rho(\ga)+1} + t\fh[t]\bigg).
      \]
  {\normalfont(ii)} If $\fg$ is of type $C$, we have
      \[ \mathrm{Ann}_{U(\hfn_+)}v_D = U(\hfn_+)\bigg(\sum_{\ga \in \hgDre_+} \C x_\ga^{\rho(\ga)+1} +
         \sum_{\begin{smallmatrix} (\ga,\gb) \in S \\ 1\le k < \rho(\ga)/2+1 \end{smallmatrix}}
         \C x_{\ga}^{\rho(\ga)-2k +1} x_{\gb}^k + t\fh[t]\bigg),
      \]
      where $S$ is a subset of $\hat{\gD}_+^{\mathrm{re}} \times \hat{\gD}_+^{\mathrm{re}}$ defined by
       \[ S= \bigg\{ \Big(-\big(\ga_{p,n} + \ga_{q,n-1}\big) + \gd, -\big(\ga_{q,n} + \ga_{q,n-1}\big) + \gd \Big)\biggm|
                     1\le p < q \le n\bigg\}.
       \]
\end{Prop}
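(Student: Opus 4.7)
The plan is to establish the two inclusions of the claimed equality separately.

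For the inclusion $\supseteq$, I will verify that each listed generator annihilates $v_D$. Since $v_{\xi_k}\in\hV(\gL^k)$ is an extremal weight vector, the standard identity $x_\ga^{\max\{0,-\langle\ga^\vee,\xi_k\rangle\}+1}v_{\xi_k}=0$ holds for every $k$ and every $\ga\in\hgDre_+$, and by the very definition $\rho(\ga)=\sum_k\max\{0,-\langle\ga^\vee,\xi_k\rangle\}$, a pigeonhole argument applied to the cocommutative coproduct $\Delta(x_\ga^N)=\sum\binom{N}{n_1,\ldots,n_n}x_\ga^{n_1}\otimes\cdots\otimes x_\ga^{n_n}$ yields $x_\ga^{\rho(\ga)+1}v_D=0$. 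For the relation $t\fh[t]\cdot v_D=0$, I will use the explicit description of each $\xi_k$ and $\gL^k$ from Subsections~\ref{Main} and \ref{Subsection:Corollaries} to check, case by case, that for each $k$ and $s\ge 1$ either $\xi_k+s\gd\notin\gL^k-\hQ^+$ (in which case the relevant weight space of $\hV(\gL^k)$ is zero) or $\xi_k=\gL^k$ is already in $\hP^+$ (in which case $v_{\xi_k}$ is a highest weight vector and $\hfn_+v_{\xi_k}=0$); in either event $(h\otimes t^s)v_{\xi_k}=0$, and summing over tensor factors gives $(h\otimes t^s)v_D=0$. The type-$C$ Serre-like relations $x_\ga^{\rho(\ga)-2k+1}x_\gb^k v_D=0$ will be treated by a parallel tensor-factorwise analysis, reducing to an $\mathfrak{sl}_2$-type product identity that follows from $\fg$-dominance of each $\xi_k$ together with the specific root-string structure of the pair $(\ga,\gb)\in S$.

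For the inclusion $\subseteq$, let $I$ denote the right-hand side. The natural map $U(\hfn_+)/I\twoheadrightarrow U(\hfn_+)v_D=U(\hfb)v_D$ is visibly surjective, and the plan is to prove injectivity by $\hfh$-character comparison. On the target side, I will identify $U(\hfb)v_D$ with the generalized Demazure module $D(\xi_1,\ldots,\xi_n)$ of Subsection~\ref{Demazure}: writing $\xi_i=w_{[1,i]}\gL^i$ with $w_i\in\wti{W}$ and $\gL^i\in\hP^+$ as in Subsection~\ref{Subsection:Corollaries}, Lemma~\ref{Lem:BC}(ii) furnishes $\ell(w_{[1,n]})=\sum_i\ell(w_i)$, so Proposition~\ref{Prop:character2} and Corollary~\ref{Prop:character3} yield a closed Demazure-operator expression for $\chhh D(\xi_1,\ldots,\xi_n)$. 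On the source side, imposing $t\fh[t]=0$ kills every imaginary-root generator of $\hfn_+$, and by assertion~(i) preceding the proposition $\rho(\ga)=0$ for every $\ga\in\hgDre_+$ outside the finite set $S_0=\{-\gb+\gd\mid\gb\in\gD_+\setminus\gD_+^1\}$, so the quotient is controlled by the finitely many generators $\{x_\ga\}_{\ga\in S_0}$ subject to the truncations $x_\ga^{\rho(\ga)+1}=0$ (and, in type~$C$, the additional Serre-like cuts). A direct monomial count then matches the Demazure-operator character.

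The main obstacle is the type-$C$ case: the generators $\{x_\ga\}_{\ga\in S_0}$ do not all commute modulo $I$, so naive PBW monomials over-count and the Serre-like relations are indispensable for cutting the monomial count down to the correct value. My plan for handling this is an induction on the rank $n$, restricting via Corollary~\ref{Cor:restriction} to suitable rank-two Levi subalgebras containing the pair $(\ga,\gb)\in S$, where both the validity of the Serre-like relation and the matching character count can be pinned down by an explicit computation inside the corresponding sub-Demazure module.
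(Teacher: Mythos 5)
The $\supseteq$ half of your plan is broadly workable for the relations $x_\ga^{\rho(\ga)+1}v_D=0$ and $t\fh[t]v_D=0$, but the decisive gap is in the $\subseteq$ half. You reduce everything to the assertion that ``a direct monomial count matches the Demazure-operator character,'' i.e.\ that $\chhh D(\xi_1,\ldots,\xi_n)$, as produced by the nested operators $\D_{w_i}$ of Proposition~\ref{Prop:character2} and Corollary~\ref{Prop:character3}, coincides with the character of the span of truncated monomials in the finitely many $x_\ga$ with $\ga\in\{-\gb+\gd\mid \gb\in\gD_+\setminus\gD_+^1\}$. That identity is not something one can simply read off: the Demazure-operator expression is a nested alternating sum, not visibly a product of truncated geometric series, and the claimed matching is logically equivalent to the proposition itself (the monomials give an upper bound for $\chhh\big(U(\hfn_+)/I\big)$, the surjection onto $U(\hfb)v_D$ gives a lower bound, and equality of the two \emph{is} the statement). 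So at the crucial point the proposal restates the problem rather than solving it. This is exactly why the paper proceeds differently: it interpolates a chain of modules $D(i,j)$ with $F_i'D(i,j)=D(i+1,j)$ and $D(0,j)=D(\gL^j)\otimes D(j+1,j+1)$, and transports the annihilator ideal across one operator $F_i$ at a time using Joseph's results (Lemma~\ref{Lem:proceed}), where the needed character identity $\chhh F_iT=\D_i\chhh T$ is supplied by Theorem~\ref{Thm:LLM} and the only thing left to check at each step is the $\ad(e_i)$-invariance of the ideal, which is controlled by the root-combinatorial Lemma~\ref{Lem:values}.

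Two further problems arise in type $C$. First, your diagnosis that the over-count comes from non-commutativity is incorrect: for $(\ga,\gb)\in S$ one has $[x_\ga,x_\gb]=0$ (the sum of the two roots is not a root), as the paper itself notes; the over-count occurs because $D(\xi_1,\ldots,\xi_n)$ genuinely satisfies the extra relations $x_\ga^{\rho(\ga)-2k+1}x_\gb^k v_D=0$, not because PBW monomials fail to be orderable. Second, the proposed reduction to rank-two Levi subalgebras via Corollary~\ref{Cor:restriction} cannot work: that corollary concerns restrictions of minimal affinizations to $U_q(\bL\fg_J)$, not Demazure modules, and a pair $(\ga,\gb)\in S$ involves the simple roots $\ga_p,\ldots,\ga_n$ together with $\ga_0$, so no rank-two subalgebra contains it. In the paper these extra relations are generated inside the same induction, via the commutation identity for $e_0x_\ga^{l_1}x_\gb^{l_2}$ in (\ref{eq:ad2}) and the twist of the ideal by $r_i$ at each step of Lemma~\ref{Lem:proceed}.
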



Assuming this proposition for a while, we shall prove $D \twoheadrightarrow M(\gl)$ first.
Set $D' = D(\xi_1,\ldots,\xi_p) \subseteq D$, which is generated by $v_D$ as a $\hfb$-module.

\begin{Lem}\label{Lem:exis}
  There is a $\hfb$-module homomorphism from $D'$ to $M(\gl)$ mapping $v_D$ to $v_M$.
\end{Lem}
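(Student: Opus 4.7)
To construct the desired $\hfb$-module homomorphism, the plan is to verify that $\mathrm{Ann}_{U(\hfb)}(v_D) \cdot v_M = 0$, since then the assignment $v_D \mapsto v_M$ extends uniquely from $D' = U(\hfb)v_D$. Using $\hfb = \hfh \oplus \hfn_+$, this splits into two checks: that $v_D$ and $v_M$ share the same $\hfh$-weight, and that every generator of $\mathrm{Ann}_{U(\hfn_+)}(v_D)$ listed in Proposition \ref{Prop:annihilator} annihilates $v_M$. The weight check is routine: by our convention $K$ and $d$ act on $v_M$ by the same scalars as on $v_D$, while a direct inspection of the explicit formulas for $\xi_i$ in types $B$ and $C$ given in Subsection \ref{Main} yields $\sum_i \xi_i\big|_{\fh} = \gl$, matching the action of $\fh$ on $v_M$.

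Passing to the generators of $\mathrm{Ann}_{U(\hfn_+)}(v_D)$, the $t\fh[t]$ piece is killed by $v_M$ immediately from the defining relation $(h \otimes t^s)v = 0$ for $s > 0$. By assertion (i) about $\rho$, each remaining generator $x_\ga^{\rho(\ga)+1}$ with $\rho(\ga) = 0$ falls, according to $\ga = \gb + s\gd$, into one of three cases: $\gb \in \gD_+$ (so $x_\ga \in \fn_+[t]$), $\gb \in \gD_-$ with $s \ge 2$ (so $x_\ga \in t^2\fn_-[t]$), or $s = 1$ with $-\gb \in \gD_+^1$ (so $x_\ga$ is a scalar multiple of $f_{-\gb} \otimes t$). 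Each possibility matches one of the listed defining relations of $M(\gl)$, yielding a routine case-by-case dispatch.

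The substantive step, and the main obstacle, is the generators $x_\ga^{\rho(\ga)+1}$ with $\rho(\ga) > 0$. By assertions (ii) and (iii) these are the higher-power relations $(f_\ggg \otimes t)^{N+1} v_M = 0$ with $\ggg \in \gD_+ \setminus \gD_+^1$ and $N$ determined by $\gl$, together with the mixed generators $x_\ga^{\rho(\ga)-2k+1} x_\gb^k$ indexed by the set $S$ in type $C$. None of these identities appears among the defining relations of $M(\gl)$, so each must be derived. The plan is to combine the Serre-type relations $f_i^{\gl_i + 1}v_M = 0$ with the relations $t^2\fn_-[t]v_M = 0$ and $(f_\gb \otimes t)v_M = 0$ for $\gb \in \gD_+^1$ by working inside the $\widehat{\mathfrak{sl}}_2$-subalgebra of $\hfg$ attached to $\ggg$ and applying a Garland-type identity; iterated commutators of the $f_i$'s with elements of the form $f_\gb \otimes t$ then transport the vanishing from simple-root data to the root $\ggg$. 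The type-$C$ mixed relations are more delicate: balancing the exponents $\rho(\ga) - 2k + 1$ and $k$ simultaneously against the defining relations demands an additional inductive commutator argument, and this is where the bulk of the work in the proof is expected to lie.
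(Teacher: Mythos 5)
Your reduction coincides with the paper's: identify the $\hfh$-weights of $v_D$ and $v_M$, and check that the generators of $\mathrm{Ann}_{U(\hfn_+)}v_D$ from Proposition \ref{Prop:annihilator} kill $v_M$; the $t\fh[t]$ part and the generators $x_\ga^{\rho(\ga)+1}$ with $\rho(\ga)=0$ are indeed disposed of by matching them against the defining relations of $M(\gl)$, exactly as you describe. The problem is the step you yourself flag as substantive. A Garland-type identity inside the affine $\mathfrak{sl}_2$ attached to $\ggg\in\gD_+\setminus\gD_+^1$ only sees the integer $\langle \ggg^\vee,\gl\rangle$ and so can give at best $(f_\ggg\otimes t)^k v_M=0$ for $k>\langle\ggg^\vee,\gl\rangle/2$; but the exponent you must reach is $\rho(\ga)+1$, where $\rho(\ga)$ is computed from the shorter tail $\sum_{k\ge q}\gl_k$ only and is in general strictly smaller. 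Concretely, in type $B_2$ with $\gl=2\varpi_1$ and $\ggg=\ga_{1,2}+\ga_{2,2}$ one has $\rho(-\ggg+\gd)=0$, so the relation needed is $(f_\ggg\otimes t)v_M=0$ outright, whereas $\langle\ggg^\vee,\gl\rangle=2$ and the Garland bound only kills the square. The relations $(f_\gb\otimes t)v_M=0$ for $\gb\in\gD_+^1$ must therefore enter in an essential, quantitative way, and ``iterated commutators transport the vanishing'' does not supply a mechanism: writing $f_\ggg\otimes t$ as a commutator $[f_i,f_\gb\otimes t]$ leaves behind terms such as $(f_\gb\otimes t)f_i v_M$, which need not vanish.

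The paper's device is different and is the missing idea. For $\ga=-(\ga_{p,n}+\ga_{q,n})+\gd$ in type $B$ ($p<q$) one applies the \emph{finite} raising operator $e_{\ggg'}$ with $\ggg'=\ga_{q,n}$: the commutator $[e_{\ggg'},x_\ga]$ is proportional to $f_{\ga_{p,n}}\otimes t$ with $\ga_{p,n}\in\gD_+^1$, and it commutes with $x_\ga$, so $e_{\ggg'}x_\ga^{\rho(\ga)+1}v_M=0$; since $\langle\ggg'^\vee,\gl+(\rho(\ga)+1)\ga\rangle<0$ and $M(\gl)$ is a finite-dimensional $\fg$-module (a quotient of the local Weyl module), the vector must vanish. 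The same negative-weight argument handles $\ga=-(\ga_{q,n}+\ga_{q,n-1})+\gd$ in type $C$ with $\ggg'=\ga_{q,n-1}$, and the mixed generators $x_\ga^{\rho(\ga)-2k+1}x_\gb^k$ are then treated by a descending induction on $k$ seeded by $f_{\ga_{p,q-1}}v_M=0$, not by balancing exponents against a Garland identity. Without some substitute for this finite-dimensionality/negative-weight argument, your outline does not close.
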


\begin{proof}
  Since the $\hfh$-weights of $v_D$ and $v_M$ are same,
  it is enough to check that $\mathrm{Ann}_{U(\hfn_+)}v_D$ given in Proposition \ref{Prop:annihilator} annihilates $v_M$.
  First let us show $x_\ga^{\rho(\ga)+1}v_M = 0$ for $\fg=B_n$ and $\ga = -(\ga_{p,n} + \ga_{q,n}) + \gd$ with $p< q$.
  Set $\ggg = \ga_{q,n}$.
  It is easily checked that
  \[ \Big\langle \ggg^\vee, \gl + \big(\rho(\ga)+1\big)\ga \Big\rangle= \bigg(2\sum_{k=q}^{n-1}\gl_k +\gl_n\bigg) -
     2 \bigg( \sum_{k=q}^{n-1}\gl_k + \lfloor \gl_n/2\rfloor +1 \bigg) < 0.
  \]
  On the other hand, a direct calculation shows $x_{\ggg}x_\ga^{\rho(\ga)+1}v_M = 0$,
  which implies $x_\ga^{\rho(\ga)+1}v_M = 0$ as desired since $M(\gl)$ is a finite-dimensional $\fg$-module.
  For $\fg = C_n$ and $\ga = -(\ga_{q,n} + \ga_{q,n-1}) + \gd$, $x_\ga^{\rho(\ga)+1}v_M = 0$ 
  is proved by the same argument with $\ggg = \ga_{q,n-1}$.

  Next we shall show $x_\ga^{\rho(\ga) -2k+ 1}x_\gb^{k}v_M = 0$ for $\fg = C_n$,
  \[ \ga = -(\ga_{p,n} + \ga_{q,n-1}) +\gd, \ \ \gb = -(\ga_{q,n} + \ga_{q,n-1}) + \gd \ \ \text{with} \ p<q,
  \] 
  and $0 \le k< \rho(\ga)/2+1$.
  If $\rho(\ga) = \sum_{k =q}^{n-1}\gl_k$, this is proved by the same argument as above with $\ggg = \ga_{q,n-1}$.
  So we may assume that $\rho(\ga) = \sum_{k =q}^{n-1}\gl_k -1 $ (i.e., $\sum_{k=q}^{n-1}\gl_k$ is odd and $\gl_k = 0$ 
  for $p \le k <q$). 
  The assertion is proved by the descending induction on $k$.
  Set $\ggg = \ga - \gb = -\ga_{p,q-1}$.
  Since $\rho(\gb) = \rho(\ga)/2$ and $x_\ggg v_M= 0$, we have
  \[ 0 = x_\ggg x_\gb^{\rho(\ga)/2+1}v_M \in \C^\times x_{\gb + \ggg} x_\gb^{\rho(\ga)/2}v_M.
  \]
  Hence the case $k = \rho(\ga)/2$ is proved.
  Assume that $k < \rho(\ga)/2$. 
  A direct calculation using $x_\ggg v_M=0$ shows 
  \begin{align*}
     x_\ggg^2 x_{\gb + \ggg}^{\rho(\ga) - 2k -1} x_\gb^{k+2} v_M = 
      a_1 &x_{\gb + 2\ggg}^2 x_{\gb+\ggg}^{\rho(\ga)-2k-3} x_\gb^{k+2} v_M\\
      &+ a_2 x_{\gb + 2\ggg}x_{\gb+\ggg}^{\rho(\ga) -2k -1} x_\gb^{k+1}v_M + a_3 x_{\gb+\ggg}^{\rho(\ga) -2k+ 1}x_\gb^{k}v_M
   \end{align*}
   for some $a_1,a_2,a_3 \in \C$ with $a_3 \neq 0$ (set $x_{\gb + \ggg}^l =0$ if $l < 0$).
   By the induction hypothesis, this implies $x_\ga^{\rho(\ga) -2k+ 1}x_\gb^{k}v_M = 0$ as desired.
   The other relations are trivially checked, and the lemma is proved.
\end{proof}

Let $w_\circ = s_{i_1} \cdots s_{i_{r-1}}s_{i_r}$ be a reduced expression of $w_\circ$, and set $w^{k\le} = s_{i_{k}} 
\cdots s_{i_r}$ for $1 \le k \le r+1$.
We define $D^{k}= F_{w^{k\le}}D'$.
Then $D^{r+1} = D'$ is obvious, and $D^{1} = D$ follows from Lemma \ref{Lem:one_change}.
In the following, we shall verify by the descending 
induction on $k$ that there exists a nonzero $\hfb$-module homomorphism from $D^k$ to $M(\gl)$.
This for $k=r+1$ is just Lemma \ref{Lem:exis}.
Assume that $k \le r$,
and consider a $\hfp_{i_k}$-module $U(\hfp_{i_k}) \otimes_{U(\hfb)} D^{k+1}$.
This $\hfp_{i_k}$-module has a unique maximal finite-dimensional quotient \cite{MR826100},
which we denote by $\wti{D}^{k+1}$.
We easily see from the definition that, if $N$ is a finite-dimensional $\hfp_{i_k}$-module, 
every $\hfb$-module homomorphism $D^{k+1} \to N$
uniquely extends to a $\hfp_{i_k}$-module homomorphism $\wti{D}^{k+1} \to N$. 
By the induction hypothesis, there is a nonzero $\hfb$-module homomorphism $D^{k+1} \to M(\gl)$,
which extends to a $\hfp_{i_k}$-module homomorphism $\wti{D}^{k+1} \to M(\gl)$.
Hence it suffices to show that $\wti{D}^{k+1} \cong D^k$.
The inclusion $D^{k+1} \hookrightarrow  F_{i_{k}}D^{k+1} = D^k$ extends to a homomorphism $\wti{D}^{k+1} \to D^k$, 
and this is obviously surjective.
On the other hand, \cite[Lemmas 2.6, 2.8(i)]{MR826100} and Theorem \ref{Thm:LLM} imply
\[ \chhh \wti{D}^{k+1} = \mathcal{D}_{i_k}\chhh D^{k+1} = \chhh F_{i_k}D^{k+1} = \chhh D^k,
\]
and therefore $\wti{D}^{k+1} \cong D^k$ holds, as desired.

By the above argument, we see that there is a nonzero $\hfb$-module homomorphism from $D$ to $M(\gl)$.
Note that $D$ and $M(\gl)$ are generated by the $1$-dimensional weight spaces
$D_{w_\circ\gl}$ and $M(\gl)_{w_\circ \gl}$ respectively,
and these spaces are annihilated by $\fn_-$.
We easily see from this that the homomorphism $D \to M(\gl)$ is surjective, and extends to one of $\fg[t]$-modules.
Hence $D \twoheadrightarrow M(\gl)$ (as $\fg[t]$-modules) is proved.

It remains to show Proposition \ref{Prop:annihilator}.
For $1 \le j \le n$ and $0 \le i \le j$,
let
\begin{align*}
   D(i,j)&= D\big(w_i\gL^j, w_iw_{j+1}\gL^{j+1}, \ldots,w_iw_{[j+1,n-1]}\gL^{n-1},w_i w_{[j+1,n]}\gL^n\big), \ \text{and}\\
   v(i,j)& = v_{w_i\gL^j} \otimes v_{w_iw_{j+1}\gL^{j+1}} \otimes \cdots \otimes v_{w_iw_{[j+1,n-1]}\gL^{n-1}}
   \otimes v_{w_i w_{[j+1,n]}\gL^n}
\end{align*}
(here we set $w_0 = \id$). The vector $v(i,j)$ is a generator of $D(i,j)$ as a $\hfb$-module.
Note that $D' = D(1,1)$ and $v_D = v(1,1)$ since $\xi_i = w_{[1,i]}\gL^i$ for all $i \in I$.
By Lemmas \ref{Lem:one_change} and \ref{Lem:tau}, we have 
\begin{align*}\label{eq:induction}
  F_i' D(i,j) = D(i+1,j) \ \ \ \text{for}\ 0\le i<j,
\end{align*}
where $F_i' = F_\tau$ if $\fg = B_n$ and $i = 0$, and $F_i' = F_i$ otherwise.
Moreover $D(0,j) = D(\gL^j) \otimes D(j+1,j+1)$ holds for $j < n$.
In the following, we shall prove the proposition by determining the annihilators of $v(i,j)$'s inductively 
using these equalities.
For this, we prepare two lemmas. 
Define a Lie subalgebra $\hfn_i$ of $\hfn_+$ for $i \in \hI$ by $\hfn_i = 
\bigoplus_{\ga \in \hat\gD_+^{\mathrm{re}} \setminus \{\ga_i\}} \C x_\ga \oplus t\fh[t]$.
Note that $\hfn_+ = \C e_i \oplus \hfn_i$.

\begin{Lem}\label{Lem:proceed}
  Let $V$ be an integrable $\hfg$-module, $T$ a finite-dimensional $\hfb$-submodule of $V$,
  $i \in \hI$ and $\xi \in \hP$ such that $\langle \ga_i^\vee, \xi \rangle \ge 0$.
  Assume that the following conditions hold:\\
  {\normalfont(i)}
    $T$ is generated by a weight vector $v \in T_\xi$ satisfying $e_i v=0$.\\
  {\normalfont(ii)}
    There is an $\ad(e_i)$-invariant left $U(\hfn_i)$-ideal $\mathcal{I}$ such that 
    \[ \mathrm{Ann}_{U(\hfn_+)}v = U(\hfn_+)e_i + U(\hfn_+)\mathcal{I}.
    \]
  {\normalfont(iii)} We have $\chhh F_i T = \mathcal{D}_i \chhh T$.\\
  Let $v' = f_i^{\langle \ga_i^\vee, \xi \rangle} v$. Then we have
  \[ \mathrm{Ann}_{U(\hfn_+)} v'= U(\hfn_+)e_i^{\langle \ga_i^\vee, \xi \rangle+1} + U(\hfn_+)r_i( \mathcal{I}),
  \]
  where $r_i$ denotes the algebra automorphism of $U(\hfg)$ corresponding to the reflection $s_i$.
\end{Lem}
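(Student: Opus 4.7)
I would prove the two inclusions separately. Setting $m := \langle \ga_i^\vee, \xi\rangle \ge 0$, integrability of $V$ together with $e_i v = 0$ forces $U(\mathfrak{sl}_{2,i})\cdot v$ to be the $(m{+}1)$-dimensional simple $\mathfrak{sl}_{2,i}$-module, so $v' = f_i^m v$ is a nonzero weight vector of weight $s_i\xi$. The inclusion $U(\hfn_+)e_i^{m+1}\subseteq \mathrm{Ann}_{U(\hfn_+)}(v')$ is immediate because $e_i^{m+1}$ annihilates this $(m{+}1)$-dimensional submodule. For $r_i(\mathcal{I})\cdot v' = 0$, I would invoke the Weyl group lift $\wti{s}_i$ of $s_i$ as a linear automorphism of the integrable module $V$, which satisfies $\wti{s}_i\cdot x = r_i(x)\cdot\wti{s}_i$ for all $x\in U(\hfg)$ and sends $v$ to $c\cdot v'$ for some $c\in\C^\times$. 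Then for every $x\in\mathcal{I}$ we have $r_i(x)v' = c^{-1}\wti{s}_i\cdot xv = 0$, since $xv = 0$ by hypothesis (ii), establishing the inclusion ``$\supseteq$''.

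For the reverse inclusion I would first check that $v'$ generates $F_iT$ as a $U(\hfn_+)$-module: applying powers of $e_i$ to $v'$ recovers the full $\mathfrak{sl}_{2,i}$-string $\{v,f_iv,\dots,f_i^m v\}$, so $F_iT = U(\hfp_i)v = U(\hfn_+)\cdot U(\mathfrak{sl}_{2,i})v \subseteq U(\hfn_+)v'$. This yields a surjective $U(\hfn_+)$-module map
\[ \Psi\colon U(\hfn_+)/\mathcal{J}\twoheadrightarrow F_iT, \qquad \bar 1\mapsto v', \]
where $\mathcal{J} := U(\hfn_+)e_i^{m+1}+U(\hfn_+)r_i(\mathcal{I})$. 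It remains to show $\Psi$ is injective, which I would do by comparing $\hfh$-graded characters.

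The character of $T$ admits a clean computation from hypothesis (ii) via the PBW decomposition $U(\hfn_+)=U(\hfn_i)\cdot U(\langle e_i\rangle)$: the $\ad(e_i)$-invariance of $\mathcal{I}$ gives $U(\langle e_i\rangle)\mathcal{I}\subseteq \mathcal{I}\,U(\langle e_i\rangle)+U(\hfn_+)e_i$, hence $U(\hfn_+)e_i+U(\hfn_+)\mathcal{I} = U(\hfn_+)e_i+\mathcal{I}$, and therefore $T\cong U(\hfn_i)/\mathcal{I}$ as $\hfh$-graded vector spaces up to the shift by $\xi$. Combined with hypothesis (iii), this identifies $\chhh F_iT$ explicitly as $\D_i(e^\xi Q)$, where $Q$ denotes the $\hfh$-graded character of $U(\hfn_i)/\mathcal{I}$ with $\bar 1$ placed in weight $0$.

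The main obstacle is the analogous computation for $\chhh(U(\hfn_+)/\mathcal{J})$. Because $r_i(\mathcal{I})$ is only $\ad(f_i)$-invariant and not $\ad(e_i)$-invariant, the PBW simplification of the previous paragraph does not apply directly, and one must carefully analyze the left $U(\hfn_+)$-submodule generated by $\overline{r_i(\mathcal{I})}$ inside $U(\hfn_+)/U(\hfn_+)e_i^{m+1}\cong U(\hfn_i)\otimes \bigoplus_{k=0}^m\C e_i^k$, using the commutation identity $e_i^k x = \sum_{j=0}^k\binom{k}{j}(\ad e_i)^j(x)\,e_i^{k-j}$. Exploiting that $r_i$ restricts to an algebra automorphism of $U(\hfn_i)$ intertwining the $\hfh$-action with $s_i$, so that the $\hfh$-graded character of $U(\hfn_i)/r_i(\mathcal{I})$ equals $s_i(Q)$, I expect the resulting character to reduce to $(e^{m\ga_i}Q - e^{-\ga_i}s_i Q)/(1-e^{-\ga_i})$, which coincides with $e^{-s_i\xi}\D_i(e^\xi Q)$ by a direct manipulation of the Demazure operator formula. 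With both characters agreeing and $\Psi$ surjective between finite-dimensional $\hfh$-graded vector spaces, $\Psi$ must be an isomorphism, completing the proof.
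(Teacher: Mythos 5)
Your first half is correct and coincides with the paper's argument: the containment $U(\hfn_+)e_i^{m+1}+U(\hfn_+)r_i(\mathcal{I})\subseteq\mathrm{Ann}_{U(\hfn_+)}v'$ via integrability and the Weyl-group lift $\wti{s}_i$ is exactly the easy direction, and your observation that $\ad(e_i)$-invariance gives $U(\hfn_+)e_i+U(\hfn_+)\mathcal{I}=U(\hfn_+)e_i+\mathcal{I}$, hence $\mathrm{Ann}_{U(\hfn_i)}v=\mathcal{I}$ and $\mathrm{Ann}_{U(\hfn_i)}v'=r_i(\mathcal{I})$, is also how the paper proceeds. The identification $F_iT=U(\hfn_+)v'$ and the reduction of the reverse containment to the character identity $\chhh\big(U(\hfn_+)/\mathcal{J}\big)=e^{-s_i\xi}\D_i(e^\xi Q)$ are likewise the right strategy.

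The gap is that this character identity carries essentially all of the content of the lemma, and you do not prove it: you only write that you ``expect'' the computation to yield $(e^{m\ga_i}Q-e^{-\ga_i}s_iQ)/(1-e^{-\ga_i})$. Observe that the surjection $\Psi$ already gives $\chhh\big(U(\hfn_+)/\mathcal{J}\big)\ge e^{-s_i\xi}\,\chhh F_iT$ coefficientwise, so hypothesis (iii) makes the lower bound free; what is actually needed is the opposite estimate, namely that $\mathcal{J}$ is \emph{large enough} in every weight. The commutation identity you propose produces terms $(\ad\, e_i)^j(x)$ with $x\in r_i(\mathcal{I})$ which lie neither in $U(\hfn_i)$ nor, a priori, in $\mathcal{J}$; the hypothesis only yields $\ad(f_i)$-invariance of $r_i(\mathcal{I})$ (the image under $r_i$ of the $\ad(e_i)$-invariance of $\mathcal{I}$), which points the wrong way for a computation carried out inside $U(\hfn_+)$. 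So the proposed filtration does not obviously close up, and a genuinely finer argument is required. This is precisely the step the paper does not reprove but cites: \cite[Proposition 3.2]{MR826100} asserts that condition (iii) forces $\mathrm{Ann}_{U(\hfn_+)}v'=U(\hfn_+)e_i^{\langle\ga_i^\vee,\xi\rangle+1}+U(\hfn_+)\mathrm{Ann}_{U(\hfn_i)}v'$. Either invoke that result, as the paper does, or supply the missing estimate; as written, your proof is incomplete at its crucial point.
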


\begin{proof}
  The following proof is essentially same as a part of the proof of \cite[Theorem 3.4]{MR826100}.

  It follows from the $\ad(e_i)$-invariance of $\mathcal{I}$ that 
  \[  U(\hfn_+)\mathcal{I} = \C[e_i]\mathcal{I} \subseteq \mathcal{I} + U(\hfn_+)e_i.
  \]
  Hence (ii) implies $\mathrm{Ann}_{U(\hfn_i)}v = \mathcal{I}$, for $U(\hfn_+) = U(\hfn_+)e_i \oplus U(\hfn_i)$.
  By \cite[Lemma 3.8]{MR1104219}, there is a $\hfg$-module automorphism $r_i'$ on $V$ satisfying 
  $r_i' (v) \in \C^\times v'$ and $\mathrm{Ad}(r_i')x = r_i(x)$ for $x \in \hfg$.
  Hence by applying $r_i$ to the equality $\mathrm{Ann}_{U(\hfn_i)}v = \mathcal{I}$, we have
  $\mathrm{Ann}_{U(\hfn_i)} v'= r_i(\mathcal{I})$.
  Now, since (iii) implies
  \[ \mathrm{Ann}_{U(\hfn_+)}v' = U(\hfn_+)e_i^{\langle \ga_i^{\vee}, \xi\rangle +1} + U(\hfn_+)\mathrm{Ann}_{U(\hfn_i)}v'
  \]
  by \cite[Proposition 3.2]{MR826100}, the assertion is proved.  
\end{proof}

For $1 \le j \le n$ and $0 \le i \le j$, define $\rho_{i,j}\colon \hgDre \to \Z_{\ge 0}$ by
  \[ \rho_{i,j}(\ga) = \sum_{k =j}^n \max \Big\{0, -\big\langle \ga^{\vee}, w_i w_{[j+1,k]}\gL^k\big\rangle\Big\},
  \] 
and put $\hat{\gD}_+^{\mathrm{re}}(i,j) = \{ \ga \in \hat{\gD}_+^{\mathrm{re}} \mid \rho_{i,j}(\ga) > 0\}$.
When $j<n$, we have
\begin{equation}\label{eq:ind}
   \rho_{0,j}(\ga) = \rho_{j+1,j+1}(\ga) + \max\big\{0, -\langle \ga^{\vee},\gL^j\rangle\big\} = \rho_{j+1,j+1}(\ga)
   \ \ \ \text{for} \  \ga \in \hgDre_+,
\end{equation}
which implies $\hgDre_+(0,j) =\hgDre_+(j+1,j+1)$.

\begin{Lem}\label{Lem:values}
  Assume $1 \le i \le j \le n$. \\
    {\normalfont(i)} If $\fg = B_n$, then  
    \begin{align*}
        \hat{\gD}_+^{\mathrm{re}}&(i,j) \subseteq \big\{\ga_{p,i-1} \bigm| 1 \le p < i \Big\} \sqcup 
                                 \big\{\ga_{p,q}\bigm| 1 \le p \le j \le q < n, \ p\neq i \big\} \\
                &\sqcup \Big\{- (\ga_{i,n} + \ga_{q,n}) + \gd \Bigm| j <  q \le n\Big\}
                 \sqcup \Big\{-(\ga_{p,n} + \ga_{q,n}) + \gd \Bigm| j < p < q \le n \Big\}.
    \end{align*}
  {\normalfont(ii)} If $\fg = C_n$, then
    \begin{align*}
        \hat{\gD}_+^{\mathrm{re}}(i,j) \subseteq \big\{\ga_{p,i-1} \bigm| 1 \le p < i \big\} &\sqcup 
                                 \big\{\ga_{p,q}\bigm| 1 \le p \le j \le q < n, \ p\neq i \big\} \\
                &\sqcup   \Big\{- (\ga_{i,n} + \ga_{q,n-1}) + \gd  \Bigm| q =i \ \text{or} \ j < q \le n\Big\}\\
                &\sqcup \Big\{-(\ga_{p,n} + \ga_{q,n-1}) + \gd  \Bigm| j < p \le q \le n \Big\}.
    \end{align*}
\end{Lem}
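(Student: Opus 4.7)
The plan is to compute the weights $w_i w_{[j+1,k]}\gL^k$ explicitly for each $k$ with $j \le k \le n$, and then read off the set of positive real roots whose coroots pair negatively with at least one of these weights; that set will turn out to be contained in the claimed union.

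A convenient first reduction is to use $\xi_k = w_{[1,k]}\gL^k$ to rewrite
\[
  w_i w_{[j+1,k]}\gL^k \;=\; w_i\bigl(w_{[1,j]}\bigr)^{-1}\xi_k,
\]
so that the problem is transformed into one about the action of $w_i(w_{[1,j]})^{-1}$ on the explicitly given weights $\xi_k$. For $1 \le i \le j \le k$, a reduced expression of $w_i(w_{[1,j]})^{-1}$ can be extracted from the sequence of reflections in Lemma \ref{Lem:BC}(ii), and the action on $\xi_k = \gl_k(\varpi_k+\gL_0)$ (and the special boundary weights $\xi_n$) can be computed in the same spirit as Lemma \ref{Lem:BC}(i), namely by tracing how each reflection $s_m$ (and, in type $B$, the diagram automorphism $\tau$) moves the fundamental weights $\varpi_m$ and the level-one generator $\gL_0$. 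The explicit outcome is a weight of the form $c_1\varpi_{a_1} + c_2\varpi_{a_2} + c_0\gL_0 + (\text{multiple of }\gd)$ with a very restricted set of indices $(a_1,a_2)$ determined by $(i,j,k)$.

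With the weight in hand, I would test an arbitrary positive real root $\alpha = \beta+s\gd \in \hgDre_+$ against it. Since
\[
  \langle \alpha^\vee, w_i w_{[j+1,k]}\gL^k\rangle
  = \langle \beta^\vee, (\text{finite part})\rangle + \tfrac{2s}{(\beta,\beta)}\langle K,\gL^k\rangle
\]
and the second summand is nonnegative, negativity forces the finite pairing to be strictly negative of sufficient magnitude. The finite part has support only on the few fundamental weights $\varpi_{a_1},\varpi_{a_2}$ singled out in the computation above, so negative pairing can occur only when $\beta^\vee$ sees one of these coordinates, i.e., when $\beta$ is supported on a very restricted window of simple roots. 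A case split on $(\beta\in\gD_+$ vs.\ $\beta\in\gD_-)$ combined with the exact value of $\langle K,\gL^k\rangle$ (which is $\gl_k$ in type $B$ or $\frac{d_k}{2}(\gl_k-p_k+p_{k^\flat})$ in type $C$) pins down precisely the $s$ values that can participate: for $\beta\in\gD_+$ only $s=0$ is possible, and for $\beta\in\gD_-$ only $s=1$. Carrying this out for each $k\in[j,n]$ and taking the union produces exactly the lists in (i) and (ii).

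The main obstacle is the bookkeeping in the computation of $w_i(w_{[1,j]})^{-1}\xi_k$, especially in type $C$, where $w_\ell = s_{\ell-1}\cdots s_1 s_0$ involves the affine reflection $s_0$ and the level of $\gL^k$ depends on the parities $p_i$ and the auxiliary indices $i^\flat$. One has to verify that, despite the apparent asymmetry introduced by these parities, the set of possible negative-pairing roots is in fact insensitive to them and fits inside the uniform list displayed in the statement. The cases $k=n$ and $i=j$ need separate attention: when $k=n$ the weight $\xi_n$ has a nonzero $\varpi_n$ component, which is what produces the roots of the form $-(\ga_{p,n}+\ga_{q,n})+\gd$ in type $B$ and $-(\ga_{p,n}+\ga_{q,n-1})+\gd$ in type $C$; when $i=j$ the element $w_i(w_{[1,j]})^{-1}$ collapses significantly, producing the finite-type roots $\ga_{p,i-1}$. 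Once these edge cases are correctly accounted for, the inclusions follow.
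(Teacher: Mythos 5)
Your strategy---compute every weight $w_i w_{[j+1,k]}\gL^k$ explicitly and read off which coroots pair negatively---is a genuinely different route from the paper's. The paper never computes these weights for general $(i,j)$. Instead it uses the covariance relations $\rho_{i+1,j}(\ga) = \rho_{i,j}(s_i\ga)$ for $0<i<j$ and $\rho_{1,j}(\ga) = \rho_{0,j}(\tau\ga) = \rho_{j+1,j+1}(\tau\ga)$ (the last equality coming from (\ref{eq:ind})), which immediately give the containments
\[
\hgDre_+(i+1,j)\subseteq s_i\big(\hgDre_+(i,j)\big)\sqcup\{\ga_i\},\qquad
\hgDre_+(1,j)\subseteq\tau\big(\hgDre_+(j+1,j+1)\big),
\]
and then runs an induction starting from the trivial base case $\hgDre_+(1,n)=\emptyset$. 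This reduces the whole lemma to checking that the displayed lists are stable under applying $s_i$ (together with adjoining $\ga_i$) and $\tau$, with no weight computations at all. Your approach would additionally yield the weights themselves, but at the cost of much heavier bookkeeping, and it is worth knowing that the cheaper inductive argument exists.

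As written, your argument is a plan rather than a proof: the two decisive steps are asserted, not established. First, the claim that each $w_i(w_{[1,j]})^{-1}\xi_k$ has finite part supported on at most two fundamental weights is precisely the hard computation---a Weyl group element applied to a multiple of $\varpi_k$ generically spreads support over many fundamental weights, so this must be verified for these particular elements, reflection by reflection, in the spirit of Lemma \ref{Lem:BC}. Second, the assertion that for $\beta\in\gD_+$ only $s=0$ occurs and for $\beta\in\gD_-$ only $s=1$ does not follow merely from nonnegativity of $\tfrac{2s}{(\beta,\beta)}\langle K,\gL^k\rangle$: to exclude, say, $s\ge 2$ with $\beta\in\gD_-$ you need the quantitative bound $\langle\beta^\vee,\mu_k\rangle\ge-\tfrac{4}{(\beta,\beta)}\langle K,\gL^k\rangle$ on the finite parts $\mu_k$, which again presupposes the explicit weights. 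Neither point is wrong---both do hold---but together they constitute essentially the entire content of the lemma, so the proposal has not yet proved it.
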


\begin{proof}
  We prove the assertion (i) only (the proof of (ii) is similar).
  Note that the following two containments hold:
  \begin{align}
     \hat{\gD}_+^{\mathrm{re}}(1,j) &\subseteq \tau\Big(\hat{\gD}_+^{\mathrm{re}}(j+1,j+1)\Big) \ \ \text{for} \ j<n
     , \ \text{and} \label{eq:containment2} \\
     \hat{\gD}_+^{\mathrm{re}}(i+1,j) &\subseteq s_i\Big(\hat{\gD}_+^{\mathrm{re}}(i,j)\Big) \sqcup \{\ga_i\} \ \  \text{for} \
     0 < i < j. \label{eq:containment1}
  \end{align}
  In fact, (\ref{eq:containment2}) holds since we have
  \begin{equation*}\label{eq:equal}
    \rho_{1,j} (\ga) = \rho_{0,j}(\tau \ga) = \rho_{j+1,j+1}(\tau \ga)
    \ \ \ \text{for} \ \ga \in \hat{\gD}_+^{\mathrm{re}}
  \end{equation*}
  by (\ref{eq:ind}), and (\ref{eq:containment1}) holds since $\rho_{i+1,j}(\ga) = \rho_{i,j}(s_i \ga)$.
  Then the assertion can be proved inductively from $\hat{\gD}_+^{\mathrm{re}}(1,n) = \emptyset$ using these containments.
\end{proof}

Now let us prove Proposition \ref{Prop:annihilator}.
First assume that $\fg$ is of type $B$.
We verify the assertion
\begin{flushleft}
$
\displaystyle{(\mathrm{B}_{i,j}) \ \ \ \ \ \ \ \ \ \ \ \ \ \ \mathrm{Ann}_{U(\hfn_+)}v(i,j) 
= U(\hfn_+)\bigg(\sum_{\ga \in \hat{\gD}_+^{\mathrm{re}}} \C x_{\ga}^{\rho_{i,j}(\ga)+1} + t\fh[t]\bigg)}
$
\end{flushleft}
by the induction on $(i,j)$, which with $i = j =1$ is just the proposition.
$(\mathrm{B}_{0,n})$ is obvious since $D(0,n) = D(\gL^n)$ is a trivial $\hfn_+$-module and $\rho_{0,n}(\ga) = 0$
for all $\ga \in \hat{\gD}_+^{\mathrm{re}}$.
We easily see that ($\mathrm{B}_{j+1,j+1}$) implies ($\mathrm{B}_{0,j}$)
from (\ref{eq:ind}) and $v(0,j) = v_{\gL^j} \otimes v(j+1,j+1)$,
and it is also easy to check that ($\mathrm{B}_{0,j}$) implies ($\mathrm{B}_{1,j}$)
since $D(1,j) = F_\tau D(0,j)$ by Lemma \ref{Lem:tau} and $\rho_{1,j}(\ga) = \rho_{0,j}(\tau\ga)$.
It remains to show that ($\mathrm{B}_{i,j}$) with $0 <i < j$ implies ($\mathrm{B}_{i+1,j}$).
Let $\xi(i,j) = \sum_{k=j}^n w_iw_{[j+1,k]}\gL^k$, which is the weight of $v(i,j)$.
Since $\langle \ga_i^\vee, w_i w_{[j+1,k]}\gL^k \rangle \ge 0$ for all $k\ge j$ by Lemma \ref{Lem:BC} (ii), we have
\[ f_i^{\langle \ga_i^{\vee}, \xi(i,j)\rangle}v(i,j) \in \C^\times v(i+1,j)
\]
and $\rho_{i+1,j}(\ga_i) = \rho_{i,j}(-\ga_i)=\langle \ga_i^\vee, \xi(i,j)\rangle$.
Therefore it suffices to show the $\ad(e_i)$-invariance of the left $U(\hfn_i)$-ideal
\[ \mathcal{I}_{i,j} = U(\hfn_i)\bigg(\sum_{\ga \in \hat{\gD}_+^{\mathrm{re}} 
   \setminus \{ \ga_i \}} \C x_\ga^{\rho_{i,j}(\ga)+1} + t\fh[t]\bigg)
\]
by Lemma \ref{Lem:proceed}.
Note that, 
if $\gb \in \hat{\gD}_+^{\mathrm{re}}$ is in the form $\gb = l \ga + \ga_i$ with some $\ga \in \hat{\gD}_+$ 
and $l \in \Z_{>0}$, then $\rho_{i,j}(\gb) = 0$ holds.
In fact, the condition implies $\gb \in \hgDre_+ + \gd$, or
\begin{align*}
   \gb \in \big\{\ga_{p,i} \bigm| p < i\big\} &\!\sqcup\! \big\{\ga_{i,q} \bigm| q > i\big\} \!
                                  \sqcup \!\big\{\ga_{i,n} + \ga_{q,n} \bigm| q \neq i \big\}
           \sqcup\!\big\{- \ga_{p,i-1} + \gd \bigm| p<i\big\}\\ &\!\sqcup\! \big\{- \ga_{i+1,q} + \gd \bigm|  q > i\big\} 
   \!\sqcup\! \Big\{-\big(\ga_{i+1,n} + \ga_{q,n}\big) + \gd \Bigm| q \neq i,i+1\Big\},
\end{align*}
and hence $\rho_{i,j}(\gb) = 0$ follows from Lemma \ref{Lem:values} and $0<i<j$.
From this, the $\ad(e_i)$-invariance of $\mathcal{I}_{i,j}$ is immediately proved.

Next assume that $\fg = C_n$, and define a subset $S_j$ of $\hat{\gD}_+^{\mathrm{re}} \times \hat{\gD}_+^{\mathrm{re}}$
for $1 \le j \le n$ by
\[ S_j= \bigg\{ \Big(-\big(\ga_{p,n} + \ga_{q,n-1}\big) + \gd, 
        -\big(\ga_{q,n} + \ga_{q,n-1}\big) + \gd \Big)\biggm| j \le p < q \le n\bigg\}.
\]
We verify the assertion $(\mathrm{C}_{i,j})$: $\mathrm{Ann}_{U(\hfn_+)}v(i,j) = \mathcal{J}_{i,j}$ by the induction on $(i,j)$,
where $\mathcal{J}_{i,j}$ is a $U(\hfn_+)$-ideal defined by 
\[ \mathcal{J}_{i,j} = U(\hfn_+)\bigg(\sum_{\ga \in \hat{\gD}_+^{\mathrm{re}}} \C x_{\ga}^{\rho_{i,j}(\ga)+1} +
    \sum_{\begin{smallmatrix} (\ga,\gb) \in s_is_{i+1}\cdots s_{j-1}(S_j) \\ 1 \le k < \rho_{i,j}(\ga)/2+1 \end{smallmatrix}} 
    \C x_\ga^{\rho_{i,j}(\ga) -2k +1} x_\gb^{k}+ t\fh[t]\bigg).
\]
Here we set $w(S_j) = \big\{(w\ga, w\gb) \bigm| (\ga,\gb) \in S_j\big\}$ for $w \in \hW$. 
Note that ($\mathrm{C}_{1,1}$) is just the proposition, and ($\mathrm{C}_{0,n}$) is obvious.

Let us show that ($\mathrm{C}_{j+1,j+1}$) implies ($\mathrm{C}_{0,j}$).
Since $v(0,j) = v_{\gL^j} \otimes v(j+1,j+1)$, we have $\mathrm{Ann}_{U(\hfn_+)} v(0,j) = \mathrm{Ann}_{U(\hfn_+)} v(j+1,j+1)$,
and hence it suffices to show that $\mathcal{J}_{0,j} = \mathcal{J}_{j+1,j+1}$.
We have $\rho_{0,j}(\ga) = \rho_{j+1,j+1}(\ga)$ for $\ga \in \hgDre_+$ by (\ref{eq:ind}),
and a direct calculation shows
\begin{equation}\label{eq:S}
  s_0s_1 \cdots s_{j-1}S_{j} 
  = S_{j+1} \sqcup \bigg\{ \Big(\ga_{1, q-1}, -(\ga_{q,n} + \ga_{q,n-1}) + \gd \Big)\biggm| j < q \le n\bigg\}.
\end{equation}
Hence it is enough to check that 
\begin{equation}\label{eq:ad}
  x_{\ga}^{\rho_{j+1,j+1}(\ga) -2k +1} x_{\gb}^{k} \in \mathcal{J}_{j+1,j+1}
\end{equation}
for $\ga = \ga_{1,q-1}$, $\gb = -(\ga_{q,n} + \ga_{q,n-1}) + \gd$ with $j < q$ and $1 \le k < \rho_{j+1,j+1}(\ga)/2+1$.
Since $\gb = \ga_0 + 2\ga$, it is directly checked for $l_1 \in \Z_{\ge 2}$ and $l_2 \in \Z_{\ge 0}$ that 
\begin{equation}\label{eq:ad2}
  e_0x_\ga^{l_1}x_\gb^{l_2} = x_\ga^{l_1}x_\gb^{l_2}e_0 + a_1x_{\ga}^{l_1-1}x_\gb^{l_2}x_{\ga_0 + \ga}
                                            +a_2 x_\ga^{l_1-2} x_\gb^{l_2+1}
\end{equation}
with $a_1,a_2 \in \C^\times$. 
Since $e_0$ and $x_{\ga_0 + \ga}$ belong to $\mathcal{J}_{j+1,j+1}$,
(\ref{eq:ad}) is proved inductively using this equality from $x_{\ga}^{\rho_{j+1,j+1}(\ga)+1} \in \mathcal{J}_{j+1,j+1}$.

Finally, let us show that ($\mathrm{C}_{i,j}$) with $0 \le i < j$ implies ($\mathrm{C}_{i+1,j}$). 
It is easy to see that $f_i^{\langle \ga_i^\vee, \xi(i,j)\rangle} v(i,j) \in \C^\times v(i+1,j)$ and
$\rho_{i+1,j}(\ga_i^\vee) = \langle \ga_i^\vee, \xi(i,j) \rangle$, where $\xi(i,j)$ is the weight of $v(i,j)$.
Hence by Lemma \ref{Lem:proceed}, it suffices to show the $\ad(e_i)$-invariance of the left $U(\hfn_i)$-ideal
\[ \mathcal{I}_{i,j} = U(\hfn_i)\bigg(\sum_{\ga \in \hat{\gD}_+^{\mathrm{re}}\setminus\{\ga_i\}} \C x_{\ga}^{\rho_{i,j}(\ga)+1} +
    \sum_{\begin{smallmatrix} (\ga,\gb) \in s_is_{i+1}\cdots s_{j-1}(S_j) \\ 1 \le k < \rho_{i,j}(\ga)/2+1 \end{smallmatrix}} 
    \C x_\ga^{\rho_{i,j}(\ga) -2k +1} x_\gb^{k}+ t\fh[t]\bigg).
\]
First, assume that $0 < i <j$. 
It is checked in a similar way as above that, 
if $\gb \in \hat{\gD}_+^{\mathrm{re}}$ is in the form $\gb = l \ga + \ga_i$ with $\ga \in \hat{\gD}_+$ and 
$l \in \Z_{>0}$, then $\rho_{i,j}(\gb) = 0$.
Since $[x_\ga,x_\gb] = 0$ for $(\ga,\gb) \in s_i \cdots s_{j-1} (S_j)$,
the $\ad(e_i)$-invariance of $\mathcal{I}_{i,j}$ is proved from this.
Next, assume that $i = 0$.
Using $\hgDre_+(0,j) =\hgDre_+(j+1,j+1)$,
it is similarly checked that, if $\gb \in \hat{\gD}_+^{\mathrm{re}}$ 
is in the form $\gb = l\ga + \ga_0$ with $\ga \in \hat{\gD}_+$ and $l \in \Z_{>0}$,
then $\rho_{0,j}(\gb) = 0$, or 
\[ l = 2, \ \ \ga = \ga_{1,q-1} \ \ \text{and} \ \ \gb = -(\ga_{q,n} + \ga_{q,n-1}) + \gd
\]
for some $j < q \le n$.
In the latter case, we see from (\ref{eq:ad2}) that 
\[ U(\hfn_0)\bigg( \C x_{\ga_0+\ga} + \sum_{0 \le k < \rho_{0,j}(\ga)/2+1} \C x_{\ga}^{\rho_{0,j}(\ga)-2k +1}x_\gb^k\bigg)
\]
is $\ad(e_0)$-invariant.
Now the $\ad(e_0)$-invariance of $\mathcal{I}_{0,j}$ is easily proved, using (\ref{eq:S}).
The proof is complete.

\subsection{$q$-characters}\label{Subsection:q-char}

Here we recall the definition of $q$-characters and some results on them,
which are necessary in Subsections \ref{Proof2} and \ref{Proof3}.
For a finite-dimensional $\ell$-weight module $V$, define its $\ell$-weight set $\wt_\ell V$
and \textit{$q$-character} $\chq V$ by
\[ \wt_\ell V = \{ \bm{\rho} \in P_q \mid V_{\bm{\rho}} \neq 0\} \ \ \text{and} \ \  
   \chq V= \sum_{\bm{\rho} \in P_q} (\dim V_{\bm{\rho}}) \bm{\rho} \in \Z [P_q]
\]
respectively.
For finite-dimensional $\ell$-weight modules $V_1, V_2$ and $\bm{\rho} \in P_q$, 
it follows that
\[ (V_1 \otimes V_2)_{\bm{\rho}} = \bigoplus_{\bm{\nu} \in P_q} (V_1)_{\bm{\nu}} \otimes (V_2)_{\bm{\rho}\bm{\nu}^{-1}}, 
\]
and therefore $\chq V_1 \otimes V_2 = \chq V_1 \cdot \chq V_2$ holds \cite{MR1745260}.
For $i \in I$ and $a \in \C(q)^\times$, define $\bm{\ga}_{i,a} \in P_q$ by
\[ \bm{\ga}_{i,a} = \bm{\pi}_{2, a}^{(i)} \prod_{j \neq i} \Big(\bm{\pi}_{-c_{j,i},a}^{(j)}\Big)^{-1}.
\]
Let $Q_q^+$ denote the submonoid generated by $\{\bm{\ga}_{i,a}\mid i \in I, a \in \C(q)^\times\}$,
and $Q_q$ the corresponding subgroup. 
We write $\bm{\rho} \le \bm{\nu}$ for $\bm{\rho}, \bm{\nu} \in P_q$ if
$\bm{\nu} \bm{\rho}^{-1} \in Q_q^+$ holds.

\begin{Prop}[{\cite[Theorem 4.1]{MR1810773}}]\label{Prop:weight}
  For every $\bm{\rho} \in P^+_q$, $\bm{\nu} \in \wt_\ell L_q(\bm{\rho})$ implies
  $\bm{\nu} \le \bm{\rho}$.
\end{Prop}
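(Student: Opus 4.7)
The plan is to use the triangular decomposition $U_q(\bL\fg) = U_q(\bL\fn_-) U_q(\bL\fh) U_q(\bL\fn_+)$ together with explicit commutation relations between the series $\phi_i^{\pm}(u) = \sum_{r \ge 0} \phi_{i,\pm r}^{\pm} u^{\pm r}$ and the lowering generators $x_{j,s}^-$. Since $L_q(\bm{\rho})$ is $\ell$-highest weight generated by $v_{\bm{\rho}}$, every vector is a $\C(q)$-linear combination of monomials
\[ M = x_{i_1,s_1}^- x_{i_2,s_2}^- \cdots x_{i_k,s_k}^- v_{\bm{\rho}}, \]
and it suffices to analyze the generalized eigenspaces of the commuting family $\{\phi_{i,r}^{\pm}\}$ on the length filtration of these monomials.

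First I would translate the Drinfeld relation $[h_{i,m}, x_{j,s}^\pm] = \pm\frac{1}{m}[m c_{ij}]_{q_i} x_{j,s+m}^\pm$ into a commutation of generating series: for indeterminates $u,z$ one obtains an identity of the form
\[ \phi_i^{\pm}(u)\, x_j^-(z) = f_{ij}(u/z)\, x_j^-(z)\, \phi_i^{\pm}(u), \]
where $x_j^-(z) = \sum_s x_{j,s}^- z^{-s-1}$ and $f_{ij}(w)$ is an explicit rational function in $w$ whose expansion around $0$ or $\infty$ gives the scalar by which $\phi_{i,r}^{\pm}$ is shifted. A direct computation identifies $f_{ij}(u/z)$, upon expansion, with the coefficient function attached to the $\ell$-weight shift $\bm{\gamma}_{j,a}^{-1}$, where $a$ is determined by the spectral parameter $z$.

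Next I would argue by induction on the monomial length $k$. For $k=0$ the only $\ell$-weight is $\bm{\rho}$ itself. For the inductive step, I would show that on the quotient of the length-$k$ filtered piece by the length-$(k{-}1)$ piece, $\phi_{i,r}^{\pm}$ acts, modulo nilpotent corrections, by the eigenvalue obtained from that on the length-$(k{-}1)$ piece multiplied by the appropriate coefficient of some $\bm{\gamma}_{i_k, a_k}^{-1}$. This places the leading $\ell$-weight of $M$ in $\bm{\rho}\cdot \prod_{\ell=1}^k \bm{\gamma}_{i_\ell, a_\ell}^{-1}$, which lies in $\bm{\rho}\cdot (Q_q^+)^{-1}$, i.e.\ is $\le \bm{\rho}$.

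To conclude, I would observe that any genuine $\ell$-weight vector $v \in L_q(\bm{\rho})_{\bm{\nu}}$ must lie in the kernel of $(\phi_{i,r}^{\pm} - \gamma_{i,r}^{\pm})^N$ for large $N$, so its $\ell$-weight $\bm{\nu}$ appears as the leading $\ell$-weight of some monomial of minimal length in its filtration class, and the previous step forces $\bm{\nu} \le \bm{\rho}$. The main obstacle is justifying the length-filtered commutation step rigorously: the relation $\phi_i^{\pm}(u) x_j^-(z) = f_{ij}(u/z) x_j^-(z) \phi_i^{\pm}(u)$ is a formal identity, and one needs to check that after expanding in $u$ (around $0$ or $\infty$) and extracting coefficients, the correction terms that arise from the noncommutativity of the $x_{j,s}^-$'s among themselves (via Serre-type relations) remain in the length-$k$ filtered piece and only contribute to nilpotent parts, never producing eigenvalues that fall outside $\bm{\rho}\cdot (Q_q^+)^{-1}$.
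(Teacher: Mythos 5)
This proposition is not proved in the paper at all: it is quoted verbatim from Frenkel--Mukhin \cite{MR1810773}, so there is no in-house argument to measure yours against. On its own terms, your bottom-up induction on the length of PBW monomials $x_{i_1,s_1}^-\cdots x_{i_k,s_k}^-v_{\bm{\rho}}$ is a legitimate and natural route, and if completed it proves more than is asked: the conclusion would hold for every finite-dimensional $\ell$-highest weight module, not just the simple ones (consistent with the fact that such a module is a quotient of a tensor product of fundamental modules). The filtration $W_k$ by monomial length is indeed $U_q(\bL\fh)$-stable, the commutation of the Cartan currents past a single lowering current is an exact identity of the form $\phi_i^{\pm}(u)\,x_j^-(z)\,\phi_i^{\pm}(u)^{-1}=q_i^{-c_{ij}}\frac{1-q_i^{c_{ij}}uz}{1-q_i^{-c_{ij}}uz}\,x_j^-(z)$ (in suitable conventions), and this rational function is precisely the ratio of the eigenvalue series of $\bm{\nu}\bm{\ga}_{j,a}^{-1}$ to that of $\bm{\nu}$ upon substituting $z=a$. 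So the skeleton is sound.

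The genuine gap, however, is not where you place it. Serre relations are a red herring: commuting $\phi_i^{\pm}(u)$ past one current $x_j^-(z)$ never requires reordering the $x_{j,s}^-$ among themselves, so no correction terms of that kind arise. What actually requires proof is the step you treat as bookkeeping: that the generalized $U_q(\bL\fh)$-eigenvalues on $X=\sum_s\C(q)\,x_{j,s}^-v$, for $v$ of $\ell$-weight $\bm{\nu}$, are of the form $\bm{\nu}\bm{\ga}_{j,a}^{-1}$ for \emph{actual} parameters $a\in\C(q)^{\times}$. The formal variable $z$ is not a spectral parameter of any single Fourier mode $x_{j,s}^-$; to produce one you must diagonalize the shift $x_{j,s}^-v\mapsto x_{j,s+1}^-v$ on the finite-dimensional space $X$. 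This shift is well defined because it is realized, up to a nonzero scalar, by $h_{j,1}-\langle h_{j,1}\text{-eigenvalue on }v\rangle$ (and its inverse by $h_{j,-1}$); on a generalized eigenspace of this operator with eigenvalue $a$ the series in $z$ specializes to the multiplier of $\bm{\ga}_{j,a}^{-1}$. That analysis — equivalently, the reduction to $U_q(\bL\fg_j)\cong U_q(\bL\mathfrak{sl}_2)$ recorded in the paper as Propositions \ref{Prop:sl2} and \ref{Prop:reduction} — is the real content of the lemma you are missing; one must also check that $a$ lies in $\C(q)^{\times}$ rather than an algebraic closure, and run the induction on generalized (not honest) eigenvectors by filtering each generalized eigenspace by nilpotency degree. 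With that lemma supplied, your induction closes: each application of $x_{j,s}^-$ multiplies $\ell$-weights by some $\bm{\ga}_{j,a}^{-1}$, so every $\ell$-weight of $L_q(\bm{\rho})$ lies in $\bm{\rho}\prod_{i,a}\bm{\ga}_{i,a}^{-\Z_{\ge 0}}$, i.e.\ is $\le\bm{\rho}$.
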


The following proposition is proved from the study of $U_q(\bL \mathfrak{sl}_2)$-modules in \cite{MR1137064,MR1357195,MR1745260}.

\begin{Prop}\label{Prop:sl2}
  Assume that $\fg= \mathfrak{sl}_2$. Then the following statements hold, where we omit the index $i$. \\
  {\normalfont(i)} 
  \[ \chq L_q(\bm{\pi}_{m,a}) =\bm{\pi}_{m,a}\sum_{0 \le k \le m}\prod_{0\le j \le k-1} \bm{\ga}_{aq^{m-2j}}^{-1}.
  \]
  {\normalfont(ii)}
  If $V$ is an $\ell$-highest weight module with $\ell$-highest weight $\bm{\pi}_{m,a}$,
  then we have
  \[ \chq L_q(\bm{\pi}_{m,a}) 
     \le \chq V \le \prod_{1 \le j \le m} \chq L_q(\bm{\varpi}_{aq^{m -2j + 1}}) 
     = \bm{\pi}_{m,a}\prod_{1 \le j \le m}(1+\bm{\ga}_{aq^{m-2j+2}}^{-1}),
  \] 
  where the inequality $f \le g$ means $g -f \in \Z_{\ge 0}[P_q]$.
  In particular, the dimension of each $\ell$-weight space of $V$ is at most $1$. 
\end{Prop}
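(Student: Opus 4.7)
The plan for part (i) is to realize $L_q(\bm{\pi}_{m,a})$ explicitly via evaluation. Since $\fg = \mathfrak{sl}_2$, there is an evaluation homomorphism $\mathrm{ev}_a \colon U_q(\bL\mathfrak{sl}_2) \to U_q(\mathfrak{sl}_2)$, and the pullback of the $(m+1)$-dimensional simple $U_q(\mathfrak{sl}_2)$-module along $\mathrm{ev}_a$ is a simple finite-dimensional $U_q(\bL\mathfrak{sl}_2)$-module. Computing the generating series $\sum_{r\ge 0}\phi^+_r u^r$ on the highest weight vector $v_0$ using the explicit form of $\mathrm{ev}_a$, one sees that the resulting eigenvalue is the rational function associated with $\bm{\pi}_{m,a}$, identifying the pullback with $L_q(\bm{\pi}_{m,a})$. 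Then, for the weight basis $v_k = (x^-_0)^{(k)}v_0$ with $0\le k\le m$, a direct calculation of the action of $\phi^\pm(u)$ on each $v_k$ shows that $v_k$ is an $\ell$-weight vector with $\ell$-weight $\bm{\pi}_{m,a}\prod_{j=0}^{k-1}\bm{\ga}_{aq^{m-2j}}^{-1}$, which yields the formula in (i).

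For part (ii), the lower bound $\chq L_q(\bm{\pi}_{m,a}) \le \chq V$ is immediate from the fact that $L_q(\bm{\pi}_{m,a})$ is the unique simple $\ell$-highest weight quotient of $V$, so $\chq V = \chq L_q(\bm{\pi}_{m,a}) + \chq(\ker)$. For the upper bound, I would introduce the ordered tensor product
\[ T = L_q(\bm{\varpi}_{aq^{m-1}}) \otimes L_q(\bm{\varpi}_{aq^{m-3}}) \otimes \cdots \otimes L_q(\bm{\varpi}_{aq^{-m+1}}). \]
The multiplicativity of $\chq$ under tensor products, combined with the $m=1$ case of part (i), immediately gives $\chq T = \bm{\pi}_{m,a}\prod_{j=1}^m\big(1 + \bm{\ga}_{aq^{m-2j+2}}^{-1}\big)$. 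The key structural step is to establish that $T$ is itself an $\ell$-highest weight module with $\ell$-highest weight $\bm{\pi}_{m,a}$ (i.e., cyclic at $v_T = \bigotimes_j v_{\bm{\varpi}_{aq^{m-2j+1}}}$), and that any $\ell$-highest weight $V$ with $\ell$-highest weight $\bm{\pi}_{m,a}$ is a quotient of $T$. Both statements follow from the classical $U_q(\bL\mathfrak{sl}_2)$ analysis: this particular ordering is standard for making the tensor product cyclic, and a dimension count ($\dim T = 2^m$) identifies $T$ with the universal finite-dimensional $\ell$-highest weight (Weyl) module with this $\ell$-highest weight. Granting this, $V$ is a quotient of $T$, so $\chq V \le \chq T$ as desired.

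The final assertion on $\ell$-weight space dimensions follows by inspection of $\chq T$: the $2^m$ monomials appearing there are pairwise distinct, since they correspond bijectively to the $2^m$ subsets of $\{1,\ldots,m\}$ recording which factors $\bm{\ga}^{-1}_{aq^{m-2j+2}}$ appear, and the $m$ exponents $m-2j+2$ are distinct. Consequently no $\ell$-weight multiplicity of $V$ can exceed $1$. The principal obstacle is the cyclicity of $T$ at $v_T$ together with the universality that identifies $T$ with the Weyl module; everything else is routine bookkeeping with rational functions and with the distributivity of $\chq$ over tensor products.
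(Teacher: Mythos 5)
Your argument is correct and is essentially the proof the paper intends: the paper offers no proof of its own but cites the standard $U_q(\bL\mathfrak{sl}_2)$ analysis of \cite{MR1137064,MR1357195,MR1745260}, and your evaluation-module computation for (i) together with the Weyl-module/ordered-tensor-product argument for (ii) is exactly that analysis. The two facts you isolate as the principal obstacle (cyclicity of the suitably ordered tensor product and its identification with the universal finite-dimensional $\ell$-highest weight module of dimension $2^m$) are precisely the content of the cited Chari--Pressley results, so nothing essential is missing.
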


Recall the map $P_q \ni \bm{\rho} \mapsto \bm{\rho}_J \in P_{q,J}$ for a subset $J\subseteq I$ defined 
in Subsection \ref{subsection:minimal}.
The following proposition is an easy consequence of results in \cite[Section 3]{MR1810773}.

\begin{Prop}\label{Prop:reduction}
  Let $V$ be a finite-dimensional $\ell$-weight module, and $J \subseteq I$ a subset such that $\fg_J$ is simple.
  For an $\ell$-weight vector $v \in V_{\bm{\rho}}$, let $W = U_q(\bL\fg_J)v$.
  Assume that a vector $w \in W$ is $\ell$-weight with respect to $U_q(\bL\fh_J)$, and its $\ell$-weight is
  \[ \bm{\rho}_J \prod_{i \in J, a \in \C(q)^\times} (\bm{\ga}_{i,a})_J^{v(i,a)} \in P_{q,J}
  \]
  with some integers $v(i,a)$. 
  Then $w$ is also $\ell$-weight with respect to $U_q(\bL\fh)$,
  and its $\ell$-weight is $\bm{\rho} \prod \bm{\ga}_{i,a}^{v(i,a)} \in P_q$.
\end{Prop}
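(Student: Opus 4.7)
The plan is to reduce the statement to the $U_q(\bL\mathfrak{sl}_2)$-restriction theory developed in \cite[Section 3]{MR1810773}, via repeated application of the Drinfeld commutation relations.

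As a preliminary step, I would check that $W$ is stable under the full $U_q(\bL\fh)$, not merely $U_q(\bL\fh_J)$. For $j\in I$ and $\mathbf{x} \in U_q(\bL\fg_J)$, the commutation $[h_{j,m},x_{i,s}^\pm]=\pm\tfrac{1}{m}[mc_{ji}]_{q_j}x_{i,s+m}^\pm$ stays inside $U_q(\bL\fg_J)$ because only spectral parameters are shifted while indices remain in $J$; hence $[\phi_{j,r}^\pm,\mathbf{x}]\in U_q(\bL\fg_J)$, and combined with $\phi_{j,r}^\pm v\in\C v$ this yields $\phi_{j,r}^\pm W\subseteq W$. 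Since the $\phi_{j,r}^\pm$ for $j\notin J$ further commute with $U_q(\bL\fh_J)$, they preserve each generalized $U_q(\bL\fh_J)$-eigenspace of $W$; in particular, the subspace containing $w$ is $\phi_{j,r}^\pm$-stable, and one only has to determine the eigenvalues there.

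The core step is to compute the action of $\phi_{j,r}^\pm$ ($j\notin J$) on products $x_{i_1,s_1}^\pm\cdots x_{i_k,s_k}^\pm v$ with $i_l\in J$ using the Drinfeld-presentation commutation relations. In generating-function form these commutations take the scalar shape
\[ \Phi_j^\pm(u)\,x_i^\pm(z)\,\Phi_j^\pm(u)^{-1}=G_{j,i}(u,z)\,x_i^\pm(z), \]
with $G_{j,i}(u,z)$ an explicit ratio of linear factors in $u,z$ determined by $c_{ji}$ and $q_j$. Iterating along any monomial and using $\Phi_j^\pm(u)v=\gamma_j^\pm(u)v$, one finds that $\Phi_j^\pm(u)$ acts on the monomial by $\gamma_j^\pm(u)\prod_l G_{j,i_l}(u,z_l)$. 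A direct calculation identifies these shift factors precisely with the Drinfeld-series representation of the $j$-components of $\bm{\ga}_{i_l,a_l}^{\mp 1}$, where the parameter $a_l$ is linked to $z_l$; this computation is essentially the content of \cite[Section 3]{MR1810773}. Hence the $U_q(\bL\fh)$-$\ell$-weight of each monomial is forced to be $\bm{\rho}$ times a product of $\bm{\ga}_{i,a}^{\mp 1}$'s whose $J$-restriction matches the monomial's $U_q(\bL\fh_J)$-$\ell$-weight.

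Combining these, $w$ lies in the $U_q(\bL\fh_J)$-eigenspace with eigenvalue $\bm{\rho}_J\prod(\bm{\ga}_{i,a})_J^{v(i,a)}$, and every monomial contributing to $w$ must have $U_q(\bL\fh)$-eigenvalue $\bm{\rho}\prod\bm{\ga}_{i,a}^{v(i,a)}$, since the $(\bm{\ga}_{i,a})_J$ for $(i,a)\in J\times\C(q)^\times$ are independent enough in $P_{q,J}$ that the exponents $v(i,a)$ are determined by the $J$-eigenvalue. Therefore $w$ itself is a $U_q(\bL\fh)$-$\ell$-weight vector with $\ell$-weight $\bm{\rho}\prod\bm{\ga}_{i,a}^{v(i,a)}$, as claimed. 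The main obstacle is the explicit identification of the Drinfeld shift factors $G_{j,i}(u,z)$ with the $j$-components of $\bm{\ga}_{i,a}^{\mp 1}$: this is a somewhat technical rational-function manipulation but is routine within the Frenkel--Mukhin framework, which is why the author labels the proposition an ``easy consequence'' of \cite[Section 3]{MR1810773}. Once this identification is in hand, the remainder is mere bookkeeping.
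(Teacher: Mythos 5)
The paper offers no argument for this proposition beyond the citation of \cite[Section 3]{MR1810773}, and your proposal unpacks that citation along the intended lines: the whole content is (a) that the $\ell$-weights of $V$ occurring in $U_q(\bL\fg_J)V_{\bm{\rho}}$ all lie in the coset $\bm{\rho}\cdot\langle \bm{\ga}_{i,b}^{\pm1} \mid i\in J,\ b\in\C(q)^\times\rangle$, and (b) that restriction to $J$ is injective on this coset, so the $U_q(\bL\fh_J)$-eigenvalue pins down the full one. Two steps of your write-up are, however, not correct as stated. First, an $\ell$-weight vector is only a \emph{generalized} eigenvector for $U_q(\bL\fh)$ (the definition involves $(\phi_{i,\pm r}^{\pm}-\ggg_{i,\pm r}^{\pm})^N v=0$), so your assertion $\phi_{j,r}^{\pm}v\in\C v$ fails in general, and with it the claim that $W$ itself is $U_q(\bL\fh)$-stable and the literal eigenvalue bookkeeping that follows. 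This is not a vacuous worry for the application: in Subsection 5.4 the proposition is applied to vectors taken from $L_q(\bm{\varpi}_{r,a})_{\bm{\nu}}$, which is a generalized eigenspace. The repair is standard --- work with the projections $\mathrm{pr}_{\bm{\rho}'}\colon V\to V_{\bm{\rho}'}$ coming from the decomposition $V=\bigoplus V_{\bm{\rho}'}$ and show that $\mathrm{pr}_{\bm{\rho}'}$ is nonzero on $W$ only for $\bm{\rho}'$ in the coset above --- but it has to be made.

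Second, a single monomial $x_{i_1,s_1}^{-}\cdots x_{i_k,s_k}^{-}v$ is not an $\ell$-weight vector and does not carry a well-defined shift $\prod_l\bm{\ga}_{i_l,a_l}^{-1}$: the generating-function identity $\Phi_j^\pm(u)\,x_i^\pm(z)\,\Phi_j^\pm(u)^{-1}=G_{j,i}(u,z)\,x_i^\pm(z)$ controls only the full current $x_i^\pm(z)$, and extracting individual Fourier modes mixes the spectral parameters, so the phrase ``each monomial is forced to have $U_q(\bL\fh)$-eigenvalue $\ldots$'' is not literally meaningful. What \cite[Section 3]{MR1810773} actually supplies (and what you should quote) is precisely statement (a), proved there by decomposing $V$ with respect to the copies of $U_q(\bL\fg_i)$, $i\in J$, one node at a time. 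Finally, the injectivity in (b) is where the hypothesis that $\fg_J$ is simple enters: $(\bm{\ga}_{i,a})_J$ is exactly the element $\bm{\ga}_{i,a}$ formed from the Cartan submatrix $(c_{jk})_{j,k\in J}$, and the multiplicative independence of these elements in $P_{q,J}$ follows from the invertibility of that submatrix; this deserves a sentence rather than the phrase ``independent enough''. With those two points repaired the argument is sound and coincides with the route the author intends.
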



Let $j \in I$.
We say $\bm{\rho} = \prod_{i \in I, a \in \C(q)^\times} \bm{\varpi}_{i,a}^{u(i,a)} \in P_q$ is \textit{$j$-dominant}
if $u(j,a) \ge 0$ for all $a \in \C(q)^\times$.
The following proposition was established by Hernandez.

\begin{Prop}[{\cite[Lemma 5.6]{MR2468483}}]\label{Prop:3cond}
  Let $\bm{\rho} \in P_q^+$, and $\bm{\nu} \in \wt_{\ell}L_q(\bm{\rho}) \setminus \{\bm{\rho}\}$.
  Then there exist some $j \in I$ and $\bm{\nu}' \in \wt_\ell L_q(\bm{\rho})$ such that 
  \begin{enumerate}
    \item[\normalfont (i)] $\bm{\nu}'$ is $j$-dominant, 
    \item[\normalfont (ii)] $\bm{\nu}' \in \bm{\nu}\prod_{a \in \C(q)^\times} \bm{\ga}_{j,a}^{\Z_{\ge 0}}$
                                     and $\bm{\nu}' > \bm{\nu}$,
    \item[\normalfont (iii)] $\Big(U_q(\bL\fg_j)L_q(\bm{\rho})_{\bm{\nu}}\Big) \cap L_q(\bm{\rho})_{\bm{\nu}'} \neq 0$.  
  \end{enumerate} 
\end{Prop}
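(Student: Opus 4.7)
The plan is to climb one step upward from $\bm{\nu}$ along a single $\fg_j$-direction, exploiting the representation theory of $U_q(\bL\mathfrak{sl}_2)$ arising from restriction to $U_q(\bL\fg_j)$.

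First I would select the index $j$. Since $L_q(\bm{\rho})$ is generated by $v_{\bm{\rho}}$ over $U_q(\bL\fn_-)$ and each $x_{i,r}^-$ strictly lowers the $\fh$-weight, the top Weyl-weight space of $L_q(\bm{\rho})$ is $\C v_{\bm{\rho}}$, so every $\ell$-highest weight vector is a scalar multiple of $v_{\bm{\rho}}$. The hypothesis $\bm{\nu} \neq \bm{\rho}$ therefore produces a nonzero $v \in L_q(\bm{\rho})_{\bm{\nu}}$, together with $j \in I$ and $r \in \Z$, such that $x_{j,r}^+ v \neq 0$.

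Next I would consider the finite-dimensional $U_q(\bL\fg_j) \cong U_q(\bL\mathfrak{sl}_2)$-submodule $W := U_q(\bL\fg_j) v$. Writing $W$ as a sum of cyclic $U_q(\bL\fg_j)$-submodules generated by $\ell$-highest weight vectors $w_\alpha \in W$ of $U_q(\bL\fh_j)$-$\ell$-weight $\bm{\mu}_\alpha \in P_{q,j}^+$, I restrict attention to the $\bm{\nu}_j$-$\ell$-weight component and write $v = \sum v_\alpha$ with $v_\alpha \in (U_q(\bL\fg_j) w_\alpha)_{\bm{\nu}_j}$. Proposition \ref{Prop:sl2}(i) forces each relevant $\bm{\mu}_\alpha$ to satisfy $\bm{\mu}_\alpha = \bm{\nu}_j \prod_a \bm{\ga}_{j,a}^{n_\alpha(a)}$ with $n_\alpha(a) \geq 0$, while Proposition \ref{Prop:sl2}(ii) ensures $\dim (U_q(\bL\fg_j) w_\alpha)_{\bm{\nu}_j} \leq 1$, so $v_\alpha \in \C w_\alpha$ whenever $\bm{\mu}_\alpha = \bm{\nu}_j$. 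If every appearing $\bm{\mu}_\alpha$ equalled $\bm{\nu}_j$, then $v$ itself would be $\ell$-highest, contradicting $x_{j,r}^+ v \neq 0$. Hence some $\bm{\mu}_\alpha \neq \bm{\nu}_j$; I fix $w := w_\alpha$ for this $\alpha$ and write $\bm{\mu}_\alpha = \bm{\nu}_j \prod_a \bm{\ga}_{j,a}^{n(a)}$ with some $n(a) > 0$. Setting $\bm{\nu}' := \bm{\nu} \prod_a \bm{\ga}_{j,a}^{n(a)}$, Proposition \ref{Prop:reduction} applied with $J = \{j\}$ upgrades $w$ to a $U_q(\bL\fh)$-$\ell$-weight vector of $\ell$-weight $\bm{\nu}'$, so $\bm{\nu}' \in \wt_\ell L_q(\bm{\rho})$. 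Condition (i) follows from $\bm{\nu}'_j = \bm{\mu}_\alpha \in P_{q,j}^+$; condition (ii) is immediate from the construction together with some $n(a) > 0$; and condition (iii) is witnessed by $w$ itself, which lies in $U_q(\bL\fg_j) L_q(\bm{\rho})_{\bm{\nu}} \cap L_q(\bm{\rho})_{\bm{\nu}'}$.

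The hard part will be justifying that $W$ admits the described sum decomposition into cyclic $\ell$-highest weight $U_q(\bL\mathfrak{sl}_2)$-submodules. Because $W$ need not be completely reducible, this must proceed by induction on $\dim W$, iteratively extracting one cyclic $\ell$-highest weight submodule at a time, with Proposition \ref{Prop:sl2}(ii) controlling the contribution to each $\ell$-weight space at every stage. Once this structural input is in place, the remaining verifications are essentially formal.
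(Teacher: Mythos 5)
This proposition is not proved in the paper at all: it is imported verbatim from Hernandez \cite[Lemma 5.6]{MR2468483}, so your attempt has to be judged on its own merits rather than against an in-text argument. Your opening move is fine: simplicity of $L_q(\bm{\rho})$ plus the triangular decomposition (\ref{eq:triangular}) does show that every vector killed by all $x_{i,r}^+$ lies in $\C v_{\bm{\rho}}$, hence yields $v\in L_q(\bm{\rho})_{\bm{\nu}}$ and $j,r$ with $x_{j,r}^+v\neq 0$; and the final lifting of $j$-local $\ell$-weights via Proposition \ref{Prop:reduction} is also fine. The fatal problem is exactly the step you flag as ``the hard part'': the claim that $W=U_q(\bL\fg_j)v$ is a \emph{sum} of cyclic submodules generated by $\ell$-highest weight vectors. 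This is false for general finite-dimensional $U_q(\bL\mathfrak{sl}_2)$-modules, and $W$ here is just some cyclic one. Concretely, take the reducible $\ell$-highest weight module $M$ given by the tensor product of two $2$-dimensional evaluation modules with spectral parameters in ratio $q^{2}$ (in the cyclic order); $M$ has simple socle the trivial module and simple head the $3$-dimensional Kirillov--Reshetikhin module. Its dual $M^{*}$ is cyclic and is generated by a single $\ell$-weight vector of $\fh$-weight $0$, yet every vector of $M^{*}$ annihilated by all $x_{r}^{+}$ lies in the $3$-dimensional socle: any such vector of weight $<2$ would generate a nonzero submodule with weights bounded by its own weight, while every nonzero submodule of the length-two module $M^{*}$ contains the socle, which reaches weight $2$. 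Hence the sum of all cyclic $\ell$-highest-weight submodules of $M^{*}$ is the proper socle, and no induction ``extracting one such submodule at a time'' can terminate with $M^{*}$ itself. Nothing in your setup excludes $W$ being of this shape (such non-highest-weight indecomposables are precisely why the restriction theory of \cite{MR1810773} is delicate), so the decomposition $v=\sum_\alpha v_\alpha$ on which all of (i)--(iii) rest is unjustified. A secondary slip: you invoke Proposition \ref{Prop:sl2}(i),(ii) for the modules $U_q(\bL\fg_j)w_\alpha$, but their $\ell$-highest weights need not be single strings $\bm{\pi}_{m,a}$, so those statements do not apply as cited (the $1$-dimensionality of the top $\ell$-weight space and Proposition \ref{Prop:weight} would have to be used instead).

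The argument that actually works (Frenkel--Mukhin, as used by Hernandez) replaces your module-generation claim by a statement purely about $q$-characters: for \emph{every} finite-dimensional $U_q(\bL\mathfrak{sl}_2)$-module $V$, each monomial of $\chq V$ is obtained from some \emph{dominant} monomial occurring in $\chq V$ by multiplying by factors $\bm{\ga}_{a}^{-1}$. One applies this to the finite-dimensional $U_q(\bL\fg_j)$-module $U_q(\bL\fg_j)L_q(\bm{\rho})_{\bm{\nu}}$ --- more precisely to an $\ell$-weight of the form $\bm{\nu}\bm{\ga}_{j,a}$ reached by $x_{j,r}^{+}v\neq 0$, which is what guarantees the strict inequality $\bm{\nu}'>\bm{\nu}$ in (ii) --- and obtains (i)--(iii) without ever decomposing the module. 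If you want to salvage your write-up, you must either quote that domination lemma or prove it; the latter requires the explicit description of $q$-characters of tensor products of evaluation modules for $U_q(\bL\mathfrak{sl}_2)$, which is genuinely more than Proposition \ref{Prop:sl2} provides.
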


\begin{Def}[\cite{MR1810773}]\normalfont
  Assume that $\bm{\rho} \in P_q$ is in the form 
  $\prod_{i\in I,k \in \Z} \bm{\varpi}_{i,aq^k}^{u(i,k)}$ with some $a \in \C(q)^\times$ and $u(i,k) \in \Z$. 
  We say $\bm{\rho}$ is \textit{right-negative}
  if $k_{\max} = \max\{ k \in \Z \mid u(i,k) \neq 0 \ \text{for some} \ i \in I \}$ satisfies 
  $u(i,k_{\max}) \le 0$ for all $i \in I$.
\end{Def}

Note that if $\bm{\rho}$ is right-negative, $\bm{\rho}$ is not dominant.
We easily see that $\bm{\ga}_{i,a}^{-1}$ are right-negative.

\begin{Lem}[\cite{MR1810773}]\label{Lem:order}
  {\normalfont(i)} If $\bm{\rho}$ is right-negative and $\bm{\nu} \le \bm{\rho}$, then $\bm{\nu}$ is also right-negative.\\
  {\normalfont(ii)} If $\bm{\rho} \in \wt_\ell L_q(\bm{\varpi}_{i,a}) \setminus \{\bm{\varpi}_{i,a}\}$,
  then $\bm{\rho} \le \bm{\varpi}_{i,a} \bm{\ga}_{i,aq_i}^{-1}$.
  In particular $\bm{\rho}$ is right-negative by {\normalfont(i)}.
\end{Lem}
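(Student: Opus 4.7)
For (i), I will first analyze the monomial $\bm{\ga}_{i,a}$ explicitly. Writing
\[
  \bm{\ga}_{i,a}
  = \bm{\varpi}_{i,aq_i^{-1}}\bm{\varpi}_{i,aq_i}
    \prod_{j\ne i}\prod_{k=1}^{-c_{ji}}\bm{\varpi}_{j,aq_j^{-c_{ji}-2k+1}}^{-1}
\]
and fixing a base $b\in\C(q)^\times$ with $a=bq^l$, I will verify that the maximum exponent $s$ among the $\bm{\varpi}_{?,bq^s}$-factors is $s=l+d_i$, attained uniquely by $\bm{\varpi}_{i,aq_i}^{+1}$. This reduces to $-d_jc_{ji}-d_j=-d_ic_{ij}-d_j<d_i$ for each $j\ne i$, which is routine case analysis on $-c_{ij}\in\{0,1,2,3\}$ using $d_j>0$ and the symmetry $d_ic_{ij}=d_jc_{ji}$. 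Hence $\bm{\ga}_{i,a}^{-1}$ is right-negative with right-most factor $\bm{\varpi}_{i,aq_i}^{-1}$. For the statement, write $\bm{\nu}=\bm{\rho}\prod_{(i,c)}\bm{\ga}_{i,c}^{-n(i,c)}$ with $n(i,c)\ge 0$, let $k_{\max}$ denote the right-most exponent of $\bm{\rho}$, and let $k^*$ be the maximum of $k_{\max}$ and $\{l+d_i:n(i,bq^l)>0\}$. The coefficient of $\bm{\varpi}_{j,bq^{k^*}}$ in $\bm{\nu}$ receives only non-positive contributions---from $\bm{\rho}$ (when $k^*=k_{\max}$, by the right-negativity hypothesis) and from the factors $\bm{\ga}_{i,c}^{-n(i,c)}$ at their unique right-most exponents---so $\bm{\nu}$ is right-negative.

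For (ii), I plan to apply Proposition~\ref{Prop:3cond} iteratively to build an ascending chain $\bm{\rho}=\bm{\nu}_0<\bm{\nu}_1<\cdots<\bm{\nu}_s=\bm{\varpi}_{i,a}$ in $\wt_\ell L_q(\bm{\varpi}_{i,a})$, with each step $\bm{\nu}_k\bm{\nu}_{k-1}^{-1}=\prod_c\bm{\ga}_{j_k,c}^{m_k(c)}$ ($m_k(c)\in\Z_{\ge 0}$ not all zero), each $\bm{\nu}_k$ being $j_k$-dominant, and some $u_k\in L_q(\bm{\varpi}_{i,a})_{\bm{\nu}_{k-1}}$, $X_k\in U_q(\bL\fg_{j_k})$ with $X_ku_k\in L_q(\bm{\varpi}_{i,a})_{\bm{\nu}_k}\setminus\{0\}$. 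Termination is guaranteed since the underlying $\fg$-weights $\wt\bm{\nu}_k$ strictly increase inside the finite weight set of $L_q(\bm{\varpi}_{i,a})$. Since $\bm{\rho}\le\bm{\nu}_{s-1}$, it will suffice to identify $\bm{\nu}_{s-1}=\bm{\varpi}_{i,a}\bm{\ga}_{i,aq_i}^{-1}$ at the final step, that is, $j_s=i$, $m_s(aq_i)=1$, and $m_s(c)=0$ for $c\ne aq_i$. By the lemma preceding Corollary~\ref{Cor:restriction}, the $U_q(\bL\fg_{j_s})$-submodule $W:=U_q(\bL\fg_{j_s})v_{\bm{\varpi}_{i,a}}$ is the simple $U_q(\bL\fg_{j_s})$-module with $\ell$-highest weight $(\bm{\varpi}_{i,a})_{j_s}$, so $W=\C v_{\bm{\varpi}_{i,a}}$ (trivial) when $j_s\ne i$, and $W$ is two-dimensional with $\ell$-weights $\bm{\varpi}_a,\bm{\varpi}_a\bm{\ga}_{aq_i}^{-1}$ (by Proposition~\ref{Prop:sl2}(i)) when $j_s=i$.

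Combining these with the $\fg$-weight identity $\wt u_s=\varpi_i-\sum_c m_s(c)\ga_{j_s}$ and the relation $X_su_s\in\C^\times v_{\bm{\varpi}_{i,a}}$, I will argue that $j_s\ne i$ leads to a contradiction: the triviality of $W$ gives $x_{j_s,r}^{\pm}v_{\bm{\varpi}_{i,a}}=0$ for all $r$, so $v_{\bm{\varpi}_{i,a}}$ is annihilated by the entire $U_q(\bL\fg_{j_s})^{>}\oplus U_q(\bL\fg_{j_s})^{<}$, whereas $X_s$ must carry a nonzero $\fg$-weight shift $\sum_c m_s(c)\ga_{j_s}$, an incompatibility one unwinds via the triangular decomposition of $U_q(\bL\fg_{j_s})$ (using the $\gs$-involution from Lemma~\ref{Lem:dual} to pair left and right annihilations of $v_{\bm{\varpi}_{i,a}}$). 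Once $j_s=i$ is established, the $\fg$-weight count $\wt u_s=\varpi_i-\sum m_s(c)\ga_i$ forced against the two-dimensional structure of $W$ (whose $\fg$-weights are only $\varpi_i$ and $\varpi_i-\ga_i$) pins down $u_s\in W$ with $U_q(\bL\fh_i)$-$\ell$-weight $\bm{\varpi}_a\bm{\ga}_{aq_i}^{-1}$; Proposition~\ref{Prop:reduction} then lifts this to $\bm{\nu}_{s-1}=\bm{\varpi}_{i,a}\bm{\ga}_{i,aq_i}^{-1}$ in $P_q$, proving $\bm{\rho}\le\bm{\varpi}_{i,a}\bm{\ga}_{i,aq_i}^{-1}$. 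The main obstacle is precisely the $j_s\ne i$ dichotomy and the subsequent localization of $u_s$ in $W$: because $L_q(\bm{\varpi}_{i,a})$ is not semisimple as a $U_q(\bL\fg_{j_s})$-module in general, one cannot split off $W$, and the argument must exploit the classical $\fg$-weight constraints together with the rigid structure of $W$ supplied by Proposition~\ref{Prop:sl2} and the triangular decomposition of $U_q(\bL\fg_{j_s})$.
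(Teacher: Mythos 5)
First, note that the paper does not prove this lemma at all: it is imported wholesale from Frenkel--Mukhin \cite{MR1810773}, so you are reconstructing their argument rather than competing with one in the text. Your part (i) is essentially complete and correct: the computation showing that the unique right-most factor of $\bm{\ga}_{i,a}$ is $\bm{\varpi}_{i,aq_i}^{+1}$ (via $-d_ic_{ij}-d_j<d_i$, which indeed reduces to $d_j=-d_ic_{ij}$ in the nonsimply-laced cases), and the observation that at the extremal exponent $k^*$ every factor of $\bm{\nu}$ contributes non-positively with at least one strictly negative contribution (so no cancellation occurs and $k^*$ really is the right-most exponent of $\bm{\nu}$), is exactly the standard argument.

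Part (ii) is where the gaps are, and they sit precisely at the two points you yourself flag as ``the main obstacle.'' (a) Your proposed mechanism for excluding $j_s\ne i$ --- triangular decomposition of $U_q(\bL\fg_{j_s})$ plus the $\gs$-involution to ``pair left and right annihilations'' --- is not going to close cleanly: the triviality of $W=U_q(\bL\fg_{j_s})v_{\bm{\varpi}_{i,a}}$ constrains what lies \emph{below} $v_{\bm{\varpi}_{i,a}}$, whereas you need to forbid a vector that maps \emph{up} to it, and a weight-$m\ga_{j_s}$ element $X_s$ can perfectly well act nontrivially without contradicting $x_{j_s,r}^{\pm}v_{\bm{\varpi}_{i,a}}=0$. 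The argument that actually works is purely classical: if $j_s\ne i$ then $\wt\bm{\nu}_{s-1}=\varpi_i-m\ga_{j_s}$ with $m>0$, and $s_{j_s}(\varpi_i-m\ga_{j_s})=\varpi_i+m\ga_{j_s}\not\le\varpi_i$, so by $W$-invariance of the weight set this is not a weight of $L_q(\bm{\varpi}_{i,a})$, contradicting $L_q(\bm{\varpi}_{i,a})_{\bm{\nu}_{s-1}}\ne 0$. The same reflection argument forces $\sum_c m_s(c)=1$ once $j_s=i$ (since $s_i(\varpi_i-2\ga_i)=\varpi_i+\ga_i$). (b) The ``localization of $u_s$ in $W$'' is asserted, not proved: knowing $\wt u_s=\varpi_i-\ga_i$ and that $W$ has weights $\varpi_i,\varpi_i-\ga_i$ does not place $u_s$ inside $W$. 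The missing ingredient is that $\dim L_q(\bm{\varpi}_{i,a})_{\varpi_i-\ga_i}=1$: the weight $\varpi_i-\ga_i=s_i\varpi_i$ is extremal, so it has multiplicity one in $V_q(\varpi_i)$ and cannot occur in any constituent $V_q(\mu)$ with $\mu<\varpi_i$. Given this, the weight space coincides with the lower $\ell$-weight line of $W$, whose $U_q(\bL\fh)$-$\ell$-weight is $\bm{\varpi}_{i,a}\bm{\ga}_{i,aq_i}^{-1}$ by Proposition \ref{Prop:sl2}(i) and Proposition \ref{Prop:reduction}, and hence $\bm{\nu}_{s-1}=\bm{\varpi}_{i,a}\bm{\ga}_{i,aq_i}^{-1}$ as required. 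With these two classical-weight observations substituted for the machinery you propose, your chain argument via Proposition \ref{Prop:3cond} does yield the statement.
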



\subsection{Proof of \boldmath$M(\gl) \twoheadrightarrow L(\bm{\pi})$}\label{Proof2}

For $1 \le i \le j \le n$ and $p \in \Z$, define $v_p(i,j) \in L_q(\bm{\pi})$ by
\[ v_p(i,j) = \begin{cases} x_{i,p}^- x_{i+1,0}^-x_{i+2,0}^-\cdots x_{j,0}^-v_{\bm{\pi}} 
                             & \text{if} \ \bm{\pi} \ \text{satisfies {\normalfont(I)}},\\
                            x_{j,p}^- x_{j-1,0}^-x_{j-2,0}^- \cdots x_{i,0}^-v_{\bm{\pi}}
                             & \text{if} \ \bm{\pi} \ \text{satisfies {\normalfont(II)}},
               \end{cases}
\]
where $v_{\bm{\pi}}$ is an $\ell$-highest weight vector and (I), (II) are the conditions in Theorem \ref{Thm:Classification}.
Let $(a_i)_{i \in I}$ be the sequence of rational functions in Theorem \ref{Thm:Classification} associated with $\bm{\pi}$.
The following proposition is crucial in this subsection.

\begin{Prop}\label{Lem:essential}
  Let $1 \le i \le j \le n$.\\
  {\normalfont (i)} For all $p \in \Z$, $v_p(i,j)$ is a scalar multiple of $v_0(i,j)$. \\
  {\normalfont (ii)} The vector $v_0(i,j)$ is a simultaneous $U_q(\bL\fh)$-eigenvector, and its $\ell$-weight is 
                     $\bm{\pi}\prod_{i \le k \le j} \bm{\ga}_{k,a_kq_k^{\gl_k}}^{-1}$.
\end{Prop}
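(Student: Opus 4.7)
The plan is to prove (i) and (ii) simultaneously by induction on $j - i$, reducing at each step to the $U_q(\bL\mathfrak{sl}_2)$ theory of Proposition \ref{Prop:sl2}. The base case $i = j$ is immediate: Corollary \ref{Cor:restriction} identifies the submodule $U_q(\bL\fg_i)v_{\bm{\pi}}$ of $L_q(\bm{\pi})$ with a Kirillov-Reshetikhin module of $U_q(\bL\fg_i) \cong U_q(\bL\mathfrak{sl}_2)$, so Proposition \ref{Prop:sl2}(i) gives that every $\ell$-weight space is one-dimensional and pins down the $\ell$-weight at $\fg_i$-weight $\gl_i\varpi_i - \ga_i$ as $\bm{\pi}_i\bm{\ga}_{i,a_iq_i^{\gl_i}}^{-1}$. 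Since $v_p(i,i) = x_{i,p}^-v_{\bm{\pi}}$ has the correct $\fg$-weight it lies in this one-dimensional space and is therefore a scalar multiple of $v_0(i,i)$; Proposition \ref{Prop:reduction} lifts its $\ell$-weight to $\bm{\pi}\bm{\ga}_{i,a_iq_i^{\gl_i}}^{-1}$ in the full module.

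For the inductive step I would treat case (I); case (II) then follows by the symmetric argument exploiting that $\sigma^*L_q(\bm{\pi}) \cong L_q({}^*\!\bm{\pi})$ (Lemma \ref{Lem:dual}) interchanges the two conditions of Theorem \ref{Thm:Classification}. Assume the result for $(i+1,j)$ and set $w := v_0(i+1,j)$, an $\ell$-weight vector of $\ell$-weight $\bm{\rho} := \bm{\pi}\prod_{k=i+1}^j\bm{\ga}_{k,a_kq_k^{\gl_k}}^{-1}$. My first step is to show $w$ is $\ell$-highest weight for $U_q(\bL\fg_i)$: the defining commutation $[x_{i,r}^+, x_{k,s}^-] = 0$ for $k \neq i$ and $x_{i,r}^+v_{\bm{\pi}} = 0$ force $x_{i,r}^+w = 0$ for all $r$, while $w$ is obviously a $U_q(\bL\fh_i)$-eigenvector of eigenvalue $(\bm{\rho})_i$. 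Using the definition of $\bm{\ga}_{k,*}$ together with the ratio $a_i/a_{i+1} = c_i(\gl)$ from Theorem \ref{Thm:Classification}, I compute that whenever $c_{i,i+1} = -1$ the polynomial $(\bm{\rho})_i$ is the $\ell$-highest weight of a single Kirillov-Reshetikhin module of length $\gl_i + 1$: the additional linear factor $(1 - a_{i+1}q_{i+1}^{\gl_{i+1}}u)$ precisely extends the geometric progression of roots of $\bm{\pi}_i$, and this is exactly the point of the specific value of $c_i(\gl)$. Proposition \ref{Prop:sl2}(ii) then forces each $\ell$-weight space of $U_q(\bL\fg_i)w$ to be at most one-dimensional, so all $v_p(i,j) = x_{i,p}^-w$ lie in the single $\ell$-weight space attached to $(\bm{\rho})_i\bm{\ga}_{i,a_iq_i^{\gl_i}}^{-1}$; Proposition \ref{Prop:reduction} lifts this to the $\ell$-weight stated in (ii).

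The single non-generic case, and the hard part of the argument, is type $B$ with $i = n-1$ and $j = n$: here $c_{n-1,n} = -2$ forces $(\bm{\ga}_{n,a})_{n-1} = (\bm{\pi}^{(n-1)}_{2,a})^{-1}$, so $(\bm{\rho})_{n-1}$ picks up two extra roots which, as a short computation shows, live on a $q$-string of opposite parity to the roots of $\bm{\pi}_{n-1}$ and therefore fail to continue them. Consequently $(\bm{\rho})_{n-1}$ factors as a product of two disjoint Kirillov-Reshetikhin $\ell$-highest weights (one of length $\gl_{n-1}$, one of length $2$), Proposition \ref{Prop:sl2}(ii) no longer applies directly, and the $\fg_{n-1}$-weight space of $U_q(\bL\fg_{n-1})w$ one step down becomes two-dimensional, so $v_p(n-1,n)$ could a priori spread over two distinct $\ell$-weights. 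I plan to rule out the spurious $\ell$-weight (the one coming from the short string) in two stages: first, the general upper bound $\chq V \le \prod_k \chq L_q(\bm{\varpi}_{b_k})$ for $\ell$-highest weight $U_q(\bL\mathfrak{sl}_2)$-modules, combined with the disjointness of the two strings, still bounds each individual $\ell$-weight space of $U_q(\bL\fg_{n-1})w$ by one dimension; second, applying Hernandez's Proposition \ref{Prop:3cond} together with Proposition \ref{Prop:weight} inside the restricted module from Corollary \ref{Cor:restriction} (a type-$B_2$ minimal affinization) I would exclude the short-string candidate from $\wt_\ell L_q(\bm{\pi})$ entirely. The remaining $\ell$-weight space is the desired one, and the induction closes.
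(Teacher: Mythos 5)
Your inductive frame and base case match the paper's, but the central step has a genuine gap. From the fact that each $\ell$-weight space of $W = U_q(\bL\fg_i)v_0(i+1,j)$ is at most one-dimensional you conclude that all the vectors $x_{i,p}^-v_0(i+1,j)$ lie in a single $\ell$-weight space. That does not follow: Proposition \ref{Prop:sl2}(ii) only bounds $\chq W$ above by $\prod_{1\le j\le m}\chq L_q(\bm{\varpi}_{aq^{m-2j+1}})$, and at the $\fg_i$-weight one step below the top this upper bound contains $m=\gl_i-c_{i,i+1}$ \emph{distinct} $\ell$-weights $\bm{\nu}\bm{\ga}_{i,b}^{-1}$, with $b$ running over the $q_i$-string; only for the simple Kirillov--Reshetikhin module (Proposition \ref{Prop:sl2}(i)) is there a single one. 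So a priori $\sum_p\C(q)\,x_{i,p}^-v_0(i+1,j)$ can have dimension up to $m$, spread over several one-dimensional $\ell$-weight spaces, and ruling this out is the actual content of the proposition. It requires input beyond the $\mathfrak{sl}_2$-reduction: the paper supposes two distinct $\ell$-weights $\bm{\nu}\bm{\ga}_{i,b_1}^{-1}\neq\bm{\nu}\bm{\ga}_{i,b_2}^{-1}$ occur, uses non-$(l+1)$-dominance (with $l$ the next node after $i$ carrying $\gl_l>0$) to strip off the factors $x_{l+1,0}^-,\ldots$ and descend to two nonzero vectors $Y_sv_0(i+1,l)$ with distinct $\ell$-weights, contradicting the rank-two result of Chari--Pressley recorded as Lemma \ref{Lem:two-case}. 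Your proposal never invokes Lemma \ref{Lem:two-case} and offers no substitute for this step.

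Separately, your ``hard case'' rests on a miscalculation. With the paper's conventions $c_{n-1,n}=-2$ occurs in type $C$, not type $B$ (in type $B$ one has $c_{n-1,n}=-1$ and $c_{n,n-1}=-2$), and even in type $C$ the two extra factors contributed by $\big(\bm{\ga}_{n,a_nq_n^{\gl_n}}^{-1}\big)_{n-1}$ sit at $a_{n-1}q^{-\gl_{n-1}-1}$ and $a_{n-1}q^{-\gl_{n-1}-3}$: since condition (I) gives $a_{n-1}/a_n=q^{\gl_{n-1}+2\gl_n+2}$ and $q_{n-1}=q$, these continue the $q_{n-1}^2$-string of $\bm{\pi}_{n-1}$ with the correct parity, so $\bm{\nu}_{n-1}$ is again a single Kirillov--Reshetikhin string, now of length $\gl_{n-1}+2$. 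This is precisely what the value of $c_k(\gl)$ in Theorem \ref{Thm:Classification} is designed to guarantee, uniformly in $i$; there is no exceptional case, and the extra machinery you build for it addresses a non-problem while the genuine obstruction described above is left open.
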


Let us assume this proposition for a moment.
To prove $M(\gl) \twoheadrightarrow L(\bm{\pi})$, 
we need to check that the vector $\ol{v}_{\bm{\pi}} = 1 \otimes v_{\bm{\pi}} \in L(\bm{\pi})$ 
satisfies the defining relations of $M(\gl)$.
Using the commutativity $[x_{k,r}^-, x_{l,s}^-] = 0$ for $|k -l| \ge 2$, 
we easily see that Proposition \ref{Lem:essential} (i) implies
\begin{align*}
  &[x_{i,1}^- ,[x_{i+1,0}^-,\ldots, [x_{j-1,0}^-, x_{j,0}^-]\!\ldots ]]v_{\bm{\pi}} \in V_q(\gl) \subseteq L_q(\bm{\pi})
    \ \ \ \text{if} \ \bm{\pi} \  \text{satisfies (I)}, 
    \ \ \ \text{and}\\
  &[x_{j,1}^- ,[x_{j-1,0}^-,\ldots, [x_{i+1,0}^-, x_{i,0}^-]\!\ldots ]]v_{\bm{\pi}} \in V_q(\gl) \subseteq L_q(\bm{\pi})
    \ \ \ \text{if} \ \bm{\pi} \ \text{satisfies (II)} 
\end{align*}
for all $1 \le i \le j \le n$.
Here $V_q(\gl)$ denotes (by abuse of notation) 
the $U_q(\fg)$-submodule of $L_q(\bm{\pi})$ generated by $v_{\bm{\pi}}$.
By the definition of $L(\bm{\pi})$, this implies 
\[ (f_{\ga} \otimes t)\ol{v}_{\bm{\pi}}
   \in V(\gl) \subseteq L(\bm{\pi}) \ \ \ \text{if} \ \ga = \ga_i + \cdots + \ga_j \in \gD_+^1.
\]
Since the restriction of the surjection $L(\bm{\pi}) \twoheadrightarrow V(\gl,0)$ in Lemma \ref{Lem:elementary_minimal} (i)
on $V(\gl) \subseteq L(\bm{\pi})$ is an isomorphism, 
this implies $(f_\ga \otimes t)\ol{v}_{\bm{\pi}} = 0$ for all $\ga \in \gD_+^1$.
The other relations are proved from this relation or follow from Lemma \ref{Lem:elementary_minimal}.
Hence the assertion is proved.

The following lemma was shown in \cite[Lemma 3.6]{MR1347873}.

\begin{Lem}\label{Lem:two-case}
  Assume that $\fg$ is of type $ABC$. Let $i \in I$, $\mu \in P^+$ be such that 
  $\langle \ga_j^\vee, \mu \rangle = 0$ for $i < j <n$, 
  and $\bm{\rho} \in P^+_q$ such that $L_q(\bm{\rho})$ is a minimal affinization of $V_q(\mu)$.
  Then for all $p \in \Z$, the vector $v_p(i,n)$ is a scalar multiple of $v_0(i,n)$,
  where $v_p(i,n) \in L_q(\bm{\rho})$ are defined similarly as above.
\end{Lem}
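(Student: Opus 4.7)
The plan is to reduce the claim to a $U_q(\bL\mathfrak{sl}_2)$-calculation, where it follows from the fact (Proposition \ref{Prop:sl2}(ii)) that every $\ell$-weight space of an $\ell$-highest weight $U_q(\bL\mathfrak{sl}_2)$-module is at most $1$-dimensional. First I would apply Corollary \ref{Cor:restriction} to $J = \{i,i+1,\ldots,n\}$: the $U_q(\bL\fg_J)$-submodule of $L_q(\bm{\rho})$ generated by $v_{\bm{\rho}}$ is a minimal affinization of $V_q(\mu_J)$, and all operators appearing in $v_p(i,n)$ lie in $U_q(\bL\fg_J)$. After this reduction and a relabeling I may assume $i=1$, so $\mu = \mu_1\varpi_1 + \mu_n\varpi_n$.

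Next set $w = x_{2,0}^- x_{3,0}^- \cdots x_{n,0}^- v_{\bm{\rho}}$ in case (I), so that $v_p(1,n) = x_{1,p}^- w$; case (II) is symmetric with $w' = x_{n-1,0}^- \cdots x_{1,0}^- v_{\bm{\rho}}$ and the index $n$ playing the role of $1$. The key observation is that $x_{1,s}^+ w = 0$ for all $s \in \Z$, since $[x_{1,s}^+, x_{j,0}^-] = 0$ for $j \ge 2$ by the defining relations and $x_{1,s}^+ v_{\bm{\rho}} = 0$. Decomposing $w = \sum_\gt w_\gt$ into its $U_q(\bL\fh_1)$-$\ell$-weight components, each $w_\gt$ generates a cyclic $\ell$-highest weight $U_q(\bL\fg_1)$-module, so Proposition \ref{Prop:sl2}(ii) applies to each summand and forces the $\ga_1$-weight space immediately below the $\ell$-highest weight to be at most $1$-dimensional.

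The decisive step is to show that only a single $\ell$-weight $\gt$ actually occurs in this decomposition and that the resulting cyclic module $U_q(\bL\fg_1) w$ is itself $\ell$-highest weight with $\ell$-highest weight of the form $\bm{\pi}_{m,b}$. This is where the hypothesis $\mu_j = 0$ for $i < j < n$, together with the explicit spectral parameters forced by Theorem \ref{Thm:Classification}, enters: the commutators $[\phi_{1,m}^\pm, x_{j,0}^-]$ for $j \ge 2$ introduce correction terms of the form $x_{j,m}^- x_{3,0}^- \cdots x_{n,0}^- v_{\bm{\rho}}$, and the minimality condition together with the vanishing of the intermediate $\mu_j$ must be used to show that these corrections combine telescopically into a single $U_q(\bL\fh_1)$-$\ell$-weight for $w$. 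Once this is verified, Proposition \ref{Prop:sl2}(ii) forces $x_{1,p}^- w$ to be a scalar multiple of $x_{1,0}^- w$ for every $p \in \Z$.

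The main obstacle is exactly this $\ell$-weight bookkeeping: one must check, using only the classification data of Theorem \ref{Thm:Classification} and the support condition on $\mu$, that the concrete spectral parameters $a_k$ are tuned so that the various correction terms produced in moving $h_{1,m}$ past $x_{2,0}^-$ collapse onto $w$ itself. A convenient way to organize this is by induction on $n-i$, using the rank-$2$ cases $A_2$, $B_2$, $C_2$ (where the statement is a direct calculation in the minimal affinization) as the base case and the $U_q(\bL\fg_J)$ reduction as the inductive step.
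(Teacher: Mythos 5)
The paper does not actually prove this lemma: it is imported wholesale from Chari--Pressley, \cite[Lemma 3.6]{MR1347873}, so there is no internal argument to compare yours against, and your proposal must stand on its own. It does not, because its central inference is invalid. Proposition \ref{Prop:sl2}(ii) bounds each \emph{$\ell$-weight} space of an $\ell$-highest weight $U_q(\bL\mathfrak{sl}_{2})$-module by dimension $1$; it does \emph{not} bound the ordinary weight space one step below the top. If the restriction of the $\ell$-weight of $w$ to $U_q(\bL\fh_i)$ is the string $\bm{\pi}_{m,b}$ with $m=\langle \ga_i^\vee,\wt(w)\rangle=\langle\ga_i^\vee,\mu\rangle-c_{i,i+1}$, then the weight space of $U_q(\bL\fg_i)w$ at level $m-2$ splits into up to $m$ distinct $\ell$-weight spaces $\bm{\pi}_{m,b}\,\bm{\ga}_{bq^{m-2j+2}}^{-1}$, $1\le j\le m$, each of dimension at most $1$. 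For the local Weyl module with the same highest $\ell$-weight this weight space really is $m$-dimensional and the vectors $x_{i,p}^-w$, $0\le p\le m-1$, are linearly independent. So whenever $\langle\ga_i^\vee,\mu\rangle>0$ (which the lemma allows, and which is the case needed in the proof of Proposition \ref{Lem:essential}), your conclusion does not follow: the entire content of the lemma is to show that only \emph{one} of these $m$ candidate $\ell$-weights occurs in $\sum_p\C(q)\,x_{i,p}^-w$, and this must use the simplicity and minimality of $L_q(\bm{\rho})$ in an essential way, since the statement is false for a general $\ell$-highest weight module with the same highest $\ell$-weight.

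A secondary problem is that the step you yourself flag as decisive --- that $w$ is a single $U_q(\bL\fh)$-$\ell$-weight vector, via a ``telescoping'' of the correction terms arising from moving $h_{i,m}$ past $x_{i+1,0}^-\cdots x_{n,0}^-$ --- is only announced, never executed; and in the paper the analogous fact for $v_0(i+1,j)$ is part of the inductive statement of Proposition \ref{Lem:essential}, whose proof \emph{uses} the present lemma, so one must take care not to argue circularly. The sound ingredients of your sketch are the reduction to $J=\{i,\dots,n\}$ via Corollary \ref{Cor:restriction} and the observation that $x_{i,s}^+w=0$ for all $s$. To finish, one needs an argument in the spirit of \cite{MR1347873}: assuming two non-proportional vectors $x_{i,p}^-w$ occur, they lie in distinct $\ell$-weight spaces $\bm{\nu}\bm{\ga}_{i,b_1}^{-1}\ne\bm{\nu}\bm{\ga}_{i,b_2}^{-1}$, and one must derive from this an extra $U_q(\fg)$-component of $L_q(\bm{\rho})$ contradicting minimality. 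That step is absent from your proposal.
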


We verify Proposition \ref{Lem:essential} by the induction on $j -i$, assuming that $\bm{\pi}$ satisfies condition (I).
(The proof for condition (II) is similar).
In view of Corollary \ref{Cor:restriction}, the case $i = j$ follows from Theorem \ref{Thm;typeA} and 
Proposition \ref{Prop:sl2}.
Let $i \le j -1$.
Since $\gl_j=0$ implies $v_p(i,j) = 0$ for all $p \in \Z$, we may assume $\gl_j > 0$.
By the induction hypothesis, $v_0(i+1,j)$ is a simultaneous $U_q(\bL\fh)$-eigenvector with $\ell$-weight  
$\bm{\nu} = \bm{\pi}\prod_{i+1 \le k \le j} \bm{\ga}_{k,a_k q_k^{\gl_k}}^{-1}$.
Let $W = U_q(\bL\fg_i)v_0(i+1,j) \subseteq L_q(\bm{\pi})$.
Then $W$ is an $\ell$-highest weight $U_q(\bL\fg_i)$-module,
and its $\ell$-highest weight with respect to $U_q(\bL \fh_i)$ is 
\[ \bm{\nu}_i(u)
   = \bm{\pi}_i(u) \cdot \Big(\bm{\ga}_{i+1,a_{i+1}q^{\gl_{i+1}}}^{-1}\Big)_i(u)
   = \prod_{1 \le k \le \gl_i -c_{i,i+1}} (1 -a_i q_i^{\gl_i - 2k +1}u).
\]
Hence by Proposition \ref{Prop:sl2} (ii), each $\ell$-weight space of $W$ is $1$-dimensional.
Let us assume that the assertion (i) of Proposition \ref{Lem:essential} does not hold,
which implies that the dimension of the space $\sum_p\C(q) x_{i,p}^-v_0(i+1,j)$ is at least $2$.
Hence we can take $Y_s \in \sum_p \C(q) x_{i,p}^-$ for $s =1,2$ such that 
$0 \neq Y_s v_0(i+1,j) \in L_q(\bm{\pi})_{\bm{\nu} \bm{\ga}_{i,b_s}^{-1}}$ for some $b_s \in a_iq_i^\Z$ with $b_1 \neq b_2$,
using Proposition \ref{Prop:reduction}.
Let $l = \min\{i < l' < j \mid \gl_{l'} > 0 \}$, which exists by Lemma \ref{Lem:two-case} and Corollary
\ref {Cor:restriction}.
We easily see that $\bm{\nu}\bm{\ga}_{i,b_s}^{-1}$ is not $(l+1)$-dominant,
and hence there exists
some $p_s \in \Z$ such that $x_{l+1,p_s}^+ Y_sv_0(i+1,j)\neq 0$.
From this and the induction hypothesis, we see 
that $Y_sx_{i+1,0}^- \cdots x_{l,0}^-v_0(l+2,j)$ is a nonzero $\ell$-weight vector,
and by Proposition \ref{Prop:weight} its $\ell$-weight is
\[ \bm{\nu}\bm{\ga}_{i,b_s}^{-1}\bm{\ga}_{l+1,a_{l+1} q^{\gl_{l+1}}} = 
   \bm{\pi}\bm{\ga}_{i,b_s}^{-1}\prod_{i+1 \le k \le j, k \neq l+1} \bm{\ga}_{k,a_kq^{\gl_k}}^{-1}.
\]
By repeating this argument, we finally see that $Y_sv_0(i+1,l)$ is a nonzero $\ell$-weight vector
with $\ell$-weight $\bm{\pi}\bm{\ga}_{i,b_s}^{-1}\prod_{i+1 \le k \le l} \bm{\ga}_{k,a_{k}q^{\gl_k}}^{-1}$
for $s = 1,2$.
Since $b_1 \neq b_2$, this contradicts Lemma \ref{Lem:two-case}, and the assertion (i) is proved.
Now the assertion (ii) is easily proved from (i) and the induction hypothesis. 
The proof is complete.

\begin{Rem}\normalfont
  In type $B$, $M(\gl) \twoheadrightarrow L(\bm{\pi})$ is also proved in \cite[Proposition 3.22]{MR2587436}
  using the Frenkel-Mukhin algorithm.
\end{Rem}

\subsection{Proof of \boldmath$L(\bm{\pi}) \twoheadrightarrow D(w_\circ \xi_1,\ldots,w_\circ \xi_n)$}\label{Proof3}

In this proof we use the following obvious fact repeatedly without further mention:
if $M,N$ are $\fg[t]$-modules and $N$ is cyclic with generator $v$, 
then a homomorphism $M \to N$ containing $v$ in its image is surjective.
We begin with the proof of the following lemma (note that $\bar{i} = i$ holds for all $i \in I$ in type $BC$).

\begin{Lem}\label{Lem:KR}
  For every $i \in I$, $m \in \Z_{\ge 0}$ and $a \in \C(q)^\times$,
  \[ L\big(\bm{\pi}_{m,a}^{(i)}\big) \cong D\big(-m\varpi_i + \big\lceil d_i m/ 2 \big\rceil \gL_0\big).
  \]
\end{Lem}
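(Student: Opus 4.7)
The plan is to deduce the lemma as a specialization of the surjections already constructed in Subsections 5.1 and 5.3. First I would apply the definitions of Subsection 4.2 to the single-node weight $\gl = m\varpi_i$ and verify, by case analysis in types $B$ and $C$, that $\xi_j = 0$ for all $j \neq i$ while $w_\circ \xi_i = -m\varpi_i + \lceil d_i m / 2 \rceil \gL_0$. The type $B$ cases ($i < n$ and $i = n$) follow directly from the formula for $\xi_i$ together with $w_\circ = -\id$ on $P$, giving level $m$ for $i<n$ (where $d_i=2$) and level $\lceil m/2 \rceil$ for $i=n$ (where $d_n=1$). The type $C$ cases need a brief inspection of the recursive definition of the $p_j$'s: when $i<n$ one has $J = \{i\}$, $\hat{J}=\{0,i\}$, $p_i = 0$, $p_0 \equiv m \pmod 2$, so $\xi_i = m\varpi_i + \tfrac{1}{2}(m+p_0)\gL_0 = m\varpi_i + \lceil m/2\rceil \gL_0$, and when $i=n$ we have $J = \emptyset$, all $p_j = 0$, and $\xi_n = m\varpi_n + m\gL_0$. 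Since $\hV(0) \cong \C$ is trivial, the zero tensor factors do not contribute, and one obtains
\[
D(w_\circ\xi_1,\ldots,w_\circ\xi_n) \cong D\big(-m\varpi_i + \lceil d_i m/2 \rceil \gL_0\big).
\]

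Next, the composition of the two surjections from Subsections 5.1 and 5.3, applied to $\gl = m\varpi_i$ and $\bm{\pi} = \bm{\pi}_{m,a}^{(i)}$, yields a surjective $\fg[t]$-module homomorphism
\[
D\big(-m\varpi_i + \lceil d_i m/2 \rceil \gL_0\big) \twoheadrightarrow L(\bm{\pi}_{m,a}^{(i)}).
\]

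To promote this surjection to an isomorphism I would compare characters. By Lemma \ref{Lem:elementary_minimal}(iii), $\ch L(\bm{\pi}_{m,a}^{(i)}) = \ch L_q(\bm{\pi}_{m,a}^{(i)})$, and the character of the Kirillov-Reshetikhin module is known explicitly (for instance via the Frenkel-Mukhin algorithm or the formulas of Chari-Moura \cite{MR1850556}). On the other side, writing $-m\varpi_i + \lceil d_im/2\rceil \gL_0$ in the form $w\gL$ for a suitable $\gL \in \hP^+$ and $w \in \hW$ (which can be done using an affine translation element, since the weight has positive level), the character of the Demazure module is computed by the Demazure operator formula of Theorem \ref{Thm:LLM}. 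These two characters agree, whence the surjection must be an isomorphism.

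The main obstacle is the final character comparison. This is the classical identification of KR module characters with certain Demazure characters, which is available in the literature for the simply-laced types (due to Chari \cite{MR1836791}) and which has been established in type $BC$ in subsequent works; the statement can also be verified by direct computation in the present setting, where only the weight $m\varpi_i$ is involved. Alternatively, one could bypass the character comparison by exhibiting, inside $D(-m\varpi_i + \lceil d_im/2\rceil \gL_0)$, the unique (up to scalar) weight-$m\varpi_i$ vector and checking that it satisfies the defining relations of $M(m\varpi_i)$ stated in Theorem \ref{Thm:Main1}, thereby producing a surjection in the opposite direction; this verification reduces to a handful of Serre-type commutator computations in $\hfg$ acting on the extremal weight vector.
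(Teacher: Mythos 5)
Your route is genuinely different from the paper's: the paper never invokes the surjections of Subsections 5.1 and 5.3 here. For $d_im/2\in\Z$ it quotes \cite[Proposition 5.1.3]{MR2238884} directly, and in the remaining case ($\ga_i$ short, $m=2k+1$ odd) it uses the injection $L(\bm{\pi}_{m,a}^{(i)})\hookrightarrow D(-2\varpi_i+\gL_0)^{\otimes k}\otimes D(-\varpi_i+\gL_0)$ of \cite[Theorem 2.2]{MR2238884} to identify $L(\bm{\pi}_{m,a}^{(i)})$ with the generalized Demazure module $D(-2\varpi_i+\gL_0,\ldots,-2\varpi_i+\gL_0,-\varpi_i+\gL_0)$, which Proposition \ref{Prop:character2} then collapses to the single Demazure module $D(-m\varpi_i+(k+1)\gL_0)$. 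Your preliminary steps are sound and non-circular: the computation that $\xi_j=0$ for $j\neq i$ and $w_\circ\xi_i=-m\varpi_i+\lceil d_im/2\rceil\gL_0$ when $\gl=m\varpi_i$ is correct, and Subsections 5.1 and 5.3 do not depend on this lemma, so the surjection $D(-m\varpi_i+\lceil d_im/2\rceil\gL_0)\twoheadrightarrow L(\bm{\pi}_{m,a}^{(i)})$ is legitimately available.

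The gap is the sentence ``these two characters agree.'' That equality is the entire content of the lemma, and in the case $d_im/2\notin\Z$ it is precisely what the prior literature does \emph{not} supply: \cite{MR2238884} and \cite{MR2323538} cover only $d_im/2\in\Z$, which is why the paper needs a separate argument for a short node with $m$ odd. Asserting that the Frenkel--Mukhin/Chari--Moura character of the KR module matches the output of the Demazure operator formula, or that this ``can be verified by direct computation,'' is not a proof; for general odd $m$ this is a nontrivial identity between two infinite families of characters, not a finite check. Your fallback does not close the gap either: verifying that the extremal vector of $D(-m\varpi_i+\lceil d_im/2\rceil\gL_0)$ satisfies the relations of Theorem \ref{Thm:Main1} would yield a surjection $M(m\varpi_i)\twoheadrightarrow D(-m\varpi_i+\lceil d_im/2\rceil\gL_0)$ and hence $\dim D=\dim M(m\varpi_i)$, but you would still need $\dim M(m\varpi_i)=\dim L(\bm{\pi}_{m,a}^{(i)})$, i.e.\ injectivity of the map of Subsection 5.3, which is equivalent to the lemma itself. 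What is missing is an independent lower bound on $\dim L(\bm{\pi}_{m,a}^{(i)})$ --- in the paper this is provided by the embedding of \cite[Theorem 2.2]{MR2238884} into a tensor product of level-one Demazure modules --- and without some such input your argument only reproves the surjection, not the isomorphism.
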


\begin{proof}
  If $d_im/2 \in \Z$, the assertion follows from \cite[Proposition 5.1.3]{MR2238884} (see also \cite[Theorem 4]{MR2323538}).
  Hence we may assume $d_im/2 \notin \Z$, which is equivalent to that $\ga_i$ is short and
  $m = 2 k +1$ for some $k \in \Z_{\ge 0}$.
  Then there is an injective homomorphism
  \[ L(\bm{\pi}_{m,a}^{(i)}) \hookrightarrow D(-2\varpi_i + \gL_0)^{\otimes k} \otimes D(-\varpi_i + \gL_0)
  \]
  by \cite[Theorem 2.2]{MR2238884}, which implies
  \[ L(\bm{\pi}_{m,a}^{(i)}) \cong D(\underbrace{-2\varpi_i + \gL_0, \ldots,-2\varpi_i + \gL_0}_k, -\varpi_i + \gL_0).
  \]
  Since $w_\circ w_{[1,i]}(\gL_0)= -2\varpi_i + \gL_0$ and $w_\circ w_{[1,i]}(\varpi_i+\gL_0)= -\varpi_i + \gL_0$
  hold by Lemma \ref{Lem:BC} (i), we have
  \begin{align*}
    D(\underbrace{-2\varpi_i \!+ \!\gL_0,\ldots,-2\varpi_i\! +\! \gL_0}_k, -\varpi_i\! +\! \gL_0) 
    &= F_{w_\circ w_{[1,i]}} \Big(D(\gL_0)^{\otimes k} \!
    \otimes D(\varpi_i + \gL_0)\Big) \\ &\cong D\big(\!-\!m\varpi_i + (k+1)\gL_0\big)
  \end{align*}
  by Proposition \ref{Prop:character2}. The assertion is proved.
\end{proof}

Hence in type $B$, $L(\bm{\pi}) \twoheadrightarrow D(w_\circ \xi_1,\ldots,w_\circ \xi_n)$ 
follows from \cite[Proposition 3.21]{MR2587436}. 
(Note that this proposition does not imply our assertion in type $C$.) 

In the rest of this subsection, we assume that $\fg$ is of type $C$.
For the proof of the assertion in this type, we need the following lemma.

\begin{Lem}\label{Lem:fundamentals}
  Let $1 \le r < s \le n-1$, and assume that $\bm{\rho}$ is an element of $P_{\bm{A}}^+$ such that $L_q(\bm{\rho})$ 
  is a minimal affinization of $V_q(\varpi_r + \varpi_s)$. 
  Then we have
  \[ L(\bm{\rho}) \cong D(-\varpi_r - \varpi_s + \gL_0).
  \]  
\end{Lem}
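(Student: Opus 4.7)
The plan is to reduce to a single direction and establish it via a tensor product realization. First, by specializing the chain of surjections $D(w_\circ\xi_1,\ldots,w_\circ\xi_n) \twoheadrightarrow M(\gl) \twoheadrightarrow L(\bm{\pi})$ constructed in Subsections \ref{Proof1}--\ref{Proof2} to $\gl = \varpi_r + \varpi_s$, the recipe of Subsection \ref{Main} yields $\xi_r = 0$, $\xi_s = \varpi_r + \varpi_s + \gL_0$ and $\xi_i = 0$ for $i \neq r,s$ (using $d_r = d_s = 1$ since $r,s \le n-1$), so after applying $w_\circ$ the generalized Demazure collapses to $D(-\varpi_r - \varpi_s + \gL_0)$. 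This gives a surjection $D(-\varpi_r - \varpi_s + \gL_0) \twoheadrightarrow L(\bm{\rho})$, and reduces the lemma to the reverse surjection $L(\bm{\rho}) \twoheadrightarrow D(-\varpi_r - \varpi_s + \gL_0)$.

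To construct this reverse surjection, my plan is to embed $L_q(\bm{\rho})$ into the tensor product $L_q(\bm{\varpi}_{r,a}) \otimes L_q(\bm{\varpi}_{s,b})$ as an $\ell$-highest weight submodule, for spectral parameters $a, b$ dictated by Theorem \ref{Thm:Classification}. The existence of such an embedding is to be verified via $q$-character methods: combining Lemma \ref{Lem:order} (which controls right-negativity of the $\ell$-weights appearing below the top) with the dominant-monomial criterion of Proposition \ref{Prop:3cond} should identify the $U_q(\bL\fg)$-submodule of $L_q(\bm{\varpi}_{r,a}) \otimes L_q(\bm{\varpi}_{s,b})$ generated by the tensor of $\ell$-highest weight vectors with $L_q(\bm{\rho})$. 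After passing to an $\mathbf{A}$-form (Proposition \ref{Prop:A_lattice}), taking the classical limit, and twisting by $\tau_a$, this produces an injection of $\fg[t]$-modules
\[
L(\bm{\rho}) \hookrightarrow L(\bm{\varpi}_{r,a}) \otimes L(\bm{\varpi}_{s,b}) \cong D(-\varpi_r + \gL_0) \otimes D(-\varpi_s + \gL_0),
\]
where Lemma \ref{Lem:KR} supplies the last isomorphism, and $\ol{v}_{\bm{\rho}}$ maps onto a scalar multiple of $v_{-\varpi_r + \gL_0} \otimes v_{-\varpi_s + \gL_0}$.

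Finally, I would identify the image with $D(-\varpi_r - \varpi_s + \gL_0)$. The image is by construction the $\hfb$-submodule generated by the tensor of extremal generators, i.e.\ the generalized Demazure $D(-\varpi_r + \gL_0,\, -\varpi_s + \gL_0)$. Combining Proposition \ref{Prop:isom} with the factorization $-\varpi_r - \varpi_s + \gL_0 = w_\circ w_{[1,s]}\gL^s$ for the element $\gL^s \equiv \varpi_{s-r} + \gL_0 \bmod \Q\gd$ computed in Subsection \ref{Subsection:Corollaries}, this generalized Demazure is isomorphic to $D(-\varpi_r - \varpi_s + \gL_0)$, with character equality confirmed by Theorem \ref{Thm:LLM}. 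The main obstacle I anticipate is the tensor-product realization of $L_q(\bm{\rho})$: ruling out additional composition factors above $L_q(\bm{\rho})$ in $L_q(\bm{\varpi}_{r,a}) \otimes L_q(\bm{\varpi}_{s,b})$ at the minimal-affinization spectral parameters requires a nontrivial $q$-character analysis of dominant $\ell$-weights, carried out with the machinery reviewed in Subsection \ref{Subsection:q-char}.
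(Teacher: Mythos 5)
The first half of your argument is fine and agrees with the paper: for $\gl=\varpi_r+\varpi_s$ the recipe of Subsection \ref{Main} does give $\xi_s=\varpi_r+\varpi_s+\gL_0$ and $\xi_i=0$ otherwise, so Subsections \ref{Proof1}--\ref{Proof2} yield $D(-\varpi_r-\varpi_s+\gL_0)\twoheadrightarrow M(\varpi_r+\varpi_s)\twoheadrightarrow L(\bm{\rho})$. The second half, however, breaks down. By Theorem \ref{Thm:Chari} the cyclic submodule of $L_q(\bm{\varpi}_{r,a})\otimes L_q(\bm{\varpi}_{s,aq^{s-r+2}})$ is indeed $L_q(\bm{\rho})$, but this inclusion does \emph{not} survive the classical limit: $\C\otimes_{\mathbf{A}}(-)$ is only right exact, and here the quotient lattice has $(q-1)$-torsion, so your claimed injection $L(\bm{\rho})\hookrightarrow L(\bm{\varpi}_{r,a})\otimes L(\bm{\varpi}_{s,b})$ fails. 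What survives is only a map onto the cyclic $\fg[t]$-submodule generated by $\ol{v}\otimes\ol{v}'$, and that submodule is far too small. Indeed $b/a=q^{s-r+2}$ specializes to $1$, so both factors become evaluation modules at the \emph{same} point; since in type $C$ each fundamental module is $\fg$-irreducible, $t\fg[t]$ annihilates both $D(-\varpi_r+\gL_0)\cong V(\varpi_r,0)$ and $D(-\varpi_s+\gL_0)\cong V(\varpi_s,0)$, hence the whole tensor product. The cyclic submodule generated by the tensor of extremal (lowest weight) vectors is then just the Cartan component, so
\[ D(-\varpi_r+\gL_0,\,-\varpi_s+\gL_0)\cong V(\varpi_r+\varpi_s)\subsetneq \bigoplus_{k=0}^{r}V(\varpi_{r-k}+\varpi_{s-k})\cong D(-\varpi_r-\varpi_s+\gL_0). \]
Your appeal to Proposition \ref{Prop:isom} does not apply: the pair $(-\varpi_r+\gL_0,-\varpi_s+\gL_0)$ cannot be written as $(w_{[1,1]}\gL^1, w_{[1,2]}\gL^2)$ with the required length additivity, and the factorization $-\varpi_r-\varpi_s+\gL_0=w_\circ w_{[1,s]}(\varpi_{s-r}+\gL_0)$ involves the single level-one weight $\varpi_{s-r}+\gL_0$, not the two fundamental weights $\varpi_r+\gL_0$ and $\varpi_s+\gL_0$.

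This is exactly why the paper does not attempt to build the reverse surjection at all. Instead it observes that, given $D(-\varpi_r-\varpi_s+\gL_0)\twoheadrightarrow L(\bm{\rho})$, it suffices to prove $\dim L_q(\bm{\rho})\ge\dim D(-\varpi_r-\varpi_s+\gL_0)$, and it establishes this bound entirely at the quantum level, where $a$ and $aq^{s-r+2}$ are genuinely distinct. Concretely, Lemma \ref{Lem:exact} (a dominant-$\ell$-weight computation using Lemma \ref{Lem:order}, Proposition \ref{Prop:3cond} and \cite[Theorem 2.7]{MR2242948}) shows the tensor product of the two fundamental modules has at most two composition factors, $L_q(\bm{\rho})$ and $L_q(\bm{\varpi}_{r-1,aq}\bm{\varpi}_{s+1,aq^{s-r+1}})$; comparing the $U_q(\fg)$-decomposition (\ref{eq:tensor}) of $V_q(\varpi_r)\otimes V_q(\varpi_s)$ with that of $V_q(\varpi_{r-1})\otimes V_q(\varpi_{s+1})$ then forces $L_q(\bm{\rho})$ to contain every $V_q(\varpi_{r-k}+\varpi_{s-k})$, $0\le k\le r$, which matches the branching of $D(-\varpi_r-\varpi_s+\gL_0)$ computed in Lemma \ref{Lem:decomp}. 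If you want to salvage your strategy, you must keep $\bm{\varpi}_{r,a}\bm{\varpi}_{s,aq^{s-r+2}}$ as a single tensor factor rather than splitting it into two fundamental modules --- which is precisely the content of the lemma you are trying to prove.
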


Assuming this lemma for a while, we shall prove $L(\bm{\pi}) \twoheadrightarrow D(w_\circ \xi_1, \ldots, w_\circ \xi_n)$
for $\bm{\pi}$ satisfying condition (I). (The proof for condition (II) is similar.)
The proof is carried out in a similar line as that of \cite[Proposition 3.21]{MR2587436}.
First we recall the following theorem, which is obtained by taking the dual of \cite[Theorem 5.1]{MR1883181} and
using Lemma \ref{Lem:dual}.

\begin{Thm}\label{Thm:Chari}
  Let $i_1, \ldots,i_p \in I$, $b_1,\ldots,b_p \in \C(q)^\times$, and $l_1, \ldots, l_p \in \Z_{>0}$, and assume that  
  \begin{equation}\label{eq:condition}
    b_rq_{i_r}^{-l_r} \notin q^{\Z_{> 0}} b_sq^{-l_s}_{i_s} \ \ \ \text{for all} \ r < s.
  \end{equation}
  Then the submodule of $L_q\big(\bm{\pi}_{l_1,b_1}^{(i_1)}\big)
  \otimes \cdots \otimes L_q\big(\bm{\pi}_{l_p,b_p}^{(i_p)}\big)$
  generated by the tensor product of $\ell$-highest weight vectors is isomorphic to 
  $L_q\Big(\prod_{k =1}^p \bm{\pi}_{l_k,b_k}^{(i_k)}\Big)$.
\end{Thm}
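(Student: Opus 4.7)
The plan is to deduce the theorem from \cite[Theorem 5.1]{MR1883181} by dualization. Chari's result is the ``dual form'' of the present statement: it identifies the simple module $L_q\big(\prod_k \bm{\pi}_{l_k,b_k}^{(i_k)}\big)$ as the unique simple quotient of a tensor product of Kirillov--Reshetikhin modules taken in the opposite order, provided the parameters satisfy a non-resonance condition of the same shape as (\ref{eq:condition}) but with the sign reversed. Because any tensor product of $\ell$-highest weight modules is itself $\ell$-highest weight with $\ell$-highest weight equal to the product of the individual $\ell$-highest weights, the cyclic submodule generated by the tensor product of $\ell$-highest weight vectors always surjects onto $L_q\big(\prod_k \bm{\pi}_{l_k,b_k}^{(i_k)}\big)$; the content of the theorem is that, under (\ref{eq:condition}), this surjection is an isomorphism.

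First I would dualize. Iteratively applying $(V \otimes W)^* \cong W^* \otimes V^*$, the dual of $L_q(\bm{\pi}_{l_1,b_1}^{(i_1)}) \otimes \cdots \otimes L_q(\bm{\pi}_{l_p,b_p}^{(i_p)})$ is $L_q(\bm{\pi}_{l_p,b_p}^{(i_p)})^* \otimes \cdots \otimes L_q(\bm{\pi}_{l_1,b_1}^{(i_1)})^*$, so the order of factors is reversed. Lemma \ref{Lem:dual} then identifies each dual factor $L_q(\bm{\pi}_{l_k,b_k}^{(i_k)})^*$ with the shifted Kirillov--Reshetikhin module $L_q\big(\bm{\pi}_{l_k,\,b_k q^{-r^\vee h^\vee}}^{(\bar i_k)}\big)$, and identifies $L_q\big(\prod_k \bm{\pi}_{l_k,b_k}^{(i_k)}\big)^*$ with $L_q\big(\prod_k \bm{\pi}_{l_k,\,b_k q^{-r^\vee h^\vee}}^{(\bar i_k)}\big)$. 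Under duality a cyclic submodule becomes a simple quotient and vice versa, so the claim in the theorem is equivalent to the assertion that, for the reversed, shifted tensor product, the stated simple module appears as the unique simple quotient generated by the tensor product of $\ell$-highest weight vectors. This is precisely what Chari's theorem provides, once the parameter hypothesis is translated correctly.

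The final step, and the only real work, is verifying that the hypothesis (\ref{eq:condition}) transports correctly. Under the combined operation of reversing the indexing $(1,\ldots,p)\mapsto (p,\ldots,1)$ and applying the shift $b_k \mapsto b_k q^{-r^\vee h^\vee}$, the inequalities $b_r q_{i_r}^{-l_r} \notin q^{\Z_{>0}} b_s q_{i_s}^{-l_s}$ for $r<s$ become the mirror inequalities in the reversed order required by \cite[Theorem 5.1]{MR1883181}; the common factor $q^{-r^\vee h^\vee}$ simply cancels on both sides of each comparison. The main obstacle is thus the combinatorial book-keeping to match normalization conventions between the two papers; modulo this matching, the desired isomorphism follows immediately by taking the $U_q(\Lfg)$-dual of Chari's isomorphism and applying Lemma \ref{Lem:dual}.
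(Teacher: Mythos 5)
Your proposal is correct and takes essentially the same route as the paper, which derives this theorem in a single sentence ``by taking the dual of \cite[Theorem 5.1]{MR1883181} and using Lemma \ref{Lem:dual}'' --- precisely the dualization, order reversal, and parameter translation you describe. The only step you gloss over is that the simple socle obtained by dualizing Chari's surjection actually coincides with the submodule generated by the tensor product of $\ell$-highest weight vectors (this holds because the top $\ell$-weight space of the tensor product is one-dimensional), but this is standard and the paper itself supplies no more detail than you do.
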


The following corollary is an easy consequence of this theorem.

\begin{Cor}\label{Cor:Chari}
  Assume that $i_1, \ldots,i_p \in I$, $b_1, \ldots, b_p \in \C(q)^\times$, and $l_1, \ldots,l_p \in \Z_{> 0}$ satisfy
  {\normalfont(\ref{eq:condition})}. 
  Then for any sequence $0 =k_0 < k_1 < \cdots <k_{r-1} < k_r = p$, 
  the submodule of $L_q\Big(\prod_{k=1}^{k_1} \bm{\pi}_{l_k,b_k}^{(i_k)}\Big) \otimes 
  \cdots \otimes L_q\Big(\prod_{k=k_{r-1}+1}^{p}\bm{\pi}_{l_k,b_k}^{(i_k)}\Big)$
  generated by the tensor product of $\ell$-highest weight vectors is isomorphic to 
  $L_q\Big(\prod_{k =1}^p \bm{\pi}_{l_k,b_k}^{(i_k)}\Big)$.
\end{Cor}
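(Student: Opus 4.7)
The plan is to reduce the statement to Theorem \ref{Thm:Chari} by constructing an injective $U_q(\bL\fg)$-module homomorphism $\iota\colon W \hookrightarrow V$, where $W$ denotes the tensor product appearing in the corollary and $V$ denotes the longer tensor product $L_q\big(\bm{\pi}_{l_1,b_1}^{(i_1)}\big) \otimes \cdots \otimes L_q\big(\bm{\pi}_{l_p,b_p}^{(i_p)}\big)$, in such a way that $\iota$ sends the tensor product $v_W$ of the $r$ $\ell$-highest weight vectors of $W$ to the tensor product $v_V$ of all $p$ $\ell$-highest weight vectors of $V$. Once this is established, Theorem \ref{Thm:Chari} applied to the full sequence $1,\ldots,p$ identifies $M_V := U_q(\bL\fg)v_V \subseteq V$ with $L_q\big(\prod_{k=1}^p \bm{\pi}_{l_k,b_k}^{(i_k)}\big)$, and a short argument will transport this identification across $\iota$ to the submodule $M_W := U_q(\bL\fg)v_W \subseteq W$ of interest.

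To produce $\iota$, I will apply Theorem \ref{Thm:Chari} to each subsequence $k_{j-1}+1,\ldots,k_j$ separately. The key point is that condition (\ref{eq:condition}) is inherited by any subsequence: if the inequality holds for all $1\le r < s \le p$ in the original data, it holds a fortiori for the restricted pairs. Hence for each $1\le j\le r$ the theorem gives an injection
\[
  L_q\Big(\prod_{k=k_{j-1}+1}^{k_j}\bm{\pi}_{l_k,b_k}^{(i_k)}\Big)
  \hookrightarrow
  L_q\big(\bm{\pi}_{l_{k_{j-1}+1},b_{k_{j-1}+1}}^{(i_{k_{j-1}+1})}\big)\otimes\cdots\otimes L_q\big(\bm{\pi}_{l_{k_j},b_{k_j}}^{(i_{k_j})}\big)
\]
sending the $\ell$-highest weight vector of the source to the tensor product of $\ell$-highest weight vectors of the target. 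Taking the tensor product of these $r$ injections and rearranging parentheses yields the desired embedding $\iota\colon W \hookrightarrow V$, and by construction $\iota(v_W) = v_V$.

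It then remains to identify $M_W$ with $M_V$. Since $\iota$ is a $U_q(\bL\fg)$-module homomorphism with $\iota(v_W) = v_V$, one has
\[
  \iota(M_W) \;=\; U_q(\bL\fg)\,\iota(v_W) \;=\; U_q(\bL\fg)\,v_V \;=\; M_V,
\]
and injectivity of $\iota$ gives $M_W \cong M_V \cong L_q\big(\prod_{k=1}^p \bm{\pi}_{l_k,b_k}^{(i_k)}\big)$, which is the assertion of the corollary. No real obstacle arises: the proof is simply an unwinding of definitions together with two invocations of Theorem \ref{Thm:Chari}, the only (trivial) input being that condition (\ref{eq:condition}) is stable under passage to consecutive subsequences.
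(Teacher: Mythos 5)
Your argument is correct and is exactly the intended one: the paper offers no proof beyond calling the corollary ``an easy consequence'' of Theorem \ref{Thm:Chari}, and the natural unwinding is precisely your block-by-block application of that theorem (using that condition (\ref{eq:condition}) passes to consecutive subsequences) to embed each factor of $W$ into the corresponding block of $V$, followed by a second application of the theorem to the full sequence. The only point worth making explicit is that each block isomorphism can be normalized to send the $\ell$-highest weight vector to the tensor product of $\ell$-highest weight vectors because the top weight space of $L_q\big(\prod_{k}\bm{\pi}_{l_k,b_k}^{(i_k)}\big)$ is one-dimensional; with that remark your proof is complete.
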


Let $(a_i)_{i \in I}$ be the sequence in Theorem \ref{Thm:Classification} associated with $\bm{\pi}$.
For each $i \in I$, define $\bm{\pi}^{(i)} \in P_q^+$ by 
\[ \bm{\pi}^{(i)} = \begin{cases}  \bm{\pi}_{\gl_i -p_i, a_iq^{p_i}}^{(i)} & \text{if} \ i^\flat= 0, \\
                                   \bm{\pi}_{p_{i^{\flat}}, a_{i^\flat}q^{-\gl_{i^\flat} + 1}}^{(i^\flat)}
                                   \bm{\pi}_{\gl_i - p_i, a_iq^{p_i}}^{(i)}& \text{otherwise}.
                \end{cases}
\]
By Corollary \ref{Cor:Chari}, 
we have a nonzero $U_q(\bL\fg)$-module homomorphism
\[ L_q(\bm{\pi}) \to L_q\big(\bm{\pi}^{(n)}\big) \otimes \cdots \otimes L_q\big(\bm{\pi}^{(2)}\big) \otimes 
   L_q\big(\bm{\pi}^{(1)}\big),
\]
which induces a $U_{\mathbf{A}}(\bL\fg)$-module homomorphism 
\[ L_{\mathbf{A}}(\bm{\pi}) \to L_{\mathbf{A}}\big(\bm{\pi}^{(n)}\big) 
   \otimes
   \cdots \otimes L_{\mathbf{A}}\big(\bm{\pi}^{(2)}\big) \otimes L_{\mathbf{A}}\big(\bm{\pi}^{(1)}\big).
\]
By applying $\C \otimes_{\mathbf{A}}$ and taking the pull-back, we have a $\fg[t]$-module homomorphism
$L\big(\bm{\pi}\big) \to \bigotimes_{i=n}^{1} L\big(\bm{\pi}^{(i)}\big)$
mapping $\ol{v}_{\bm{\pi}}$ to $\ol{v}_{\bm{\pi}^{(n)}} \otimes \cdots \otimes \ol{v}_{\bm{\pi}^{(1)}}$. 
Hence it suffices to show for each $1 \le i \le n$ the existence of a surjective homomorphism 
$L\big(\bm{\pi}^{(i)}\big) \twoheadrightarrow D(w_\circ \xi_i)$, since this induces
\[ L(\bm{\pi}) \twoheadrightarrow U(\fg[t])(\ol{v}_{\bm{\pi}^{(n)}} \otimes \cdots \otimes \ol{v}_{\bm{\pi}^{(1)}})
   \twoheadrightarrow D(w_\circ \xi_n, \ldots, w_\circ \xi_1),
\]
and the last term is isomorphic to $D(w_\circ \xi_1,\ldots,w_\circ \xi_n)$ by definition.
If $p_{i^{\flat}} = 0$ or $i^{\flat} = 0$, this assertion follows from Lemma \ref{Lem:KR}.
Assume that $p_{i^\flat} = 1$ and $i^\flat \neq 0$, and put
\[ \bm{\pi}_1= \bm{\pi}^{(i)}_{\gl_i -p_i -1, a_iq^{p_i-1}} , \ \ \ 
   \bm{\pi}_2= \bm{\varpi}_{i^{\flat},a_{i^\flat}q^{-\gl_{i^\flat}+1}}\bm{\varpi}_{i,a_iq^{\gl_i-1}}.
\]
There is a nonzero homomorphism   
$L_q\big(\bm{\pi}^{(i)}\big) \to L_q(\bm{\pi}_1) \otimes L_q(\bm{\pi}_2 )$ by Corollary \ref{Cor:Chari},
and then using the same argument as above, we obtain a $\fg[t]$-module homomorphism
$L\big(\bm{\pi}^{(i)}\big) \to L\big(\bm{\pi}_1\big) \otimes L(\bm{\pi}_2)$.
By Lemmas \ref{Lem:KR} and \ref{Lem:fundamentals},
this induces a surjective homomorphism
\[ L\big(\bm{\pi}^{(i)}\big) \twoheadrightarrow 
   D\Big(-(\gl_i-p_i-1) \varpi_i + \frac{1}{2}(\gl_i -p_i -1)\gL_0, -\varpi_{i^\flat}-\varpi_i  + \gL_0\Big).
\]
By Proposition \ref{Prop:character2} and Lemma \ref{Lem:BC}, we see that  the right-hand side is isomorphic to
\[ F_{w_\circ w_{[1,i]}}\bigg(D \Big(\frac{1}{2}(\gl_i-p_i-1) \gL_0\Big)\otimes D\big(\varpi_{i - i^\flat} + \gL_0\big) 
    \bigg) \cong D(w_\circ \xi_i),
\]
and hence the assertion is proved.

It remains to show Lemma \ref{Lem:fundamentals}. Fix $1 \le r < s \le n-1$.

\begin{Lem}\label{Lem:decomp}
  As $\fg$-modules,
  \[ D(-\varpi_r -\varpi_s + \gL_0) \cong \bigoplus_{k=0}^{r} V(\varpi_{r-k} + \varpi_{s-k}).
  \] 
\end{Lem}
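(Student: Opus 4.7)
The plan is to prove the lemma by computing the $\fh$-character of $D:=D(-\varpi_r-\varpi_s+\gL_0)$ via the Demazure character formula and matching it against $\sum_{k=0}^{r}\ch V(\varpi_{r-k}+\varpi_{s-k})$. Since both sides lie in $\Z[P]$, the lemma reduces to this character identity.

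First I realize $D$ as $F_w D(\gL)$ for an explicit $w\in\wti{W}$ and a level-one dominant weight $\gL\in\hP^+$. The finite part of $\xi:=-\varpi_r-\varpi_s+\gL_0$, together with the affine translation part of $\hW$, forces $\gL$ to be (modulo $\C\gd$) either $\gL_0$ or $\varpi_1+\gL_0$ according to the parity of $r+s$; I then write down an explicit reduced expression $w=s_{i_1}\cdots s_{i_N}$ in $\wti{W}$ carrying $\gL$ to $\xi$ through successive simple reflections. Iterating Lemma \ref{Lem:one_change} along this reduced expression yields $D\cong F_w D(\gL)$, and Corollary \ref{Prop:character3} then gives
\[
\chhh D \;=\; \mathcal{D}_w(e^\gL),
\]
so that $\chh D$ is obtained by specialising $e^{\gL_0}=e^\gd=1$.

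With the character in this form, I verify the identity
\[
\mathcal{D}_w(e^\gL)\Big|_{e^{\gL_0}=e^\gd=1}\;=\;\sum_{k=0}^{r}\ch V(\varpi_{r-k}+\varpi_{s-k})
\]
by induction on $r$. The base case $r=1$ reduces to computing a Demazure operator for a fairly short affine word, giving $\ch V(\varpi_1+\varpi_s)+\ch V(\varpi_{s-1})$ after simplification. For the inductive step, I peel off the outermost simple reflections of the reduced expression for $w$ and apply a Pieri-type rule for $\ch V(\varpi_1)\cdot\ch V(\mu)$ in type $C_n$, which telescopes the two sides against the identity for $(r-1,s-1)$.

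The main obstacle is the first step: producing the element $w\in\wti{W}$ together with a reduced expression whose length equals the sum of the simple-reflection lengths, as required by Proposition \ref{Prop:isom}, and verifying that the iterated application of Lemma \ref{Lem:one_change} respects the positivity condition $\langle\alpha_{i_j}^\vee,\,\cdot\,\rangle\ge 0$ at every intermediate stage. The affine reflection $s_0$ must be interleaved with the finite reflections $s_1,\dots,s_{n-1}$ in a pattern determined by $r$, $s$, and the parity of $r+s$, and this combinatorial bookkeeping in $C_n^{(1)}$ is the principal subtlety. Once $w$ is in hand, the character identification is essentially a formal Demazure-to-Weyl reduction using $\mathcal{D}_{w_\circ}(e^\mu)=\ch V(\mu)$ for $\mu\in P^+$.
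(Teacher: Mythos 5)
Your overall strategy (realize $D(-\varpi_r-\varpi_s+\gL_0)$ as $F_wD(\gL)$, compute $\chhh$ via Corollary \ref{Prop:character3}, and match characters) is legitimate in principle, but your very first step contains a concrete error that derails the rest. In type $C_n^{(1)}$ every fundamental weight $\gL_i=\varpi_i+\gL_0$ (mod $\Q\gd$) has level one, so the level-one dominant weights fall into roughly $(n+1)/2$ distinct $\wti{W}$-orbits, not two; the dominant representative of the orbit of $-\varpi_r-\varpi_s+\gL_0$ is \emph{not} $\gL_0$ or $\varpi_1+\gL_0$ according to the parity of $r+s$. By Lemma \ref{Lem:BC}(i)(b) one has $w_{[1,s]}(\varpi_{s-r}+\gL_0)\equiv\varpi_r+\varpi_s+\gL_0$ mod $\Q\gd$, so the correct dominant weight is $\gL=\varpi_{s-r}+\gL_0$ and $D(-\varpi_r-\varpi_s+\gL_0)=F_{w_\circ w_{[1,s]}}D(\varpi_{s-r}+\gL_0)$. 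Since your induction on $r$ and the ``Pieri-type rule for $\ch V(\varpi_1)\cdot\ch V(\mu)$'' are built around the finite part of $\gL$ being $0$ or $\varpi_1$, the inductive scheme as stated does not apply. Moreover, the part you yourself flag as ``the principal subtlety'' --- producing the reduced expression, checking length-additivity, and actually evaluating $\D_w(e^\gL)\big|_{e^{\gL_0}=e^\gd=1}$ --- is precisely the content of the lemma and is left unexecuted, so even after correcting $\gL$ this is a plan rather than a proof.

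For comparison, the paper's argument sidesteps the Demazure-operator computation entirely: writing $D(-\varpi_r-\varpi_s+\gL_0)=F_{w_\circ w_{[1,s]}}D(\varpi_{s-r}+\gL_0)$, it observes that all reflections occurring in $w_{[1,s]}$ have indices in $\hat{J}=\{0,1,\dots,s-1\}$, so $F_{w_\circ^Jw_{[1,s]}}D(\varpi_{s-r}+\gL_0)$ is the \emph{whole} simple module for the finite-type subalgebra $\hfg_{\hat{J}}$ (type $C_s$); its restriction to $\fg_J$ (type $A_{s-1}$) is a known branching $\bigoplus_{k=0}^rV_J(\varpi_{r-k}+\varpi_{s-k})$, and applying $F_{w_\circ w_\circ^J}$ converts each $\fg_J$-Demazure module $V_J(\nu)$ into $V(\nu)$. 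If you want to salvage your character-theoretic route, you would need to start from $\gL=\varpi_{s-r}+\gL_0$ and supply the reduced expression and the resulting identity explicitly; as written, the proposal has a genuine gap.
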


\begin{proof}
  Let $\hat{J} = \{0,1, \ldots, s-1\} \subseteq \hI$ and $J = \{1,\ldots,s-1\} \subseteq I$,
  and define $\hfg_{\hat{J}}$ by the Lie subalgebra of $\hfg$ generated by $\{e_i,f_i \mid i\in \hat{J}\,\}$ and $\hfh$.
  We also define $\fg_{J} \subseteq \fg$ similarly.
  Note that we have
  \[ D(-\varpi_r - \varpi_s + \gL_0) = F_{w_\circ w_{[1,s]}}D(\varpi_{s-r} + \gL_0).
  \]
  Let $w_\circ^J$ be the longest element of the Weyl group of $\fg_J$.
  Then $F_{w_\circ^J w_{[1,s]}}D(\varpi_{s-r} + \gL_0)$ is a simple $\hfg_{\hat{J}}$\,-module 
  with highest weight $\varpi_{s-r} + \gL_0$,
  and therefore we have
  \[ F_{w_\circ^J w_{[1,s]}}D(\varpi_{s-r} + \gL_0) \cong \bigoplus_{k =0}^{r} V_{J}(\varpi_{r-k} + \varpi_{s-k})
  \]
  as $\fg_J$-modules, where $V_{J}(\nu)$ denotes the simple highest weight $\fg_{J}$-module with highest weight $\nu$. 
  Since $V_{J}(\nu)$ are Demazure modules for $\fg$, $F_{w_\circ w_\circ^J}V_{J}(\nu) = V(\nu)$ holds.
  Hence the assertion is proved.
\end{proof}

We also need the following lemma.

\begin{Lem}\label{Lem:exact}
  There exists an exact sequence
  \begin{align*}
    0 \to L_q(\bm{\varpi}_{r,a}\bm{\varpi}_{s,aq^{s-r+2}}) &\to L_q(\bm{\varpi}_{r,a}) \otimes L_q(\bm{\varpi}_{s,aq^{s-r+2}}) \\
      & \to L_q(\bm{\varpi}_{r-1,aq}\bm{\varpi}_{s+1,aq^{s-r+1}}).
  \end{align*}
\end{Lem}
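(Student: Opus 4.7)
Set $T := L_q(\bm{\varpi}_{r,a}) \otimes L_q(\bm{\varpi}_{s,aq^{s-r+2}})$, $S := L_q(\bm{\varpi}_{r,a}\bm{\varpi}_{s,aq^{s-r+2}})$, and $Q' := L_q(\bm{\varpi}_{r-1,aq}\bm{\varpi}_{s+1,aq^{s-r+1}})$. The plan is to obtain the first arrow from Theorem~\ref{Thm:Chari} and to build the second arrow by locating a suitable $\ell$-highest weight vector in the quotient $T/S$. For the first arrow, I would apply Theorem~\ref{Thm:Chari} with $(i_1,i_2,l_1,l_2,b_1,b_2)=(r,s,1,1,a,aq^{s-r+2})$: since $r,s\le n-1$ we have $q_{i_1}=q_{i_2}$, so the condition $b_1 q_{i_1}^{-1}\notin q^{\Z_{>0}}b_2 q_{i_2}^{-1}$ reduces to $q^{-(s-r+2)}\notin q^{\Z_{>0}}$, which holds automatically. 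Hence the submodule of $T$ generated by the tensor product $v_1\otimes v_2$ of $\ell$-highest weight vectors is isomorphic to $S$, and this inclusion is the first arrow.

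For the second arrow, since $Q'$ is simple, any nonzero homomorphism $T/S \to Q'$ is automatically injective, so it suffices to prove that $T/S$ is an $\ell$-highest weight module with $\ell$-highest weight $\bm{\rho}:=\bm{\varpi}_{r-1,aq}\bm{\varpi}_{s+1,aq^{s-r+1}}$. My plan has three steps: first, locate a nonzero $\ell$-weight vector $v\in T$ of $\ell$-weight $\bm{\rho}$ by successively applying the $\mathfrak{sl}_{2,j}$-lowering machinery of Proposition~\ref{Prop:sl2} (combined with Proposition~\ref{Prop:reduction}) for $j=r,r+1,\ldots,s$ to $v_1\otimes v_2$; second, verify through the right-negativity bound of Lemma~\ref{Lem:order}(ii) that $\bm{\rho}\notin \wt_\ell S$, so that the image of $v$ in $T/S$ is nonzero; third, apply Proposition~\ref{Prop:3cond} to conclude that this image is $\ell$-highest in $T/S$.

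The main obstacle is this last step. By Proposition~\ref{Prop:3cond}, bounding the $\ell$-highest weights occurring in $T/S$ amounts to enumerating those dominant $\ell$-weights of $T$ that are not $\le$ the $\ell$-highest weight of $S$; the dominant $\ell$-weights of $T$ itself arise as products $\bm{\nu}_1\bm{\nu}_2$ whose right-negative tails (Lemma~\ref{Lem:order}(ii)) cancel exactly, and the critical spectral shift $q^{s-r+2}$ is precisely the value that permits such a cancellation. The technical heart of the argument is to show that this cancellation occurs uniquely and yields the target $\ell$-weight $\bm{\rho}$; this requires detailed information on $\wt_\ell L_q(\bm{\varpi}_{i,b})$ in type $C_n$, which can be extracted either from the Frenkel-Mukhin algorithm or by iterated applications of Proposition~\ref{Prop:3cond}. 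Once this combinatorial uniqueness is established, the exactness of the sequence follows by combining the inclusion $S\hookrightarrow T$ with the resulting embedding $T/S\hookrightarrow Q'$.
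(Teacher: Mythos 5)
Your overall architecture matches the paper's: the first arrow comes from Theorem \ref{Thm:Chari} (your verification of condition (\ref{eq:condition}) is correct), and the quotient is controlled by enumerating the dominant $\ell$-weights of $T=L_q(\bm{\varpi}_{r,a})\otimes L_q(\bm{\varpi}_{s,aq^{s-r+2}})$ using right-negativity and detailed knowledge of $\wt_\ell L_q(\bm{\varpi}_{i,b})$; the paper carries out precisely this enumeration, via an auxiliary induction based on Proposition \ref{Prop:3cond} together with the explicit $\ell$-weights of fundamental modules from Chari--Moura. However, your reduction for the second arrow is logically broken. A nonzero homomorphism $T/S \to Q'$ with $Q'$ simple is automatically \emph{surjective}, not injective; and knowing that $T/S$ is $\ell$-highest weight with $\ell$-highest weight $\bm{\rho}$ only yields a surjection $T/S \twoheadrightarrow Q'$, whereas exactness of the displayed sequence requires an embedding $T/S \hookrightarrow Q'$. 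An $\ell$-highest weight module may well have composition factors below its head, so "it suffices to prove that $T/S$ is $\ell$-highest weight with $\ell$-highest weight $\bm{\rho}$" is not sufficient.

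What actually closes the argument, and what the paper reduces the lemma to, is the statement that the dominant $\ell$-weights of $T$ are exactly $\bm{\varpi}_{r,a}\bm{\varpi}_{s,aq^{s-r+2}}$ and $\bm{\varpi}_{r-1,aq}\bm{\varpi}_{s+1,aq^{s-r+1}}$ \emph{and} that each of these $\ell$-weight spaces is one-dimensional. Since every composition factor $L_q(\bm{\nu})$ of $T$ has $\bm{\nu}$ among the dominant $\ell$-weights of $T$, with multiplicity bounded by $\dim T_{\bm{\nu}}$, this forces the composition factors of $T$ to be $S$ (once) and possibly $Q'$ (at most once); combined with $S\hookrightarrow T$ this gives $T/S=0$ or $T/S\cong Q'$, hence exactness. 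You never address the multiplicity-one part, and without it $T/S$ could a priori be a nonsplit self-extension of $Q'$, which does not embed in $Q'$. Two smaller points: Proposition \ref{Prop:3cond} is stated for simple modules, so it cannot be applied directly to the quotient $T/S$; and your step "$\bm{\rho}\notin\wt_\ell S$ by Lemma \ref{Lem:order}(ii)" is both unjustified (that lemma concerns fundamental modules, not $S$) and unnecessary --- the exact sequence does not assert surjectivity of the last map, so one need not decide whether $Q'$ actually occurs as a constituent of $T$.
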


Assuming this lemma for a moment, we shall complete the proof of Lemma \ref{Lem:fundamentals}.
Take $\bm{\rho}$ as in Lemma \ref{Lem:fundamentals}.
By the results in Subsections \ref{Proof1} and \ref{Proof2}, we have
\[ D(-\varpi_r - \varpi_s + \gL_0) \twoheadrightarrow M(\varpi_r + \varpi_s) 
   \twoheadrightarrow L(\bm{\rho}).
\]
Hence it suffices to show that $\dim L(\bm{\rho}) \ge \dim D(-\varpi_r - \varpi_s + \gL_0)$.
Recall that every fundamental module $L_q(\bm{\varpi}_{i,a})$ is simple as a $U_q(\fg)$-module (in type $C$),
and it follows for $1 \le i <j \le n$ that
\begin{align}\label{eq:tensor}
   V_q(\varpi_i&) \otimes V_q(\varpi_j) \\ &\cong 
    \begin{cases} \bigoplus_{k=0}^i V_q(\varpi_{i-k} + \varpi_{j-k})
                   \oplus \bigoplus_{k= 1}^i V_q(\varpi_{i-k} + \varpi_{n-|n-j-k|}) & \text{if} \ j \le n-1, \\
                  \bigoplus_{k=0}^i V_q(\varpi_{i-k} + \varpi_{n-k}) & \text{if} \ j = n.
    \end{cases}\nonumber
\end{align}
By Theorem \ref{Thm:Classification}, 
$\bm{\rho} = \bm{\varpi}_{r,a} \bm{\varpi}_{s,aq^{\gee(s-r+2)}}$ for some $a \in \C(q)^\times$ 
and $\gee \in \{\pm 1\}$.
Let us assume $\gee = +1$ first. Using
\[ L_q(\bm{\varpi}_{r-1,aq}\bm{\varpi}_{s+1,aq^{s-r+1}}) \hookrightarrow L_q(\bm{\varpi}_{r-1,aq}) 
   \otimes L_q(\bm{\varpi}_{s+1,aq^{s-r+1}}),
\]
we see from Lemma \ref{Lem:exact} and (\ref{eq:tensor}) that
$L_q(\bm{\rho})$ contains $V_q(\varpi_{r-k} + \varpi_{s-k})$ ($0 \le k \le r$) as simple $U_q(\fg)$-module components.
Hence $\dim L(\bm{\rho}) = \dim L_q(\bm{\rho}) \ge \dim D(-\varpi_r - \varpi_s + \gL_0)$ holds by Lemma \ref{Lem:decomp},
as desired.
Since $\dim L_q(\bm{\rho}) = \dim L_q({}^*\!\bm{\rho})$ holds by Lemma \ref{Lem:dual}, the case $\gee = -1$ is also proved.

Now let us prove Lemma \ref{Lem:exact}. 
It suffices to show that
\[ \wt_\ell\Big(L_q(\bm{\varpi}_{r,a}) \otimes L_q(\bm{\varpi}_{s,aq^{s-r+2}})\Big) \cap P^+_q 
   = \big\{\bm{\varpi}_{r,a} \bm{\varpi}_{s,aq^{s-r+2}},\bm{\varpi}_{r-1,aq} \bm{\varpi}_{s+1,aq^{s-r+1}}\big\}
\]
and each dominant $\ell$-weight space is $1$-dimensional. 
Assume that $\bm{\rho}_1 \in \wt_{\ell}L_q(\bm{\varpi}_{r,a})$ and $\bm{\rho}_2 \in \wt_{\ell} L_q(\bm{\varpi}_{s,aq^{s-r+2}})$
satisfy $\bm{\rho}_1\bm{\rho}_2 \in P_q^+$.
If $\bm{\rho}_2 \neq \bm{\varpi}_{s,aq^{s-r+2}}$, it follows that
\[ \bm{\rho}_1\bm{\rho}_2 \le \bm{\varpi}_{r,a} \bm{\varpi}_{s,aq^{s-r+2}}
   \bm{\ga}_{s,aq^{s-r+3}}^{-1}
\]
by Lemma \ref{Lem:order} (ii),
and therefore $\bm{\rho}_1\bm{\rho}_2$ is right-negative by Lemma \ref{Lem:order} (i).
Hence we have $\bm{\rho}_2 = \bm{\varpi}_{s,aq^{s-r+2}}$.

We need to show one more lemma.
For $\bm{\nu} \in \wt_\ell L_q(\bm{\varpi}_{r,a})$, 
define $u_i(\bm{\nu}) \in \Z_{\ge 0}$ for $i \in I$ by $\varpi_r - \wt(\bm{\nu}) = \sum_{i \in I} u_i(\bm{\nu}) \ga_i$.
Let $u(\bm{\nu}) = \sum_{i \in I} u_i(\bm{\nu}) \in \Z_{\ge 0}$.

\begin{Lem}
  Let $r \le k \le n$,
  and assume that $\bm{\nu} \in \wt_\ell L_q(\bm{\varpi}_{r,a})$ satisfies $u_k(\bm{\nu}) > 0$.
  Then $\bm{\nu} \le \bm{\varpi}_{r,a} \bm{\ga}_{k,aq^{p(k)}}^{-1}$ holds, where we set $p(k) = k- r + 1 + \gd_{kn}$.
\end{Lem}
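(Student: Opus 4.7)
The plan is to induct on $u(\bm{\nu}) = \sum_{i \in I} u_i(\bm{\nu})$. For the base case $u(\bm{\nu}) = 1$, the hypothesis $u_k(\bm{\nu}) > 0$ combined with $\langle \ga_k^\vee, \varpi_r \rangle = \gd_{kr}$ forces $k = r$ (since $\varpi_r - \ga_k$ is a weight of $V(\varpi_r)$ only in that case), so $\bm{\nu} = \bm{\varpi}_{r,a}\bm{\ga}_{r,c}^{-1}$ for some $c$; Lemma \ref{Lem:order}(ii) then pins $c = aq_r$, and the claim reduces to the identity $d_r = p(r)$, verified from $d_r = 1$ for $r<n$ and $d_n = 2$ in type $C$.

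For the inductive step, apply Proposition \ref{Prop:3cond} to $\bm{\nu}$ to obtain an index $j \in I$ and a $j$-dominant $\bm{\mu} \in \wt_\ell L_q(\bm{\varpi}_{r,a})$ with $\bm{\mu} > \bm{\nu}$, $\bm{\mu}\bm{\nu}^{-1} \in \prod_c \bm{\ga}_{j,c}^{\Z_{\ge 0}}$, and the action condition (iii). Since $u(\bm{\mu}) < u(\bm{\nu})$, if $j \neq k$, or if $j = k$ and $u_k(\bm{\mu}) > 0$, then $u_k(\bm{\mu}) > 0$ and the induction hypothesis yields $\bm{\mu} \le \bm{\varpi}_{r,a}\bm{\ga}_{k,aq^{p(k)}}^{-1}$, whence transitivity $\bm{\nu} \le \bm{\mu}$ concludes.

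The essential remaining case is $j = k$ with $u_k(\bm{\mu}) = 0$. Let $W = U_q(\bL\fg_k)v_{\bm{\mu}}$ be the cyclic submodule provided by condition (iii); as $\bm{\mu}$ is $k$-dominant, $W$ is an $\ell$-highest weight $U_{q_k}(\bL\mathfrak{sl}_2) \cong U_q(\bL\fg_k)$-module with highest weight $\bm{\mu}_k$. The descent from $\bm{\mu}$ to $\bm{\nu}$ inside $W$ expands, via Propositions \ref{Prop:sl2} and \ref{Prop:reduction}, as a product of $\bm{\ga}_{k,c}^{-1}$-factors indexed by a subset of the roots of $\bm{\mu}_k(u)$ shifted by $q_k$; the lemma reduces to showing $\bm{\ga}_{k,aq^{p(k)}}^{-1}$ is among these, equivalently that $\bm{\mu}_k$ has a root at $aq^{p(k) - d_k}$ which is activated in the descent. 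To locate this root, decompose $\bm{\mu}_k(u) = (\bm{\varpi}_{r,a})_k(u) \cdot \prod_{i \neq k,\, c}\bm{\pi}_{-c_{k,i},c}^{(k)}(u)^{l(i,c)}$ coming from $\bm{\varpi}_{r,a}\bm{\mu}^{-1} = \prod_{i \neq k,\, c} \bm{\ga}_{i,c}^{l(i,c)}$; applying the induction hypothesis to $\bm{\mu}$ at each contributing neighbor $i$ of $k$ yields $\bm{\mu} \le \bm{\varpi}_{r,a}\bm{\ga}_{i,aq^{p(i)}}^{-1}$, and propagating the corresponding $c = aq^{p(i)}$ through $\bm{\pi}_{-c_{k,i},c}^{(k)}$ produces a root of $\bm{\mu}_k$ at $aq^{p(k) - d_k}$ by means of a case-by-case combinatorial identity, the $\gd_{kn}$ jump in $p$ accommodating the long-root asymmetry at node $n$. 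The prefix structure of activations in $\mathfrak{sl}_2$-Kirillov-Reshetikhin modules (implicit in Proposition \ref{Prop:sl2}(i)) then forces this specific root to be among the activated ones, producing the desired factor $\bm{\ga}_{k,aq^{p(k)}}^{-1}$. The main obstacle is precisely this last forcing argument, together with the combinatorial $q$-shift identities across the long-root boundary needed to verify that the propagated root lands exactly at $aq^{p(k)-d_k}$.
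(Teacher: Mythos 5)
Your overall skeleton --- reduce via Proposition \ref{Prop:3cond} to the case where the dominance index $j$ equals $k$ and $u_k$ drops to zero, then analyse the $k$-th component through the $\mathfrak{sl}_2$-reduction of Propositions \ref{Prop:sl2} and \ref{Prop:reduction} --- matches the paper's. But the step you yourself flag as ``the main obstacle'' is a genuine gap, and as you have set things up it cannot be closed with the tools you cite. Inducting on $u(\bm{\nu})$, all you know about $\bm{\mu}$ in the remaining case is that for each neighbour $i$ of $k$ with $u_i(\bm{\mu})>0$ the monomial $\bm{\varpi}_{r,a}\bm{\mu}^{-1}$ contains \emph{at least one} factor $\bm{\ga}_{i,aq^{p(i)}}$; nothing in your argument bounds $u_{k-1}(\bm{\mu})$ by $1$, so there may be further factors $\bm{\ga}_{i,c}$ with other $c$, each contributing a root to $\bm{\mu}_k(u)$. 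Once $\bm{\mu}_k(u)$ has several roots, Proposition \ref{Prop:sl2}(i) gives you no ``prefix structure'': that statement concerns a single $q$-string $\bm{\pi}_{m,a}$, whereas for a product of strings in general position the simple $U_q(\bL\mathfrak{sl}_2)$-module is a tensor product of evaluation modules and any of the strings can be lowered independently. Hence there is no mechanism forcing the activated factor to be $\bm{\ga}_{k,aq^{p(k)}}^{-1}$ rather than $\bm{\ga}_{k,cq_k}^{-1}$ for one of the other roots $c$.

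The paper closes exactly this hole by (a) inducting on $k$ rather than on $u(\bm{\nu})$, and (b) invoking the weight set of $L_q(\bm{\varpi}_{r,a})\cong V_q(\varpi_r)$ in type $C$: if $u_k(\bm{\nu}')=0$ and $u_{k-1}(\bm{\nu}')>0$ for a weight of $V_q(\varpi_r)$, then necessarily $u_{k-1}(\bm{\nu}')=1$ and $u_l(\bm{\nu}')=0$ for all $l\ge k$. Combined with the inductive hypothesis at $k-1$, this shows that $\bm{\varpi}_{r,a}(\bm{\nu}')^{-1}$ equals $\bm{\ga}_{k-1,aq^{p(k-1)}}$ times factors $\bm{\ga}_{l,b}$ with $l<k-1$, which do not touch the $k$-th component; hence $\bm{\nu}'_k(u)=1-aq^{p(k-1)}u$ has a \emph{single} root, Proposition \ref{Prop:sl2} leaves only one possible descent, and $\bm{\nu}=\bm{\nu}'\bm{\ga}_{k,aq^{p(k-1)}q_k}^{-1}$ with $p(k-1)+d_k=p(k)$. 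You should restructure your induction to be on $k$ (the base case $k=r$ follows for all $\bm{\nu}$ at once from Lemma \ref{Lem:order}(ii), not only for $u(\bm{\nu})=1$) and insert the weight-set argument; the ``combinatorial $q$-shift identity across the long-root boundary'' then reduces to the single computation $p(k-1)+d_k=p(k)$.
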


\begin{proof}
  We prove the assertion by the induction on $k$.
  The case $k = r$ follows from Lemma \ref{Lem:order} (ii).
  Let $k > r$, and assume that there is an element $\bm{\nu}$ such that $u_k(\bm{\nu})>0$
  and $\bm{\nu} \nleq \bm{\varpi}_{r,a}\bm{\ga}_{k,aq^{p(k)}}^{-1}$.
  We may assume further that $u(\bm{\nu})$ is minimal among such elements.
  By Proposition \ref{Prop:3cond},
  there exist $j \in I$ and $\bm{\nu}' \in \wt_{\ell} L_q(\bm{\varpi}_{r,a})$ satisfying the conditions (i)--(iii) therein.
  If $u_k(\bm{\nu}') > 0$, then $\bm{\nu}'$ also satisfies the assumption of $\bm{\nu}$, 
  which contradicts the minimality of $u(\bm{\nu})$. 
  Hence $j = k$ and $u_k(\bm{\nu}') = 0$ follow.
  On the other hand, since $u_{k-1}(\bm{\nu}) > 0$ obviously holds, we have $u_{k-1}(\bm{\nu}') > 0$.
  Then we see from the weight set of $L_q(\bm{\varpi}_{r,a}) \cong V_q(\varpi_r)$ that
  $u_{k-1}(\bm{\nu}') = 1$ and $u_l(\bm{\nu}') = 0$ for $l \ge k$, and therefore
  we have
  \[ \bm{\nu}' \in \bm{\varpi}_{r,a} \bm{\ga}_{k-1,aq^{p(k-1)}}^{-1}
     \prod_{l < k-1, b \in \C(q)^\times} \bm{\ga}_{l,b}^{\Z_{\le 0}}
  \]
  by the induction hypothesis.
  Thus it follows that $\bm{\nu}'_k(u) = 1 - aq^{p(k-1)}u$. Then by Propositions \ref{Prop:sl2} and
  \ref{Prop:reduction}, condition (iii) implies   
  $\bm{\nu} = \bm{\nu}' \bm{\ga}_{k,aq^{p(k-1)}q_k}^{-1}\le \bm{\varpi}_{r,a} \bm{\ga}_{k,aq^{p(k)}}^{-1}$,
  which is a contradiction. The lemma is proved.  
\end{proof}

This lemma implies that, if $\bm{\nu} \in \wt_\ell L_q(\bm{\varpi}_{r,a})$ satisfies
$u_k(\bm{\nu}) > 0$ for some $k > s$, then $\bm{\nu} \bm{\varpi}_{s,aq^{s-r+2}}$ is right-minimal.
Hence $u_k(\bm{\rho}_1) = 0$ holds for all $k > s$.
Moreover $\bm{\rho}_1 \bm{\varpi}_{s,aq^{s-r+2}} \in P_q^+$ implies $\wt(\bm{\rho}_1) + \varpi_s \in P^+$.
Hence we have 
\[ \wt(\bm{\rho}_1) \in \{ \varpi_r, \varpi_{r-1} -\varpi_{s} + \varpi_{s+1}\}.
\]
Then we see from \cite[Theorem 2.7]{MR2242948} that 
\[ \bm{\rho}_1 \in \{ \bm{\varpi}_{r,a}, \bm{\varpi}_{r-1,aq}\bm{\varpi}_{s,aq^{s-r+2}}^{-1}\bm{\varpi}_{s+1,aq^{s-r+1}}\}
\]
and $\dim L_q(\bm{\varpi}_{r,a})_{\bm{\rho_1}} = 1$.
Hence we have
\[ \bm{\rho}_1 \bm{\rho}_2 = \bm{\rho}_1 \bm{\varpi}_{s,aq^{s-r+2}} \in
   \{ \bm{\varpi}_{r,a}\bm{\varpi}_{s,aq^{s-r+2}}, \bm{\varpi}_{r-1,aq}\bm{\varpi}_{s+1,aq^{s-r+1}}\},
\] 
and $\dim\Big(L_q(\bm{\varpi}_{r,a}) \otimes L_q(\bm{\varpi}_{s,aq^{s-r+2}})\Big)_{\bm{\rho}_1\bm{\rho}_2} = 1$, as desired.
The proof is complete.\\

\noindent {\bf Acknowledgment.} 
A part of this work was carried out during the visit of the author to University of California, Riverside.
He would like to express his gratitude to V.\ Chari for her hospitality and fruitful discussion.
He also thank R.\ Kodera for helpful comments.
This work was supported by World Premier International Research Center Initiative (WPI Initiative), MEXT, Japan.


\def\cprime{$'$} \def\cprime{$'$}

\end{document}